\documentclass[11pt,reqno,a4paper]{article}

\usepackage{avelinpaper}
\newcommand{\Pp}{\P}
\newcommand{\Ee}{\E}
\newcommand{\TV}{\mathrm{TV}}

\DeclareMathOperator{\Var}{Var}
\DeclareMathOperator{\arctanh}{arctanh}

\title{Absorption cutoff and stationary singularities for rounded Gaussian random dynamical systems}
\author{Benny Avelin\thanks{Department of Mathematics, Uppsala University,
751 05 Uppsala, Sweden. Email: \texttt{benny.avelin@math.uu.se}.}}
\date{August 16, 2026}
\hypersetup{
  pdftitle={Absorption cutoff and stationary singularities for rounded Gaussian random dynamical systems},
  pdfauthor={Benny Avelin},
  pdfsubject={Cutoff and stationary singularities for rounded Gaussian Markov chains},
  pdfkeywords={cutoff, absorption, finite precision, Gaussian Markov chains, metastability, renewal tails}
}

\begin{document}

\maketitle

\begin{abstract}
  We study Gaussian random dynamical systems with coordinatewise $\tanh$
  nonlinearity, where finite precision is modeled by nearest-grid rounding
  after each step. Gaussian symmetry reduces the
  dynamics to an exact Markov chain for the normalized squared radius.
  Rounding makes the
  origin absorbing, and the total variation distance to the absorbing
  equilibrium equals the survival probability of the absorption time. At
  fixed width, we identify the critical gain and prove an absorption cutoff
  with Gaussian profile as the mesh tends to zero. At fixed precision, global
  contraction yields a large-dimension absorption cutoff, while positive
  drift produces metastability. In the
  supercritical regime, we prove a large-dimension cutoff to a nonzero invariant
  law and show that, at fixed dimension, its mass near the repelling origin
  has a power-law asymptotic. All six main results are formalized in Lean~4 on
  top of Mathlib and independently checked against restatements that import
  only Mathlib.
\end{abstract}

\noindent\textbf{Keywords:}
cutoff; absorption; finite precision; Gaussian Markov chains; metastability;
renewal tails; random neural networks.

\smallskip

\noindent\textbf{2020 Mathematics Subject Classification.}
Primary 60J05, 60K05; secondary 60G50, 37H15, 68T07.

\tableofcontents

\section{Introduction}
\label{sec:introduction}

We study cutoff for nonlinear Markov chains obtained by rounding random
iterated maps to a finite grid. The grid makes exact absorption possible. In
the absorbing regimes considered below, the total variation distance to
equilibrium equals the survival probability of the absorption time. The
cutoff problem therefore reduces to sharp first passage for a random radius.

Random neural-network dynamics provide the motivating example. In real-valued
models, the weights are scaled to control signal propagation through depth
\cite{glorot-bengio2010,he-etal2015,poole-etal2016,schoenholz-etal2017}.
Finite-precision arithmetic instead confines the numerical state to a finite
set \cite{higham2002}. The zero state can then be reached exactly, turning
gradual signal decay into absorption.

This phenomenon was observed numerically in a joint work with Karlsson
\cite{avelin-karlsson2022}. The model iterates random fully connected layers
with independent weights uniformly distributed on
$[-N^{-1/2},N^{-1/2}]$:
\begin{equation}\label{eq:intro-network}
  X_{t+1}^{(N)}
  =
  \tanh\left(W_{t+1}^{(N)}X_t^{(N)}\right),
  \qquad
  X_t^{(N)}\in[-1,1]^N,
\end{equation}
where $\tanh$ acts coordinatewise. Rounding makes the origin absorbing. With
continuous weights, the unrounded chain started away from zero does not hit the
origin at any finite time, so its total variation distance to $\delta_0$ remains
one. Numerical experiments at fixed precision showed a sharp transition in
depth for dimensions $1$ and $2$.

For the Gaussian model studied below, we prove a Gaussian-profile absorption
cutoff as the mesh tends to zero at fixed width. At fixed precision and growing
width, we prove a bounded-window absorption cutoff in the globally contracting
regime and exponential persistence in the positive-drift regime. For the
unrounded supercritical chain, we prove cutoff to a nonzero invariant law as the
width grows and, at fixed width, determine the power-law mass of this law near
the repelling origin.

\subsection{Gaussian model and rounding}
\label{subsec:intro-model}

For the remainder of the paper, we replace the bounded weights in the
motivating model by independent Gaussian weights with gain parameter $A>0$:
\begin{equation}\label{eq:intro-gaussian-weights}
  (\mathsf W_{t,A}^{(N)})_{ij}
  \sim
  \mathcal N(0,A^2/N),
  \qquad 1\leq i,j\leq N,\quad t\geq1.
\end{equation}
For $x\in[-1,1]^N$, we write $X^{(N)}$ for the resulting unrounded chain:
\begin{equation}\label{eq:intro-gaussian-chain}
  X_0^{(N)}=x,
  \qquad
  X_{t+1}^{(N)}
  =
  \tanh\left(\mathsf W_{t+1,A}^{(N)}X_t^{(N)}\right).
\end{equation}

For a mesh size $\varrho>0$, let $Q_\varrho$ denote coordinatewise
nearest-grid rounding to $\varrho\Z^N$, with ties rounded toward the grid
point of smaller absolute value. The rounded chain is
\begin{equation}\label{eq:intro-rounded-chain}
  Y_0^{(N)}=Q_\varrho x,
  \qquad
  Y_{t+1}^{(N)}
  =
  Q_\varrho\left(
    \tanh\left(\mathsf W_{t+1,A}^{(N)}Y_t^{(N)}\right)
  \right),
\end{equation}
and we write $P_{\varrho,A,N}$ for its transition kernel. Since the point $0$ is
absorbing, define the absorption time as
\begin{equation}\label{eq:intro-absorption-time}
  \tau_\varrho^{(N)}
  :=
  \inf\{t\geq0:Y_t^{(N)}=0\},
\end{equation}
then the total variation distance to the absorbing equilibrium is exactly
\begin{equation}\label{eq:intro-tv-absorption}
  \norm{
    P_{\varrho,A,N}^t(Q_\varrho x,\cdot)-\delta_0
  }_{\TV}
  =
  \Pp(\tau_\varrho^{(N)}>t).
\end{equation}
Thus finite-precision cutoff to $\delta_0$ is an absorption-time question.

Set
\begin{equation*}
  Z_t^{(N)}:=N^{-1}\norm{X_t^{(N)}}_2^2
\end{equation*}
and suppose that $Z_t^{(N)}=q$. The preactivation coordinates at the next
step are independent centered Gaussian variables with variance $A^2q$.
Hence $Z^{(N)}$ is a Markov chain, and, for independent standard normal
variables $G_1,\ldots,G_N$, its transition is
\begin{equation}\label{eq:intro-radius-chain}
  Z_{t+1}^{(N)}
  \stackrel{d}{=}
  \frac1N\sum_{i=1}^N
  \tanh^2\left(A\sqrt{Z_t^{(N)}}\,G_i\right).
\end{equation}
We denote the transition kernel of $Z^{(N)}$ by $K_{A,N}$.
The updates of $X^{(N)}$ and $Y^{(N)}$ are examples of iterated random
functions. For invariant laws and convergence under average contraction, see
Diaconis and Freedman \cite{diaconis-freedman1999}. The corresponding
scalar reductions and further notation are given in \cref{sec:setup}.

\smallskip

We first recall the threshold for the unrounded chain as $N\to\infty$, and
then define the three thresholds used for the rounded chain. Let $G$ be a
standard normal variable and define
\begin{equation}\label{eq:intro-mean-field-map}
  V_A(q):=\Ee\tanh^2(A\sqrt q\,G).
\end{equation}
Since $\tanh u=u+O(u^3)$ near zero, we have
$V_A(q)=A^2q+O(q^2)$ for $q>0$ small. Thus the threshold is $A=1$ as
$N\to\infty$. Strict concavity of $V_A$ gives a unique nonzero attracting
fixed point $q_\ast$ when $A>1$.

At fixed $N$, the transition from $q$ satisfies
\begin{equation*}
  \sqrt{\frac{Z_{t+1}^{(N)}}q}
  \Longrightarrow
  \frac A{\sqrt N}\chi_N,
  \qquad
  q\downarrow0,
\end{equation*}
where $\chi_N$ is the Euclidean norm of a standard Gaussian vector in
$\R^N$. Set
\begin{equation*}
  \ell_{A,N}:=\frac A{\sqrt N}\chi_N,
  \qquad
  \gamma_{A,N}:=-\Ee\log\ell_{A,N}.
\end{equation*}
The equation $\gamma_{A,N}=0$ has the unique solution
\begin{equation}\label{eq:intro-fixed-width-threshold}
  A_c(N)
  =
  \sqrt{\frac N2}\,
  \exp\left\{-\frac12\psi\left(\frac N2\right)\right\},
\end{equation}
where $\psi$ is the digamma function. Thus
$\gamma_{A,N}=\log(A_c(N)/A)$, and $A<A_c(N)$ is equivalent to
$\Ee\log\ell_{A,N}<0$. Moreover, $A_c(N)>1$ for finite $N$ and
$A_c(N)\downarrow1$ as $N\to\infty$.

For the rounded chain, measure the normalized squared radius in units of
$\varrho^2$, so that $h=q/\varrho^2$. As $N\to\infty$ at fixed $\varrho$,
the mean map for $h$ is
\begin{equation}\label{eq:intro-rounded-map}
  V_{A,\varrho}(h)
  :=
  \Ee\left[
    Q_1\left(
      \varrho^{-1}\tanh(\varrho A\sqrt h\,G)
    \right)^2
  \right].
\end{equation}
Here $Q_1$ is nearest-integer rounding, with ties rounded toward the integer
of smaller absolute value. Since $\abs{\tanh z}\leq\abs z$, we have
$V_{A,\varrho}(h)\leq L_A(h)$, where
\begin{equation*}
  L_A(h):=\Ee[Q_1(A\sqrt h\,G)^2].
\end{equation*}
Define
\begin{equation}\label{eq:intro-lattice-threshold}
  A_{\mathrm{lat}}^2
  :=
  \inf_{\alpha>0}
  \frac{\alpha^2}{\Ee[Q_1(\alpha G)^2]} .
\end{equation}
The condition $A<A_{\mathrm{lat}}$ is equivalent to
$L_A(h)<h$ for every $h>0$. In this case, both $L_A$ and
$V_{A,\varrho}$ contract globally toward zero. Set
\begin{equation}\label{eq:intro-metastable-threshold}
  A_{\mathrm{ex}}(\varrho)^2
  :=
  \inf_{\alpha>0}
  \frac{\alpha^2}
  {\Ee\left[
    Q_1\left(\varrho^{-1}\tanh(\varrho\alpha G)\right)^2
  \right]} .
\end{equation}
Then $A>A_{\mathrm{ex}}(\varrho)$ is equivalent to
$V_{A,\varrho}(h)>h$ for some $h>0$. In this case, the rightmost component of
$\{h:V_{A,\varrho}(h)>h\}$ produces metastability.

The three thresholds enter the rounded results as follows. At fixed $N$,
$A<A_c(N)$ gives the cutoff as $\varrho\downarrow0$. At fixed $\varrho$ and
as $N\to\infty$, $A<A_{\mathrm{lat}}$ gives the absorption cutoff, while
$A>A_{\mathrm{ex}}(\varrho)$ gives metastability.

For both absorption cutoffs, the relevant distance is
\cref{eq:intro-tv-absorption}.
In the supercritical large-dimension problem, the origin is repelling and
$X^{(N)}$ has a nonzero invariant law. The relevant distance is
then total variation distance to this law.
Here and below, nonzero means that the invariant probability law assigns zero
mass to the absorbing state. Mixtures with $\delta_0$ are excluded.

\subsection{Cutoff convention}
\label{subsec:intro-cutoff-convention}

We use the following convention throughout.

\begin{definition}[Total variation cutoff]\label{def:intro-cutoff}
  Let $\mathcal I\subset\R$, and let $r_\ast\in[-\infty,\infty]$ belong to the
  closure of $\mathcal I$ in the extended real line. For each $r\in\mathcal I$, let $P_r$ be a
  Markov kernel on a measurable space $E_r$, let $\pi_r$ be an invariant
  probability measure, and let $x_r\in E_r$. Set
  \begin{equation}\label{eq:intro-tv-distance}
    d_r(t)
    :=
    \norm{P_r^t(x_r,\cdot)-\pi_r}_{\TV},
    \qquad
    t\in\N_0 .
  \end{equation}
  For $u\in\R$, write
  $\lfloor u\rfloor_+:=\max\{0,\lfloor u\rfloor\}$.
  Let $t_r\to\infty$ and $a_r=o(t_r)$ with $a_r>0$. We say that
  $(P_r,x_r,\pi_r)_{r\in\mathcal I}$ has total variation cutoff at time $t_r$
  with window $a_r$ if
  \begin{equation}\label{eq:intro-cutoff-convention}
    \lim_{c\to\infty}\liminf_{r\to r_\ast}
    d_r\left(\lfloor t_r-ca_r\rfloor_+\right)
    =
    1,
    \qquad
    \lim_{c\to\infty}\limsup_{r\to r_\ast}
    d_r\left(\lfloor t_r+ca_r\rfloor_+\right)
    =
    0.
  \end{equation}
  A function $F:\R\to[0,1]$ is a cutoff profile on the scale $a_r$ if, for
  every $c\in\R$,
  \begin{equation}\label{eq:intro-cutoff-profile}
    d_r\left(\lfloor t_r+ca_r\rfloor_+\right)
    \to
    F(c),
    \qquad
    r\to r_\ast,
  \end{equation}
  where $F(c)\to1$ as $c\to-\infty$ and $F(c)\to0$ as $c\to\infty$.

  When no window is specified, cutoff at time $t_r$ means that, for every
  fixed $\delta\in(0,1)$,
  \begin{equation}\label{eq:intro-cutoff-window-free}
    d_r\left(\lfloor(1-\delta)t_r\rfloor_+\right)\to1,
    \qquad
    d_r\left(\lfloor(1+\delta)t_r\rfloor_+\right)\to0,
    \qquad
    r\to r_\ast.
  \end{equation}
  Equivalently, there exists $b_r=o(t_r)$ with $b_r>0$ such that the family
  has cutoff at time $t_r$ with window $b_r$.
\end{definition}

\begin{definition}[Mixing time]\label{def:intro-mixing-time}
  For $\varepsilon\in(0,1)$, define
  \begin{equation}\label{eq:intro-mixing-time}
    t_{\mathrm{mix}}^{(r)}(\varepsilon)
    :=
    \inf\{t\in\N_0:\ d_r(t)\leq\varepsilon\},
  \end{equation}
  where the infimum is $\infty$ if the set is empty.
\end{definition}

\begin{remark}
  Since $d_r$ is nonincreasing, cutoff with window $a_r$ implies, for every
  fixed $\varepsilon\in(0,1)$,
  \begin{equation*}
    t_{\mathrm{mix}}^{(r)}(\varepsilon)
    =
    t_r+O_\varepsilon(a_r).
  \end{equation*}
  Thus window-free cutoff gives
  $t_{\mathrm{mix}}^{(r)}(\varepsilon)=t_r+o(t_r)$. If $a_r\to\infty$, the
  family has profile $F$, and $F$ is continuous and strictly decreasing
  through the level $\varepsilon$, then
  \begin{equation*}
    t_{\mathrm{mix}}^{(r)}(\varepsilon)
    =
    t_r+c_\varepsilon a_r+o(a_r),
    \qquad
    F(c_\varepsilon)=\varepsilon.
  \end{equation*}
\end{remark}

\subsection{Main results}
\label{subsec:main-results}

We first
state the two absorption cutoffs, followed by metastability, the supercritical
cutoff, and the behavior of the invariant law near zero. We write
$\Phi_{\mathrm G}$ for the distribution function of a standard Gaussian
random variable.

For fixed $N\geq1$, $0<A<A_c(N)$, deterministic
$x_0\in\R^N\setminus\{0\}$, and $0<\varrho<R_0:=\norm{x_0}_2$, set
\begin{equation*}
  L_\varrho:=\log\frac{R_0}{\varrho},
  \qquad
  \sigma_N^2:=\Var(\log\chi_N)
  =\frac14\psi_1\left(\frac N2\right),
\end{equation*}
where $\psi_1=\psi'$ is the trigamma function. For $a\in\R$, define the
nonnegative integer
\begin{equation*}
  t_\varrho(a)
  :=
  \max\left\{0,
    \left\lfloor
      \frac{L_\varrho}{\gamma_{A,N}}
      +
      a\,\frac{\sigma_N}{\gamma_{A,N}^{3/2}}\sqrt{L_\varrho}
    \right\rfloor
  \right\} .
\end{equation*}

\begin{theorem}[Vanishing-mesh cutoff]
\label{thm:rounded-gaussian-nearest-cutoff}
  Assume $0<A<A_c(N)$. Then, for every fixed $a\in\R$,
  \begin{equation}\label{eq:rounded-gaussian-nearest-profile}
    \norm{
      P_{\varrho,A,N}^{t_\varrho(a)}(Q_\varrho x_0,\cdot)-\delta_0
    }_{\TV}
    =
    \Pp(\tau_\varrho^{(N)}>t_\varrho(a))
    \to
    \Phi_{\mathrm G}(-a),
    \qquad
    \varrho\downarrow0.
  \end{equation}
  In particular, $Y^{(N)}$ has total variation cutoff to $\delta_0$
  at time $L_\varrho/\gamma_{A,N}$ with window
  $\sigma_N \gamma_{A,N}^{-3/2}\sqrt{L_\varrho}$ and Gaussian profile
  $a\mapsto\Phi_{\mathrm G}(-a)$.
\end{theorem}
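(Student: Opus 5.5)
The plan is to compare the rounded chain $Y^{(N)}$ with the multiplicative random walk obtained in the small-radius limit, $\log\norm{Y_t}_2\approx \log R_0+\sum_{s\le t}\log\ell^{(s)}_{A,N}$, with i.i.d.\ steps of mean $-\gamma_{A,N}<0$ and variance $\sigma_N^2$. Note that $\Var(\log\ell_{A,N})=\Var(\log\chi_N)=\sigma_N^2$. Absorption should occur essentially when this log-radius first drops below $\log\varrho+O(1)$, i.e.\ when the walk has descended a distance $L_\varrho+O(1)$. For a random walk with negative drift $-\gamma$ and step variance $\sigma^2$, the first passage time $T_L$ below level $-L$ satisfies the renewal CLT $(T_L-L/\gamma)/(\sigma\gamma^{-3/2}\sqrt L)\Rightarrow \mathcal N(0,1)$. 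This gives $\Pp(T_{L_\varrho}>t_\varrho(a))\to\Phi_{\mathrm G}(-a)$, and an $O(1)$ shift of the level is negligible on the $\sqrt{L_\varrho}$ scale. The TV identity is \cref{eq:intro-tv-absorption}, so only the limit needs proof.

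The first step is the coupling. Write one step exactly: if $Y_t=y$ with $r=\norm y_2$, then $\mathsf W y\stackrel d= A r N^{-1/2}G$ with $G$ standard Gaussian in $\R^N$. Hence $\norm{\tanh(\mathsf Wy)}_2=\ell\, r\,(1+O(\ell^2r^2))$ with $\ell=AN^{-1/2}\norm G_2$. Coordinatewise rounding changes the norm additively by at most $\sqrt N\varrho/2$. Thus
\[
\log\norm{Y_{t+1}}_2=\log\norm{Y_t}_2+\log\ell_{t+1}+E_{t+1},
\]
where $\abs{E_{t+1}}\lesssim \ell_{t+1}^2\norm{Y_t}_2^2+\sqrt N\varrho/(\ell_{t+1}\norm{Y_t}_2)$ as long as the last term is small. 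One also needs the initial rounding $\norm{Q_\varrho x_0}_2=R_0+O(\varrho)$. The error from the tanh correction is summable, because the radius decays geometrically. More precisely, it is large only while the radius is of order one, and with high probability it stays at most $R_0e^{O(1)}$ times a maximum of the walk, which has a finite supremum since the drift is negative. The rounding error matters only in the final $O(1)$ window of log-levels above $\log\varrho$. Far above that level, each step contributes $O(\varrho/\norm{Y_t})$, which sums geometrically along a walk that decreases linearly; I would control this with a ladder-height or overshoot bound for the walk.

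The second step gives two-sided bounds on $\tau_\varrho$. For the upper bound, once $\norm{Y_t}_2\le c\varrho$ with $c<1/2$, every coordinate of the next preactivation is small with positive probability, and then all coordinates round to $0$. Because $\abs{\tanh}\le\abs{\cdot}$, the event $\abs{(\mathsf Wy)_i}<\varrho/2$ for all $i$ has probability bounded below uniformly once $\norm y\le M\varrho$, for any fixed $M$. Consequently, after the walk reaches level $\log(M\varrho)$, absorption takes only a geometric number of further steps, which is $O_P(1)$. Conversely, absorption requires $Y_{t+1}=0$, which forces $\norm{\tanh(\mathsf WY_t)}_\infty\le\varrho/2$. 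While $\norm{Y_t}\ge M\varrho$, this has probability at most $\epsilon(M)$, where $\epsilon(M)\to0$ as $M\to\infty$: for $N=1$ by the small-ball bound on a Gaussian, and in general by the same bound applied to any single coordinate. Summing over the $O_P(1)$ expected number of visits to each log-level in the region $[\log(M\varrho),\log(M'\varrho)]$ rules out premature absorption from high levels, since above $M'\varrho$ the probability is tiny. The required local estimate is that a transient walk visits each unit interval only $O(1)$ times in expectation, so these local probabilities sum to $O(\epsilon(M))$ over the last stretch. Still above $M'\varrho$, the per-step probability is exponentially or polynomially small in the radius ratio, so its sum is negligible.

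Combining these, $\tau_\varrho=T_{L_\varrho+O_P(1)}+O_P(1)$, where $T$ is the first passage of the comparison walk, and the renewal CLT gives the Gaussian profile. I expect the main obstacle to be the coupling in the intermediate regime, where $\norm{Y_t}$ is a bounded multiple of $\varrho$. There the rounding perturbation is genuinely not small, so the step law is not close to $\ell_{A,N}$. The argument must show that only $O_P(1)$ time is spent there and that the additive drift errors accumulated above that region sum to $O_P(1)$ in the log-scale. I would handle this with a stopping time at the first entry below $M\varrho$, a geometric-sum bound on the accumulated errors before it, and uniform-in-$\varrho$ tightness of the time spent afterward.
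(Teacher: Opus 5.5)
\paragraph{Comparison with the paper.} Your route is viable, but it differs from the paper's. You treat $\log\norm{Y_t}_2$ directly as a perturbed random walk. Both the $\tanh$ correction and the relative rounding error, of size about $\varrho/\norm{Y_t}_2$, go into the error term. You must then show, by a bootstrap, that the accumulated error is $O_P(1)$ before entrance into $B(M\varrho)$: while the error stays bounded, the perturbed walk's visits to each log-level are controlled by the clean walk's $O(1)$ expected visits.

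The paper never takes logarithms of the rounded radius.
\begin{itemize}
\item It proves the perturbed-walk representation only for the unrounded chain $X^{(N)}$. There the error is monotone and almost surely finite.
\item It couples $Y^{(N)}$ to $X^{(N)}$ synchronously in Euclidean norm. Normalizing by the direction of $Y_t-X_t$ gives $d_{t+1}\leq M_{t+1}d_t+c_N\varrho$ with $M_j$ i.i.d.\ copies of $\ell_{A,N}$. Hence $\max_{t\leq C\log(1/\varrho)}\norm{Y_t-X_t}_2\leq\varrho(\log(1/\varrho))^p$ with high probability.
\item Absorption then forces $X$ below $\varrho(\log(1/\varrho))^p$. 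Conversely, $X$ below $\varrho(\log(1/\varrho))^{p+3}$ forces absorption within $O(\log\log(1/\varrho))$ further steps.
\item Both polylogarithmic shifts are $o(\sqrt{L_\varrho})$ on the log scale.
\end{itemize}
The paper's coupling is cruder, but it avoids the intermediate-scale bookkeeping you identify as the main obstacle. Your approach, if carried out, gives the sharper statement that $\tau_\varrho$ is a clean first passage at level $L_\varrho+O_P(1)$, plus $O_P(1)$ time.

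\paragraph{What needs repair.}
\begin{itemize}
\item The claim that ``absorption takes only a geometric number of further steps'' does not follow from a uniform one-step absorption probability on $B(M\varrho)$. After a failed attempt the chain can leave the ball, since the next radius is up to about $\ell M\varrho+\sqrt N\varrho/2$ and $\ell$ is unbounded. You therefore also need uniform-in-$\varrho$ exponential tails for the return time to the ball. The paper dominates $\norm{Y_t}_2/\varrho$ by the affine recursion $U_{t+1}=M_{t+1}U_t+c_N$ with $\Ee\log M_1<0$ (\cref{lem:negative-drift-affine-entrance}). It then runs a trials-and-returns argument with exponential moments (\cref{lem:rounded-gaussian-final-absorption}). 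The ball must be at least as large as the $K_\ast$ that this recursion produces.
\item When you sum the rounding errors in expectation, bound each by $C\varrho/\norm{Y_{t+1}}_2$ rather than by $\varrho/(\ell_{t+1}\norm{Y_t}_2)$. The former is legitimate before entrance, because $\norm{Y_{t+1}}_2\geq M\varrho$ forces the pre-rounding radius to be at least $(M-\sqrt N/2)\varrho$. The latter fails in expectation for $N=1$, since $\Ee\,\ell_{A,1}^{-1}=\infty$.
\item Your premature-absorption estimate is unnecessary. Since $Y_\tau=0$ lies in the ball, $\tau$ is at least the entrance time into $B(M\varrho)$ deterministically. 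At the entrance step the pre-rounding radius is already below $(M+\sqrt N/2)\varrho$, so the lower bound reduces to the same error control.
\end{itemize}
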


For $A>0$, $\varrho\in(0,1)$, and deterministic
$x_N\in[-1,1]^N$, define
\begin{equation*}
  \bar h_{0,N}:=\frac1{N\varrho^2}\norm{Q_\varrho x_N}_2^2,
  \qquad
  \bar h_{t+1,N}:=V_{A,\varrho}(\bar h_{t,N}),
\end{equation*}
and
\begin{equation*}
  \bar t_N
  :=
  \inf\left\{t\geq0:\bar h_{t,N}\leq\frac1{\log N}\right\}.
\end{equation*}

\begin{theorem}[Subcritical dimension cutoff]
\label{thm:subcritical-dimension-cutoff:intro}
  Assume $A<A_{\mathrm{lat}}$ and that the rounded initial radii are
  macroscopic:
  \begin{equation*}
    \liminf_{N\to\infty}\bar h_{0,N}>0 .
  \end{equation*}
  Then $\bar t_N\to\infty$ and
  \begin{equation*}
    \norm{
      P_{\varrho,A,N}^{\bar t_N-1}
      (Q_\varrho x_N,\cdot)-\delta_0
    }_{\TV}
    \to1,
    \qquad
    \norm{
      P_{\varrho,A,N}^{\bar t_N+2}
      (Q_\varrho x_N,\cdot)-\delta_0
    }_{\TV}
    \to0 .
  \end{equation*}
  Thus $Y^{(N)}$ has total variation cutoff to $\delta_0$
  at the exact deterministic center $\bar t_N$, with a bounded
  window.
\end{theorem}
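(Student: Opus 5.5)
The plan is to reduce everything to the scalar chain
$H_t:=\norm{Y_t^{(N)}}_2^2/(N\varrho^2)$. Given $Y_t^{(N)}$, the preactivations $(\mathsf W_{t+1,A}^{(N)}Y_t^{(N)})_i$ are i.i.d.\ $\mathcal N(0,A^2\varrho^2H_t)$. Hence $H$ is a Markov chain with
\begin{equation*}
  H_{t+1}\stackrel{d}{=}\frac1N\sum_{i=1}^N \xi_i,
  \qquad
  \xi_i:=Q_1\bigl(\varrho^{-1}\tanh(\varrho A\sqrt{H_t}\,G_i)\bigr)^2 .
\end{equation*}
Its conditional mean is $V_{A,\varrho}(H_t)$. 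Since $0\le\xi_i\le M:=(\varrho^{-1}+1)^2$, its conditional variance is at most $M\,V_{A,\varrho}(H_t)/N$. Moreover $\{\tau_\varrho^{(N)}\le t\}=\{H_t=0\}$, so by \cref{eq:intro-tv-absorption} both claims are statements about $\Pp(H_t>0)$.

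\textbf{Deterministic step.} I would first record two facts about $V=V_{A,\varrho}$: it is continuous on $[0,M]$, and $0<V(h)\le L_A(h)<h$ for $h>0$. A nonzero rounded value requires $|G|\ge 1/(2A\sqrt h)$, which gives the two-sided tail bounds
\begin{equation*}
  e^{-C/h}\le V(h)\le p(h):=\Pp\Bigl(|G|\ge\tfrac1{2A\sqrt h}\Bigr)\le e^{-1/(8A^2h)}
\end{equation*}
for small $h$. The upper bound uses $\xi_i\le M\,\mathbf 1\{\xi_i\neq0\}$; the constant is absorbed.

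Continuity and compactness give a bounded number of steps to go from $[\liminf\bar h_{0,N}/2,\,M]$ down to any fixed level $\eta$. From $\eta$ down to $1/\log N$, the lower bound $V(h)\ge e^{-C/h}$ shows the number of steps grows, roughly like an iterated logarithm. This gives $\bar t_N\to\infty$ but only very slowly, and in particular $\bar t_N=O(\log N)$.

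\textbf{Tracking step (main obstacle).} Next I would show that, with probability tending to one,
\begin{equation*}
  |H_t-\bar h_{t,N}|\le \varepsilon_N:=N^{-1/3}
  \qquad\text{for all } t\le\bar t_N .
\end{equation*}
The hard point is that the number of steps grows while $V$ is only uniformly continuous, so errors propagate through $\bar t_N$ compositions. I would handle this by splitting the orbit into two parts.

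On the initial bounded-length stretch down to level $\eta$, uniform continuity of $V$ on $[0,M]$ together with Chebyshev at each step suffices.

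On the small-$h$ stretch, I would use $V(h)\le h$ and the Lipschitz property of $V$ on $[0,\eta]$, with constant $\sup V'<1$ near $0$ because $V'$ is tiny there. Errors then do not grow, and a union bound over $\bar t_N=O(\log N)$ steps of Chebyshev deviations of order $N^{-1/2}\log N$ still closes.

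Since $\bar h_{\bar t_N-1,N}>1/\log N\gg\varepsilon_N$, this gives $H_{\bar t_N-1}>0$ with high probability. That proves the first limit.

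\textbf{Absorption step.} On the good event, $H_{\bar t_N}\le 2/\log N$. Each coordinate at time $\bar t_N+1$ is then nonzero with probability at most $p(2/\log N)\le N^{-\kappa}$, where $\kappa=1/(16A^2)>0$. By Markov's inequality the number of nonzero coordinates is at most $N^{1-\kappa/2}$ with high probability, so
\begin{equation*}
  H_{\bar t_N+1}\le M N^{-\kappa/2}.
\end{equation*}
One step later, the expected number of nonzero coordinates is at most
\begin{equation*}
  N\,p(MN^{-\kappa/2})\le N\exp\bigl(-N^{\kappa/2}/(8A^2M)\bigr)\to0 .
\end{equation*}
Hence $\Pp(H_{\bar t_N+2}>0)\to0$, which is the second limit.

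The extra step in the window $\bar t_N+2$ is needed because $N^{1-\kappa}$ need not vanish when $A$ is not small. The only delicate estimate in the whole argument is the uniform tracking over the slowly growing horizon $\bar t_N$.
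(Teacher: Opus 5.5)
Your proof is correct, but it takes a different route from the paper in both the tracking step and the terminal step.

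\textbf{The paper's route.} The paper tracks the \emph{relative} error $(\mathscr H_t^{(N)}-\bar h_{t,N})/(\bar h_{t,N}+(\log N)^{-1})$. It writes this error as a normalized linear recursion, bounds the amplification factors along the orbit by $\exp\{C(\log\log N)^2\}$ (\cref{lem:subcritical-normalized-lattice-amplification}), and applies the stopped second-moment estimate of \cref{lem:common-stopped-orbit-tracking} up to time $\bar t_N+1$. It then uses the deterministic decay $\bar h_{\bar t_N+1,N}=V_{A,\varrho}(\bar h_{\bar t_N,N})\le CN^{-c}$ to get $\mathscr H_{\bar t_N+1}^{(N)}\log N\to0$ in probability. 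It finishes with a single union bound on the Gaussian maximum against the exact zero-rounding threshold $b_\varrho$.

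\textbf{Your route.} You track the \emph{absolute} error at scale $N^{-1/3}$. You exploit that $V_{A,\varrho}'\to0$ at the origin, so that near zero errors contract rather than being amplified, and you stop tracking at $\bar t_N$. The two remaining steps are handled stochastically: a Markov bound on the number of nonzero coordinates at time $\bar t_N+1$ gives $H_{\bar t_N+1}\le MN^{-\kappa/2}$. A first-moment count at time $\bar t_N+2$ then gives absorption. This uses only the crude criterion $\abs{\tanh z}\le\abs z$.

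\textbf{Comparison.} Your argument is shorter and bypasses the amplification lemma entirely. The paper's argument fits the general stopped-$L^2$ fluid-limit framework, which it reuses in the supercritical section. Its use of the exact threshold $b_\varrho$ is also what produces the refinement in \cref{rem:subcritical-bounded-window-mechanism}, where the upper time improves to $\bar t_N+1$.

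\textbf{Two small fixes.}
\begin{itemize}
\item On the initial bounded stretch, uniform continuity of $V_{A,\varrho}$ alone does not deliver the quantitative $N^{-1/3}$ error you need where the two stretches meet. A general modulus of continuity could degrade an $N^{-1/2}$ noise to something of order $1/\log N$. Use instead that $V_{A,\varrho}$ is Lipschitz on $[0,M]$. This follows from the finite layer formula \cref{eq:subcritical-layer-formulas-exact}.
\item The claim that $\sup V_{A,\varrho}'<1$ on a neighborhood $[0,2\eta]$ of the origin needs a derivative bound such as \cref{eq:subcritical-fixed-small-radius-map-derivative}, not just the two-sided tail bounds on $V_{A,\varrho}$ itself.
\end{itemize}
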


\begin{remark}
\label{rem:subcritical-macroscopic-intro}
  The macroscopic assumption in
  \cref{thm:subcritical-dimension-cutoff:intro} is sufficient but not
  necessary. Continuity and positivity of $V_{A,\varrho}$ imply that it gives
  $\bar t_N\to\infty$. On the other hand, $A<A_{\mathrm{lat}}$ implies
  $\bar h_{t,N}\leq C_\varrho\theta_A^t$ for some $\theta_A\in(0,1)$, and
  hence $\bar t_N=O(\log\log N)$. In place of the macroscopic assumption,
  \cref{thm:subcritical-dimension-cutoff} requires only
  $\bar t_N\to\infty$ and therefore also permits initial radii that approach
  zero.
\end{remark}

For $\varrho\in(0,1)$, set
\begin{equation*}
  M_\varrho
  :=
  \max_{\abs u\leq\varrho^{-1}} Q_1(u)^2,
  \qquad
  \mathcal O_{A,\varrho}
  :=
  \{h\in(0,M_\varrho):V_{A,\varrho}(h)>h\},
\end{equation*}
and call its connected components positive-drift components.
Write
\begin{equation*}
  \mathscr H_t^{(N)}
  :=
  \frac1{N\varrho^2}\norm{Y_t^{(N)}}_2^2.
\end{equation*}

\begin{theorem}[Metastability]
\label{thm:rounded-qualitative-metastability:intro}
  Assume $A>A_{\mathrm{ex}}(\varrho)$, and let $(h_-,h_+)$ be the
  rightmost positive-drift component of $\mathcal O_{A,\varrho}$. Fix a
  compact set $B\subset(h_-,h_+]$ and deterministic initial states
  $Y_0^{(N)}$ satisfying $\mathscr H_0^{(N)}\in B$ for every $N\geq1$.
  Then there are
  $T\in\N_0$ and $c>0$ such that
  \begin{equation*}
    \Pp\left(
      \tau_\varrho^{(N)}
      >
      T+\lfloor\exp(cN)\rfloor
    \right)
    \to1 .
  \end{equation*}
  For each fixed $N$, however, absorption still occurs almost surely for every
  deterministic choice of $Y_0^{(N)}$.
\end{theorem}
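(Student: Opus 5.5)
The plan is to work with the scalar chain $\mathscr H_t^{(N)}$, which is Markov by the Gaussian reduction. Conditional on $\mathscr H_t^{(N)}=h$, the value $\mathscr H_{t+1}^{(N)}$ is an average of $N$ i.i.d.\ bounded variables $Q_1(\varrho^{-1}\tanh(\varrho A\sqrt h\,G_i))^2\in[0,M_\varrho]$ with mean $V_{A,\varrho}(h)$. Note that $V_{A,\varrho}$ is continuous in $h$: the only discontinuities of the integrand lie on a null set of $G$. Since $(h_-,h_+)$ is the rightmost component of $\mathcal O_{A,\varrho}$, we have $V_{A,\varrho}(h_+)=h_+$ when $h_+<M_\varrho$. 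The case $h_+=M_\varrho$ would be handled by the trivial bound $V_{A,\varrho}\le M_\varrho$.

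\textbf{Trapping interval.} Pick $h_-<a<\min B$ and set $I=[a,h_+]$. By continuity and compactness, $\delta:=\min_{h\in[a,b]}(V_{A,\varrho}(h)-h)>0$ for any $b<h_+$ close to $h_+$. Near $h_+$ I would instead use the fact that $V_{A,\varrho}$ maps $[a,h_+]$ into $(a,\cdot)$. The cleaner approach is to take the enlarged interval $J=[a,h_+ +\eta]$ and use $\max_{J}V_{A,\varrho}<h_++\eta$ for small $\eta$. This needs $V_{A,\varrho}(h)<h$ just to the right of $h_+$ (from rightmostness), together with $V_{A,\varrho}\le h_++\eta/2$ on $J$, which holds by continuity at $h_+$. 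Hence there is $\delta'>0$ with $V_{A,\varrho}(J)\subset[a+\delta',h_++\eta-\delta']$. Hoeffding's inequality, uniformly over $h\in J$, then gives
\begin{equation*}
  \sup_{h\in J}\Pp\bigl(\mathscr H_{t+1}^{(N)}\notin J\,\big|\,\mathscr H_t^{(N)}=h\bigr)\le 2e^{-2N\delta'^2/M_\varrho^2}.
\end{equation*}
Since $B\subset J$, we can take $T=0$. A union bound over $\lfloor e^{cN}\rfloor$ steps with $c<2\delta'^2/M_\varrho^2$ shows that the chain stays in $J$, and hence away from $0$, with probability tending to $1$. Absorption requires $\mathscr H=0\notin J$. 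The main obstacle is the boundary point $h_+$, where the drift vanishes. The enlargement to $J$ handles it, provided that $h_+<M_\varrho$ and $V_{A,\varrho}<h$ on $(h_+,h_++\eta]$; this should follow from the definition of the rightmost component together with continuity, possibly after choosing $\eta$ small.

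\textbf{Almost sure absorption at fixed $N$.} The state space $\varrho\Z^N\cap[-1,1]^N$ is finite, and $0$ is absorbing. From any state $y\neq0$, with positive probability all preactivations satisfy $|(\mathsf W y)_i|<\varrho/2$, because the Gaussian density is positive. On that event $\tanh$ keeps them below $\varrho/2$, so they round to $0$. Hence there is $p>0$, uniform over the finitely many states, such that absorption occurs in one step with probability at least $p$. Therefore $\Pp(\tau>t)\le(1-p)^t\to0$, and $\tau<\infty$ almost surely.
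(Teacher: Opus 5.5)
\textbf{Gap: the trap is not justified as written.} The construction of the trap $J=[a,h_++\eta]$ has two unjustified steps. Both are repaired by a fact you never use: $V_{A,\varrho}$ is nondecreasing. For fixed $G$, $\abs{\varrho^{-1}\tanh(\varrho A\sqrt h\,G)}$ increases with $h$, and $\abs{Q_1(u)}\le\abs{Q_1(v)}$ whenever $\abs u\le\abs v$, so the integrand is pointwise nondecreasing in $h$.

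\emph{Upper bound on $[a,h_+]$.} You claim $V_{A,\varrho}\le h_++\eta/2$ on all of $J$ ``by continuity at $h_+$''. This does not follow. Continuity at $h_+$ controls $V_{A,\varrho}$ only on a small neighborhood of $h_+$ and says nothing on $[a,h_+-\epsilon]$. Monotonicity gives $V_{A,\varrho}\le V_{A,\varrho}(h_+)=h_+$ there.

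\emph{Strict margin to the right of $h_+$.} Rightmostness gives only $V_{A,\varrho}(h)\le h$ to the right of $h_+$, not strict inequality. Continuity cannot exclude a whole interval of fixed points $[h_+,h_++\epsilon]$; in that case no small $\eta$ produces your upper margin $\delta'$. The paper obtains $V(h)<h$ on $(h_+,h_++\eta_0]$ from real-analyticity of $V_{A,\varrho}$. Its layer formula is a finite sum of terms $\bar\Phi(b_{\varrho,k}/(A\sqrt h))$, so the zeros of $V(h)-h$ are isolated.

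\textbf{Repair and comparison with the paper.} With monotonicity you can drop the upper side of the trap altogether, since absorption requires only a downward exit. For $a\in(h_-,\min B)$ and every $h\ge a$, monotonicity gives $V_{A,\varrho}(h)\ge V_{A,\varrho}(a)>a$. One-sided Hoeffding then bounds $\Pp(\mathscr H_{t+1}^{(N)}<a\mid\mathscr H_t^{(N)}=h)$ by $\exp(-2N(V_{A,\varrho}(a)-a)^2/M_\varrho^2)$. A union bound over $\lfloor e^{cN}\rfloor$ steps, with $c<2(V_{A,\varrho}(a)-a)^2/M_\varrho^2$, gives the survival claim with $T=0$, because $a>h_->0$.

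The paper argues differently. It first uses uniform convergence $V^t\to h_+$ on $B$ and finite-horizon Hoeffding tracking to bring $\mathscr H^{(N)}$ within $\eta/2$ of $h_+$ after $T_\eta$ steps. It then traps the chain in $[h_+-\eta,h_++\eta]$, which yields localization near $h_+$ for exponentially long times, not just survival. Your repaired route is shorter and needs neither analyticity nor the rightmost hypothesis, but it proves only survival.

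\textbf{Fixed-$N$ absorption.} Your argument matches the paper's. The only correction is that the reachable states lie in $\varrho\Z^N\cap[-1-\varrho/2,1+\varrho/2]^N$ rather than $[-1,1]^N$, since rounding can exceed $1$; this set is still finite.
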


The persistence of $Y^{(N)}$ describes its behavior before eventual
absorption. We next remove rounding and study relaxation as $N\to\infty$. In
the supercritical regime, $X^{(N)}$ approaches a nonzero invariant law, under
which $Z^{(N)}$ is concentrated near $q_\ast$.

Let $A>1$, and let $q_\ast$ and $\mu_A=V_A'(q_\ast)\in(0,1)$ be the nonzero
attracting fixed point and multiplier of $V_A$. For
$q_0\in(0,1]\setminus\{q_\ast\}$, let $C(q_0)\neq0$ be determined by
\begin{equation*}
  V_A^t(q_0)-q_\ast=C(q_0)\mu_A^t+o(\mu_A^t),
\end{equation*}
and set
\begin{equation}\label{eq:gaussian-cutoff-time}
  t_N(q_0)
  :=
  \frac{\frac12\log N+\log\abs{C(q_0)}}{\abs{\log\mu_A}}.
\end{equation}
For $c\in\R$, define the nonnegative integer
\begin{equation}\label{eq:gaussian-integer-cutoff-time}
  n_N(c):=\max\{0,\lfloor t_N(q_0)+c\rfloor\} .
\end{equation}

\begin{theorem}[Supercritical cutoff for $Z^{(N)}$]
\label{thm:gaussian-process-cutoff}
  Let $x_N\in[-1,1]^N$ be deterministic and suppose
  \begin{equation*}
    q_N:=N^{-1}\norm{x_N}_2^2
    \to
    q_0.
  \end{equation*}
  Then, for all sufficiently large $N$, $K_{A,N}$ has a
  unique nonzero invariant law $\nu_{A,N}$, and
  \begin{equation}\label{eq:gaussian-process-lower}
    \lim_{c\to\infty}\liminf_{N\to\infty}
    \norm{K_{A,N}^{n_N(-c)}(q_N,\cdot)-\nu_{A,N}}_{\TV}
    =1,
  \end{equation}
  while
  \begin{equation}\label{eq:gaussian-process-upper}
    \lim_{c\to\infty}\limsup_{N\to\infty}
    \norm{K_{A,N}^{n_N(c)}(q_N,\cdot)-\nu_{A,N}}_{\TV}
    =0.
  \end{equation}
\end{theorem}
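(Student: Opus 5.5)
We split the proof into three parts: existence and uniqueness of $\nu_{A,N}$ together with its concentration, a lower bound before $t_N$, and an upper bound after $t_N$.

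\textbf{Invariant law.} The chain $K_{A,N}$ lives on $[0,1]$, and $0$ is an absorbing point that is never reached from $q>0$. Write $Z'=N^{-1}\sum_i\tanh^2(A\sqrt q\,G_i)$ for the next state. For $q>0$ the law of $Z'$ has a density on $(0,\tanh^2)$-type sets, so the chain is $\psi$-irreducible and aperiodic on $(0,1]$.

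For $N$ large, $\gamma_{A,N}=\log(A_c(N)/A)<0$ because $A_c(N)\downarrow1<A$. Hence near the origin $\log Z$ has positive drift. This gives a Lyapunov function $W(q)=q^{-\eta}+1$ with $KW\le\lambda W+b\mathbf 1_{[\epsilon,1]}$: for small $\eta$, $\Ee\ell_{A,N}^{-2\eta}<1$, and the remainder is controlled by continuity. Since $[\epsilon,1]$ is petite, Harris theory gives a unique invariant law on $(0,1]$.

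Concentration comes from a variance estimate. We have $\Var(Z'\mid q)\le 1/N$, and $V_A$ contracts near $q_\ast$ with multiplier $\mu_A<1$. Iterating therefore shows that under $\nu_{A,N}$, $Z-q_\ast=O_P(N^{-1/2})$. More precisely, $\sqrt N(Z-q_\ast)$ converges to a centered Gaussian with variance $s^2=\sigma_\ast^2/(1-\mu_A^2)$, where $\sigma_\ast^2=\Var\tanh^2(A\sqrt{q_\ast}G)$. This is the linearized AR(1) limit, obtained by a standard Lindeberg/fluctuation argument around the attracting fixed point.

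\textbf{Lower bound.} Decompose $Z_t=V_A^t(q_N)+\text{noise}$. The noise accumulated up to time $t$ is $O_P(N^{-1/2})$ uniformly in $t$, again by contraction plus the variance bound. We also have $q_N\to q_0$, with the deterministic error propagating as $\mu_A^t\cdot o(1)$. At $t=n_N(-c)$,
\[
V_A^t(q_0)-q_\ast\approx C(q_0)\mu_A^{t}\approx \operatorname{sign}(C)\,\mu_A^{-c}N^{-1/2}.
\]
So $\sqrt N(Z_t-q_\ast)$ is centred near $\pm\mu_A^{-c}$ with tight fluctuations, while under $\nu_{A,N}$ it is tight around $0$. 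Choosing the test set $\{\sqrt N\,\operatorname{sign}(C)(z-q_\ast)>\mu_A^{-c}/2\}$ gives a TV distance tending to $1$ as $c\to\infty$.

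\textbf{Upper bound (main obstacle).} Closeness at scale $N^{-1/2}$ does not by itself imply total variation closeness, so this step is the main obstacle. We plan a coupling in two stages.

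\emph{Stage 1.} Take $t_1=n_N(c)-k$ with $k$ fixed. At this time $\sqrt N(Z_{t_1}-q_\ast)$ is within $\mu_A^{c-k}$ of the stationary fluctuation picture: the deterministic offset is small, and the noise has the same AR(1) law up to $o(1)$. Concretely, we show that both $\sqrt N(Z_{t_1}-q_\ast)$ and its stationary counterpart converge in law to Gaussians. Their means differ by $C\mu_A^{c-k}$, and their variances are $s^2(1-\mu_A^{2t_1})\to s^2$.

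\emph{Stage 2.} Run $k$ further steps from two nearby starting points $q,q'$ with $|q-q'|\le \delta N^{-1/2}$ in the bulk. We need a one-step local smoothing estimate:
\[
\|K(q,\cdot)-K(q',\cdot)\|_{\TV}\le C\sqrt N|q-q'|.
\]
This should hold by a local CLT for sums of i.i.d. bounded variables with absolutely continuous law. The law $K(q,\cdot)$ is close to $\mathcal N(V_A(q),\sigma^2(q)/N)$ in TV, the mean shift is $O(|q-q'|)$ on scale $N^{-1/2}$, and the variance changes only negligibly.

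With this estimate, the TV distance is bounded by the probability that the Stage-1 coupling fails to bring the two chains within $\delta N^{-1/2}$, plus $C\delta$. Stage 1 can be realized by a maximal coupling of the one-dimensional laws, which are close in TV by the Gaussian limit after one smoothing step. Letting $c\to\infty$ and then $\delta\to0$ gives the upper bound. The step I expect to be hardest is the uniform local CLT in TV for $K(q,\cdot)$, uniformly over $q$ near $q_\ast$. I would try Fourier inversion, using that $\tanh^2(A\sqrt q G)$ has a density with integrable characteristic-function power.
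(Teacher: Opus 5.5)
\textbf{The gap: localization of $\nu_{A,N}$ away from the origin.} Your lower-bound mechanism matches the paper's: a half-line test set at distance of order $\mu_A^{-c}N^{-1/2}$ from $q_\ast$, plus $N^{-1/2}$-concentration of both laws. The smoothing estimate you single out is exactly \cref{lem:gaussian-score-smoothing}. The genuine gap is the concentration of $\nu_{A,N}$ around $q_\ast$, on which both halves of your argument rest.

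You obtain it by iterating the variance bound against the contraction of $V_A$ near $q_\ast$. That contraction is only local. Near the origin $V_A$ is expanding ($V_A'(0+)=A^2>1$). For every $N$ the law $\nu_{A,N}$ has a strictly positive density down to $0$ (\cref{prop:gaussian-unique-nonzero-invariant}), and at fixed $N$ its mass near $0$ decays only polynomially (\cref{thm:nd-power-singularity:intro}). Under invariance the iteration gives only $\int(q-q_\ast)^2\,\nu_{A,N}(\mathrm dq)\le C\bigl(N^{-1}+\nu_{A,N}(I_\ast^c)\bigr)/(1-\kappa^2)$. So you first need an $N$-uniform bound on the stationary mass outside $I_\ast$, above all near the repelling origin.

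Your Lyapunov function $q^{-\eta}+1$ does not supply this. You use it only for existence at fixed $N$, with constants obtained ``by continuity''. The paper closes the gap with negative moments of order $\gamma N$. \Cref{lem:gaussian-truncated-cramer,prop:gaussian-near-zero-negative-moment,lem:gaussian-outside-negative-moment} give $K_{A,N}V_N\le e^{-\kappa N}V_N+e^{bN}$ for $V_N(q)=q^{-\gamma N}$, hence $\nu_{A,N}((0,r])\le Ce^{-cN}$. A fixed-time entrance from $[r,1]$ into $I_\ast$ then gives $\nu_{A,N}(I_\ast^c)\le Ce^{-cN}$ (\cref{prop:gaussian-stable-localization}), and the variance recursion closes (\cref{prop:gaussian-stationary-concentration}).

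A fixed exponent $\eta$ with drift constants proved uniform in $N$ would give only $\nu_{A,N}(I_\ast^c)=o(1)$. That can still be turned into tightness of $\sqrt N(Z-q_\ast)$, but only through an extra argument over roughly $\log N$ steps, not by the direct iteration you describe. Without some such estimate, neither your stationary CLT nor the smallness of the stationary mass of your lower-bound test set is justified.

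\textbf{The upper bound.} Stage 1 as written presupposes the conclusion. A maximal coupling makes the chains coincide only with probability $1-\norm{K_{A,N}^{t_1}(q_N,\cdot)-\nu_{A,N}}_{\TV}$, and smallness of that distance is the claim itself at a shifted time. There are two ways to repair it.
\begin{itemize}
\item If you derive that closeness from the local CLT at the step $t_1-1\to t_1$, Stage 2 is superfluous. Convolution with the approximately Gaussian one-step noise does turn weak closeness at scale $N^{-1/2}$ into total-variation closeness.
\item Otherwise, replace the maximal coupling by a quantile coupling. It needs only weak closeness and makes $\sqrt N\abs{Z_{t_1}-\widetilde Z_{t_1}}$ concentrate near the difference of the means. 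One smoothing step ($k=1$) then finishes.
\end{itemize}
Either repair still needs Gaussian limit theorems along the transient orbit and under $\nu_{A,N}$, plus a uniform local CLT.

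The paper avoids all three. Since $q\mapsto F_{A,N}(q,g)$ is nondecreasing, the synchronous coupling satisfies $\Ee\abs{F_{A,N}(q,G)-F_{A,N}(q',G)}=\abs{V_A(q)-V_A(q')}$. With shared noise, the $L^1$ distance to a stationary copy therefore evolves exactly by the mean map and contracts at rate $\mu_A+o(1)$ near $q_\ast$. Over a final block of $\ell_N\asymp\log\log N$ steps it drops from $O\bigl(N^{-1/2}(\mu_A^{c-\ell_N}+1)\bigr)$ to $N^{-1/2}\bigl(C\mu_A^{c}+o(1)\bigr)$ (\cref{lem:gaussian-synchronous-cutoff-scale}). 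The smoothing lemma itself is proved without any local CLT. It follows from the score identity $\partial_q\mu_q^{\otimes N}=S_{N,q}\,\mu_q^{\otimes N}$ together with $\Ee S_{N,q}^2=N/(2q^2)$.
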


\begin{corollary}[Supercritical cutoff for $X^{(N)}$]
\label{cor:gaussian-vector-cutoff}
  Under the assumptions of \cref{thm:gaussian-process-cutoff}, $X^{(N)}$ has
  a unique nonzero invariant law $\pi_{A,N}$ for all
  sufficiently large $N$, and
  \begin{equation}\label{eq:gaussian-vector-cutoff-lower}
    \lim_{c\to\infty}\liminf_{N\to\infty}
    \norm{P_{A,N}^{n_N(-c)}(x_N,\cdot)-\pi_{A,N}}_{\TV}
    =1,
  \end{equation}
  while
  \begin{equation}\label{eq:gaussian-vector-cutoff-upper}
    \lim_{c\to\infty}\limsup_{N\to\infty}
    \norm{P_{A,N}^{n_N(c)}(x_N,\cdot)-\pi_{A,N}}_{\TV}
    =0.
  \end{equation}
\end{corollary}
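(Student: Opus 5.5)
The plan is to transfer \cref{thm:gaussian-process-cutoff} from the radius chain $Z^{(N)}$ to the vector chain $X^{(N)}$ by exploiting the Gaussian rotation symmetry of the weights. The first step is structural. Conditional on $X_t^{(N)}=x$, the preactivation $\mathsf W_{t+1,A}^{(N)}x$ is $\mathcal N(0,A^2 N^{-1}\norm{x}_2^2 I_N)$. Hence the law of $X_{t+1}^{(N)}$ depends on $x$ only through $q=N^{-1}\norm{x}_2^2$:
\[
  P_{A,N}(x,\cdot)=\kappa(q,\cdot),
  \qquad
  \kappa(q,\cdot):=\mathrm{Law}\bigl(\tanh(A\sqrt q\,G)\bigr),
  \quad G\sim\mathcal N(0,I_N).
\]
Consequently, for $t\geq1$,
\[
  P_{A,N}^t(x_N,\cdot)=\int \kappa(q,\cdot)\,K_{A,N}^{t-1}(q_N,dq).
\]

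Next I construct $\pi_{A,N}$ as $\pi_{A,N}:=\int\kappa(q,\cdot)\,\nu_{A,N}(dq)$. Two checks are needed:
\begin{itemize}
  \item \emph{Invariance.} Pushing $\pi_{A,N}$ forward by the radius map $\Pi(x)=N^{-1}\norm{x}_2^2$ gives $\nu_{A,N}K_{A,N}=\nu_{A,N}$, by the definition \eqref{eq:intro-radius-chain} of $K_{A,N}$. Therefore $\pi_{A,N}P_{A,N}=\int\kappa(q,\cdot)\,(\nu_{A,N}K_{A,N})(dq)=\pi_{A,N}$.
  \item \emph{Nonzero.} $\pi_{A,N}(\{0\})=\nu_{A,N}(\{0\})=0$, since $\kappa(q,\cdot)$ charges $0$ only when $q=0$.
\end{itemize}

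For uniqueness, let $\pi$ be any nonzero invariant law. Then $\Pi_\ast\pi$ is invariant for $K_{A,N}$ and assigns no mass to $0$. By the uniqueness part of \cref{thm:gaussian-process-cutoff}, $\Pi_\ast\pi=\nu_{A,N}$. Invariance then gives
\[
  \pi=\pi P_{A,N}=\int\kappa(q,\cdot)\,(\Pi_\ast\pi)(dq)=\pi_{A,N}.
\]

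For the cutoff estimates, I sandwich the vector distance between two radius distances. On the lower side, total variation does not increase under the measurable map $\Pi$. Since $\Pi_\ast P_{A,N}^t(x_N,\cdot)=K_{A,N}^t(q_N,\cdot)$ and $\Pi_\ast\pi_{A,N}=\nu_{A,N}$,
\[
  \norm{P_{A,N}^t(x_N,\cdot)-\pi_{A,N}}_{\TV}\geq\norm{K_{A,N}^t(q_N,\cdot)-\nu_{A,N}}_{\TV}.
\]
Together with \eqref{eq:gaussian-process-lower} this gives \eqref{eq:gaussian-vector-cutoff-lower}.

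On the upper side, applying convexity to the mixture representation yields, for $t\geq1$,
\[
  \norm{P_{A,N}^t(x_N,\cdot)-\pi_{A,N}}_{\TV}\leq\norm{K_{A,N}^{t-1}(q_N,\cdot)-\nu_{A,N}}_{\TV}.
\]
The one-step shift is harmless: $n_N(c)-1\geq n_N(c-1)$, and by monotonicity of the distance in time it suffices to apply \eqref{eq:gaussian-process-upper} with $c-1$ and then let $c\to\infty$. Here $n_N(c)\geq1$ for large $N$, because $t_N(q_0)\to\infty$.

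The main point needing care is the uniqueness step. It relies on the radius-level uniqueness in \cref{thm:gaussian-process-cutoff} being among \emph{all} laws with no atom at $0$. If instead it were stated relative to some class, I would have to check that $\Pi_\ast\pi$ falls in that class. Everything else reduces to the exact lumpability identity $P_{A,N}(x,\cdot)=\kappa(\Pi x,\cdot)$.
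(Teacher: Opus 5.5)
Your proposal is correct and takes essentially the paper's route. You derive directly the kernel factorization through $J_{A,N}$, the total-variation bracket \cref{eq:gaussian-tv-bracket} (pushforward contraction below, mixture convexity above), the one-step index shift, and uniqueness by pushing an invariant law forward to the radius chain; the paper instead cites these from \cref{prop:gaussian-tv-reduction} and \cref{cor:gaussian-unique-vector-invariant}. Your concern about the uniqueness class is settled by the paper's convention that ``nonzero'' means assigning no mass to the absorbing state, so $\Pi_\ast\pi$ falls under the uniqueness statement.
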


By \cref{eq:gaussian-cutoff-time},
$\abs{C(q_0)}\mu_A^{t_N(q_0)}=N^{-1/2}$. Shifting $t_N(q_0)$ by $c$
multiplies the left-hand side by $\mu_A^c$, so the cutoff window is bounded.
Since $n_N(c)=\max\{0,\lfloor t_N(q_0)+c\rfloor\}$, it depends on the
fractional part of $t_N(q_0)+c$. Consequently, no nonconstant continuous
profile on $\R$, including a Gaussian profile, can arise on this scale. A
sharper profile would have to retain the floor in $n_N(c)$ and may require
subsequences along which the fractional part of $t_N(q_0)$ converges.

Finally, fix $N$ and consider the nonzero invariant law near the repelling
origin. Its mass there has a power-law asymptotic.

\begin{theorem}[Stationary singularity in the supercritical regime]
\label{thm:nd-power-singularity:intro}
  Fix $N\geq1$ and assume $A>A_c(N)$. Let $\pi_{A,N}$ be the nonzero invariant
  law of $X^{(N)}$. Let $\beta_{A,N}\in(0,N)$ be the unique
  positive solution of
  \begin{equation*}
    A^{-\beta}N^{\beta/2}2^{-\beta/2}
    \frac{\Gamma((N-\beta)/2)}{\Gamma(N/2)}
    =
    1.
  \end{equation*}
  Here $\bar\sigma_N$ denotes normalized surface measure on
  $\mathbb S^{N-1}$. Then there is a constant $c_{A,N}>0$ such that, for every
  Borel set
  $B\subseteq\mathbb S^{N-1}$ satisfying $\bar\sigma_N(\partial B)=0$,
  \begin{equation*}
    s^{-\beta_{A,N}}\pi_{A,N}\left(
      0<\norm{x}_2\leq s,\,
      \frac{x}{\norm{x}_2}\in B
    \right)
    \longrightarrow
    c_{A,N}\bar\sigma_N(B)
    \qquad \mbox{as } s\downarrow0 .
  \end{equation*}
  In particular,
  \begin{equation*}
    \pi_{A,N}\left(0<\norm{x}_2\leq s\right)
    \sim
    c_{A,N}s^{\beta_{A,N}}
    \qquad \mbox{as } s\downarrow0 .
  \end{equation*}
\end{theorem}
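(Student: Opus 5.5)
We treat the statement as a Kesten–Goldie type tail result for a random affine-like recursion near the origin, in the implicit renewal framework. Write $X_{t+1}=\tanh(W_{t+1}X_t)$ with $W_{t+1}=(A/\sqrt N)\,\mathsf G_{t+1}$, where $\mathsf G_{t+1}$ has standard Gaussian entries. Use polar coordinates $R_t=\norm{X_t}_2$ and $\Theta_t=X_t/R_t$. The linearized dynamics $x\mapsto Wx$ have the following structure: by rotation invariance, $\norm{W\theta}_2\stackrel{d}{=}\ell_{A,N}$ for every unit vector $\theta$, and the direction $W\theta/\norm{W\theta}_2$ is uniform on $\mathbb S^{N-1}$ and independent of the norm. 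So for the linear chain, $\log R_t$ is a random walk with i.i.d.\ increments $\xi\stackrel{d}{=}\log\ell_{A,N}$, and $\Theta_t$ is i.i.d.\ uniform for $t\geq1$, independent of the walk. The assumption $A>A_c(N)$ gives $\Ee\xi=-\gamma_{A,N}>0$, so the origin is repelling. A direct computation gives
\begin{equation*}
  \Ee e^{-\beta\xi}=A^{-\beta}N^{\beta/2}\Ee\chi_N^{-\beta}=A^{-\beta}N^{\beta/2}2^{-\beta/2}\frac{\Gamma((N-\beta)/2)}{\Gamma(N/2)}.
\end{equation*}
This is exactly the function in the statement. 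It equals $1$ at $\beta=0$, has negative derivative there, is log-convex, and tends to $\infty$ as $\beta\uparrow N$. Hence a unique root $\beta_{A,N}\in(0,N)$ exists, and $\Ee[\xi e^{-\beta\xi}]$ is finite and nonzero at that root. I would record this first.

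Next, reduce to a renewal equation for the law of $\log R$ under $\pi_{A,N}$. Since $\tanh$ acts coordinatewise, the direction does not decouple exactly. Instead I would write
\begin{equation*}
  \log R_{t+1}=\log R_t+\xi_{t+1}+\varepsilon(R_t,\Theta_t,W_{t+1}),
\end{equation*}
where $\abs{\varepsilon}\leq C\,R_t^2\norm{W_{t+1}}^2$ because $\tanh u=u(1+O(u^2))$ and $\tanh u/u\in(0,1]$. The perturbation is one-signed, $\varepsilon\leq0$, and is quadratically small near the origin. I would also use existence and uniqueness of the nonzero invariant law $\pi_{A,N}$, which I take from the setup sections; it comes from positive drift at $0$ plus a Lyapunov function $R^{-\delta}$ with small $\delta>0$. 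Set $Y=-\log R$ under $\pi_{A,N}$ and let $f(y)=e^{\beta y}\Pp(Y>y,\ \Theta\in B)$. Following Goldie's implicit renewal theorem, stationarity gives
\begin{equation*}
  f=f*\check\mu_\beta+g,
\end{equation*}
where $\check\mu_\beta$ is the $e^{-\beta\xi}$-tilted law of $-\xi$, which has positive finite mean, and
\begin{equation*}
  g(y)=e^{\beta y}\Bigl[\Pp(Y_1>y,\Theta_1\in B)-\Pp(Y_0-\xi_1>y)\,\bar\sigma_N(B)\Bigr].
\end{equation*}
Because $\Theta_1$ is asymptotically uniform and independent of $\xi$ when $R_0$ is small, $g$ splits into two pieces. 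The first comes from the difference between $\log\norm{\tanh(Wx)}$ and $\log\norm{Wx}$. The second comes from the direction of $\tanh(Wx)$ versus $Wx$; this piece is controlled using $\bar\sigma_N(\partial B)=0$ through a continuity argument that approximates $B$ from inside and outside.

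The key step, and the main obstacle, is showing that $g$ is directly Riemann integrable. Goldie's lemma gives this once $\int e^{\beta y}\abs{\Pp(\cdot)-\Pp(\cdot)}\,dy<\infty$. This in turn reduces to the estimate
\begin{equation*}
  \Ee_{\pi}\Bigl[\bigl|R_1^{-\beta}-\norm{W_1X_0}_2^{-\beta}\bigr|\Bigr]<\infty,
\end{equation*}
which follows from $\abs{\varepsilon}\lesssim R^2\norm W^2$ together with a priori finiteness of $\Ee_\pi R^{2-\beta-\eta}$ for small $\eta$. A technical difficulty is that $R_1$ can be small even when $R_0$ is not, if $Wx\approx0$. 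I would handle this by a moment bound $\Ee_\pi R^{-\beta+\eta'}<\infty$, proved with the Lyapunov function $R^{-s}$ for $s<\beta$; there the strict inequality $\Ee e^{-s\xi}<1$ gives geometric drift. The same bound also covers the direction error.

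Once this is done, the key renewal theorem applies, since $\xi$ has a density and is therefore nonarithmetic. It gives $f(y)\to c\,\bar\sigma_N(B)$ with
\begin{equation*}
  c=\frac{\int g_{\mathbb S}}{\beta\,\Ee[-\xi e^{-\beta\xi}]}.
\end{equation*}
This translates into $s^{-\beta}\pi(0<R\leq s,\Theta\in B)\to c_{A,N}\bar\sigma_N(B)$. Finally, positivity of $c_{A,N}$ needs a separate argument: I would use Goldie's alternative representation of the constant together with the fact that $\pi$ charges every neighbourhood of the origin, because $W$ has full support.
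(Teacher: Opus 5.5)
Your strategy is sound, and it differs from the paper's mainly in how the nonlinear forcing is controlled.

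The paper keeps the angle throughout. It works with the measure-valued tail $\Hcal_y$ and forcing $\Psi^\pi_y$, and proves a total-variation envelope for $\Psi^\pi_y$ by comparing, in $L^1$, the densities of $r^{-1}\tanh(rU)$ and $U$ on small balls (\cref{lem:nd-gaussian-polar-perturbation}). Combined with $\Ee e^{\alpha Y}<\infty$ for $\alpha<\beta_{A,N}$, this gives direct Riemann integrability outright. The whole-line renewal theorem then applies without smoothing, and the directional forcing is covered by the same bound.

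You instead apply Goldie's implicit renewal theorem to $R^{-1}$ with multiplier $\ell_{A,N}^{-1}$. There only integrability of the one-signed forcing is needed, namely $\Ee_\pi[R_1^{-\beta}-\norm{W_1X_0}_2^{-\beta}]<\infty$. Goldie's smoothing, together with monotonicity of $y\mapsto e^{-\beta y}f(y)$, replaces DRI. This is more elementary for the radial asymptotic. When you check integrability, use the quadratic bound on $\varepsilon$ only for $\norm{W_1X_0}_2\le1$. For larger values use $R_1\geq\tanh(\norm{W_1X_0}_2/\sqrt N)$, since $\Ee e^{C\beta\norm{W_1X_0}_2^2}$ can be infinite.

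The step that fails as written is the directional renewal equation. With your $g$, the identity $f=f*\check\mu_\beta+g$ is false for $B\neq\mathbb S^{N-1}$. The linearized step replaces $\Theta_0$ by the fresh uniform direction $\widehat{W_1X_0}$, so stationarity actually gives
\begin{equation*}
  f_B(y)=\bar\sigma_N(B)\,(f_{\mathbb S}*\check\mu_\beta)(y)+g_B(y),
\end{equation*}
where $f_{\mathbb S}$ is the full-sphere tail. This is not a renewal equation for $f_B$. Goldie's tautological forcing $f_B-f_B*\check\mu_\beta$ would not help either: it compares the new direction with the old one and has no smallness. You must first get $f_{\mathbb S}(y)\to c$ from the scalar equation and then show that $g_B$ is negligible; this is the angular reconstruction step in the proof of \cref{lem:nd-gaussian-renewal}.

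For that step, integrability of $g_B$ is the wrong target. Knowing only $\bar\sigma_N(\partial B)=0$ gives no rate for $\bar\sigma_N((\partial B)^\epsilon)$, so the angular part of $g_B$ need not be integrable. Decay suffices. Since $\widehat{W_1X_0}$ is uniform and independent of $\norm{W_1X_0}_2$, and $\norm{\widehat{\tanh u}-\hat u}_2\le C\norm{u}_2^2$ for $\norm u_2\le1$, the angular part is at most $(f_{\mathbb S}*\check\mu_\beta)(y)\,\bar\sigma_N((\partial B)^{Ce^{-2y}})\to0$. The radial part is dominated by $g_{\mathbb S}\in L^1$, whose smoothing tends to zero. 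Unsmoothing by monotonicity then gives $f_B(y)\to c\,\bar\sigma_N(B)$.

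Finally, positivity of $c_{A,N}$ does not need full support of $\pi_{A,N}$. Since $\norm{\tanh u}_2<\norm u_2$ for $u\neq0$, you have $g_{\mathbb S}\geq0$ and
\begin{equation*}
  \int g_{\mathbb S}=\beta^{-1}\Ee_\pi[R_1^{-\beta}-\norm{W_1X_0}_2^{-\beta}]>0,
\end{equation*}
exactly as in Step~3 of the proof of \cref{prop:nd-forcing-admissibility}.
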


\subsection{Previous work and novelty}
\label{subsec:previous-work}

The closest neural-network literature describes signal propagation through
deterministic recursions for variances and correlations. This includes
vanishing gradients and variance-based initialization
\cite{bengio-simard-frasconi1994,glorot-bengio2010,he-etal2015}, as well as the
order-to-chaos transition in the large-width limit
\cite{poole-etal2016,schoenholz-etal2017}. We instead study the absorption time
created by finite-precision rounding and its concentration as the mesh or
width varies.

The identity \cref{eq:intro-tv-absorption} is the standard relation between
distance to an absorbing state and the tail of the absorption time. Mart\'inez
and Ycart use this relation for families whose initial states leave every
finite set. They connect cutoff to concentration of the absorption times and
give moment criteria for tree-type birth-and-death chains
\cite{martinez-ycart2001}. Here rounding creates the absorbing state, and our
estimates use logarithmic-radius random walks and fluid limits.

For general background on cutoff and mixing times, see
\cite{diaconis1996,levin-peres-wilmer2017}. Cutoff criteria have been obtained
for $L^p$ semigroups, birth-and-death chains, reversible chains through
hitting-time concentration, and chains with nonnegative curvature
\cite{chen-saloff-coste2008,diaconis-saloff-coste2006,
ding-lubetzky-peres2010,basu-hermon-peres2017,salez2024-curvature}.
These results concern ergodic chains or semigroups and do not directly apply
to the absorbing rounded dynamics.
The local Poincar\'e criterion of Pedrotti and Salez
\cite{pedrotti-salez2026-local-product} likewise does not apply: it assumes
$\mu_t\ll\pi$ for every $t>0$, whereas here $\pi=\delta_0$ and the law before
absorption charges nonzero states.

Related results treat small random perturbations of deterministic dynamics.
Barrera and Jara prove total-variation cutoff for small Brownian perturbations
under coercivity and growth assumptions
\cite{barrera-jara2020}. Barrera, H\"ogele, and Pardo obtain
total-variation cutoff for a class of L\'evy-driven Langevin systems and
related Wasserstein results for generalized Ornstein--Uhlenbeck systems
\cite{barrera-hogele-pardo2021-tv,
barrera-hogele-pardo2021-wasserstein}. In joint work with Barrera, we study
cutoff and its failure for one-dimensional Langevin--Kolmogorov dynamics by
spectral Fokker--Planck methods
\cite{avelin-barrera2026-chi2-cutoff}. Here we instead study discrete-time
absorption and metastability caused by rounding. Together with Karlsson, we
previously observed this cutoff numerically at fixed precision in dimensions
one and two
\cite{avelin-karlsson2022}.

The contrast between cutoff and metastable escape also appears in
birth-and-death chains: downhill hitting times concentrate, while normalized
uphill escape times are asymptotically exponential
\cite{barrera-bertoncini-fernandez2009}. Our two behaviors likewise occur in
different parameter regimes. The metastability theorem gives exponential
persistence, but not a sharp exit-time law or a quasi-stationary limit. See
\cite{bovier-denhollander2015,collet-martinez-sanmartin2013,
champagnat-villemonais2023} for general background. The stationary singularity
uses Kesten--Goldie renewal theory
\cite{kesten1973,goldie1991,buraczewski-damek-mikosch2016}. Gaussian isotropy
refreshes the angle at each step, reducing the renewal equation to a scalar
one.
To the best of our knowledge, these absorption cutoffs, our metastability
estimate, and the stationary singularity have not previously
been proved for this random neural-network model.

\subsection{Formalization}
\label{subsec:formalization}

All six headline results in \cref{subsec:main-results} have been formalized in
Lean~4 on top of Mathlib. The Lean development is publicly available at
\url{https://github.com/BennyAvelin/AbsorptionCutoff}.\footnote{The repository
is pinned to Lean \texttt{v4.32.0} and Mathlib \texttt{v4.32.0}.}
The production library contains no \texttt{sorry} and declares no custom
axiom. A machine-checked audit shows that the seven declarations below use
only Mathlib's three standard foundational axioms
(\texttt{propext}, \texttt{Classical.choice}, \texttt{Quot.sound}). The
fixed-dimensional absorption clause of the metastability theorem is audited
separately.

\begin{center}
\renewcommand{\arraystretch}{1.2}
\begin{tabular}{ll}
  \toprule
  Result & Lean declaration \\
  \midrule
  \cref{thm:rounded-gaussian-nearest-cutoff}
    & \texttt{rounded\_gaussian\_nearest\_cutoff} \\
  \cref{thm:subcritical-dimension-cutoff:intro}
    & \texttt{subcritical\_dimension\_cutoff} \\
  \cref{thm:rounded-qualitative-metastability:intro}, persistence
    & \texttt{rounded\_qualitative\_metastability} \\
  \cref{thm:rounded-qualitative-metastability:intro}, absorption
    & \texttt{rounded\_fixed\_dimension\_absorption} \\
  \cref{thm:gaussian-process-cutoff}
    & \texttt{gaussian\_process\_cutoff} \\
  \cref{cor:gaussian-vector-cutoff}
    & \texttt{gaussian\_vector\_cutoff} \\
  \cref{thm:nd-power-singularity:intro}
    & \texttt{nd\_power\_singularity} \\
  \bottomrule
\end{tabular}
\end{center}

The Lean comparator checks the six printed results and the separate absorption
clause against restatements that import only Mathlib. Each restatement appears
in a challenge file with one intentional \texttt{sorry}. A matching solution
derives it from the development. Challenge files are excluded from the
production library. The comparator checks that each pair has the same Lean
type, that the solution is accepted by the kernel, and which axioms it uses. It
cannot verify that a restatement expresses the intended mathematics.

The paper and Lean code sometimes use different notation or equivalent
hypotheses. After unfolding the definitions and equivalences, every printed
result follows from its Lean declaration. No formalized result requires an
additional hypothesis.

Our use of autoformalization is inspired by the recent Lean formalizations of
De Giorgi--Nash--Moser theory by Armstrong and Kempe
\cite{armstrong-kempe2026}, and of coarse-graining theory
for elliptic equations by Armstrong and Kuusi
\cite{armstrong-kuusi2026-coarsegraining}. The development was produced by
autoformalization. Under the author's supervision, OpenAI's ChatGPT and
Anthropic's Claude generated the proofs from the arguments of this paper. The
Lean kernel checks every definition, statement, and proof in the production
library against Mathlib.

\subsection{Proof overview and organization}
\label{subsec:proof-overview}

To prove \cref{thm:rounded-gaussian-nearest-cutoff}, we couple $Y^{(N)}$ to
$X^{(N)}$ until the radius reaches the grid scale $\varrho$. In logarithmic
coordinates, the radius is a perturbed random walk, and a first-passage central
limit theorem gives the Gaussian profile. An estimate on the finite grid then
turns entrance at this scale into absorption.

For \cref{thm:subcritical-dimension-cutoff:intro}, the lattice comparison
$V_{A,\varrho}\leq L_A$ gives uniform contraction of the deterministic orbit
when $A<A_{\mathrm{lat}}$. The chain $\mathscr H^{(N)}$ concentrates around this
orbit until it reaches the absorption scale, and absorption occurs within two
further steps.
For \cref{thm:rounded-qualitative-metastability:intro}, the iterates starting in
$B\subset(h_-,h_+]$ approach $h_+$, and Hoeffding bounds keep
$\mathscr H^{(N)}$ near $h_+$ for exponentially many steps.

For \cref{thm:gaussian-process-cutoff}, the orbit $V_A^t(q_0)$ reaches an
$N^{-1/2}$-neighborhood of $q_\ast$, the scale of the stationary fluctuations.
Negative-moment bounds control visits of $Z^{(N)}$ near zero, and concentration
localizes the invariant law near $q_\ast$. We couple $Z^{(N)}$ to a stationary
copy. Synchronous contraction reduces their separation, and a one-step
total-variation estimate proves the cutoff. The comparison in
\cref{prop:gaussian-tv-reduction} gives \cref{cor:gaussian-vector-cutoff}.

To prove \cref{thm:nd-power-singularity:intro}, we write the invariant equation
in log-polar coordinates. Gaussian isotropy refreshes the angular variable in
one step, and the radial equation becomes a scalar renewal equation with
Cramér root $\beta_{A,N}$.

The common notation and scalar estimates are in \cref{sec:setup}.
\Cref{sec:subcritical-precision,sec:subcritical-dimension}
prove the two absorption cutoffs, and \cref{sec:rounded-metastability} treats
metastability. The supercritical cutoff and stationary singularity are
proved in \cref{sec:supercritical-dimension,sec:nd-supercritical-stationary}.
The auxiliary estimates are in
\cref{app:common-probabilistic-estimates,app:supercritical-cutoff-estimates}.

\section{Gaussian chains and scalar estimates}
\label{sec:setup}

We recall the vector chains and derive their scalar reductions. We then collect
the thresholds, properties of the mean maps, scalar estimates, transition
identities, and facts about invariant laws used later. Additional probabilistic
estimates are proved in
\cref{app:common-probabilistic-estimates}.

We write $\norm{\cdot}_2$ for the Euclidean norm on vectors. Fix $A>0$,
$N\geq1$, and $x\in\R^N$. Throughout the paper the Gaussian weights are
\begin{equation}\label{eq:gaussian-weights}
  (\mathsf W_{t,A}^{(N)})_{ij}
  \sim
  \mathcal N(0,A^2/N),
  \qquad
  1\leq i,j\leq N,\quad t\geq1,
\end{equation}
independently over all indices. The unrounded vector chain is
\begin{equation}\label{eq:unrounded-chain}
  X_0^{(N)}=x,
  \qquad
  X_{t+1}^{(N)}
  =
  \tanh\left(\mathsf W_{t+1,A}^{(N)}X_t^{(N)}\right).
\end{equation}
For $\varrho>0$, let $Q_\varrho:\R^N\to\varrho\Z^N$ be coordinatewise
nearest-grid rounding, with ties rounded toward the grid point of smaller
absolute value. The rounded vector chain is
\begin{equation}\label{eq:rounded-chain}
  Y_0^{(N)}=Q_\varrho x,
  \qquad
  Y_{t+1}^{(N)}
  =
  Q_\varrho\left(
    \tanh\left(\mathsf W_{t+1,A}^{(N)}Y_t^{(N)}\right)
  \right).
\end{equation}
We denote the transition kernel of $Y^{(N)}$ by $P_{\varrho,A,N}$ and its
absorption time by
\begin{equation}\label{eq:absorption-time}
  \tau_\varrho^{(N)}
  :=
  \inf\{t\geq0:Y_t^{(N)}=0\}.
\end{equation}

The point $0$ is absorbing, and hence $\delta_0$ is invariant. With the
convention
\begin{equation*}
  \norm{\mu-\nu}_{\TV}
  :=
  \sup_{B\in\mathcal B(\R^N)}\abs{\mu(B)-\nu(B)},
\end{equation*}
we have the exact identity
\begin{equation}\label{eq:tv-absorption}
  \norm{
    P_{\varrho,A,N}^t(Q_\varrho x,\cdot)-\delta_0
  }_{\TV}
  =
  \Pp(\tau_\varrho^{(N)}>t).
\end{equation}
Thus cutoff to $\delta_0$ is equivalent to concentration of the absorption
time.

\subsection{Scalar reductions}
\label{subsec:radius-observables}

\emph{The unrounded chain.}
By Gaussian rotational invariance, the conditional law of the next
preactivation vector depends on the present state only through its Euclidean
norm. For $u\in\R^N$, set
\begin{equation*}
  r_N(u):=\frac1N\norm{u}_2^2.
\end{equation*}
Define
\begin{equation*}
  R_t^{(N)}:=\norm{X_t^{(N)}}_2,
  \qquad
  Z_t^{(N)}:=r_N(X_t^{(N)}).
\end{equation*}
Since $R_t^{(N)}=\sqrt{NZ_t^{(N)}}$, both processes are Markov. We call
$R^{(N)}$ the unrounded radius chain and $Z^{(N)}$ the unrounded
squared-radius chain.
Conditionally on $X_t^{(N)}$, the vector
$\mathsf W_{t+1,A}^{(N)}X_t^{(N)}$ has the same law as
$(AR_t^{(N)}/\sqrt N)G_{t+1}$, where $G_{t+1}$ is a standard Gaussian vector in
$\R^N$. Equivalently, given $Z_t^{(N)}=q$,
\begin{equation}\label{eq:q-chain}
  Z_{t+1}^{(N)}
  \stackrel{d}{=}
  \frac1N\sum_{i=1}^N
  \tanh^2\left(A\sqrt{q}\,G_{t+1,i}\right).
\end{equation}
For $t\geq1$, the chain $Z^{(N)}$ takes values in $[0,1]$.

For $g=(g_1,\ldots,g_N)\in\R^N$, define
\begin{equation}\label{eq:gaussian-random-map}
  F_{A,N}(q,g)
  :=
  \frac1N\sum_{i=1}^N \tanh^2(A\sqrt q\,g_i),
  \qquad q\geq0.
\end{equation}
Let $K_{A,N}$ be the transition kernel of $Z^{(N)}$ on $[0,1]$, given by
\begin{equation}\label{eq:gaussian-q-kernel}
  K_{A,N}(q,B)
  =
  \int_{\R^N}
  \one_{\{F_{A,N}(q,g)\in B\}}\,
  \mathcal G_N(\mathrm{d}g),
\end{equation}
where $\mathcal G_N$ is standard Gaussian measure on $\R^N$. Thus, with
$G_{t+1}\sim\mathcal N(0,I_N)$,
\begin{equation}\label{eq:gaussian-random-map-chain}
  Z_{t+1}^{(N)}
  =
  F_{A,N}(Z_t^{(N)},G_{t+1}).
\end{equation}
For fixed $g$, the map $q\mapsto F_{A,N}(q,g)$ is nondecreasing.

The conditional mean map of $Z^{(N)}$ is
\begin{equation}\label{eq:mean-field-map}
  V_A(q):=\Ee\tanh^2(A\sqrt q\,G),
  \qquad
  G\sim\mathcal N(0,1).
\end{equation}
For fixed $q$, the conditional moments are
\begin{equation}\label{eq:gaussian-q-moments}
  \Ee\left[Z_{t+1}^{(N)}\mid Z_t^{(N)}=q\right]=V_A(q),
  \qquad
  \Var\left(Z_{t+1}^{(N)}\mid Z_t^{(N)}=q\right)
  =
  \frac1N
  \Var\left(\tanh^2(A\sqrt q\,G)\right).
\end{equation}
Moreover, near zero, $V_A$ expands as
\begin{equation}\label{eq:gaussian-mean-field-near-zero}
  V_A(q)
  =
  A^2q-2A^4q^2+O(q^3),
  \qquad q\downarrow0.
\end{equation}
With respect to the natural filtration of $Z^{(N)}$, define the centered
one-step fluctuation by
\begin{equation}\label{eq:gaussian-noise-average}
  \zeta_{t+1}^{(N)}
  =
  \frac1N\sum_{i=1}^N
  \left[
    \tanh^2(A\sqrt{Z_t^{(N)}}\,G_{t+1,i})
    -V_A(Z_t^{(N)})
  \right],
\end{equation}
so
\begin{equation}\label{eq:gaussian-noise-decomposition}
  Z_{t+1}^{(N)}
  =
  V_A(Z_t^{(N)})+\zeta_{t+1}^{(N)}.
\end{equation}
Conditionally on $\mathcal F_t$, the summands in
\cref{eq:gaussian-noise-average} are independent and centered, and each lies
in an interval of length at most one. Hence
\begin{equation}\label{eq:gaussian-noise-moments}
  \Ee\left[\zeta_{t+1}^{(N)}\mid\mathcal F_t\right]=0,
  \qquad
  \Var\left(\zeta_{t+1}^{(N)}\mid\mathcal F_t\right)
  \leq \frac1{4N},
\end{equation}
and Hoeffding's inequality gives, for every $u>0$,
\begin{equation}\label{eq:gaussian-noise-hoeffding}
  \Pp\left(
    \abs{\zeta_{t+1}^{(N)}}>u
    \,\middle|\,\mathcal F_t
  \right)
  \leq
  2\exp(-2Nu^2).
\end{equation}

\emph{The rounded chain.}
Let $Q_1:\R\to\Z$ be nearest-integer rounding with the same tie convention as
$Q_\varrho$. Then
\begin{equation*}
  Q_\varrho(y)_i=\varrho Q_1(y_i/\varrho),
  \qquad
  Q_1(u)=0 \quad\Longleftrightarrow\quad \abs u\leq\frac12,
  \qquad
  \abs{Q_1(u)-u}\leq\frac12.
\end{equation*}
Define
\begin{equation}\label{eq:subcritical-exact-grid-radius}
  \mathscr H_t^{(N)}
  :=
  \frac1{N\varrho^2}\norm{Y_t^{(N)}}_2^2 .
\end{equation}
For $h\geq0$ and $g\in\R$, put
\begin{equation}\label{eq:rounded-coordinate-observable}
  U_{A,\varrho}(h,g)
  :=
  Q_1\left(
    \varrho^{-1}\tanh(\varrho A\sqrt h\,g)
  \right)^2.
\end{equation}
Conditionally on $\mathscr H_t^{(N)}=h$, the Gaussian representation and
\cref{eq:rounded-coordinate-observable} give
\begin{equation}\label{eq:subcritical-exact-grid-transition}
  \mathscr H_{t+1}^{(N)}
  \stackrel{d}{=}
  \frac1N\sum_{i=1}^N U_{A,\varrho}(h,G_{t+1,i}),
\end{equation}
where the variables $G_{t+1,i}$ are independent standard Gaussians. Thus
$\mathscr H^{(N)}$ is Markov, and its absorption time satisfies
\begin{equation}\label{eq:subcritical-exact-absorption}
  \tau_\varrho^{(N)}
  =
  \inf\{t\in\N_0:\mathscr H_t^{(N)}=0\}.
\end{equation}

Its mean map is
\begin{equation}\label{eq:rounded-radius-map}
  V_{A,\varrho}(h)
  :=
  \Ee\left[
    Q_1\left(
      \varrho^{-1}\tanh(\varrho A\sqrt h\,G)
    \right)^2
  \right],
  \qquad h\geq0.
\end{equation}
With respect to the natural filtration of $\mathscr H^{(N)}$, define
\begin{equation}\label{eq:rounded-noise-decomposition}
  \widehat\zeta_{t+1}^{(N)}
  :=
  \mathscr H_{t+1}^{(N)}
  -V_{A,\varrho}(\mathscr H_t^{(N)}).
\end{equation}
If
$
  B_\varrho
  :=
  \max_{\abs u\leq\varrho^{-1}}Q_1(u)^2,
$
then $0\leq U_{A,\varrho}(h,G)\leq B_\varrho$, and hence
\begin{equation}\label{eq:rounded-noise-moments}
  \Ee[\widehat\zeta_{t+1}^{(N)}\mid\mathcal F_t]=0,
  \qquad
  \Ee[(\widehat\zeta_{t+1}^{(N)})^2\mid\mathcal F_t]
  \leq
  \frac{B_\varrho^2}{4N}.
\end{equation}
Moreover, Hoeffding's inequality yields
\begin{equation}\label{eq:rounded-noise-hoeffding}
  \Pp\left(
    \abs{\widehat\zeta_{t+1}^{(N)}}>u
    \,\middle|\,\mathcal F_t
  \right)
  \leq
  2\exp\left(-\frac{2Nu^2}{B_\varrho^2}\right),
  \qquad u>0.
\end{equation}

\subsection{Thresholds and mean maps}
\label{subsec:thresholds-mean-field}

The large-dimension threshold is $A=1$. Recall that $\chi_N$ denotes the
Euclidean norm of a standard Gaussian vector in $\R^N$. At fixed width, set
\begin{equation}\label{eq:gaussian-ell}
  \ell_{A,N}:=\frac A{\sqrt N}\chi_N.
\end{equation}
Define
\begin{equation*}
  \gamma_{A,N}:=-\Ee\log\ell_{A,N}.
\end{equation*}
The unique zero of $\gamma_{A,N}$ is
\begin{equation}\label{eq:fixed-width-critical}
  A=A_c(N)
  =
  \sqrt{\frac N2}\,
  \exp\left(-\frac12\psi\left(\frac N2\right)\right),
\end{equation}
where $\psi$ is the digamma function. Thus $A<A_c(N)$ is equivalent to
$\gamma_{A,N}>0$, and
\begin{equation}\label{eq:gaussian-log-multiplier-moments}
  \gamma_{A,N}=\log\frac{A_c(N)}A,
  \qquad
  \Var(\log\ell_{A,N})
  =
  \Var(\log\chi_N)
  =
  \frac14\psi_1\left(\frac N2\right).
\end{equation}
Here $\psi_1=\psi'$ is the trigamma function. Moreover,
\begin{equation}\label{eq:gaussian-squared-radius-multiplier}
  \frac{F_{A,N}(q,G)}{q}
  \to
  A^2\frac{\chi_N^2}{N}
  \qquad\mbox{as }q\downarrow0
\end{equation}
almost surely. In particular, $A_c(N)>1$ for finite $N$ and
$A_c(N)\downarrow1$ as $N\to\infty$.

Define
\begin{equation}\label{eq:subcritical-lattice-map}
  L_A(h):=\Ee[Q_1(A\sqrt h\,G)^2],
\end{equation}
and
\begin{equation}\label{eq:subcritical-lattice-threshold}
  m(\alpha):=\Ee[Q_1(\alpha G)^2],
  \qquad
  A_{\mathrm{lat}}^2
  :=
  \inf_{\alpha>0}\frac{\alpha^2}{m(\alpha)}.
\end{equation}
Thus $L_A(h)=m(A\sqrt h)$. The condition $A<A_{\mathrm{lat}}$ is
equivalent to $L_A(h)<h$ for every $h>0$.

We use the following fixed-point and linearization facts for $V_A$.
\begin{lemma}
\label{lem:gaussian-mean-field-concavity}
  The map $V_A$ is strictly increasing on $[0,1]$ and strictly concave on
  $[0,1]$, with $V_A(0)=0$, $V_A'(0+)=A^2$, and $V_A(1)<1$. If
  $A\leq1$, then $0$ is its only fixed point. If $A>1$, then it has a unique
  nonzero fixed point $q_\ast\in(0,1)$ satisfying
  \begin{equation}\label{eq:gaussian-fixed-point}
    q_\ast=V_A(q_\ast),
    \qquad
    \mu_A:=V_A'(q_\ast)\in(0,1).
  \end{equation}
  If $A>1$, every orbit of $V_A$ started from $(0,1]$ converges to $q_\ast$.
  In this case, for every $q_0\in(0,1]\setminus\{q_\ast\}$, there is a
  continuous function $C$ in a neighborhood of $q_0$, with $C(q_0)\neq0$,
  such that
  \begin{equation}\label{eq:gaussian-deterministic-asymptotic}
    V_A^t(q)-q_\ast
    =
    C(q)\mu_A^t+o(\mu_A^t)
  \end{equation}
  locally uniformly for $q$ near $q_0$.
\end{lemma}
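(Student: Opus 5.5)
The strategy is to reduce everything to the function $f(s):=\Ee\tanh^2(\sqrt s\,G)$ via $V_A(q)=f(A^2q)$. The first step is to show that $f$ is strictly increasing and strictly concave on $[0,\infty)$. I would write the expectation as an integral against the $\chi^2_1$ density after the substitution $u=sG^2$:
\[
f(s)=\int_0^\infty \tanh^2(\sqrt u)\,\frac{u^{-1/2}e^{-u/(2s)}}{\sqrt{2\pi s}}\,\mathrm du .
\]
An equivalent form is $f(s)=\Ee\,\phi(sG^2)$ with $\phi(v)=\tanh^2\sqrt v$. Monotonicity then follows because $\phi$ is strictly increasing, so $f'(s)=\Ee[G^2\phi'(sG^2)]>0$. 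For concavity, $f''(s)=\Ee[G^4\phi''(sG^2)]$, so it suffices to show $\phi''<0$ on $(0,\infty)$. I would verify this directly: with $w=\sqrt v$, $\phi'(v)=\tanh w\,\mathrm{sech}^2 w/w$, and $\frac{\mathrm d}{\mathrm dw}\bigl(\tanh w\,\mathrm{sech}^2w/w\bigr)<0$ reduces to $w(1-3\tanh^2 w)<\tanh w$ after multiplying through by $w^2\cosh^2 w>0$. If this elementary inequality proves awkward, the fallback is the Gaussian integration-by-parts representation, which I expect to give a sign-definite integrand.

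The endpoint facts are straightforward. $V_A(0)=0$ is immediate. Since $\tanh^2 x\le x^2$ with a quadratic error term, dominated convergence gives $V_A'(0+)=A^2$; this also matches \cref{eq:gaussian-mean-field-near-zero}. Finally, $V_A(1)<1$ because $\tanh^2<1$.

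Fixed points come from concavity. The function $V_A(q)-q$ is strictly concave and vanishes at $0$. If $A\le1$, its derivative is at most $A^2-1\le 0$ at $0^+$ and strictly decreasing, so $V_A(q)<q$ for $q>0$. If $A>1$, $V_A(q)-q$ is positive near $0$ and negative at $1$, so it has a unique zero $q_\ast\in(0,1)$. Concavity gives $\mu_A=V_A'(q_\ast)<V_A(q_\ast)/q_\ast=1$, and strict monotonicity gives $\mu_A>0$. Convergence of orbits then follows: $V_A$ is increasing with $V_A(q)>q$ on $(0,q_\ast)$ and $V_A(q)<q$ on $(q_\ast,1]$, so every orbit is monotone, bounded, and converges to a fixed point, which must be $q_\ast$.

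For the asymptotic \cref{eq:gaussian-deterministic-asymptotic} I would use Koenigs linearization. Since $V_A$ is real-analytic near $q_\ast$ with $0<\mu_A<1$, the limit
\[
\kappa(q):=\lim_t\mu_A^{-t}\bigl(V_A^t(q)-q_\ast\bigr)
\]
exists uniformly on a small neighborhood $U$ of $q_\ast$. The convergence comes from the telescoping bound $\bigl|\mu_A^{-1}(V_A(y)-q_\ast)-(y-q_\ast)\bigr|\le C|y-q_\ast|^2$, together with geometric decay $|V_A^t(q)-q_\ast|\le C\lambda^t$ for some $\lambda<1$ with $\lambda^2<\mu_A$. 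The limit satisfies $\kappa\circ V_A=\mu_A\kappa$ and $\kappa'(q_\ast)=1$. I would also need $\kappa$ to be continuous and strictly increasing on $U$; the latter follows from uniform convergence of the derivatives $\mu_A^{-t}(V_A^t)'$, which is proved the same way.

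For general $q_0$, I would pick $t_0$ with $V_A^{t_0}(q)\in U$ for $q$ near $q_0$, using continuity and convergence. Then set $C(q):=\mu_A^{-t_0}\kappa(V_A^{t_0}(q))$. This is continuous, and the $o(\mu_A^t)$ error is locally uniform because the Koenigs convergence is uniform on $U$. Finally, $C(q_0)\ne0$ because $\kappa$ vanishes only at $q_\ast$ and $V_A^{t_0}$ is injective, so $V_A^{t_0}(q_0)\ne q_\ast$ whenever $q_0\ne q_\ast$.

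I expect the main obstacle to be the strict concavity, that is, proving $\phi''<0$. The rest is standard one-dimensional dynamics.
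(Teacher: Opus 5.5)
Your proposal is correct and follows the paper's proof in all essentials. Strict monotonicity and concavity come from the pointwise concavity of $q\mapsto\tanh^2(a\sqrt q)$ integrated over $G$, the fixed points and monotone convergence of orbits come from concavity, and the asymptotic comes from a Koenigs linearization. The one difference is in that last step: you build the linearizer $\kappa$ at $q_\ast$, transport it by $V_A^{t_0}$, and get $C(q_0)\neq0$ from $\kappa'>0$, whereas the paper writes the Koenigs product directly near $q_0$ and gets nonvanishing from a convergent product of positive factors. The step you single out as the main obstacle is immediate, as in the paper: $w\mapsto\tanh w/w$ and $w\mapsto\operatorname{sech}^2w$ are positive and strictly decreasing on $(0,\infty)$, so their product $\tanh w\,\operatorname{sech}^2w/w$ is strictly decreasing (equivalently, $w(1-3\tanh^2w)\leq w\operatorname{sech}^2w<\tanh w$ because $2w<\sinh 2w$).
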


\begin{proof}
  \emph{Step 1: Monotonicity, concavity, and fixed points.}
  Fix $a>0$ and put $f_a(q):=\tanh^2(a\sqrt q)$ for $q>0$. With
  $x=a\sqrt q$,
  \begin{equation*}
    f_a'(q)
    =
    a^2\,\frac{\tanh x}{x}\operatorname{sech}^2 x.
  \end{equation*}
  The functions $x\mapsto\tanh x/x$ and
  $x\mapsto\operatorname{sech}^2x$ are strictly decreasing on
  $(0,\infty)$. Hence $f_a'$ is strictly decreasing and $f_a$ is strictly
  concave. Since
  $\tanh^2(A\sqrt q\,G)=f_{A\abs G}(q)$ and $\Pp(\abs G>0)=1$, integration
  gives strict concavity of $V_A$. The same formula gives strict
  monotonicity. The expansion in \cref{eq:gaussian-mean-field-near-zero} gives
  $V_A'(0+)=A^2$, while $V_A(1)<1$ since
  $\tanh^2(AG)<1$ almost surely.

  Strict concavity now gives no positive fixed point when $A\leq1$ and exactly
  one when $A>1$. At the positive fixed point,
  concavity gives
  \begin{equation*}
    0<V_A'(q_\ast)<\frac{V_A(q_\ast)}{q_\ast}=1.
  \end{equation*}

  \emph{Step 2: Convergence of the orbit.}
  Suppose that $A>1$. Then $V_A(q)>q$ for $0<q<q_\ast$ and
  $V_A(q)<q$ for $q_\ast<q\leq1$. The orbit started at $q_\ast$ is constant.
  Since $V_A$ is increasing, every other orbit is monotone and remains on the
  same side of $q_\ast$. It is therefore bounded and converges to $q_\ast$.

  \emph{Step 3: Linearization at $q_\ast$.}
  We prove \cref{eq:gaussian-deterministic-asymptotic} by the standard Koenigs
  product argument; see
  \cite[Ch.~II, Sec.~2, Theorem~2.1]{carleson1993complex}. The Taylor expansion
  at the attracting fixed point is
  \begin{equation*}
    V_A(q)-q_\ast
    =
    \mu_A(q-q_\ast)+O((q-q_\ast)^2)
  \end{equation*}
  near $q_\ast$. Fix $q_0\in(0,1]\setminus\{q_\ast\}$. By Step~2, there are
  a neighborhood $U$ of $q_0$, an integer $s\geq0$, and $\rho\in(0,1)$ such
  that, for every $q\in U$ and $t\geq s$,
  \begin{equation*}
    \abs{V_A^{t+1}(q)-q_\ast}
    \leq
    \rho\abs{V_A^t(q)-q_\ast}.
  \end{equation*}
  For $t\geq s$, we have
  \begin{equation*}
    \frac{V_A^{t+1}(q)-q_\ast}{\mu_A^{t+1}}
    =
    \frac{V_A^t(q)-q_\ast}{\mu_A^t}
    \frac{V_A(V_A^t(q))-q_\ast}
    {\mu_A(V_A^t(q)-q_\ast)} .
  \end{equation*}
  The last factor is $1+O(\abs{V_A^t(q)-q_\ast})$, uniformly for $q$ in a
  sufficiently small choice of $U$. The contraction estimate implies that
  \begin{equation*}
    \sum_{t=s}^\infty
    \abs{V_A^t(q)-q_\ast}
  \end{equation*}
  converges uniformly for $q\in U$. Hence the Koenigs product converges
  uniformly on $U$, and we may define
  \begin{equation*}
    C(q)
    =
    (V_A^{s}(q)-q_\ast)\mu_A^{-s}
    \prod_{j=s}^\infty
    \frac{V_A(V_A^j(q))-V_A(q_\ast)}
    {\mu_A(V_A^j(q)-q_\ast)}
  \end{equation*}
  for $q\in U$. The finite products telescope, and therefore
  \begin{equation*}
    \frac{V_A^t(q)-q_\ast}{\mu_A^t}
    \longrightarrow
    C(q)
  \end{equation*}
  uniformly on $U$. The product representation shows that $C$ is continuous.
  Moreover, $C(q_0)\neq0$ because the orbit approaches $q_\ast$ monotonically
  without reaching or crossing it. This proves
  \cref{eq:gaussian-deterministic-asymptotic} and completes the proof.
\end{proof}

\subsection{Common scalar estimates}
\label{subsec:common-scalar-estimates}

Let $T_N\in\N$, let $(\mathcal F_t)_{0\leq t\leq T_N}$ be a filtration,
and let $(\mathcal R_{t,N})_{0\leq t\leq T_N}$ be adapted. Let $\tau_N$ be a
stopping time. Suppose that, on $\{t<\tau_N\}$ and for $0\leq t<T_N$,
\begin{equation}\label{eq:common-tracking-recursion}
  \mathcal R_{t+1,N}
  =
  A_{t,N}\mathcal R_{t,N}+\eta_{t+1,N},
\end{equation}
where $A_{t,N}$ is $\mathcal F_t$-measurable,
$\abs{A_{t,N}}\leq\alpha_{t,N}$ for deterministic $\alpha_{t,N}\geq0$, and
\begin{equation}\label{eq:common-tracking-noise}
  \Ee[\eta_{t+1,N}\mid\mathcal F_t]=0,
  \qquad
  \Ee[\eta_{t+1,N}^2\mid\mathcal F_t]
  \leq
  \frac{C\sigma_{t,N}^2}{N}
\end{equation}
for a constant $C<\infty$ and deterministic $\sigma_{t,N}\geq0$. Define
\begin{equation}\label{eq:common-tracking-amplification}
  K_N:=1\vee\max_{0\leq s<t\leq T_N}
  \bigg\{
  \prod_{u=s}^{t-1}(1\vee\alpha_{u,N}),
  \sigma_{s,N}
  \prod_{u=s+1}^{t-1}(1\vee\alpha_{u,N})
  \bigg\}.
\end{equation}

The following stopped second-moment estimate controls deviations from a
deterministic orbit up to a prescribed stopping time.
\begin{lemma}
\label{lem:common-stopped-orbit-tracking}
  Under the preceding assumptions, set
  $\overline{\mathcal R}_{t,N}:=\mathcal R_{t\wedge\tau_N,N}$. Then
  \begin{equation}\label{eq:common-stopped-orbit-tracking}
    \Ee\overline{\mathcal R}_{T_N,N}^2
    \leq
    CK_N^2\left(
      \Ee\mathcal R_{0,N}^2+\frac{T_N}{N}
    \right).
  \end{equation}
  In addition, for every $t\leq T_N$,
  \begin{equation}\label{eq:common-killed-orbit-tracking}
    \Ee\left[
      \mathcal R_{t,N}^2\one_{\{\tau_N>t\}}
    \right]
    \leq{}
    \left(\prod_{u=0}^{t-1}\alpha_{u,N}^2\right)
      \Ee\mathcal R_{0,N}^2
    +\frac{C}{N}\sum_{s=0}^{t-1}
      \sigma_{s,N}^2
      \prod_{u=s+1}^{t-1}\alpha_{u,N}^2,
  \end{equation}
  with empty products equal to one and empty sums equal to zero.
  In particular, if
  $\tau_N=\inf\{t:\abs{\mathcal R_{t,N}}>\delta\}$ for some $\delta>0$,
  then
  \begin{equation}\label{eq:common-orbit-exit-bound}
    \Pp(\tau_N\leq T_N)
    \leq
    \frac{CK_N^2}{\delta^2}
    \left(
      \Ee\mathcal R_{0,N}^2+\frac{T_N}{N}
    \right).
  \end{equation}
\end{lemma}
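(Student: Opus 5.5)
The plan is a one-step second-moment recursion for the stopped and killed processes, iterated in time; the exit bound then follows from Chebyshev's inequality. The key observation is that $\{\tau_N>t\}\in\mathcal F_t$ because $\tau_N$ is a stopping time, and that $A_{t,N}$ and $\mathcal R_{t,N}$ are also $\mathcal F_t$-measurable. Hence every cross term of the form $A_{t,N}\mathcal R_{t,N}\eta_{t+1,N}\one_{\{\tau_N>t\}}$ has zero expectation, by conditioning on $\mathcal F_t$ and using $\Ee[\eta_{t+1,N}\mid\mathcal F_t]=0$ from \cref{eq:common-tracking-noise}. The recursion \cref{eq:common-tracking-recursion} is only needed on $\{t<\tau_N\}$, which is exactly where we use it.

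\emph{Killed estimate.} Since $\{\tau_N>t+1\}\subseteq\{\tau_N>t\}$ and the recursion holds on $\{\tau_N>t\}$, we have
\begin{equation*}
  \Ee\left[\mathcal R_{t+1,N}^2\one_{\{\tau_N>t+1\}}\right]
  \leq
  \Ee\left[(A_{t,N}\mathcal R_{t,N}+\eta_{t+1,N})^2\one_{\{\tau_N>t\}}\right].
\end{equation*}
Expanding the square, the cross term vanishes. The quadratic terms are bounded using $\abs{A_{t,N}}\leq\alpha_{t,N}$ and the conditional variance bound. This gives
\begin{equation*}
  \Ee\left[\mathcal R_{t+1,N}^2\one_{\{\tau_N>t+1\}}\right]
  \leq
  \alpha_{t,N}^2\,\Ee\left[\mathcal R_{t,N}^2\one_{\{\tau_N>t\}}\right]
  +\frac{C\sigma_{t,N}^2}{N}.
\end{equation*}
We start from $\Ee[\mathcal R_{0,N}^2\one_{\{\tau_N>0\}}]\leq\Ee\mathcal R_{0,N}^2$ and iterate this linear recursion by induction on $t$. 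This yields \cref{eq:common-killed-orbit-tracking} exactly, with the stated empty-product and empty-sum conventions.

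\emph{Stopped estimate.} We split according to $\{\tau_N\leq t\}$ and $\{\tau_N>t\}$. On the first event, $\overline{\mathcal R}_{t+1,N}=\overline{\mathcal R}_{t,N}$. On the second event, $\overline{\mathcal R}_{t+1,N}=A_{t,N}\mathcal R_{t,N}+\eta_{t+1,N}$ and $\mathcal R_{t,N}=\overline{\mathcal R}_{t,N}$. The same cross-term cancellation then gives
\begin{equation*}
  \Ee\overline{\mathcal R}_{t+1,N}^2
  \leq
  \Ee\left[\overline{\mathcal R}_{t,N}^2\one_{\{\tau_N\leq t\}}\right]
  +\alpha_{t,N}^2\Ee\left[\overline{\mathcal R}_{t,N}^2\one_{\{\tau_N>t\}}\right]
  +\frac{C\sigma_{t,N}^2}{N}.
\end{equation*}
The right side is at most $(1\vee\alpha_{t,N})^2\,\Ee\overline{\mathcal R}_{t,N}^2+C\sigma_{t,N}^2/N$. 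Iterating up to $T_N$ gives
\begin{equation*}
  \Ee\overline{\mathcal R}_{T_N,N}^2
  \leq
  \prod_{u=0}^{T_N-1}(1\vee\alpha_{u,N})^2\,\Ee\mathcal R_{0,N}^2
  +\frac CN\sum_{s=0}^{T_N-1}\sigma_{s,N}^2\prod_{u=s+1}^{T_N-1}(1\vee\alpha_{u,N})^2.
\end{equation*}
By the definition \cref{eq:common-tracking-amplification} of $K_N$, the first product is at most $K_N^2$, and each summand $\sigma_{s,N}^2\prod_{u=s+1}^{T_N-1}(1\vee\alpha_{u,N})^2$ is at most $K_N^2$. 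The right side is therefore at most $K_N^2\Ee\mathcal R_{0,N}^2+CK_N^2T_N/N$.

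This is bounded by $CK_N^2(\Ee\mathcal R_{0,N}^2+T_N/N)$ once $C\geq1$. We may assume $C\geq1$ without loss of generality, because \cref{eq:common-tracking-noise} is only an upper bound and remains valid after enlarging $C$. I expect this normalization, together with checking carefully that the stopped process equals the recursion exactly on $\{\tau_N>t\}$, to be the only point requiring care.

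\emph{Exit bound.} Now take $\tau_N=\inf\{t:\abs{\mathcal R_{t,N}}>\delta\}$. On $\{\tau_N\leq T_N\}$ we have $\abs{\overline{\mathcal R}_{T_N,N}}=\abs{\mathcal R_{\tau_N,N}}>\delta$; this includes the case $\tau_N=0$. Chebyshev's inequality, applied with \cref{eq:common-stopped-orbit-tracking}, then gives \cref{eq:common-orbit-exit-bound}.
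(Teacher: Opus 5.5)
Your proof is correct and follows essentially the paper's argument. The killed recursion and the Chebyshev exit bound are identical, and your forward iteration of $\Ee\overline{\mathcal R}_{t+1,N}^2\leq(1\vee\alpha_{t,N})^2\,\Ee\overline{\mathcal R}_{t,N}^2+C\sigma_{t,N}^2/N$ is just the unrolled form of the paper's telescoping with backward weights $W_t=\prod_{u=t}^{T_N-1}(1\vee\alpha_{u,N}^2)$, so both arrive at the same intermediate bound. Your explicit normalization $C\geq1$ is a point the paper uses without comment when it bounds $W_0\,\Ee\mathcal R_{0,N}^2+CK_N^2T_N/N$ by $CK_N^2(\Ee\mathcal R_{0,N}^2+T_N/N)$.
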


If $\Ee\log M_1<0$, the affine recursion below is contractive
\cite{buraczewski-damek-mikosch2016}. The next lemma quantifies its entrance
into a bounded set.
\begin{lemma}
\label{lem:negative-drift-affine-entrance}
  Let $(M_j)_{j\geq1}$ be independent and identically distributed positive
  random variables with $\Ee\log M_1<0$, and assume $\log(M_1+\eps)$ has
  exponential moments in a neighborhood of the origin for each fixed
  $\eps>0$. For every $b<\infty$, there exist $K_\ast<\infty$ and
  $c_1,c_2,c_3>0$, depending only on $b$ and the law of $M_1$, such that the
  affine recursion
  \begin{equation*}
    U_{t+1}=M_{t+1}U_t+b,
    \qquad
    U_0=K,
  \end{equation*}
  satisfies, uniformly for $K\geq2$,
  \begin{equation}\label{eq:negative-drift-affine-entrance}
    \Pp\left(\inf\{t:U_t\leq K_\ast\}>c_1\log K+r\right)
    \leq
    c_2 e^{-c_3r},
    \qquad r\geq0.
  \end{equation}
\end{lemma}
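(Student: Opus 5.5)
We need to prove Lemma \ref{lem:negative-drift-affine-entrance}: entrance of the affine recursion into a bounded set, with exponential tails beyond a time of order $\log K$.

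The plan is to unroll the recursion and compare $U_t$ with a random walk in logarithmic coordinates. Write $S_t:=\sum_{j=1}^t\log M_j$ and $\Pi_{s,t}:=\prod_{j=s+1}^tM_j=e^{S_t-S_s}$. Then
\begin{equation*}
  U_t=K\,e^{S_t}+b\sum_{s=0}^{t-1}\Pi_{s+1,t}.
\end{equation*}
Set $-m:=\Ee\log M_1<0$. The moment assumption is stated with $\log(M_1+\eps)$ rather than $\log M_1$, so that only the upper tail of $\log M_1$ needs exponential moments; the lower tail may be heavy. To handle this, I will replace $M_j$ by $\widetilde M_j:=M_j+\eps$, with $\eps>0$ chosen so small that $\Ee\log(M_1+\eps)\leq -m/2$. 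This is possible by monotone convergence, since $\log(M_1+\eps)\downarrow\log M_1$ and $\log(M_1+1)$ is integrable. Pathwise domination $U_t\leq\widetilde U_t$ holds for the recursion driven by $\widetilde M$. So it suffices to treat $\widetilde M$, for which $\xi_j:=\log\widetilde M_j$ has $\Ee e^{\lambda\xi_j}<\infty$ for $|\lambda|\leq\lambda_0$ and mean at most $-m/2$. I will write $S_t$ for the sums of the $\xi_j$ from now on.

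Next comes a Chernoff step. Fix $\lambda\in(0,\lambda_0]$ small enough that $\kappa(\lambda):=\log\Ee e^{\lambda\xi_1}\leq-\lambda m/4$; this uses $\kappa(0)=0$ and $\kappa'(0)\leq -m/2$. Then $\Ee e^{\lambda(S_t-S_s)}\leq e^{-\lambda m(t-s)/4}$, and Markov's inequality will control each term of the unrolled sum.

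For the first term, I will show $Ke^{S_t}\leq1$ with high probability once $t\geq c_1\log K+r$. Indeed,
\begin{equation*}
  \Pp(Ke^{S_t}>1)\leq K^\lambda e^{-\lambda mt/4},
\end{equation*}
and choosing $c_1:=8/m$ makes this at most $e^{-\lambda mr/4}$.

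The second term $D_t:=b\sum_{s<t}e^{S_t-S_{s+1}}$ is the delicate part, because $t$ is fixed and it is not a stopping-time statement. I will bound its $\lambda$-moment. Taking $\lambda\leq1$, subadditivity of $x\mapsto x^\lambda$ gives
\begin{equation*}
  \Ee D_t^\lambda\leq b^\lambda\sum_{k\geq0}e^{-\lambda mk/4}=:B<\infty,
\end{equation*}
uniformly in $t$. Hence $\Pp(D_t>K_\ast-1)\leq B(K_\ast-1)^{-\lambda}$. This estimate is not exponentially small in $r$, so a single time is not enough.

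To get exponential decay in $r$, I will use the many disjoint chances along the way. Consider the times $t_k:=\lceil c_1\log K\rceil+kL$ for $k=0,1,\dots,\lfloor r/L\rfloor$, with $L$ a fixed block length. The target is that each block independently has a probability at least $p>0$ of producing a visit to $[0,K_\ast]$, conditionally on the past being below level $K'$. More concretely, on the event $\{U_{t_k}\leq y\}$, the next value satisfies
\begin{equation*}
  U_{t_k+L}\leq y\,e^{S_{t_k+L}-S_{t_k}}+D'_L,
\end{equation*}
where $D'_L$ is an independent copy of the $D$-term over the block.

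The cleanest way I see to organise this is a two-level argument, and I expect it to be the main obstacle.

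\textbf{(i)} First, I will show that the walk rarely stays high: $\Pp(U_t>y)\leq (K^\lambda e^{-\lambda mt/4}+B)\,y^{-\lambda}$ for all $t$, by the same moment computation with $\Ee U_t^\lambda\leq K^\lambda e^{-\lambda mt/4}+B$.

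\textbf{(ii)} Second, I will use a Markov renewal argument on the level set $\{U\leq y_0\}$. Fix $y_0$ large and $K_\ast:=2y_0$. From any state $\leq y_0$, I expect the chain to enter $[0,K_\ast]$ within $L$ steps with probability at least $p$; in fact, already $U\leq y_0\leq K_\ast$ means it has entered. So the real task is to reach $[0,y_0]$. By (i), from any starting value $U\leq Y$, the probability of $U_{t+L}>y_0$ is at most $(Y^\lambda e^{-\lambda mL/4}+B)y_0^{-\lambda}$.

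Iterating over blocks with this supermartingale-type bound on $\Ee U^\lambda$ should yield geometric decay. The cleanest route is to use $V(u)=u^\lambda$ as a Foster--Lyapunov function, since $\Ee[V(U_{t+1})\mid U_t=u]\leq \rho V(u)+b'$ with $\rho<1$ by subadditivity. The standard drift criterion then gives exponential moments for the hitting time of $\{V\leq 2b'/(1-\rho)\}$ after the initial phase, because the hitting time from $u$ has tail bounded by $C(1+V(u))e^{-cr}$. Since $V(K)=K^\lambda$, the initial factor $K^\lambda$ is absorbed by the shift $c_1\log K$. This gives the bound
\begin{equation*}
  \Pp\bigl(\inf\{t:U_t\leq K_\ast\}>c_1\log K+r\bigr)\leq c_2e^{-c_3r},
\end{equation*}
with $K_\ast:=(2b'/(1-\rho))^{1/\lambda}$, uniformly for $K\geq2$. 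The main point to verify carefully is this drift-criterion tail bound, which I would prove directly via the supermartingale $\rho^{-t\wedge\sigma}V(U_{t\wedge\sigma})$, where $\sigma$ is the hitting time.
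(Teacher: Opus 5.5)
Your final argument, the Foster--Lyapunov drift with $V(u)=u^\lambda$ plus optional stopping, is correct. It handles the additive constant differently from the paper.

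The paper never unrolls the recursion. It picks $\eps$ with $\Ee\log(M_1+\eps)<0$ and $K_\ast\geq b/\eps$, and notes that $U_t\geq K_\ast$ gives $U_{t+1}\leq M_{t+1}U_t+\eps U_t=(M_{t+1}+\eps)U_t$. On $\{T>n\}$ this yields, pathwise, $U_n\leq K\prod_{j\leq n}(M_j+\eps)$. A single Chernoff bound for the walk $\sum_{j\leq n}\log(M_j+\eps)$ then gives $\Pp(T>n)\leq (K/K_\ast)^s e^{-c_3 n}$ with $c_1=s/c_3$. There the shift $\eps$ does double duty: it supplies exponential moments and it absorbs $b$ into the multiplier above the threshold.

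In your route, $\eps$ only supplies moments. You absorb $b$ at the level of the Lyapunov function through subadditivity, $\Ee[V(U_{t+1})\mid U_t=u]\leq\rho V(u)+b^\lambda$. This is more robust, since it uses only a geometric drift inequality and would survive, for instance, random additive terms with a finite $\lambda$-moment. The cost is the restriction $\lambda\leq1$ and a supermartingale argument. The paper's reduction to a multiplicative random walk is purely pathwise and shorter.

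Two points about your write-up:
\begin{itemize}
\item The unrolled-sum estimates, the single-time bound on $D_t$, the block scheme and items (i)--(ii) are unnecessary and can be dropped. The drift argument alone proves the lemma.
\item As written, $\rho^{-(t\wedge\sigma)}V(U_{t\wedge\sigma})$ is not a supermartingale, because of the $+b^\lambda$ term. Use $\rho':=(1+\rho)/2$ instead. This works because $b^\lambda<\tfrac{1-\rho}{2}V(u)$ outside $\{V\leq C_\ast\}$, where $C_\ast:=2b^\lambda/(1-\rho)$. You then get $\Pp(\sigma>t)\leq K^\lambda(\rho')^t/C_\ast$, and the choice $c_1:=\lambda/\log(1/\rho')$ absorbs the factor $K^\lambda$.
\end{itemize}
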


See \cref{app:common-stopped-orbit-tracking,app:common-affine-entrance} for the
proofs.

\subsection{Transition and reconstruction kernels}
\label{subsec:vector-radius-kernels}

For $q\in[0,1]$, let $J_{A,N}(q,\cdot)$ be the law of
$(\tanh(A\sqrt q\,G_1),\ldots,\tanh(A\sqrt q\,G_N))$, where
$G\sim\mathcal N(0,I_N)$.
For a probability measure $\eta$ on $[0,1]$, we write
$\eta J_{A,N}$ for the probability measure on $[-1,1]^N$ defined by
\begin{equation*}
  \eta J_{A,N}(B)
  :=
  \int_0^1 J_{A,N}(q,B)\,\eta(\mathrm{d}q).
\end{equation*}

The next proposition transfers invariant laws and total variation estimates
between $K_{A,N}$ and $P_{A,N}$.
\begin{proposition}
\label{prop:gaussian-tv-reduction}
  Let $P_{A,N}$ be the transition kernel of $X^{(N)}$ in
  \cref{eq:unrounded-chain}. Then, for every bounded measurable
  $f:[-1,1]^N\to\R$,
  \begin{equation}\label{eq:gaussian-vector-kernel-factorization}
    P_{A,N}f(x)
    =
    \int_{[-1,1]^N} f(z)\,J_{A,N}(r_N(x),\mathrm{d}z).
  \end{equation}
  Consequently, for every deterministic $x\in[-1,1]^N$, with
  $q=r_N(x)$, and every $t\geq1$,
  \begin{equation}\label{eq:gaussian-vector-radius-laws}
    P_{A,N}^t(x,\cdot)=K_{A,N}^{t-1}(q,\cdot)J_{A,N},
    \qquad
    (r_N)_\#P_{A,N}^t(x,\cdot)=K_{A,N}^{t}(q,\cdot).
  \end{equation}

  If $\nu_{A,N}$ is invariant for $K_{A,N}$, then
  $\pi_{A,N}:=\nu_{A,N}J_{A,N}$, that is,
  \begin{equation}\label{eq:gaussian-vector-stationary-from-q}
    \pi_{A,N}
    =
    \int_0^1 J_{A,N}(q,\cdot)\,\nu_{A,N}(\mathrm{d}q)
  \end{equation}
  is an invariant law for $P_{A,N}$. Conversely, if $\pi_{A,N}$ is invariant
  for $P_{A,N}$ and $\nu_{A,N}:=(r_N)_\#\pi_{A,N}$, then $\nu_{A,N}$ is
  invariant for $K_{A,N}$ and $\pi_{A,N}$ is given by
  \cref{eq:gaussian-vector-stationary-from-q}.

  For this pair of invariant laws and $t\geq1$,
  \begin{equation}\label{eq:gaussian-tv-bracket}
    \norm{K_{A,N}^t(q,\cdot)-\nu_{A,N}}_{\TV}
    \leq
    \norm{P_{A,N}^t(x,\cdot)-\pi_{A,N}}_{\TV}
    \leq
    \norm{K_{A,N}^{t-1}(q,\cdot)-\nu_{A,N}}_{\TV}.
  \end{equation}
\end{proposition}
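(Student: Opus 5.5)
The plan is to derive everything from the one-step factorization \cref{eq:gaussian-vector-kernel-factorization}, and then use two facts repeatedly: $r_N\circ J_{A,N}(q,\cdot)$ has law $K_{A,N}(q,\cdot)$, and Markov kernels contract total variation.

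For the factorization, condition on $X_t^{(N)}=x$. By rotational invariance of the Gaussian matrix, $\mathsf W x\stackrel{d}{=}(A\norm{x}_2/\sqrt N)G=A\sqrt{r_N(x)}\,G$ with $G\sim\mathcal N(0,I_N)$. Applying $\tanh$ coordinatewise therefore gives exactly the law $J_{A,N}(r_N(x),\cdot)$. By definition of $F_{A,N}$ and \cref{eq:gaussian-q-kernel}, the pushforward $(r_N)_\#J_{A,N}(q,\cdot)$ equals $K_{A,N}(q,\cdot)$.

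\emph{Iterated laws.} We argue by induction on $t$. For $t=1$, the first claim is the factorization itself. Suppose $P_{A,N}^{t}(x,\cdot)=K^{t-1}(q,\cdot)J$. Then
\[
P^{t+1}(x,\cdot)=\int K^{t-1}(q,\mathrm dq')\int J(q',\mathrm dz)\,J(r_N(z),\cdot).
\]
The inner integral is $\int K(q',\mathrm dq'')\,J(q'',\cdot)$ by the pushforward identity, which gives $K^{t}(q,\cdot)J$. Taking the pushforward under $r_N$ then gives the second identity in \cref{eq:gaussian-vector-radius-laws}.

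\emph{Invariance.} If $\nu K=\nu$, then $(\nu J)P=\nu KJ=\nu J$, using the same computation with $x$ replaced by an initial law. Conversely, suppose $\pi P=\pi$ and set $\nu=(r_N)_\#\pi$. Then
\[
\pi=\pi P=\int\pi(\mathrm dx)\,J(r_N(x),\cdot)=\nu J .
\]
Pushing forward by $r_N$ gives $\nu=\nu K$.

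\emph{Bracket \cref{eq:gaussian-tv-bracket}.} For the lower bound, total variation does not increase under the measurable map $r_N$, and $(r_N)_\#$ sends $P^t(x,\cdot)$ to $K^t(q,\cdot)$ and $\pi$ to $\nu$. For the upper bound, write $P^t(x,\cdot)-\pi=(K^{t-1}(q,\cdot)-\nu)J$. Since $J$ is a Markov kernel, it contracts total variation, which gives the bound by $\norm{K^{t-1}(q,\cdot)-\nu}_{\TV}$.

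The main obstacle is measurability. We must check that $q\mapsto J_{A,N}(q,B)$ is measurable, so that $\eta J$ is a genuine kernel composition. This follows by Fubini, since the map $(q,g)\mapsto(\tanh(A\sqrt q\,g_i))_i$ is continuous. The point $q=0$ requires no special care, because $J(0,\cdot)=\delta_0$, which is consistent with the rest of the argument.
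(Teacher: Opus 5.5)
Your proposal is correct and follows essentially the same route as the paper: you use the one-step factorization via the Gaussian law of $\mathsf W x$, the pushforward identity $(r_N)_\#J_{A,N}(q,\cdot)=K_{A,N}(q,\cdot)$ to drive both the induction and the transfer of invariance in each direction, and total-variation contraction under $r_N$ and under the kernel $J_{A,N}$ for the two sides of the bracket. Your measurability check for $q\mapsto J_{A,N}(q,B)$ is a small addition that the paper leaves implicit, as is the observation $(r_N)_\#(\nu_{A,N}J_{A,N})=\nu_{A,N}K_{A,N}=\nu_{A,N}$, which the lower bound needs in the direction starting from $\nu_{A,N}$.
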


\begin{proof}
  For a fixed vector $x$, the random vector $\mathsf W_A^{(N)}x$ has law
  $A\sqrt{r_N(x)}G$. Applying $\tanh$ coordinatewise gives
  \cref{eq:gaussian-vector-kernel-factorization}. Since
  $(r_N)_\#J_{A,N}(q,\cdot)=K_{A,N}(q,\cdot)$, induction gives
  \cref{eq:gaussian-vector-radius-laws}. The same identities also give the
  statement about invariant laws. Indeed, if $\nu_{A,N}K_{A,N}=\nu_{A,N}$,
  then
  \begin{equation*}
    \left(\nu_{A,N}J_{A,N}\right)P_{A,N}
    =
    \left(\nu_{A,N}K_{A,N}\right)J_{A,N}
    =
    \nu_{A,N}J_{A,N}.
  \end{equation*}
  Conversely, if $\pi_{A,N}P_{A,N}=\pi_{A,N}$ and
  $\nu_{A,N}=(r_N)_\#\pi_{A,N}$, then
  \begin{equation*}
    \pi_{A,N}
    =
    \pi_{A,N}P_{A,N}
    =
    \nu_{A,N}J_{A,N}
  \end{equation*}
  and, after applying $(r_N)_\#$, we obtain
  $\nu_{A,N}K_{A,N}=\nu_{A,N}$.

  The lower bound in \cref{eq:gaussian-tv-bracket} is the contraction of total
  variation under the measurable map $r_N$. For the upper bound, use
  \cref{eq:gaussian-vector-radius-laws} and
  \cref{eq:gaussian-vector-stationary-from-q}. The reconstruction kernel
  $J_{A,N}$ contracts total variation, and hence
  \begin{equation*}
    \norm{
      K_{A,N}^{t-1}(q,\cdot)J_{A,N}
      -
      \nu_{A,N}J_{A,N}
    }_{\TV}
    \leq
    \norm{K_{A,N}^{t-1}(q,\cdot)-\nu_{A,N}}_{\TV}.
  \end{equation*}
\end{proof}

\subsection{Invariant laws}
\label{subsec:invariant-laws}

For later use, we write the invariance relation for $K_{A,N}$ on $(0,1]$ in
integral form:
\begin{equation}\label{eq:gaussian-q-stationary-equation}
  \int_0^1\phi(q)\,\nu_{A,N}(\mathrm{d}q)
  =
  \int_0^1
  \int_{\R^N}
  \phi\left(F_{A,N}(q,g)\right)
  \mathcal G_N(\mathrm{d}g)\,
  \nu_{A,N}(\mathrm{d}q)
\end{equation}
for every bounded measurable $\phi:[0,1]\to\R$.

For $A>0$, $N\geq1$, $q\in[0,1]$, and $T\geq1$, define
\begin{equation}\label{eq:gaussian-cesaro-selection}
  \overline\nu_{T}^{\,q}
  :=
  \frac1T\sum_{t=0}^{T-1}K_{A,N}^t(q,\cdot).
\end{equation}

\begin{proposition}
\label{prop:gaussian-compactness-selection}
  The family $(\overline\nu_T^{\,q})_{T\geq1}$ is relatively compact for weak
  convergence. Every subsequential limit $\overline\nu$ is invariant for
  $K_{A,N}$ and satisfies the stationary equation
  \cref{eq:gaussian-q-stationary-equation}. We denote by
  $\mathcal I_{A,N}(q)$ the nonempty set of all such limits.

  If $\overline\nu\in\mathcal I_{A,N}(q)$ and
  $\overline\nu\neq\delta_0$, then
  \begin{equation*}
    \nu^+
    :=
    \frac{\overline\nu|_{(0,1]}}{1-\overline\nu(\{0\})}
  \end{equation*}
  is invariant for $K_{A,N}$ on $(0,1]$. We write
  $\mathcal I_{A,N}^+(q)$ for the collection of all such nonzero components.
  For every $\nu^+\in\mathcal I_{A,N}^+(q)$, the measure
  \begin{equation}\label{eq:gaussian-compactness-vector-law}
    \pi^+:=\int_0^1 J_{A,N}(s,\cdot)\,\nu^+(\mathrm{d}s)
  \end{equation}
  is an invariant law for $X^{(N)}$. Equivalently,
  \begin{equation}\label{eq:gaussian-vector-weak-stationary-equation}
    \int_{[-1,1]^N}\varphi(y)\,\pi^+(\mathrm{d}y)
    =
    \int_{[-1,1]^N}
    \int_{\R^N}
    \varphi\bigl(\tanh(A\sqrt{r_N(x)}\,g)\bigr)\,
    \mathcal G_N(\mathrm{d}g)\,
    \pi^+(\mathrm{d}x)
  \end{equation}
  for every bounded measurable $\varphi:[-1,1]^N\to\R$.
\end{proposition}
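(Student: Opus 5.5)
Relative compactness is immediate: every $\overline\nu_T^{\,q}$ is a probability measure on the compact space $[0,1]$, so Prokhorov's theorem (or Riesz--Markov with Banach--Alaoglu) gives weak subsequential limits. To show that a limit $\overline\nu=\lim_k\overline\nu_{T_k}^{\,q}$ is invariant, we will first prove that $K_{A,N}$ is Feller on $[0,1]$. For continuous $\phi$, the map $q\mapsto\int\phi(F_{A,N}(q,g))\,\mathcal G_N(\mathrm dg)$ is continuous by dominated convergence, because $q\mapsto F_{A,N}(q,g)$ is continuous for each fixed $g$ (including at $q=0$, where $\sqrt q$ is continuous) and $\phi$ is bounded. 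We then use the telescoping identity
\begin{equation*}
  \overline\nu_T^{\,q}K_{A,N}-\overline\nu_T^{\,q}
  =
  \frac1T\left(K_{A,N}^T(q,\cdot)-\delta_q\right),
\end{equation*}
whose total variation is at most $2/T$. Pairing this with $\phi\in C([0,1])$ and applying the Feller property, we can pass to the limit along $T_k$ and obtain $\int K_{A,N}\phi\,\mathrm d\overline\nu=\int\phi\,\mathrm d\overline\nu$ for all continuous $\phi$. Hence $\overline\nu K_{A,N}=\overline\nu$ as measures, since continuous functions determine finite measures on $[0,1]$. This identity is exactly \cref{eq:gaussian-q-stationary-equation} for all bounded measurable $\phi$, by the definition \cref{eq:gaussian-q-kernel}.

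For the nonzero component, the key fact is that $(0,1]$ is invariant in both directions. We have $F_{A,N}(0,g)=0$, so $K_{A,N}(0,\cdot)=\delta_0$. For $q>0$, $F_{A,N}(q,g)>0$ unless $g=0$, which is a $\mathcal G_N$-null set, so $K_{A,N}(q,\{0\})=0$. Writing $\overline\nu=p\delta_0+(1-p)\nu^+$ with $p=\overline\nu(\{0\})<1$, we get
\begin{equation*}
  \overline\nu K_{A,N}=p\delta_0+(1-p)\nu^+K_{A,N},
\end{equation*}
and $\nu^+K_{A,N}$ is carried by $(0,1]$. Comparing the restrictions of both sides of $\overline\nu K_{A,N}=\overline\nu$ to $(0,1]$ gives $(1-p)\nu^+K_{A,N}=(1-p)\nu^+$, so $\nu^+$ is invariant and lives on $(0,1]$.

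Finally, \cref{prop:gaussian-tv-reduction} applied to the invariant law $\nu^+$ shows that $\pi^+=\nu^+J_{A,N}$ is invariant for $P_{A,N}$. The weak form \cref{eq:gaussian-vector-weak-stationary-equation} is then $\pi^+P_{A,N}=\pi^+$ written out with \cref{eq:gaussian-vector-kernel-factorization}, since $P_{A,N}\varphi(x)=\int\varphi(\tanh(A\sqrt{r_N(x)}\,g))\,\mathcal G_N(\mathrm dg)$. The only genuinely delicate point is the Feller property at $q=0$ together with the exact $\mathcal G_N$-null-set argument separating the atom at zero; everything else is standard Krylov--Bogoliubov.
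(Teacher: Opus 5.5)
Your proof is correct and follows the paper's argument essentially step for step. Both use the Feller property of $K_{A,N}$ via dominated convergence, the telescoping Ces\`aro identity to show weak limits are invariant (Krylov--Bogoliubov), the splitting at the atom using $K_{A,N}(0,\cdot)=\delta_0$ and $K_{A,N}(s,\{0\})=0$ for $s>0$, and \cref{prop:gaussian-tv-reduction} for the vector law; your comparison of restrictions to $(0,1]$ simply spells out the invariance of $\nu^+$ that the paper asserts in one line.
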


For $A>A_c(N)$, a negative-moment Foster--Lyapunov estimate shows that Cesàro
limits started from positive radii do not charge the origin.
\begin{proposition}
\label{prop:gaussian-nonzero-invariant-existence}
  Let $N\geq1$ and assume $A>A_c(N)$. Then, for every $q\in(0,1]$,
  $\mathcal I_{A,N}^+(q)$ is nonempty. In particular, $K_{A,N}$ admits at
  least one invariant probability on $(0,1]$, and $X^{(N)}$ admits a
  corresponding nonzero invariant law.
\end{proposition}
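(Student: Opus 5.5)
Since $A>A_c(N)$ gives $\gamma_{A,N}<0$, I will build a Lyapunov function $W(q)=q^{-\kappa}$ for a small $\kappa>0$ and show that the Cesàro averages $\overline\nu_T^{\,q}$ from \cref{prop:gaussian-compactness-selection} give vanishing mass to small neighbourhoods of $0$, uniformly in $T$. Any subsequential limit $\overline\nu\in\mathcal I_{A,N}(q)$ then satisfies $\overline\nu(\{0\})=0$, hence $\overline\nu\neq\delta_0$ and $\nu^+=\overline\nu$. By \cref{prop:gaussian-compactness-selection}, $\pi^+=\nu^+J_{A,N}$ is then a nonzero invariant law for $X^{(N)}$.

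\emph{Step 1: drift near zero.} By \cref{eq:gaussian-squared-radius-multiplier}, $F_{A,N}(q,G)/q\to A^2\chi_N^2/N=\ell_{A,N}^2$ almost surely. The function $\kappa\mapsto\Ee\ell_{A,N}^{-2\kappa}$ equals $1$ at $\kappa=0$ and has derivative $-2\Ee\log\ell_{A,N}=2\gamma_{A,N}<0$ there. It is finite for $2\kappa<N$, since $\chi_N^{-2\kappa}$ is integrable. So we can fix $\kappa$ with $\Ee\ell_{A,N}^{-2\kappa}=:\lambda_0<1$. Next I bound $\tanh^2(x)\geq x^2/(1+x^2)$, so that
\[
  F_{A,N}(q,g)/q\geq \frac{A^2}{N}\sum_i \frac{g_i^2}{1+A^2q g_i^2}.
\]
The right side increases to $\ell^2$ as $q\downarrow0$. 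By monotone convergence for the decreasing family $(F/q)^{-\kappa}$, which is dominated at a fixed $q_1$ because its integrability reduces to that of $\chi_N^{-2\kappa}$ on bounded $g$, we get
\[
  \Ee\bigl[(F_{A,N}(q,G)/q)^{-\kappa}\bigr]\to\lambda_0.
\]
Hence there are $\delta>0$ and $\lambda<1$ with $K_{A,N}W(q)\leq\lambda W(q)$ for $q\in(0,\delta]$. On $[\delta,1]$ the same lower bound gives $K_{A,N}W(q)\leq b<\infty$ uniformly. I expect this integrability and uniformity check to be the main technical point; the rest is routine.

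\emph{Step 2: Foster–Lyapunov bound.} Combining the two regimes gives $K_{A,N}W\leq\lambda W+b$ on $(0,1]$. Note that $Z_t>0$ almost surely for all $t$ when $q>0$, since the Gaussian coordinates are nonzero. Iterating yields
\[
  \Ee W(Z_t)\leq\lambda^tq^{-\kappa}+\frac{b}{1-\lambda},
\]
so $\sup_T\int W\,\mathrm d\overline\nu_T^{\,q}\leq q^{-\kappa}+b/(1-\lambda)=:M$.

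\emph{Step 3: passing to the limit.} Let $\overline\nu_{T_k}^{\,q}\to\overline\nu$ weakly. For each $\varepsilon>0$, Markov's inequality gives $\overline\nu_{T_k}^{\,q}([0,\varepsilon))\leq M\varepsilon^\kappa$, and the set $\{q=0\}$ is null for these measures. The Portmanteau theorem on the open set $[0,\varepsilon)$ then gives $\overline\nu(\{0\})\leq\overline\nu([0,\varepsilon))\leq M\varepsilon^\kappa$. Letting $\varepsilon\downarrow0$ yields $\overline\nu(\{0\})=0$. Therefore $\mathcal I^+_{A,N}(q)\ni\overline\nu$ is nonempty, $\overline\nu$ is an invariant probability on $(0,1]$, and \cref{prop:gaussian-compactness-selection} supplies the corresponding nonzero invariant law of $X^{(N)}$.
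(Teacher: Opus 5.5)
Your proposal is correct and follows the paper's proof essentially step by step. You choose the exponent $\kappa$ from the negative derivative $2\gamma_{A,N}$ of $\kappa\mapsto\Ee\ell_{A,N}^{-2\kappa}$ at zero, prove the Foster--Lyapunov bound $K_{A,N}W\leq\lambda W+b$ for $W(q)=q^{-\kappa}$, and then use a Ces\`aro moment bound, uniform in $T$, to force every weak limit to give no mass to $\{0\}$. The remaining differences are cosmetic: your bound $\tanh^2x\geq x^2/(1+x^2)$ replaces the paper's monotonicity of $\tanh x/x$ together with the truncated average $S_N$, and you apply Portmanteau to the open set $[0,\varepsilon)$ where the paper uses the continuous truncations $\min\{r^{-p},M\}$.
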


The proofs of \cref{prop:gaussian-compactness-selection,prop:gaussian-nonzero-invariant-existence} are in \cref{app:common-invariant-law}.

\clearpage
\subsection{Notation guide}
\label{subsec:notation-guide}

We use the following notation throughout the paper.

\begin{table}[H]
  \centering
  \small
  \setlength{\tabcolsep}{4pt}
  \renewcommand{\arraystretch}{1.15}
  \begin{tabular}{@{}p{0.22\textwidth}p{0.70\textwidth}@{}}
    \toprule
    Symbol & Definition \\
    \midrule
    $\mathsf W_{t,A}^{(N)}$
      & matrix of weights at layer $t$, with independent
        $\mathcal N(0,A^2/N)$ entries,
        \cref{eq:gaussian-weights}. \\
    $\mathcal G_N$
      & law of a standard Gaussian vector in $\R^N$,
        \cref{eq:gaussian-q-kernel}. \\
    $X_t^{(N)}$, $Y_t^{(N)}$
      & vector chains before and after rounding,
        \cref{eq:unrounded-chain,eq:rounded-chain}. \\
    $Q_\varrho$, $Q_1$
      & nearest-grid rounding with mesh $\varrho$ or $1$,
        \cref{eq:rounded-chain,eq:subcritical-exact-grid-radius}. \\
    $\tau_\varrho^{(N)}$
      & first time at which $Y_t^{(N)}=0$, \cref{eq:absorption-time}. \\
    $Z_t^{(N)}$, $R_t^{(N)}$
      & normalized squared radius and Euclidean radius of $X_t^{(N)}$,
        introduced before \cref{eq:q-chain}. \\
    $\mathscr H_t^{(N)}$, $U_{A,\varrho}$
      & normalized squared radius of $Y_t^{(N)}$ and the contribution of one
        coordinate to the next radius,
        \cref{eq:subcritical-exact-grid-radius,eq:rounded-coordinate-observable}. \\
    $F_{A,N}$, $K_{A,N}$
      & update map for $Z^{(N)}$ and its transition kernel,
        \cref{eq:gaussian-random-map,eq:gaussian-q-kernel}. \\
    $P_{A,N}$, $P_{\varrho,A,N}$
      & transition kernels of $X^{(N)}$ and $Y^{(N)}$, defined before
        \cref{eq:gaussian-vector-kernel-factorization,eq:absorption-time}. \\
    $J_{A,N}$
      & law of $X_{t+1}^{(N)}$ given $Z_t^{(N)}=q$, introduced before
        \cref{prop:gaussian-tv-reduction}. \\
    $V_A$
      & conditional mean of $Z_{t+1}^{(N)}$ given $Z_t^{(N)}=q$,
        \cref{eq:mean-field-map}. \\
    $A_c(N)$, $A_c$
      & solution of $\Ee\log\ell_{A,N}=0$, with $A_c=A_c(N)$ when $N$ is
        fixed, \cref{eq:fixed-width-critical}. \\
    $\ell_{A,N}$, $\gamma_{A,N}$, $\sigma_N$
      & multiplier near zero, with $\gamma_{A,N}=-\Ee\log\ell_{A,N}$ and
        $\sigma_N^2=\Var(\log\ell_{A,N})$,
        \cref{eq:gaussian-ell,eq:gaussian-log-multiplier-moments}. \\
    $\chi_N$
      & norm of a standard Gaussian vector in $\R^N$,
        \cref{eq:gaussian-ell}. \\
    $L_A$
      & upper comparison map for $V_{A,\varrho}$,
        \cref{eq:subcritical-lattice-map}. \\
    $V_{A,\varrho}$
      & conditional mean of $\mathscr H_{t+1}^{(N)}$ given
        $\mathscr H_t^{(N)}=h$, \cref{eq:rounded-radius-map}. \\
    $\bar h_{t,N}$, $\bar t_N$
      & orbit of $V_{A,\varrho}$ and the first time it falls below
        $(\log N)^{-1}$,
        \cref{eq:subcritical-exact-lattice-orbit,eq:subcritical-exact-hitting-time}. \\
    $A_{\mathrm{lat}}$
      & threshold below which $L_A(h)<h$ for every $h>0$,
        \cref{eq:subcritical-lattice-threshold}. \\
    $A_{\mathrm{ex}}(\varrho)$, $\mathcal O_{A,\varrho}$
      & threshold above which
        $\mathcal O_{A,\varrho}=\{h:V_{A,\varrho}(h)>h\}$ is nonempty,
        \cref{eq:metastable-exact-threshold,eq:metastable-positive-region}. \\
    $q_\ast$, $\mu_A$, $C(q_0)$
      & fixed point of $V_A$, derivative at that point, and coefficient in
        the asymptotic for $V_A^t(q_0)$,
        \cref{eq:gaussian-fixed-point,eq:gaussian-deterministic-asymptotic}. \\
    $\beta_{A,N}$
      & positive solution of the Cramér equation $r_{A,N}(\beta)=1$,
        \cref{eq:nd-beta-equation,eq:nd-beta-equation-explicit}. \\
    $\nu_{A,N}$, $\pi_{A,N}$
      & invariant laws of $Z^{(N)}$ and $X^{(N)}$ away from zero,
        \cref{eq:gaussian-q-stationary-equation,eq:gaussian-vector-stationary-from-q}. \\
    \bottomrule
  \end{tabular}
  \caption{Frequently used notation.}
  \label{tab:notation-guide}
\end{table}

\section{Subcritical cutoff as the mesh vanishes}
\label{sec:subcritical-precision}

We prove total variation cutoff to the absorbing state for $Y^{(N)}$ as the
mesh vanishes at fixed width.
Throughout the section $N\geq1$ and
$0<A<A_c(N)$ are fixed, and $x_0\in\R^N\setminus\{0\}$ is deterministic. We
write
\begin{equation*}
  R_t:=\norm{X_t^{(N)}}_2,
  \qquad
  \widehat R_t:=\norm{Y_t^{(N)}}_2,
\end{equation*}
where $X^{(N)}$ starts from $x_0$ and $Y^{(N)}$ starts from $Q_\varrho x_0$.
Generic constants $c,C,c_N,C_N$ may change from line to line. A
subscript $N$ indicates dependence only on the fixed width unless further
dependence is displayed.

We write $-\log R_t$ as a positive-drift random walk plus an almost surely
finite correction. A synchronous coupling transfers its first-passage
asymptotics to $\widehat R_t$ down to the grid scale. A final estimate then
gives absorption.

\subsection{Unrounded first-passage asymptotics}
\label{subsec:rounded-gaussian-first-passage}

Let $(\chi_{N,j})_{j\geq1}$ be independent copies of $\chi_N$. The fixed-width
subcritical condition is $\gamma_{A,N}>0$. Put
\begin{equation}\label{eq:rounded-gaussian-log-increments}
  \xi_j:=
  -\log\left(\frac A{\sqrt N}\chi_{N,j}\right),
  \qquad
  \sigma_N^2:=\Var(\xi_1).
\end{equation}
By \cref{eq:gaussian-log-multiplier-moments},
\begin{equation}\label{eq:rounded-gaussian-drift-variance-critical}
  \Ee\xi_1=\gamma_{A,N}=\log\frac{A_c(N)}A,
  \qquad
  \sigma_N^2
  =
  \Var(\log\chi_N)
  =
  \frac14\psi_1\left(\frac N2\right),
\end{equation}
where $\psi_1$ is the trigamma function. For $0<\varrho<R_0$, the cutoff scale is
\begin{equation}\label{eq:rounded-gaussian-cutoff-scale}
  L_\varrho:=\log\frac{R_0}{\varrho},
  \qquad
  t_\varrho(a)
  :=
  \max\left\{0,
    \left\lfloor
      \frac{L_\varrho}{\gamma_{A,N}}
      +
      a\,\frac{\sigma_N}{\gamma_{A,N}^{3/2}}\sqrt{L_\varrho}
    \right\rfloor
  \right\} .
\end{equation}
Here and below $\Phi_{\mathrm G}$ denotes the distribution function of a
standard Gaussian random variable.

\begin{lemma}
\label{lem:rounded-gaussian-log-reduction}
  There are an increasing adapted sequence $(E_t)_{t\geq0}$ and an event
  $\Omega_{\log}$ of probability one such that, on $\Omega_{\log}$,
  $R_t>0$ for every $t\geq0$,
  \begin{equation}\label{eq:rounded-gaussian-log-reduction}
    -\log R_t
    =
    -\log R_0
    +
    \sum_{j=1}^t\xi_j
    +
    E_t
    \qquad\mbox{for every }t\geq0
  \end{equation}
  and $E_t\uparrow E_\infty<\infty$. Consequently, $(E_t)_{t\geq0}$ is
  tight.
\end{lemma}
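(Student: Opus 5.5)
We realize the unrounded chain through the Gaussian representation. Write $X_{t+1}=\tanh\bigl((A R_t/\sqrt N)\,G_{t+1}\bigr)$ with $G_{t+1}\sim\mathcal N(0,I_N)$ independent of $\mathcal F_t$, and set $\chi_{N,t+1}:=\norm{G_{t+1}}_2$. Then $(\chi_{N,j})$ are i.i.d.\ copies of $\chi_N$, and the $\xi_j$ in \cref{eq:rounded-gaussian-log-increments} are adapted. Define $u_{t+1}:=(AR_t/\sqrt N)G_{t+1}$, so that $\norm{u_{t+1}}_2=R_t\,e^{-\xi_{t+1}}$. Define the one-step correction
\begin{equation*}
  e_{t+1}
  :=
  -\log\frac{\norm{\tanh(u_{t+1})}_2}{\norm{u_{t+1}}_2}
  \geq0 .
\end{equation*}
It is nonnegative because $\abs{\tanh z}\leq\abs z$ coordinatewise. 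Setting $E_t:=\sum_{j=1}^t e_j$ gives \cref{eq:rounded-gaussian-log-reduction} by telescoping, and $E_t$ is increasing and adapted. Positivity of $R_t$ holds almost surely: $G_{t+1}\neq0$ almost surely, and $\tanh$ vanishes only at $0$, so inductively $R_t>0$ for all $t$ off a null set.

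The substance is $E_\infty<\infty$ almost surely. Since $\tanh z/z\geq 1-z^2/3$, we have
\begin{equation*}
  e_{t+1}\leq C\norm{u_{t+1}}_2^2
\end{equation*}
when $\norm{u_{t+1}}_2$ is small. In general $e_{t+1}$ is bounded by a function of $\norm{u_{t+1}}_2$ that grows at most like $\log(1+\norm{u_{t+1}}_2)$, because $\norm{\tanh u}_2\geq\tanh(\norm u_\infty)\geq\tanh(\norm u_2/\sqrt N)$. It therefore suffices to show $R_t\to0$ geometrically fast almost surely. Then $\norm{u_{t+1}}_2^2=R_t^2e^{-2\xi_{t+1}}$ is summable almost surely, since $e^{-2\xi}$ has all moments and Borel--Cantelli gives $e^{-2\xi_{t}}\leq e^{\epsilon t}$ eventually.

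For the geometric decay, use $R_{t+1}\leq\norm{u_{t+1}}_2=R_te^{-\xi_{t+1}}$. This gives
\begin{equation*}
  -\log R_t\geq -\log R_0+\sum_{j=1}^t\xi_j,
\end{equation*}
and by the strong law of large numbers the right side is at least $\gamma_{A,N}t/2$ eventually, almost surely, since $\gamma_{A,N}>0$. Hence $R_t\leq e^{-\gamma_{A,N}t/3}$ eventually, and so
\begin{equation*}
  \sum_t R_t^2e^{-2\xi_{t+1}}<\infty
\end{equation*}
almost surely. Consequently only finitely many $t$ have $\norm{u_{t+1}}_2$ outside the small-argument region, each of those finitely many terms $e_{t+1}$ is finite, and the remaining terms are summable. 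This yields $E_t\uparrow E_\infty<\infty$ on a full-measure event $\Omega_{\log}$. Tightness of $(E_t)$ follows because $0\leq E_t\leq E_\infty$ with $E_\infty$ an almost surely finite random variable.

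The main obstacle is bounding $e_{t+1}$ uniformly, including when $R_t$ is not yet small, so as to exclude infinite values. The crude bound via $\norm{u}_\infty\geq\norm u_2/\sqrt N$ handles this: $e_{t+1}<\infty$ whenever $u_{t+1}\neq0$. The almost sure summability argument only needs the quadratic bound in the small-argument region.
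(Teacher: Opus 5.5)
Your proposal is correct and follows essentially the paper's argument: telescoping the nonnegative logarithmic correction, using the strong law of large numbers for geometric decay of $R_t$, and applying Borel--Cantelli to the multipliers. There are two small differences. The paper uses the global bound $\log(|u|/|\tanh u|)\le Cu^2$, valid for all $u$, in place of your split into a small-argument quadratic bound and a crude finiteness bound for the finitely many large terms. It also defines $G^{(j)}:=\frac{\sqrt N}{AR_{j-1}}\mathsf W_{j,A}^{(N)}X_{j-1}^{(N)}$ explicitly, so that the Gaussian representation you invoke holds pathwise for the given chain.
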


\begin{proof}
  \emph{Step 1: Gaussian normalization.}
  For each fixed $t$, we have $R_t>0$ almost surely. Hence
  \begin{equation*}
    \Omega_{\mathrm{nz}}
    :=
    \bigcap_{t\geq0}\{R_t>0\}
  \end{equation*}
  has probability one. On $\{R_{j-1}>0\}$, define
  \begin{equation*}
    G^{(j)}
    :=
    \frac{\sqrt N}{AR_{j-1}}
    \mathsf W_{j,A}^{(N)}X_{j-1}^{(N)}.
  \end{equation*}
  Extend $G^{(j)}$ arbitrarily across the null event
  $\{R_{j-1}=0\}$. Conditionally on the past, $G^{(j)}$ is a standard
  Gaussian vector whose law does not depend on the past. Therefore,
  $(G^{(j)})_{j\geq1}$ are independent standard Gaussian vectors. With
  $\chi_{N,j}:=\norm{G^{(j)}}_2$, we have
  \begin{equation*}
    R_j
    =
    \norm{
      \tanh\left(\frac A{\sqrt N}G^{(j)}R_{j-1}\right)
    }_2.
  \end{equation*}
  Set $M_j:=(A/\sqrt N)\chi_{N,j}$. The coordinatewise bound
  $\abs{\tanh z}\leq\abs z$ gives
  \begin{equation}\label{eq:rounded-gaussian-linear-domination}
    R_j
    \leq
    M_jR_{j-1}
    \qquad\mbox{for every }j\geq1.
  \end{equation}

  \emph{Step 2: The logarithmic error.}
  Define
  \begin{equation*}
    \eta_j
    :=
    \log\frac{M_jR_{j-1}}{R_j}
    \geq0,
    \qquad
    E_t:=\sum_{j=1}^t\eta_j,
    \qquad
    \xi_j:=-\log M_j,
  \end{equation*}
  Then $E_t$ is increasing, and telescoping gives
  \cref{eq:rounded-gaussian-log-reduction} simultaneously for every $t\geq0$
  on $\Omega_{\mathrm{nz}}$.

  For $u\in\R$, with the value at zero defined by continuity, we have
  \begin{equation*}
    0\leq
    \log\frac{\abs u}{\abs{\tanh u}}
    \leq
    C u^2 .
  \end{equation*}
  Let $u=(u_1,\ldots,u_N)$ and $v_i=\tanh(u_i)$, where the ratios below are
  understood by continuity when $u_i=0$. Then
  \begin{equation*}
    \norm u_2^2
    =
    \sum_{i=1}^N v_i^2\frac{u_i^2}{v_i^2}
    \leq
    \max_{1\leq i\leq N}\frac{u_i^2}{\tanh^2(u_i)}\norm v_2^2,
  \end{equation*}
  and hence $\log(\norm u_2/\norm v_2)\leq C\norm u_2^2$. With
  $u=(A/\sqrt N)G^{(j)}R_{j-1}$ and $v=\tanh(u)$, this yields
  \begin{equation}\label{eq:rounded-gaussian-nonlinear-error}
    0\leq\eta_j
    \leq
    CM_j^2R_{j-1}^2.
  \end{equation}

  \emph{Step 3: Summability of the error.}
  Since $A<A_c(N)$, the variables $\log M_j$ have negative mean. By the strong
  law of large numbers, there is a $\kappa>0$ such that, almost surely,
  \begin{equation*}
    \prod_{i=1}^jM_i\leq e^{-\kappa j}
  \end{equation*}
  for all sufficiently large $j$. By
  \cref{eq:rounded-gaussian-linear-domination}, it follows that
  $R_j\leq R_0e^{-\kappa j}$ for all sufficiently large $j$. The Gaussian tail
  bound and Borel--Cantelli also give
  $\chi_{N,j}^2\leq e^{\kappa j}$ for all sufficiently large $j$. Thus,
  almost surely,
  \begin{equation*}
    C\sum_{j\ \mathrm{large}}M_j^2R_{j-1}^2
    \leq
    C\frac{A^2}{N}R_0^2
    \sum_{j\ \mathrm{large}}e^{-2\kappa(j-1)}\chi_{N,j}^2
    \leq
    C\frac{A^2}{N}R_0^2
    \sum_{j\ \mathrm{large}}e^{-\kappa(j-2)}
    <\infty.
  \end{equation*}
  The finitely many remaining terms are finite almost surely. Intersecting
  $\Omega_{\mathrm{nz}}$ with the probability-one events furnished by the
  strong law and Borel--Cantelli gives an event $\Omega_{\log}$ on which
  \cref{eq:rounded-gaussian-log-reduction} holds for every $t\geq0$ and
  $E_t\uparrow E_\infty<\infty$. Since $E_t\leq E_\infty$ on this event, the
  family $(E_t)_{t\geq0}$ is tight. This completes the proof.
\end{proof}

For a positive threshold $\varepsilon_\varrho$, define
\begin{equation*}
  L_{\varepsilon_\varrho}
  :=
  \log\frac{R_0}{\varepsilon_\varrho},
  \qquad
  \theta_{\varepsilon_\varrho}^{\mathrm{rad}}
  :=
  \inf\{t\geq0:R_t\leq\varepsilon_\varrho\}.
\end{equation*}
The same profile holds for moving thresholds.
\begin{proposition}
\label{prop:rounded-gaussian-threshold-cutoff}
  Let $\varepsilon_\varrho\downarrow0$ satisfy
  \begin{equation*}
    L_{\varepsilon_\varrho}
    =L_\varrho+o(\sqrt{L_\varrho}).
  \end{equation*}
  Then, for every fixed $a\in\R$,
  \begin{equation}\label{eq:rounded-gaussian-moving-threshold-profile}
    \Pp\left(
      \theta_{\varepsilon_\varrho}^{\mathrm{rad}}>t_\varrho(a)
    \right)
    \to
    \Phi_{\mathrm G}(-a),
    \qquad
    \varrho\downarrow0.
  \end{equation}
  The same limit holds if $t_\varrho(a)$ is replaced by
  $\max\{0,\lfloor t_\varrho(a)+u_\varrho\rfloor\}$ for any deterministic
  $u_\varrho=o(\sqrt{L_\varrho})$.
\end{proposition}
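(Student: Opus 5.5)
We reduce the first-passage event for $R_t$ to one for the random walk $S_t:=\sum_{j=1}^t\xi_j$ by means of \cref{lem:rounded-gaussian-log-reduction}. On $\Omega_{\log}$ we have $R_t\leq\varepsilon_\varrho$ if and only if $S_t+E_t\geq L_{\varepsilon_\varrho}$. Since $E_t$ is nonnegative and nondecreasing, this gives the sandwich
\begin{equation*}
  \{S_t\geq L_{\varepsilon_\varrho}\}
  \subseteq
  \{-\log R_t+\log R_0\geq L_{\varepsilon_\varrho}\}
  \subseteq
  \{S_t\geq L_{\varepsilon_\varrho}-E_\infty\}.
\end{equation*}
For an integer $t$, the event $\{\theta^{\mathrm{rad}}_{\varepsilon_\varrho}>t\}$ means $\max_{s\leq t}(S_s+E_s)<L_{\varepsilon_\varrho}$. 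We therefore get
\begin{equation*}
  \Pp\Bigl(\max_{s\leq t}S_s<L_{\varepsilon_\varrho}-K\Bigr)-\Pp(E_\infty>K)
  \leq
  \Pp(\theta^{\mathrm{rad}}_{\varepsilon_\varrho}>t)
  \leq
  \Pp\Bigl(\max_{s\leq t}S_s<L_{\varepsilon_\varrho}\Bigr)
\end{equation*}
for every fixed $K>0$. By tightness of $E_\infty$, the correction $\Pp(E_\infty>K)$ can be made arbitrarily small by choosing $K$ large. After that, the proof reduces to a statement about the random walk alone.

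The key step is a first-passage CLT for $S$. The increments $\xi_j$ are i.i.d. with mean $\gamma:=\gamma_{A,N}>0$ and variance $\sigma_N^2<\infty$; all moments of $\log\chi_N$ are finite. Set $t=t_\varrho(a)+u_\varrho$ and threshold $L'=L_\varrho+o(\sqrt{L_\varrho})$. Here $L'$ covers all three of $L_{\varepsilon_\varrho}$, $L_{\varepsilon_\varrho}-K$, and the $u_\varrho$ shift, since each perturbs the threshold by $o(\sqrt{L_\varrho})$. We must show $\Pp(\max_{s\leq t}S_s<L')\to\Phi_{\mathrm G}(-a)$.

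We first replace the running maximum by $S_t$. We have $\max_{s\leq t}S_s-S_t=\max_{s\leq t}(S_s-S_t)$, which has the law of $\max_{k\leq t}(-S'_k)$ for a reversed walk $S'$ with positive drift. This quantity is tight, bounded by the a.s. finite all-time maximum of $-S'$. Because $\sqrt{L_\varrho}\to\infty$, this tight error is $o_{\Pp}(\sqrt{L_\varrho})$. Next, by the classical CLT,
\begin{equation*}
  \Pp(S_t<L')=\Pp\Bigl(\frac{S_t-\gamma t}{\sigma_N\sqrt t}<\frac{L'-\gamma t}{\sigma_N\sqrt t}\Bigr).
\end{equation*}
With $t=L_\varrho/\gamma+a\sigma_N\gamma^{-3/2}\sqrt{L_\varrho}+o(\sqrt{L_\varrho})$, we have $\gamma t-L'=a\sigma_N\gamma^{-1/2}\sqrt{L_\varrho}+o(\sqrt{L_\varrho})$ and $\sigma_N\sqrt t\sim\sigma_N\gamma^{-1/2}\sqrt{L_\varrho}$. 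The normalized threshold therefore tends to $-a$. The floor and the $\max\{0,\cdot\}$ only shift $t$ by $O(1)$, since $t\to\infty$ for fixed $a$.

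The limit $\Phi_{\mathrm G}(-a)$ is continuous in the normalized threshold. Hence the $o_{\Pp}(\sqrt{L_\varrho})$ perturbations in the threshold, and the passage to the maximum, do not change the limit. Both sides of the sandwich therefore converge to $\Phi_{\mathrm G}(-a)$, up to the error $\Pp(E_\infty>K)$, and we let $K\to\infty$.

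The main obstacle is bookkeeping rather than depth. We must make sure the maximum-versus-endpoint replacement is uniform. We must also check that the lower bound uses $E_\infty$ (not $E_t$) legitimately, which holds because $E_s\leq E_\infty$ for all $s\leq t$ on $\Omega_{\log}$.
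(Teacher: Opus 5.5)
Your proof is correct, and its structure matches the paper's. The log-reduction lemma turns $\theta^{\mathrm{rad}}_{\varepsilon_\varrho}$ into the first-passage time of $S_t+E_t$ above $L_{\varepsilon_\varrho}$. On $\{E_\infty\le K\}$ this time is then sandwiched between passage events of the pure walk $S$ at levels $L_{\varepsilon_\varrho}-K$ and $L_{\varepsilon_\varrho}$, and one lets $K\to\infty$.

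The difference is in the key step.

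\begin{itemize}
\item The paper cites Gut's first-passage CLT for $H_u=\inf\{t:S_t\ge u\}$. This needs a remark that $H_u$ agrees almost surely with Gut's strict passage time, which holds because $\xi_1$ is absolutely continuous. The paper then asserts that the limit survives a level shift $u\mapsto u+o(\sqrt u)$, and handles the time perturbation $u_\varrho$ in a separate monotonicity step.
\item You derive the needed statement directly. You use the duality $\{H_u>t\}=\{\max_{s\le t}S_s<u\}$. Time reversal shows that $\max_{s\le t}S_s-S_t$ is stochastically dominated by the almost surely finite $\sup_{k\ge0}(-S_k)$, uniformly in $t$. You then apply the ordinary CLT at a deterministic time, with uniformity in the threshold because $\Phi_{\mathrm G}$ is continuous.
\end{itemize}

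Your route is more elementary and self-contained. It handles the moving threshold, the additive $K$, and the time shift $u_\varrho$ in one computation, and never needs to distinguish strict from weak passage. The paper's route is shorter only because it delegates exactly this argument to the reference.
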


\begin{proof}
  \emph{Step 1: Reduction to a perturbed random walk.}
  By \cref{lem:rounded-gaussian-log-reduction}, there is a probability-one
  event on which, simultaneously for every $t\geq0$,
  \begin{equation*}
    -\log R_t
    =
    -\log R_0+S_t+E_t,
    \qquad
    S_t:=\sum_{j=1}^t\xi_j,
  \end{equation*}
  where $0\leq E_t\leq E_\infty<\infty$. Since $R_t>0$ for every $t$ on this
  event, $R_t\leq\varepsilon_\varrho$ is equivalent to
  $S_t+E_t\geq L_{\varepsilon_\varrho}$. Hence, pathwise,
  \begin{equation*}
    \theta_{\varepsilon_\varrho}^{\mathrm{rad}}
    =
    \inf\{t\geq0:S_t+E_t\geq L_{\varepsilon_\varrho}\}.
  \end{equation*}

  \emph{Step 2: First-passage asymptotics.}
  Let $H_u=\inf\{t\geq0:S_t\geq u\}$, and let
  $\nu(u)=\inf\{t\geq0:S_t>u\}$ be the strict first-passage time used in
  \cite{gut2009stopped}. Since the law of $\xi_1$ is absolutely continuous,
  the same is true of $S_t$ for every $t\geq1$. Hence, for every fixed $u>0$,
  \begin{equation*}
    \Pp\bigl(S_t=u\text{ for some }t\geq0\bigr)=0,
  \end{equation*}
  and $H_u=\nu(u)$ almost surely. By
  \cref{eq:rounded-gaussian-drift-variance-critical}, the increments have mean
  $\gamma_{A,N}>0$ and variance $\sigma_N^2<\infty$. Thus,
  \cite[Theorem~3.5.1]{gut2009stopped}, with
  $\mu=\gamma_{A,N}$ and $\sigma=\sigma_N$, gives
  \begin{equation*}
    \frac{H_u-u/\gamma_{A,N}}
    {\sigma_N\gamma_{A,N}^{-3/2}\sqrt u}
    \Rightarrow N(0,1)
    \qquad\mbox{as }u\to\infty.
  \end{equation*}
  The same limit holds with $H_u$ replaced by $H_{u+b_u}$ whenever
  $b_u=o(\sqrt u)$, because the resulting changes in the centering and
  normalization are negligible.

  \emph{Step 3: Comparison of the entrance times.}
  Fix $B<\infty$. On $\{E_\infty\leq B\}$, the entrance time satisfies the
  pathwise bounds
  \begin{equation}\label{eq:rounded-gaussian-hitting-bounds}
    H_{L_{\varepsilon_\varrho}-B}
    \leq
    \theta_{\varepsilon_\varrho}^{\mathrm{rad}}
    \leq
    H_{L_{\varepsilon_\varrho}}.
  \end{equation}
  Therefore, for $t=t_\varrho(a)$,
  \begin{equation*}
    \Pp(H_{L_{\varepsilon_\varrho}-B}>t)-\Pp(E_\infty>B)
    \leq
    \Pp(\theta_{\varepsilon_\varrho}^{\mathrm{rad}}>t)
    \leq
    \Pp(H_{L_{\varepsilon_\varrho}}>t)+\Pp(E_\infty>B).
  \end{equation*}
  Applying the first-passage limit with $u=L_\varrho$, both hitting
  probabilities in the last display converge to $\Phi_{\mathrm G}(-a)$, since
  $L_{\varepsilon_\varrho}-L_\varrho=o(\sqrt{L_\varrho})$ and
  $B=o(\sqrt{L_\varrho})$.
  Thus
  \begin{equation*}
    \Phi_{\mathrm G}(-a)-\Pp(E_\infty>B)
    \leq
    \liminf_{\varrho\downarrow0}
    \Pp(\theta_{\varepsilon_\varrho}^{\mathrm{rad}}>t_\varrho(a))
  \end{equation*}
  and
  \begin{equation*}
    \limsup_{\varrho\downarrow0}
    \Pp(\theta_{\varepsilon_\varrho}^{\mathrm{rad}}>t_\varrho(a))
    \leq
    \Phi_{\mathrm G}(-a)+\Pp(E_\infty>B).
  \end{equation*}
  Sending $B\to\infty$ proves the first assertion.

  \emph{Step 4: Perturbation of the time scale.}
  Fix $\delta>0$. Since
  \begin{equation*}
    \frac{u_\varrho}
    {\sigma_N\gamma_{A,N}^{-3/2}\sqrt{L_\varrho}}
    \longrightarrow0,
  \end{equation*}
  monotonicity places the perturbed survival probability, for all sufficiently
  small $\varrho$, between those at $t_\varrho(a-\delta)$ and
  $t_\varrho(a+\delta)$, up to a uniformly bounded time shift. The first part
  and continuity of $a\mapsto\Phi_{\mathrm G}(-a)$ give the result after
  sending $\delta\downarrow0$. This completes the proof.
\end{proof}

\subsection{Rounding comparison and absorption}
\label{subsec:rounded-gaussian-comparison-absorption}

We couple $Y^{(N)}$ and $X^{(N)}$ with the same Gaussian matrices. The
following estimate bounds the accumulated rounding error by the
grid size times a polylogarithmic factor on the cutoff time scale.
\begin{lemma}
\label{lem:rounded-gaussian-synchronous-comparison}
  Assume $0<A<A_c(N)$, and let $x_0\in\R^N$ be deterministic. Run the
  chain $X^{(N)}$ from $X_0^{(N)}=x_0$ and $Y^{(N)}$ from
  $Y_0^{(N)}=Q_\varrho(x_0)$ with the same Gaussian matrices.
  There is $C_N<\infty$, depending only on $N$, such that the following holds.
  For $0<C_0<\infty$, define
  \begin{equation*}
    T_\varrho:=\left\lfloor C_0\log(1/\varrho)\right\rfloor .
  \end{equation*}
  There exists $p=p(A,N,C_0)$ satisfying
  \begin{equation}\label{eq:rounded-gaussian-p-dependence}
    0
    \leq
    p
    \leq
    C_N(1+C_0)
    \left(
      1+\frac1{\gamma_{A,N}}
    \right),
  \end{equation}
  and
  \begin{equation}\label{eq:rounded-gaussian-coupling-input}
    \Pp\left(
      \max_{0\leq t\leq T_\varrho}
      \norm{Y_t^{(N)}-X_t^{(N)}}_2
      >
      \varrho\,\left(\log(1/\varrho)\right)^p
    \right)
    \to0
  \end{equation}
  as $\varrho\downarrow0$.
\end{lemma}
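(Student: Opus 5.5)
We compare the two chains step by step under the synchronous coupling. Write $W_t:=\mathsf W_{t,A}^{(N)}$ and $D_t:=Y_t^{(N)}-X_t^{(N)}$. Since $Q_\varrho$ moves each coordinate by at most $\varrho/2$, we have $\norm{Q_\varrho u-u}_2\leq\sqrt N\varrho/2$. Since $\tanh$ is coordinatewise $1$-Lipschitz,
\begin{equation*}
  \norm{D_{t}}_2
  \leq
  \norm{\tanh(W_{t}Y_{t-1}^{(N)})-\tanh(W_{t}X_{t-1}^{(N)})}_2+\frac{\sqrt N\varrho}{2}
  \leq
  \norm{W_t}_{\mathrm{op}}\norm{D_{t-1}}_2+\frac{\sqrt N\varrho}{2},
\end{equation*}
with $\norm{D_0}_2\leq\sqrt N\varrho/2$. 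Iterating gives the pathwise bound
\begin{equation*}
  \max_{t\leq T_\varrho}\norm{D_t}_2
  \leq
  \frac{\sqrt N\varrho}{2}\sum_{s=0}^{T_\varrho}\ \max_{s\leq t\leq T_\varrho}\prod_{u=s+1}^{t}\norm{W_u}_{\mathrm{op}}.
\end{equation*}
This is an affine recursion of the type in \cref{lem:negative-drift-affine-entrance}. However, the multipliers $\norm{W_u}_{\mathrm{op}}$ need not have negative log-mean: operator norms exceed the relevant radial multiplier $\ell_{A,N}$. So the crude bound must allow products of operator norms to grow polynomially in $1/\varrho$ only logarithmically in the exponent's base.

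The key step is therefore a high-probability bound on the products. For $0\leq s<t\leq T_\varrho$, we have $\log\prod_{u=s+1}^t\norm{W_u}_{\mathrm{op}}=\sum\log\norm{W_u}_{\mathrm{op}}$. Since $\norm{W_u}_{\mathrm{op}}\leq(A/\sqrt N)\norm{W_u/(A/\sqrt N)}_{\mathrm{F}}$ and the squared Frobenius norm is $\chi^2_{N^2}$, the variables $\log\norm{W_u}_{\mathrm{op}}$ have mean bounded by a constant $c_N\log(1+A)$, where we note $A<A_c(N)\leq C_N$. They also have exponential moments (upper tail Gaussian; lower tail irrelevant since we only need upper bounds). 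Taking $\mu_N:=\max\{0,\Ee\log\norm{W_1}_{\mathrm{op}}\}+1$, Cramér/Chernoff and a union bound over the at most $T_\varrho^2$ pairs $(s,t)$ give, with probability tending to one, $\prod_{u=s+1}^t\norm{W_u}_{\mathrm{op}}\leq e^{\mu_N T_\varrho}$. This yields $\varrho^{1-C_0\mu_N}$, which is a power of $\varrho$ and not a polylog. So the crude approach fails, and this is the main obstacle: the estimate must exploit the contraction actually present along the trajectory.

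The fix is to use smallness of the trajectory. We split the time interval at the first time $\theta$ when $R_t\leq\varrho^{1/2}$, or more carefully use linearization. Near the origin $\tanh$ is essentially the identity, so $D_t\approx W_tD_{t-1}+(\text{rounding})$. The linearized error is not contracted in norm by $W_t$ individually, but after one step its direction is Gaussian-refreshed. Since $W_tD_{t-1}\stackrel{d}{=}(A/\sqrt N)\norm{D_{t-1}}_2G$ conditionally (for the $D$ component independent of $W_t$, which holds since $D_{t-1}$ is $\mathcal F_{t-1}$-measurable), we get $\norm{W_tD_{t-1}}_2=M'_t\norm{D_{t-1}}_2$ with $M'_t$ distributed as $\ell_{A,N}$, independent of the past. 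The nonlinear part is bounded by $\abs{\tanh a-\tanh b-(a-b)}\leq C\abs{a-b}(\abs a+\abs b)^2$ coordinatewise, contributing at most $C\norm{W_t}^3_{\mathrm{op}}(R_{t-1}+\widehat R_{t-1})^2\norm{D_{t-1}}_2$.

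Thus $\norm{D_t}_2\leq(M'_t+\epsilon_t)\norm{D_{t-1}}_2+\sqrt N\varrho/2$, where $\epsilon_t\to0$ summably along trajectories, as in Step~3 of \cref{lem:rounded-gaussian-log-reduction}, on the event that $\widehat R$ stays close to $R$, handled by a stopping time. Products of $M'_t$ have negative log-drift $-\gamma_{A,N}$, and the sum $\sum_s\prod_{u=s+1}^t M'_u$ is a perpetuity. By \cref{lem:negative-drift-affine-entrance} and a union bound over $t\leq T_\varrho$, its maximum over $t\leq T_\varrho$ is $O((\log(1/\varrho))^{p})$ with probability tending to one. Here $p$ collects the logarithmic excursions: the partial products exceed $e^{x}$ with probability $e^{-cx/\gamma}$, and a union bound over $T_\varrho$ times gives $x\sim C_N(1+C_0)(1+\gamma_{A,N}^{-1})\log\log(1/\varrho)$. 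The factor $e^{\sum\epsilon_t}$ is tight. Closing the stopping-time bootstrap, since $\norm{D}\ll$ the relevant scale, finishes the proof.
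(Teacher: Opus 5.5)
Your proposal ends up using the paper's mechanism, but the obstacle you diagnose is self-inflicted, and the detour you build around it is unnecessary. In your first display, the coordinatewise $1$-Lipschitz property already gives $\norm{\tanh(W_tY_{t-1}^{(N)})-\tanh(W_tX_{t-1}^{(N)})}_2\le\norm{W_tD_{t-1}}_2$. Contraction is lost only when you then bound this by $\norm{W_t}_{\mathrm{op}}\norm{D_{t-1}}_2$.

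If you stop at $\norm{W_tD_{t-1}}_2$, the Gaussian-refresh identity you introduce later applies verbatim to the full nonlinear difference. Take the $\mathcal F_{t-1}$-measurable unit vector $\theta_{t-1}=D_{t-1}/\norm{D_{t-1}}_2$ (a fixed unit vector if $D_{t-1}=0$) and set $M'_t:=\norm{W_t\theta_{t-1}}_2$. The $M'_t$ are i.i.d.\ copies of $\ell_{A,N}$, and $\norm{W_tD_{t-1}}_2=M'_t\norm{D_{t-1}}_2$ identically. Hence $d_t\le M'_td_{t-1}+c_N\varrho$ holds exactly at every step, whatever the size of $R_t$. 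You therefore need no linearization near the origin, no corrections $\epsilon_t$, no appeal to \cref{lem:rounded-gaussian-log-reduction}, and no stopping-time bootstrap. This is precisely the paper's Step~1.

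The paper then writes $d_t\le c_N\varrho\sum_{s\le t}\prod_{u=s+1}^tM'_u$ and controls all subproducts simultaneously by a fractional moment. With $K=\frac14\psi_1(N/2)$ and $\alpha_\gamma=\min\{1,\gamma_{A,N}/(2K)\}$, Taylor's theorem for $\alpha\mapsto\log\Ee(M'_1)^\alpha$ gives $\Ee(M'_1)^{\alpha_\gamma}\le e^{-\alpha_\gamma\gamma_{A,N}/2}=:q<1$. Markov's inequality and a union bound over pairs $s\le t\le T_\varrho$ then bound by $(T_\varrho+1)(1-q)^{-1}L_\varrho^{-\alpha_\gamma p_0}$ the probability that some subproduct exceeds $L_\varrho^{p_0}$, where $L_\varrho=\log(1/\varrho)$. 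Choosing $\alpha_\gamma p_0>2$ and using $\alpha_\gamma^{-1}\le C_N(1+\gamma_{A,N}^{-1})$ gives the stated bound on $p$.

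If you keep the linearized route, several steps need repair.
\begin{itemize}
\item The additive bound $\norm{D_t}_2\le(M'_t+\epsilon_t)\norm{D_{t-1}}_2+c_N\varrho$ does not produce a factor $e^{\sum_t\epsilon_t}$ in the products. The inequality $M'+\epsilon\le M'e^{\epsilon}$ fails when $M'$ is small, and $\ell_{A,N}$ charges every neighborhood of $0$. You would need the multiplicative form obtained by keeping $\abs{(W_tD_{t-1})_i}$ in the coordinatewise error, i.e.\ a bound $CM'_t\norm{D_{t-1}}_2\max_i\bigl(\abs{(W_tY_{t-1}^{(N)})_i}+\abs{(W_tX_{t-1}^{(N)})_i}\bigr)^2$ for the nonlinear part.
\item The probability that the subproducts exceed $e^x$ is of order $e^{-c\gamma_{A,N}x}$, not $e^{-cx/\gamma_{A,N}}$. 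Only the former is consistent with your final $p\sim C_N(1+C_0)(1+\gamma_{A,N}^{-1})$.
\item \Cref{lem:negative-drift-affine-entrance} controls the entrance time of the affine recursion into a bounded set. It does not control the maximum over $t\le T_\varrho$ of $\sum_s\prod_{u=s+1}^tM'_u$; the fractional-moment union bound above is the right tool.
\end{itemize}
None of these points is fatal, and all of them disappear once you apply the $1$-Lipschitz bound before taking operator norms.
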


\begin{proof}
  \emph{Step 1: A scalar recursion for the coupling error.}
  Let $D_t:=Y_t^{(N)}-X_t^{(N)}$ and $d_t:=\norm{D_t}_2$.
  For the common Gaussian matrix at time $t+1$, set
  \begin{equation*}
    \mathcal T_{t+1}(z)
    :=
    \tanh\left(\mathsf W_{t+1,A}^{(N)}z\right).
  \end{equation*}
  The update rules \cref{eq:unrounded-chain,eq:rounded-chain} give
  \begin{equation*}
    d_{t+1}
    \leq
    \norm{\mathcal T_{t+1}(Y_t^{(N)})-
      \mathcal T_{t+1}(X_t^{(N)})}_2
    +\norm{Q_\varrho(\mathcal T_{t+1}(Y_t^{(N)}))-
      \mathcal T_{t+1}(Y_t^{(N)})}_2.
  \end{equation*}
  The second term is at most $c_N\varrho$. By the one-Lipschitz property of
  $\tanh$, the first term is at most
  \begin{equation*}
    \norm{
      \mathsf W_{t+1,A}^{(N)}Y_t^{(N)}
      -
      \mathsf W_{t+1,A}^{(N)}X_t^{(N)}
    }_2
    =
    \norm{\mathsf W_{t+1,A}^{(N)}D_t}_2 .
  \end{equation*}
  Combining these bounds gives
  \begin{equation}\label{eq:rounded-gaussian-difference-recursion}
    d_{t+1}
    \leq
    \norm{\mathsf W_{t+1,A}^{(N)}D_t}_2+c_N\varrho .
  \end{equation}
  Let $\mathcal F_t:=\sigma(\mathsf W_{1,A}^{(N)},\ldots,\mathsf W_{t,A}^{(N)})$,
  so that $D_t$ is $\mathcal F_t$-measurable. Fix a reference unit vector
  $e_1\in\R^N$ and define the $\mathcal F_t$-measurable unit vector and the
  multiplier
  \begin{equation*}
    \theta_t
    :=
    \frac{D_t}{\norm{D_t}_2}\one_{\{D_t\neq0\}}+e_1\one_{\{D_t=0\}},
    \qquad
    M_{t+1}:=\norm{\mathsf W_{t+1,A}^{(N)}\theta_t}_2 .
  \end{equation*}
  Conditionally on $\mathcal F_t$, the vector $\theta_t$ is a fixed unit vector,
  and hence the Gaussian weights give
  \begin{equation*}
    \mathsf W_{t+1,A}^{(N)}\theta_t
    \sim
    \mathcal N(0,(A^2/N)I_N),
  \end{equation*}
  independently of $\mathcal F_t$. Thus $(M_j)_{j\geq1}$ are independent
  copies of $\ell_{A,N}$ from \cref{eq:gaussian-ell}. In particular,
  $\Ee\log M_j=-\gamma_{A,N}<0$. Since both sides vanish on $\{D_t=0\}$,
  we also have
  \begin{equation*}
    \norm{\mathsf W_{t+1,A}^{(N)}D_t}_2=M_{t+1}d_t
  \end{equation*}
  identically. Therefore, \cref{eq:rounded-gaussian-difference-recursion}
  yields $d_{t+1}\leq M_{t+1}d_t+c_N\varrho$. Since
  $d_0\leq c_N\varrho$ and $M_{t+1}\geq0$, induction gives $d_t\leq U_t$ for
  every $t\geq0$, where
  \begin{equation}\label{eq:rounded-gaussian-scalar-error-recursion}
    U_{t+1}=M_{t+1}U_t+c_N\varrho,
    \qquad
    U_0=c_N\varrho .
  \end{equation}

  \emph{Step 2: Control of the multiplier subproducts.}
  Since $M_1=(A/\sqrt N)\chi_N$ and $\chi_N^2$ is
  chi-squared with $N$ degrees of freedom, the Gamma moment formula gives, for
  every $\alpha\geq0$,
  \begin{equation}\label{eq:rounded-gaussian-chi-log-moment}
    \log\Ee M_1^\alpha
    =
    \alpha\log\frac A{\sqrt N}
    +\frac\alpha2\log2
    +\log\Gamma\!\left(\frac{N+\alpha}2\right)
    -\log\Gamma\!\left(\frac N2\right).
  \end{equation}
  In particular, all positive moments are finite. Differentiating
  \cref{eq:rounded-gaussian-chi-log-moment} twice in $\alpha$ gives
  \begin{equation*}
    \frac{\mathrm{d}^2}{\mathrm{d}\alpha^2}\log\Ee M_1^\alpha
    =
    \frac14\,\psi_1\!\left(\frac{N+\alpha}2\right),
  \end{equation*}
  where $\psi_1$ is the trigamma function. Since $\psi_1$ is positive and
  decreasing on $(0,\infty)$, the second derivative is bounded on $[0,1]$ by
  $K:=\tfrac14\psi_1(N/2)$. Define
  \begin{equation*}
    \Lambda(\alpha):=\log\Ee M_1^\alpha,
    \qquad
    \alpha_\gamma
    :=
    \min\left\{1,\frac{\gamma_{A,N}}{2K}\right\}.
  \end{equation*}
  Since $\Lambda(0)=0$ and
  $\Lambda'(0)=\Ee\log M_1=-\gamma_{A,N}$, Taylor's theorem yields
  \begin{equation*}
    \Ee M_1^{\alpha_\gamma}
    \leq
    \exp\left(-\frac{\alpha_\gamma\gamma_{A,N}}2\right)
    =:q<1,
    \qquad
    \alpha_\gamma^{-1}
    \leq
    C_N\left(1+\gamma_{A,N}^{-1}\right).
  \end{equation*}
  Let $L_\varrho:=\log(1/\varrho)$. Markov's inequality and a union bound
  give, for every $p_0>0$,
  \begin{equation*}
    \Pp\left(
      \max_{0\leq s\leq t\leq T_\varrho}
      \prod_{j=s+1}^tM_j
      >
      L_\varrho^{p_0}
    \right)
    \leq
    L_\varrho^{-\alpha_\gamma p_0}
    \sum_{0\leq s\leq t\leq T_\varrho}q^{t-s}
    \leq
    \frac{T_\varrho+1}{1-q}
    L_\varrho^{-\alpha_\gamma p_0}.
  \end{equation*}
  Choose $p_0$ so that $\alpha_\gamma p_0>2$. After increasing $C_N$, we may
  also arrange that
  \begin{equation*}
    p_0
    \leq
    C_N(1+C_0)\left(1+\gamma_{A,N}^{-1}\right).
  \end{equation*}
  Since $T_\varrho=O(L_\varrho)$, the preceding probability tends to zero.
  Hence
  \begin{equation}\label{eq:rounded-gaussian-subproduct-bound}
    \Pp\left(
      \max_{0\leq s\leq t\leq T_\varrho}
      \prod_{j=s+1}^tM_j
      >
      L_\varrho^{p_0}
    \right)
    \to0 .
  \end{equation}

  \emph{Step 3: Accumulation of the rounding errors.}
  On the complementary event, the explicit solution of
  \cref{eq:rounded-gaussian-scalar-error-recursion} gives, for every
  $t\leq T_\varrho$,
  \begin{equation*}
    U_t
    \leq
    c_N\varrho
    \sum_{s=0}^t
    \prod_{j=s+1}^tM_j
    \leq
    C_N\varrho\,T_\varrho\left(\log(1/\varrho)\right)^{p_0}
    \leq
    \varrho\left(\log(1/\varrho)\right)^{p_0+C_N(1+C_0)}
  \end{equation*}
  for all sufficiently small $\varrho$. Set
  $p:=p_0+\lceil C_N(1+C_0)\rceil$. After increasing $C_N$, this
  exponent satisfies \cref{eq:rounded-gaussian-p-dependence}. Since
  $d_t\leq U_t$, we obtain \cref{eq:rounded-gaussian-coupling-input}. This
  completes the proof.
\end{proof}

Once $Y^{(N)}$ enters a ball of radius $K\varrho$, the number of additional
steps to absorption is $O(\log K)$, with an exponentially decaying upper tail.
The constants below may depend on $(A,N)$, but not on $\varrho$, $K$, or
$Y_0^{(N)}$.

\begin{lemma}
\label{lem:rounded-gaussian-final-absorption}
  Let $0<A<A_c(N)$. There are constants $c>0$ and $C<\infty$ such that,
  uniformly over $\varrho\in(0,1)$, $K\geq2$, and deterministic
  $Y_0^{(N)}=y\in\varrho\Z^N$ satisfying $\norm y_2\leq K\varrho$,
  \begin{equation}\label{eq:rounded-gaussian-final-absorption}
    \Pp_y\left(
      \tau_\varrho^{(N)}>C\log K+r
    \right)
    \leq
    C e^{-cr},
    \qquad r\geq0.
  \end{equation}
  Here $\Pp_y$ is the law of $Y^{(N)}$ started from $y$.
\end{lemma}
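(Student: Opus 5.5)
Since $\tau_\varrho^{(N)}$ only depends on $\widehat R_t/\varrho$, I will work with the rescaled radius $U_t:=\widehat R_t/\varrho$ and dominate it by an affine recursion of the form in \cref{lem:negative-drift-affine-entrance}. The proof then has two steps. First, I bring $U_t$ down to a fixed bounded level $K_\ast$ in time $O(\log K)$ with exponential tails. Second, from that level I show absorption occurs within a bounded number of steps with probability bounded below, uniformly in $\varrho$ and in the starting point. A geometric-trials argument then finishes.

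\emph{Step 1: affine domination.} Write $Y_{t+1}=Q_\varrho(\tanh(\mathsf W_{t+1}Y_t))$. As in the proof of \cref{lem:rounded-gaussian-synchronous-comparison}, define $M_{t+1}:=\norm{\mathsf W_{t+1,A}^{(N)}\theta_t}_2$ with $\theta_t=Y_t/\norm{Y_t}_2$ (or $e_1$ when $Y_t=0$). These $M_j$ are i.i.d.\ copies of $\ell_{A,N}$, with $\Ee\log M_1=-\gamma_{A,N}<0$. Because $\abs{\tanh z}\le\abs z$ and $\norm{Q_\varrho v-v}_2\le\sqrt N\varrho/2$, we get
\begin{equation*}
  U_{t+1}\le M_{t+1}U_t+\tfrac{\sqrt N}{2}.
\end{equation*}
The law of $\log(M_1+\eps)$ has exponential moments near zero: $\chi_N$ has all positive moments and $\log(M_1+\eps)\ge\log\eps$. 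So \cref{lem:negative-drift-affine-entrance} with $b=\sqrt N/2$ applies. By induction, $U_t$ is bounded by the affine process started at $U_0\le K$, and that process is monotone in its initial value. Hence the entrance time $\sigma$ of $U$ into $[0,K_\ast]$ satisfies $\Pp(\sigma>c_1\log K+r)\le c_2e^{-c_3r}$.

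\emph{Step 2: uniform one-shot absorption from $\{U\le K_\ast\}$.} I need $p_0>0$ such that, for every $y\in\varrho\Z^N$ with $\norm y_2\le K_\ast\varrho$ and every $\varrho\in(0,1)$, one step gives $\Pp_y(Y_1=0)\ge p_0$. The next state is zero precisely when every coordinate satisfies $\abs{\tanh((\mathsf W y)_i)}\le\varrho/2$. This event contains $\{\norm{\mathsf W y}_2\le\varrho/2\}$, which equals $\{(A/\sqrt N)\chi_N\norm y_2\le\varrho/2\}$ in law. Since $\norm y_2\le K_\ast\varrho$, its probability is at least $\Pp(\chi_N\le\sqrt N/(2AK_\ast))=:p_0>0$, independent of $\varrho$ and $y$. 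This step is the crux and the only place the grid enters: the absorbing threshold $\varrho/2$ scales exactly with the radius, so the scale invariance yields uniformity.

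\emph{Step 3: renewal.} Use the strong Markov property at successive entrance times into $\{U\le K_\ast\}$. After each failed attempt, the chain sits at $U\le M\,K_\ast+\sqrt N/2$ with $M\sim\ell_{A,N}$. Its re-entry time therefore has exponential tails of order $c_1\log(\cdot)$ in a random but exponentially integrable starting level. I can simplify by applying Step 1 again from level $K'=\max(2,U)$, whose log has exponential moments. Each attempt succeeds with probability at least $p_0$, so the number of attempts is geometric. A cycle length is a sum of variables with uniform exponential tails, so a standard Chernoff bound on a geometric number of such cycles gives $\Pp(\tau-\sigma>r)\le Ce^{-cr}$. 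Combining with Step 1 yields \cref{eq:rounded-gaussian-final-absorption} with constants depending only on $(A,N)$. The main technical care is making the tails uniform across cycles. I would handle it by dominating every post-failure starting level by $\max(2,M K_\ast+\sqrt N/2)$, so that all cycles after the first are i.i.d.-dominated.
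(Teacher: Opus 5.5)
Your proposal is correct and follows essentially the paper's route. You dominate $\widehat R_t/\varrho$ by an affine recursion and apply \cref{lem:negative-drift-affine-entrance}, then use a uniform one-step absorption probability from the ball of radius $K_\ast\varrho$, and finish with regeneration in which return times are controlled from the random level $\max\{2,MK_\ast+\sqrt N/2\}$. Your absorption event $\{M_{t+1}\le 1/(2K_\ast)\}$ replaces the paper's small-operator-norm event and is equally valid.

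The one point to tighten is that a failed attempt and the following return time are correlated through the same multiplier. The number of attempts is therefore not a geometric variable independent of the cycle lengths. You can fix this in either of two ways. One is to split $\Pp(\tau-\sigma>r)$ into the probability of more than $\lfloor\epsilon r\rfloor$ attempts plus a Chernoff bound on the first $\lfloor\epsilon r\rfloor$ dominating cycles, using conditional exponential moments. The other is the paper's H\"older decoupling $\Ee[e^{s\kappa^+}\one_{\{Y_1^{(N)}\neq0\}}]\leq B^{s/s_0}(1-p_\ast)^{1-s/s_0}$.
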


\begin{proof}
  \emph{Step 1: Entrance into a fixed grid-scale ball.}
  Let
  \begin{equation*}
    K_t:=\frac{\widehat R_t}{\varrho}.
  \end{equation*}
  Since $\norm{Q_\varrho(z)-z}_2\leq c_N\varrho$ and $\tanh$ is one-Lipschitz
  coordinatewise, we have
  \begin{equation*}
    \widehat R_{t+1}
    \leq
    \norm{\mathsf W_{t+1,A}^{(N)}Y_t^{(N)}}_2+c_N\varrho.
  \end{equation*}
  Let $\mathcal F_t:=\sigma(\mathsf W_{1,A}^{(N)},\ldots,\mathsf W_{t,A}^{(N)})$
  and fix a reference unit vector $e_1\in\R^N$. Set
  \begin{equation*}
    \theta_t
    :=
    \frac{Y_t^{(N)}}{\norm{Y_t^{(N)}}_2}\one_{\{Y_t^{(N)}\neq0\}}
    +e_1\one_{\{Y_t^{(N)}=0\}},
    \qquad
    M_{t+1}:=\norm{\mathsf W_{t+1,A}^{(N)}\theta_t}_2 .
  \end{equation*}
  The Gaussian calculation in the proof of
  \cref{lem:rounded-gaussian-synchronous-comparison} shows that the variables
  $M_j$ are independent copies of
  $\ell_{A,N}=(A/\sqrt N)\chi_N$. Moreover,
  $\norm{\mathsf W_{t+1,A}^{(N)}Y_t^{(N)}}_2=M_{t+1}\widehat R_t$ identically,
  with both sides zero when $Y_t^{(N)}=0$. Hence, pathwise,
  \begin{equation}\label{eq:rounded-gaussian-grid-radius-domination}
    K_{t+1}
    \leq
    M_{t+1}K_t+c_N .
  \end{equation}

  Define
  \begin{equation*}
    U_0=K,
    \qquad
    U_{t+1}=M_{t+1}U_t+c_N .
  \end{equation*}
  We have $\Ee\log M_1=-\gamma_{A,N}<0$. For every $\eps>0$,
  $\log(M_1+\eps)$ has exponential moments near the origin because $M_1$ has
  all positive moments by \cref{eq:rounded-gaussian-chi-log-moment}. Therefore,
  the hypotheses of \cref{lem:negative-drift-affine-entrance} hold with
  $b=c_N$.
  Since $K_0\leq K=U_0$,
  \cref{eq:rounded-gaussian-grid-radius-domination} and induction give
  $K_t\leq U_t$ for every $t\geq0$. We may thus apply
  \cref{lem:negative-drift-affine-entrance}. There are
  $K_\ast=K_\ast(A,N)<\infty$ and $c_1,c_2,c_3>0$ such that,
  with $\kappa:=\inf\{t\geq0:K_t\leq K_\ast\}$,
  \begin{equation}\label{eq:rounded-gaussian-grid-entrance}
    \Pp\left(\kappa>c_1\log K+r\right)
    \leq
    c_2e^{-c_3r},
    \qquad r\geq0.
  \end{equation}

  \emph{Step 2: Return after a failed absorption attempt.}
  Once $K_t\leq K_\ast$, $Y^{(N)}$ has a uniform positive probability of
  absorption in the next step. For a matrix $B$, write
  $\norm{B}_{\ell^2\to\ell^2}:=
  \sup_{\norm u_2=1}\norm{Bu}_2$.
  Since $\mathsf W_{t+1,A}^{(N)}$ has a nondegenerate Gaussian law, there is
  $p_\ast=p_\ast(A,N,K_\ast)>0$ such that
  \begin{equation*}
    \Pp\left(\norm{\mathsf W_{t+1,A}^{(N)}}_{\ell^2\to\ell^2}
      \leq \frac1{2K_\ast}\right)
    \geq p_\ast.
  \end{equation*}
  On this small operator norm event, if $\widehat R_t\leq K_\ast\varrho$, then
  \begin{equation*}
    \norm{\tanh(\mathsf W_{t+1,A}^{(N)}Y_t^{(N)})}_\infty
    \leq
    \norm{\mathsf W_{t+1,A}^{(N)}Y_t^{(N)}}_2
    \leq
    \frac{\varrho}{2},
  \end{equation*}
  and hence $Y_{t+1}^{(N)}=0$. Thus each visit to
  $\{K_t\leq K_\ast\}$ is followed by absorption with conditional probability
  at least $p_\ast$.

  We next control the return time after a failed attempt. Suppose that
  $K_0\leq K_\ast$ and define
  \begin{equation*}
    \kappa^+:=\inf\{t\geq1:K_t\leq K_\ast\}.
  \end{equation*}
  After one step, $Y^{(N)}$ satisfies
  $K_1\leq M_1K_\ast+c_N$, where $M_1$ has law
  $(A/\sqrt N)\chi_N$. Hence
  \cref{eq:rounded-gaussian-grid-entrance}, applied conditionally from time one
  with initial radius $\max\{2,K_1\}$, gives
  \begin{equation*}
    \Pp\left(
      \kappa^+>1+c_1\log(\max\{2,K_1\})+r
      \,\middle|\,\mathcal F_1
    \right)
    \leq
    c_2e^{-c_3r},
    \qquad r\geq0.
  \end{equation*}
  Choose $s_0\in(0,c_3)$. The layer-cake formula and
  $K_1\leq M_1K_\ast+c_N$ give
  \begin{equation*}
    B
    :=
    \sup_{\varrho\in(0,1)}
    \sup_{K_0\leq K_\ast}
    \Ee e^{s_0\kappa^+}
    \leq
    C\Ee(\max\{2,M_1K_\ast+c_N\})^{c_1s_0}
    <\infty,
  \end{equation*}
  since $\chi_N$ has moments of every positive order.

  The small-operator-norm event depends only on the next Gaussian matrix and
  gives, uniformly over all starting points in the fixed ball,
  \begin{equation*}
    \sup_{\varrho\in(0,1)}
    \sup_{K_0\leq K_\ast}
    \Pp(Y_1^{(N)}\neq0)
    \leq
    1-p_\ast.
  \end{equation*}
  For $s\in(0,s_0)$, H\"older's inequality consequently gives
  \begin{equation*}
    q_s
    :={}
    \sup_{\varrho\in(0,1)}
    \sup_{K_0\leq K_\ast}
    \Ee\left[e^{s\kappa^+}\one_{\{Y_1^{(N)}\neq0\}}\right]
    \leq
    B^{s/s_0}(1-p_\ast)^{1-s/s_0}.
  \end{equation*}
  The right-hand side converges to $1-p_\ast$ as $s\downarrow0$. We may
  therefore fix $s\in(0,s_0)$ sufficiently small that $q_s<1$.

  \emph{Step 3: Exponential tail of the absorption time.}
  For $n\geq1$, set
  \begin{equation*}
    F_{s,n}:=
    \sup_{\varrho\in(0,1)}
    \sup_{K_0\leq K_\ast}
    \Ee e^{s(\min\{\tau_\varrho^{(N)}, n\})}<\infty.
  \end{equation*}
  On $\{Y_1^{(N)}=0\}$ we have $\tau_\varrho^{(N)}=1$. On its complement,
  the strong Markov property at $\kappa^+$, together with
  $K_{\kappa^+}\leq K_\ast$, gives
  \begin{equation*}
    F_{s,n}
    \leq
    e^s+q_sF_{s,n}.
  \end{equation*}
  Since $q_s<1$, it follows that $F_{s,n}\leq e^s/(1-q_s)$ uniformly in $n$.
  Monotone convergence then shows that
  \begin{equation*}
    \sup_{\varrho\in(0,1)}
    \sup_{K_0\leq K_\ast}
    \Ee e^{s\tau_\varrho^{(N)}}
    \leq
    \frac{e^s}{1-q_s}<\infty.
  \end{equation*}
  Markov's inequality gives constants $C_0<\infty$ and $c_0>0$ such that,
  uniformly in
  $\varrho$ and deterministic starts with $K_0\leq K_\ast$,
  \begin{equation}\label{eq:rounded-gaussian-small-ball-absorption-tail}
    \Pp\left(\tau_\varrho^{(N)}>u\right)
    \leq
    C_0e^{-c_0u},
    \qquad u\geq0.
  \end{equation}

  Combining \cref{eq:rounded-gaussian-grid-entrance} with
  \cref{eq:rounded-gaussian-small-ball-absorption-tail} and the Markov property
  at $\kappa$ gives, for $r\geq0$,
  \begin{equation*}
    \Pp\left(\tau_\varrho^{(N)}>c_1\log K+r\right)
    \leq
    c_2e^{-c_3r/2}+C_0e^{-c_0r/2}
    \leq
    \left(c_2+C_0\right)e^{-\frac12\min\{c_3,c_0\}r}.
  \end{equation*}
  After increasing the prefactor and the coefficient of $\log K$, this is
  \cref{eq:rounded-gaussian-final-absorption}. This completes the proof.
\end{proof}

\subsection{Proof of the cutoff theorem}
\label{subsec:rounded-gaussian-cutoff-proof}

\begin{proof}[Proof of \cref{thm:rounded-gaussian-nearest-cutoff}]
  Fix $a\in\R$ and write $\ell_\varrho:=\log(1/\varrho)$ and
  $t_\varrho:=t_\varrho(a)$. By \cref{eq:tv-absorption}, it is enough to prove
  $\Pp(\tau_\varrho^{(N)}>t_\varrho)\to\Phi_{\mathrm G}(-a)$.

  \emph{Step 1: Coupling and threshold preparation.}
  Since $L_\varrho=\ell_\varrho+\log R_0$ and $a$ is fixed,
  \cref{eq:rounded-gaussian-cutoff-scale} gives
  $t_\varrho=O(\ell_\varrho)$. Choose $C_0<\infty$ such that, for all
  sufficiently small $\varrho$, $0\leq t_\varrho\leq
  T_\varrho:=\lfloor C_0\ell_\varrho\rfloor$. Let
  $p=p(A,N,C_0)$ be the exponent from
  \cref{lem:rounded-gaussian-synchronous-comparison}, put
  $d_\varrho:=\varrho\ell_\varrho^p$ and
  $M_\varrho:=\ell_\varrho^{p+3}$. For the synchronous coupling, define
  \begin{equation}\label{eq:rounded-gaussian-cutoff-good-coupling}
    \mathcal G_\varrho
    :=
    \left\{
      \max_{0\leq s\leq T_\varrho}
      \norm{Y_s^{(N)}-X_s^{(N)}}_2
      \leq d_\varrho
    \right\},
    \qquad
    \Pp(\mathcal G_\varrho^{\mathrm c})=o(1),
  \end{equation}
  where the estimate follows from
  \cref{eq:rounded-gaussian-coupling-input}.

  We compare absorption with the unrounded entrance times
  $\Theta_\varrho^-:=\theta_{d_\varrho}^{\mathrm{rad}}$ and
  $\Theta_\varrho^+:=\theta_{M_\varrho\varrho}^{\mathrm{rad}}$.
  Since both $\log(R_0/d_\varrho)$ and
  $\log(R_0/(M_\varrho\varrho))$ satisfy the hypotheses of
  \cref{prop:rounded-gaussian-threshold-cutoff},
  \begin{equation}\label{eq:rounded-gaussian-cutoff-thresholds}
    \Pp(\Theta_\varrho^->t_\varrho)
    \longrightarrow\Phi_{\mathrm G}(-a),
    \qquad
    \Pp(\Theta_\varrho^+>t_\varrho)
    \longrightarrow\Phi_{\mathrm G}(-a)
  \end{equation}
  as $\varrho\downarrow0$.

  \emph{Step 2: Lower bound from the inner threshold.}
  On $\{\tau_\varrho^{(N)}\leq t_\varrho\}\cap\mathcal G_\varrho$,
  absorption gives $Y_{\tau_\varrho^{(N)}}^{(N)}=0$, while
  $\tau_\varrho^{(N)}\leq t_\varrho\leq T_\varrho$. Hence
  $R_{\tau_\varrho^{(N)}}=
  \norm{X_{\tau_\varrho^{(N)}}^{(N)}-
  Y_{\tau_\varrho^{(N)}}^{(N)}}_2\leq d_\varrho$ and thus
  $\Theta_\varrho^-\leq\tau_\varrho^{(N)}$. Consequently,
  \begin{equation}\label{eq:rounded-gaussian-cutoff-lower-bound}
    \{\Theta_\varrho^->t_\varrho\}\cap\mathcal G_\varrho
    \subseteq\{\tau_\varrho^{(N)}>t_\varrho\},
    \qquad
    \Pp(\tau_\varrho^{(N)}>t_\varrho)
    \geq\Pp(\Theta_\varrho^->t_\varrho)-o(1),
  \end{equation}
  where we used
  \cref{eq:rounded-gaussian-cutoff-good-coupling} in the last estimate.

  \emph{Step 3: Upper bound from the outer threshold and final absorption.}
  Let $c_{\mathrm{abs}}>0$ and $C_{\mathrm{abs}}<\infty$ be constants for
  which \cref{eq:rounded-gaussian-final-absorption} holds. Choose
  $B>C_{\mathrm{abs}}(p+3)$ and set
  $h_\varrho:=\lceil B\log\ell_\varrho\rceil$,
  $K_\varrho:=2M_\varrho$, and
  $r_\varrho:=h_\varrho-C_{\mathrm{abs}}\log K_\varrho$.
  Since $\log K_\varrho=\log2+(p+3)\log\ell_\varrho$, we have
  $r_\varrho\to\infty$, while $h_\varrho=o(\sqrt{L_\varrho})$ and
  $t_\varrho-h_\varrho\geq0$ for all sufficiently small $\varrho$.

  Write $\mathcal E_\varrho:=
  \{\Theta_\varrho^+\leq t_\varrho-h_\varrho\}\cap
  \mathcal G_\varrho$. On $\mathcal E_\varrho$, we have
  $\Theta_\varrho^+\leq T_\varrho$, and therefore
  \begin{equation*}
    \widehat R_{\Theta_\varrho^+}
    =\norm{Y_{\Theta_\varrho^+}^{(N)}}_2
    \leq
    \norm{X_{\Theta_\varrho^+}^{(N)}}_2
    +\norm{Y_{\Theta_\varrho^+}^{(N)}-X_{\Theta_\varrho^+}^{(N)}}_2
    \leq M_\varrho\varrho+d_\varrho
    \leq K_\varrho\varrho
  \end{equation*}
  for all sufficiently small $\varrho$. Define the
  $\mathcal F_{\Theta_\varrho^+}$-measurable event
  $\mathcal A_\varrho:=\{\Theta_\varrho^+\leq
  t_\varrho-h_\varrho,\ \widehat R_{\Theta_\varrho^+}\leq
  K_\varrho\varrho\}$. The preceding bound gives
  $\mathcal E_\varrho\subseteq\mathcal A_\varrho$. Therefore
  \begin{align*}
    \Pp(\tau_\varrho^{(N)}>t_\varrho,\mathcal E_\varrho)
    &\leq
    \Pp(\tau_\varrho^{(N)}>t_\varrho,\mathcal A_\varrho)
    \\
    &=
    \Ee\left[
      \one_{\mathcal A_\varrho}
      \Pp_{Y_{\Theta_\varrho^+}^{(N)}}\left(
        \tau_\varrho^{(N)}>t_\varrho-\Theta_\varrho^+
      \right)
    \right]
    \\
    &\leq
    \Ee\left[
      \one_{\mathcal A_\varrho}
      \Pp_{Y_{\Theta_\varrho^+}^{(N)}}\left(
        \tau_\varrho^{(N)}>h_\varrho
      \right)
    \right]
    \\
    &\leq
    C_{\mathrm{abs}}e^{-c_{\mathrm{abs}}r_\varrho}
    =o(1).
  \end{align*}
  Here $\Pp_y$ denotes the law of $Y^{(N)}$ started from $y$. The
  equality is the strong Markov property at $\Theta_\varrho^+$. The next
  inequality uses $t_\varrho-\Theta_\varrho^+\geq h_\varrho$ on
  $\mathcal A_\varrho$. The last inequality follows from
  \cref{lem:rounded-gaussian-final-absorption}, since the entrance radius is
  at most $K_\varrho\varrho$ and
  $h_\varrho=C_{\mathrm{abs}}\log K_\varrho+r_\varrho$.

  Hence
  \begin{align}
    \{\tau_\varrho^{(N)}>t_\varrho\}
    &\subseteq
    \{\Theta_\varrho^+>t_\varrho-h_\varrho\}
    \cup\mathcal G_\varrho^{\mathrm c}
    \cup\bigl(\{\tau_\varrho^{(N)}>t_\varrho\}
      \cap\mathcal E_\varrho\bigr),
    \notag\\
    \Pp(\tau_\varrho^{(N)}>t_\varrho)
    &\leq
    \Pp(\Theta_\varrho^+>t_\varrho-h_\varrho)
    +\Pp(\mathcal G_\varrho^{\mathrm c})
    +\Pp(\tau_\varrho^{(N)}>t_\varrho,\mathcal E_\varrho)
    \notag\\
    &\leq
    \Pp(\Theta_\varrho^+>t_\varrho-h_\varrho)+o(1).
    \label{eq:rounded-gaussian-cutoff-upper-bound}
  \end{align}

  \emph{Step 4: Identification of the profile and cutoff window.}
  By the level calculations in Step~1, the conclusion of
  \cref{eq:rounded-gaussian-cutoff-thresholds} also holds for
  $\Theta_\varrho^+$ at $t_\varrho-h_\varrho$, because
  $h_\varrho=o(\sqrt{L_\varrho})$.
  Combining these limits with
  \cref{eq:rounded-gaussian-cutoff-lower-bound,eq:rounded-gaussian-cutoff-upper-bound}
  yields
  \begin{equation*}
    \Phi_{\mathrm G}(-a)
    \leq
    \liminf_{\varrho\downarrow0}
    \Pp(\tau_\varrho^{(N)}>t_\varrho)
    \leq
    \limsup_{\varrho\downarrow0}
    \Pp(\tau_\varrho^{(N)}>t_\varrho)
    \leq\Phi_{\mathrm G}(-a).
  \end{equation*}
  Thus the absorption tail converges to $\Phi_{\mathrm G}(-a)$, and
  \cref{eq:tv-absorption} gives
  \cref{eq:rounded-gaussian-nearest-profile}.

  Since $A$ and $N$ are fixed,
  $\sigma_N\gamma_{A,N}^{-3/2}\sqrt{L_\varrho}
  =o(L_\varrho/\gamma_{A,N})$. The profile just
  proved satisfies $\Phi_{\mathrm G}(-a)\to1$ as $a\to-\infty$ and
  $\Phi_{\mathrm G}(-a)\to0$ as $a\to\infty$. Therefore
  \cref{eq:intro-cutoff-convention} holds with center
  $L_\varrho/\gamma_{A,N}$ and window
  $\sigma_N\gamma_{A,N}^{-3/2}\sqrt{L_\varrho}$.
\end{proof}

\begin{remark}
\label{rem:rounded-gaussian-critical-dependence}
  As $A\uparrow A_c(N)$,
  \begin{equation*}
    \gamma_{A,N}
    =
    \log\frac{A_c(N)}A
    =
    \frac{A_c(N)-A}{A_c(N)}+O_N((A_c(N)-A)^2).
  \end{equation*}
  Hence the cutoff center and window satisfy
  \begin{equation*}
    \frac{L_\varrho}{\gamma_{A,N}}
    \sim
    \frac{A_c(N)L_\varrho}{A_c(N)-A},
    \qquad
    \frac{\sigma_N}{\gamma_{A,N}^{3/2}}\sqrt{L_\varrho}
    \sim
    \frac{\sigma_N A_c(N)^{3/2}\sqrt{L_\varrho}}
    {(A_c(N)-A)^{3/2}}.
  \end{equation*}
  For fixed $C_0$, \cref{eq:rounded-gaussian-p-dependence} allows the exponent
  in \cref{lem:rounded-gaussian-synchronous-comparison} to be chosen with
  \begin{equation*}
    p=O_{N,C_0}((A_c(N)-A)^{-1})
    \qquad\mbox{as }A\uparrow A_c(N).
  \end{equation*}
  The constants in \cref{lem:rounded-gaussian-final-absorption} may also
  diverge as $A\uparrow A_c(N)$. For fixed $A<A_c(N)$, $p$ appears only in the
  bound $\varrho(\log(1/\varrho))^p$ for
  $\max_{0\leq t\leq T_\varrho}\norm{Y_t^{(N)}-X_t^{(N)}}_2$ in
  \cref{eq:rounded-gaussian-coupling-input}. The absorption constants determine
  only the $O(\log\log(1/\varrho))$ additional time allowed after the chains
  are close to the origin. Neither changes the cutoff center or window.
\end{remark}

\section{Subcritical cutoff as \texorpdfstring{$N\to\infty$}{N tends to infinity}}
\label{sec:subcritical-dimension}

Fix $\varrho\in(0,1)$ and let $N\to\infty$. We track $\mathscr H^{(N)}$ along
the iterates of $V_{A,\varrho}$ until they reach $(\log N)^{-1}$. The next
step makes $\mathscr H^{(N)}$ polynomially small with high probability, and
one further step gives absorption. The same tracking estimate excludes
absorption one step before the deterministic entrance time.

\subsection{Deterministic rounded dynamics}
\label{subsec:subcritical-deterministic-rounded-dynamics}

Recall the transition of $\mathscr H^{(N)}$ and its centered noise decomposition
from \cref{eq:subcritical-exact-grid-transition,eq:rounded-noise-decomposition}.
The deterministic orbit and terminal-scale entrance time are
\begin{equation}\label{eq:subcritical-exact-lattice-orbit}
  \bar h_{0,N}
  :=
  \frac1{N\varrho^2}\norm{Q_\varrho x_N}_2^2,
  \qquad
  \bar h_{t+1,N}
  =
  V_{A,\varrho}(\bar h_{t,N}),
\end{equation}
and
\begin{equation}\label{eq:subcritical-exact-hitting-time}
  \bar t_N
  :=
  \inf\left\{
    t\in\N_0:\bar h_{t,N}\leq\frac1{\log N}
  \right\}.
\end{equation}
The tie convention in \cref{sec:setup} gives
\begin{equation}\label{eq:subcritical-rounding-monotonicity}
  \abs{Q_1(u)}\leq \abs{Q_1(v)}
  \quad\mbox{whenever }\abs u\leq\abs v .
\end{equation}
The threshold $A_{\mathrm{lat}}$ is defined by
\cref{eq:subcritical-lattice-threshold}. Equivalently,
$A<A_{\mathrm{lat}}$ means that $L_A(h)<h$ for every $h>0$.

\begin{lemma}
\label{lem:subcritical-lattice-regimes}
  The threshold in \cref{eq:subcritical-lattice-threshold} belongs to
  $(0,1)$. If $A<A_{\mathrm{lat}}$, then there exists
  $\theta_A\in(0,1)$ such that
  \begin{equation}\label{eq:subcritical-uniform-lattice-contraction}
    L_A(h)\leq \theta_A h
    \qquad\mbox{for every }h>0 .
  \end{equation}
  Consequently,
  \begin{equation}\label{eq:subcritical-exact-lattice-map-contraction}
    V_{A,\varrho}(h)\leq \theta_A h,
    \qquad h\geq0 .
  \end{equation}
\end{lemma}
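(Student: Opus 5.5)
The plan is to study the function $r(\alpha):=\alpha^2/m(\alpha)$ on $(0,\infty)$, where $m(\alpha)=\Ee[Q_1(\alpha G)^2]$. Then $A_{\mathrm{lat}}^2=\inf_{\alpha>0}r(\alpha)$, and the inequality $L_A(h)\le\theta_A h$ becomes $m(\alpha)\le\theta_A\alpha^2/A^2$ with $\alpha=A\sqrt h$, that is, $r(\alpha)\ge A^2/\theta_A$ for all $\alpha>0$. The only subtlety is whether the infimum is attained or only approached at an endpoint.

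\emph{Step 1: Boundary behavior of $r$.} As $\alpha\downarrow0$, $m(\alpha)\le \Ee[(\alpha G)^2\one_{\{\abs{\alpha G}>1/2\}}]\cdot C$, since $\abs{Q_1(u)}\le 2\abs u$ and $Q_1(u)=0$ for $\abs u\le1/2$. This quantity is $o(\alpha^2)$ exponentially fast, so $r(\alpha)\to\infty$. As $\alpha\to\infty$, write $Q_1(u)=u+e(u)$ with $\abs e\le1/2$. Then $m(\alpha)=\alpha^2+2\alpha\Ee[Ge(\alpha G)]+O(1)$, so $m(\alpha)=\alpha^2+O(\alpha)$ and $r(\alpha)\to1$. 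Since $m$ is continuous (dominated convergence; $\Pp(\alpha G\in\tfrac12+\Z)=0$), $r$ is continuous and positive on $(0,\infty)$.

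\emph{Step 2: $A_{\mathrm{lat}}\in(0,1)$.} Positivity follows because $r$ is continuous, positive, tends to $\infty$ at $0$ and to $1$ at $\infty$, so $\inf r>0$. For $A_{\mathrm{lat}}<1$ it suffices to exhibit some $\alpha$ with $m(\alpha)>\alpha^2$. A small-$\alpha$ choice works: for $\alpha$ small but positive, $m(\alpha)\ge\Pp(\abs{\alpha G}>1/2)$, but this is tiny compared with $\alpha^2$, so it is useless. Instead use moderate $\alpha$. For instance, at $\alpha=1$, $Q_1(G)^2\ge G^2$ exactly when $\abs G\in(1/2,1]\cup(3/2,2]\cup\dots$, and a direct numerical evaluation of $\Ee[Q_1(G)^2]-1$ (a rapidly convergent series of Gaussian probabilities $\sum_k k^2\Pp(k-\tfrac12<\abs G\le k+\tfrac12)-1$) should be checked to be positive. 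If it is not, scan for a suitable $\alpha$ around $0.5$–$1$ where the mass in $(1/2,1]$ is amplified. This numerical verification is the main obstacle. I expect to pick $\alpha$ near $0.6$–$0.8$, where $\Pp(\abs{\alpha G}>1/2)$ is already of order $\alpha^2$ while rounding pushes values in $(1/2,1)$ up to $1$. I would then bound the series by finitely many explicit terms with a rigorous Gaussian tail remainder.

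\emph{Step 3: Uniform contraction.} Let $A<A_{\mathrm{lat}}$ and set $\theta_A:=A^2/A_{\mathrm{lat}}^2\in(0,1)$. For every $\alpha>0$, $r(\alpha)\ge A_{\mathrm{lat}}^2$, so $m(\alpha)\le\alpha^2/A_{\mathrm{lat}}^2$. Substituting $\alpha=A\sqrt h$ gives $L_A(h)\le A^2h/A_{\mathrm{lat}}^2=\theta_A h$. This is \cref{eq:subcritical-uniform-lattice-contraction}, with no attainment issue needed.

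\emph{Step 4: Comparison for $V_{A,\varrho}$.} Since $\abs{\tanh z}\le\abs z$, we have $\abs{\varrho^{-1}\tanh(\varrho A\sqrt h\,g)}\le\abs{A\sqrt h\,g}$. The monotonicity \cref{eq:subcritical-rounding-monotonicity} then gives $U_{A,\varrho}(h,g)\le Q_1(A\sqrt h\,g)^2$ pointwise. Taking expectations yields $V_{A,\varrho}(h)\le L_A(h)\le\theta_Ah$ for $h>0$, and both sides vanish at $h=0$. This proves \cref{eq:subcritical-exact-lattice-map-contraction}.
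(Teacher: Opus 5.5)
Your proposal is correct. Its overall structure matches the paper's: endpoint behaviour of $m$, a witness $\alpha$ with $m(\alpha)>\alpha^2$, then the comparison via $\abs{\tanh z}\leq\abs z$ and monotonicity of $\abs{Q_1}$. Your Step~3, however, takes a shorter route.

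The paper first records the pointwise inequality $L_A(h)<h$. It then uses $L_A(h)/h\to0$ as $h\downarrow0$, $L_A(h)/h\to A^2<1$ as $h\to\infty$, and continuity plus compactness on a middle interval $[r,R]$, to extract some unspecified $\theta_A<1$. You instead read the uniform bound directly off the definition of the infimum: $m(\alpha)\leq\alpha^2/A_{\mathrm{lat}}^2$ for every $\alpha>0$. This gives the explicit constant $\theta_A=(A/A_{\mathrm{lat}})^2$, which is the best possible, since $\sup_{h>0}L_A(h)/h=A^2/A_{\mathrm{lat}}^2$. The paper's pointwise inequality is itself obtained by exactly this substitution $\alpha=A\sqrt h$, so the compactness step there is redundant and your version loses nothing.

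The one place where you are less efficient is the strict bound $A_{\mathrm{lat}}<1$, which you leave as a pending numerical check and call the main obstacle. It is not an obstacle.
\begin{itemize}
\item The paper takes $\alpha=1/2$. Then $\abs G>1$ forces $\abs{\alpha G}>1/2$, hence $Q_1(\alpha G)^2\geq1$, so $m(1/2)\geq\Pp(\abs G>1)\approx0.317>1/4=\alpha^2$ in one line.
\item Your own first guess $\alpha=1$ also works. Write $m(1)=\sum_{k\geq0}(2k+1)\Pp(\abs G>k+\tfrac12)$. The first two terms already give $m(1)\geq0.617+0.400>1$; the full value is about $1.083$.
\item Because this series has nonnegative terms, truncating it only decreases it. So a lower bound needs no Gaussian tail remainder, contrary to what you planned.
\end{itemize}
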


\begin{proof}
  \emph{Step 1: Endpoint behavior of $m$.}
  For every $\alpha>0$, we have $m(\alpha)>0$, since
  $\Pp(\abs{\alpha G}>1/2)>0$. The map $\alpha\mapsto m(\alpha)$ is
  continuous on $(0,\infty)$. Indeed, if $0<\alpha\leq R$, then
  \begin{equation*}
    Q_1(\alpha G)^2
    \leq
    (\abs{\alpha G}+\tfrac12)^2
    \leq
    (R\abs G+\tfrac12)^2 .
  \end{equation*}
  Thus continuity follows by dominated convergence. Near the origin,
  $Q_1(u)=0$ on $\{\abs u\leq1/2\}$ and
  $\abs{Q_1(u)}\leq \abs u+1/2\leq2\abs u$ on
  $\{\abs u>1/2\}$,
  \begin{equation}\label{eq:subcritical-lattice-small-alpha}
    0
    \leq
    \frac{m(\alpha)}{\alpha^2}
    \leq
    4\Ee\left[
      G^2\one_{\{\abs G>1/(2\alpha)\}}
    \right]
    \xrightarrow[\alpha\downarrow0]{}0 .
  \end{equation}
  At infinity, the bound $\abs{Q_1(u)-u}\leq1/2$ gives
  $Q_1(\alpha G)/\alpha\to G$ pointwise and
  \begin{equation*}
    \left(\frac{\abs{Q_1(\alpha G)}}{\alpha}\right)^2
    \leq
    \left(\abs G+\frac1{2\alpha}\right)^2
    \leq
    (\abs G+1)^2
    \qquad\mbox{for }\alpha\geq\frac12 .
  \end{equation*}
  Dominated convergence therefore gives
  \begin{equation}\label{eq:subcritical-lattice-large-alpha}
    \frac{m(\alpha)}{\alpha^2}
    =
    \Ee\left[
      \left(\frac{Q_1(\alpha G)}{\alpha}\right)^2
    \right]
    \xrightarrow[\alpha\to\infty]{}
    \Ee[G^2]
    =
    1 .
  \end{equation}

  \emph{Step 2: The threshold and uniform contraction.}
  It follows from \cref{eq:subcritical-lattice-small-alpha} that
  $\alpha^2/m(\alpha)\to\infty$ as $\alpha\downarrow0$, while
  \cref{eq:subcritical-lattice-large-alpha} gives
  $\alpha^2/m(\alpha)\to1$ as $\alpha\to\infty$. Since
  $\alpha^2/m(\alpha)$ is continuous and positive, it is bounded below away
  from zero outside a compact subinterval of $(0,\infty)$ and attains a
  positive minimum on that subinterval. Hence
  $A_{\mathrm{lat}}\in(0,1]$. To prove the strict upper bound, take
  $\alpha=1/2$. Then
  \begin{equation*}
    Q_1(\alpha G)^2\geq1
    \quad\mbox{on }\{\abs G>1\},
  \end{equation*}
  and hence
  \begin{equation*}
    m(1/2)\geq\Pp(\abs G>1)>\frac14.
  \end{equation*}
  The final strict inequality can be checked by an elementary Gaussian tail
  bound. Therefore,
  $A_{\mathrm{lat}}^2\leq (1/2)^2/m(1/2)<1$.

  By the change of variables $\alpha=A\sqrt h$,
  \begin{equation*}
    \frac{L_A(h)}h
    =
    A^2\,\frac{m(A\sqrt h)}{(A\sqrt h)^2},
    \qquad h>0 .
  \end{equation*}
  Thus $A<A_{\mathrm{lat}}$ implies $L_A(h)<h$ for every $h>0$.
  Moreover,
  \cref{eq:subcritical-lattice-small-alpha,eq:subcritical-lattice-large-alpha}
  give
  \begin{equation*}
    \frac{L_A(h)}h\to0
    \quad\mbox{as }h\downarrow0,
    \qquad
    \frac{L_A(h)}h\to A^2<1
    \quad\mbox{as }h\to\infty .
  \end{equation*}
  Since $h\mapsto L_A(h)/h$ is continuous on $(0,\infty)$, choose
  $0<r<R<\infty$ so that the ratio is bounded away from one on
  $(0,r]\cup[R,\infty)$. On $[r,R]$, the strict pointwise inequality and
  compactness give the same bound. This proves
  \cref{eq:subcritical-uniform-lattice-contraction}.

  \emph{Step 3: Comparison with the exact rounded map.}
  The inequality $\abs{\tanh z}\leq\abs z$ and
  \cref{eq:subcritical-rounding-monotonicity} give, pointwise in $G$,
  \begin{equation*}
    \left|
      Q_1\left(
        \varrho^{-1}\tanh(\varrho A\sqrt h\,G)
      \right)
    \right|
    \leq
    \abs{Q_1(A\sqrt h\,G)} .
  \end{equation*}
  Taking squares and expectations yields
  $V_{A,\varrho}(h)\leq L_A(h)$ for $h>0$. The case $h=0$ is immediate,
  since both sides vanish. Combining this with
  \cref{eq:subcritical-uniform-lattice-contraction} proves
  \cref{eq:subcritical-exact-lattice-map-contraction}.
\end{proof}

Let
$\bar\Phi(x):=\Pp(G>x)$ and
$\phi(x):=(2\pi)^{-1/2}\exp(-x^2/2)$. For $0<\varrho<1$ set
\begin{equation}\label{eq:subcritical-layer-thresholds}
  b_{\varrho,k}
  :=
  \varrho^{-1}\arctanh\left(\varrho(k+\tfrac12)\right),
  \qquad
  \mathcal K_\varrho
  :=
  \{k\geq0:\varrho(k+\tfrac12)<1\}.
\end{equation}
Then
\begin{equation}\label{eq:subcritical-layer-formulas-exact}
  V_{A,\varrho}(h)=2\sum_{k\in\mathcal K_\varrho}(2k+1)
  \bar\Phi\left(\frac{b_{\varrho,k}}{A\sqrt h}\right),
  \qquad
  V_{A,\varrho}'(h)=\sum_{k\in\mathcal K_\varrho}(2k+1)
  \frac{b_{\varrho,k}}{Ah^{3/2}}
  \phi\left(\frac{b_{\varrho,k}}{A\sqrt h}\right).
\end{equation}
Indeed, $Q_1(u)^2=\sum_{k\geq0}(2k+1)
\one_{\{\abs u>k+\frac12\}}$ outside Gaussian-null boundary events, and the
preimage of the layer $k+\frac12$ under
$u\mapsto\varrho^{-1}\tanh(\varrho u)$ is $b_{\varrho,k}$. Since
$\mathcal K_\varrho$ is finite, differentiating term by term is immediate.
The small-radius estimate in
\cref{eq:subcritical-fixed-small-radius-map-derivative} also gives
$V_{A,\varrho}'(h)\to0$ as $h\downarrow0$. We use this one-sided extension at
the origin.

By \cref{eq:subcritical-layer-formulas-exact}, $V_{A,\varrho}$ is positive and
$V_{A,\varrho}'$ is bounded on compact subsets of $(0,\infty)$. The threshold
$b_{\varrho,0}>0$ in \cref{eq:subcritical-layer-thresholds} yields exponential
decay in $1/h$ near the origin.

We will use regularity of $V_{A,\varrho}$ both away from and near
the origin.
\begin{lemma}
\label{lem:subcritical-fixed-precision-map-estimates}
  Fix $\varrho\in(0,1)$. For every $0<r<R<\infty$ there are constants
  $m,L\in(0,\infty)$, depending on $A,\varrho,r,R$, such that
  \begin{equation}\label{eq:subcritical-fixed-compact-bounds}
    m\leq V_{A,\varrho}(h),
    \qquad
    V_{A,\varrho}'(h)\leq L,
    \qquad r\leq h\leq R .
  \end{equation}
  For every $r\in(0,1)$ there are constants $c,C\in(0,\infty)$, depending on
  $A,\varrho,r$, such that, for $0<h\leq r$,
  \begin{equation}\label{eq:subcritical-fixed-small-radius-map-derivative}
    V_{A,\varrho}(h)
    \leq
    C\exp\left(-\frac c h\right),
    \qquad
    V_{A,\varrho}'(h)
    \leq
    Ch^{-2}\exp\left(-\frac c h\right).
  \end{equation}
\end{lemma}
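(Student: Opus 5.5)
The plan is to work directly from the layer formulas \cref{eq:subcritical-layer-formulas-exact}, which express $V_{A,\varrho}$ and $V_{A,\varrho}'$ as finite sums over $k\in\mathcal K_\varrho$. Since $\varrho\in(0,1)$ is fixed, $\mathcal K_\varrho$ is a finite nonempty set. It contains $k=0$ because $\varrho/2<1$. Every threshold $b_{\varrho,k}$ is a finite positive number, and $b_{\varrho,k}\geq b_{\varrho,0}>0$ because $\arctanh$ is increasing. All constants below will depend on $A,\varrho$ through $\abs{\mathcal K_\varrho}$, $b_{\varrho,0}$ and $b_{\max}:=\max_{k\in\mathcal K_\varrho}b_{\varrho,k}$.

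\emph{Compact bounds.} For $r\leq h\leq R$, I keep only the $k=0$ term of the first formula. This gives
\begin{equation*}
  V_{A,\varrho}(h)\geq 2\bar\Phi\left(\frac{b_{\varrho,0}}{A\sqrt h}\right)\geq 2\bar\Phi\left(\frac{b_{\varrho,0}}{A\sqrt r}\right)=:m>0,
\end{equation*}
since $\bar\Phi$ is decreasing. For the derivative, I bound each summand by $\phi\leq(2\pi)^{-1/2}$ and $b_{\varrho,k}\leq b_{\max}$, with $h^{-3/2}\leq r^{-3/2}$. This gives $V_{A,\varrho}'(h)\leq L$, where
\begin{equation*}
  L:=\sum_{k\in\mathcal K_\varrho}(2k+1)\,b_{\max}A^{-1}r^{-3/2}(2\pi)^{-1/2}.
\end{equation*}
The upper bound $R$ plays no role. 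Only positivity of the lower endpoint is used.

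\emph{Small-radius bounds.} For $0<h\leq r<1$, I set $x_k:=b_{\varrho,k}/(A\sqrt h)\geq b_{\varrho,0}/(A\sqrt h)$.
\begin{itemize}
  \item The Mills bound $\bar\Phi(x)\leq e^{-x^2/2}$ for $x\geq0$ gives each term at most $\exp(-b_{\varrho,0}^2/(2A^2h))$. Summing the finitely many terms gives the first estimate with $c:=b_{\varrho,0}^2/(2A^2)$ and $C:=2\sum_{k\in\mathcal K_\varrho}(2k+1)$.
  \item For the derivative, each term is at most
  \begin{equation*}
    (2k+1)\,b_{\max}A^{-1}h^{-3/2}(2\pi)^{-1/2}e^{-c/h}.
  \end{equation*}
  Since $h\leq r<1$, we have $h^{-3/2}\leq h^{-2}$, which gives the stated form after enlarging $C$. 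The same $c$ works for both bounds, and I take the maximum of the two $C$'s.
\end{itemize}

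Remaining check: the formula for $V_{A,\varrho}'$ is obtained by differentiating each term $\bar\Phi(b/(A\sqrt h))$. The chain rule gives $\phi(x)\,b/(2Ah^{3/2})$, and the factor $2$ in front of the sum cancels the $1/2$. Since the paper asserts this formula, I will simply invoke it.

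The only potential obstacle is making sure that $b_{\varrho,0}>0$ and that $\mathcal K_\varrho$ is finite. Both follow immediately from $\varrho<1$, so the proof is routine.
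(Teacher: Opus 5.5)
Your proposal is correct and follows the paper's proof essentially step for step. Both read everything off the finite layer formula \cref{eq:subcritical-layer-formulas-exact}, use the $k=0$ term for positivity, and combine $\bar\Phi(x)\leq e^{-x^2/2}$, $b_{\varrho,k}\geq b_{\varrho,0}$ and $h^{-3/2}\leq h^{-2}$ for the small-radius bounds. The only cosmetic difference is in the compact bounds: you obtain $m$ and $L$ explicitly from the monotonicity of $\bar\Phi$, $\phi\leq(2\pi)^{-1/2}$ and $h^{-3/2}\leq r^{-3/2}$, whereas the paper takes the minimum and maximum of continuous functions over the compact interval $[r,R]$.
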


\begin{proof}
  Since
  $0<\varrho<1$, the set $\mathcal K_\varrho$ is finite and nonempty. In
  particular $0\in\mathcal K_\varrho$ and
  \begin{equation*}
    b_{\varrho,0}
    =
    \varrho^{-1}\arctanh(\varrho/2)
    >0 .
  \end{equation*}
  The representation
  \cref{eq:subcritical-layer-formulas-exact} is a finite sum of
  continuous functions of $h>0$. Hence $V_{A,\varrho}$ is continuous on
  $(0,\infty)$. Moreover, for every $h>0$,
  \begin{equation*}
    V_{A,\varrho}(h)
    \geq
    2\bar\Phi\left(\frac{b_{\varrho,0}}{A\sqrt h}\right)
    >
    0 .
  \end{equation*}
  It follows that
  \begin{equation*}
    m
    :=
    \min_{h\in[r,R]} V_{A,\varrho}(h)
    >
    0 .
  \end{equation*}
  Similarly, \cref{eq:subcritical-layer-formulas-exact} is a finite sum of
  continuous functions on $[r,R]$. Thus
  \begin{equation*}
    L
    :=
    \max_{h\in[r,R]} V_{A,\varrho}'(h)
    <
    \infty ,
  \end{equation*}
  which proves \cref{eq:subcritical-fixed-compact-bounds}.

  For the small-radius bounds, $C<\infty$ may
  change from line to line and depends only on $A,\varrho,r$. Put
  $b_0:=b_{\varrho,0}$ and
  \begin{equation*}
    c:=\frac{b_0^2}{2A^2}.
  \end{equation*}
  Since $k\mapsto b_{\varrho,k}$ is increasing on $\mathcal K_\varrho$, we have
  $b_{\varrho,k}\geq b_0$ for every $k\in\mathcal K_\varrho$. Using
  $\bar\Phi(x)\leq \exp(-x^2/2)$ for $x\geq0$ in
  \cref{eq:subcritical-layer-formulas-exact}, we obtain, for $h>0$,
  \begin{equation*}
    V_{A,\varrho}(h)
    \leq
    2\sum_{k\in\mathcal K_\varrho}(2k+1)
    \exp\left(-\frac{b_{\varrho,k}^2}{2A^2h}\right)
    \leq
    C\exp\left(-\frac c h\right).
  \end{equation*}
  For the derivative, the same lower bound on the thresholds and
  \cref{eq:subcritical-layer-formulas-exact} give
  \begin{equation*}
    V_{A,\varrho}'(h)
    \leq
    \frac1{\sqrt{2\pi}}
    \sum_{k\in\mathcal K_\varrho}
    (2k+1)\frac{b_{\varrho,k}}{Ah^{3/2}}
    \exp\left(-\frac{b_{\varrho,k}^2}{2A^2h}\right)
    \leq
    C h^{-3/2}\exp\left(-\frac c h\right).
  \end{equation*}
  If $0<h\leq r<1$, then $h^{-3/2}\leq h^{-2}$, and the desired derivative
  estimate follows after increasing $C$. This proves
  \cref{eq:subcritical-fixed-small-radius-map-derivative}.
\end{proof}

\subsection{Tracking \texorpdfstring{$\mathscr H^{(N)}$}{H(N)}}
\label{subsec:subcritical-radius-tracking}

Assume $A<A_{\mathrm{lat}}$, put $a_N=(\log N)^{-1}$, and consider an orbit
$h_{t+1,N}=V_{A,\varrho}(h_{t,N})$ with $h_{0,N}\leq C_0$. For
$\delta\in(0,1/4)$, set
\begin{equation*}
  L_{t,N}
  :=
  \sup_{\substack{u\geq0\\
    \abs{u-h_{t,N}}\leq\delta(h_{t,N}+a_N)}}
  V_{A,\varrho}'(u),
  \qquad
  \beta_{t,N}:=\frac{h_{t,N}+a_N}{h_{t+1,N}+a_N},
  \qquad
  \alpha_{t,N}:=L_{t,N}\beta_{t,N}.
\end{equation*}

The tracking argument below normalizes
$\mathscr H_t^{(N)}-\bar h_{t,N}$ by $\bar h_{t,N}+a_N$. When
$h_{t,N}=\bar h_{t,N}$, the factors $\alpha_{t,N}$ and $\beta_{t,N}$ control
the propagation of this ratio. The following estimate bounds the resulting
products.
\begin{lemma}
\label{lem:subcritical-normalized-lattice-amplification}
  There is a constant $C<\infty$, depending only on
  $A,\varrho,\delta,C_0$, such that, for all $T_N\in\N_0$ and all
  sufficiently large $N$,
  \begin{equation}\label{eq:subcritical-global-amplification}
    \prod_{u=s}^{t-1}(1\vee\alpha_{u,N})
    +
    \beta_{s,N}
    \prod_{u=s+1}^{t-1}(1\vee\alpha_{u,N})
    \leq
    \exp\{C(\log\log N)^2\}
  \end{equation}
  for all $0\leq s<t\leq T_N$.
\end{lemma}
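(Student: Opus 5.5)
The plan is to split each product into a telescoping $\beta$-part and a derivative part $\prod(1\vee L_{u,N})$. The $\beta$-part is at most logarithmic in $N$. The derivative part is bounded uniformly in $N$, because the orbit spends only boundedly many steps where $V_{A,\varrho}'$ can exceed one. This actually gives a bound $C\log N$, which is stronger than the stated $\exp\{C(\log\log N)^2\}$.

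\emph{Step 1: $\beta_{u,N}\geq1$ and telescoping.} By \cref{lem:subcritical-lattice-regimes}, $h_{u+1,N}=V_{A,\varrho}(h_{u,N})\leq\theta_A h_{u,N}\leq h_{u,N}$. Hence $\beta_{u,N}\geq1$, so $1\vee\alpha_{u,N}\leq(1\vee L_{u,N})\beta_{u,N}$. The $\beta$'s telescope:
\begin{equation*}
  \prod_{u=s}^{t-1}\beta_{u,N}=\frac{h_{s,N}+a_N}{h_{t,N}+a_N}\leq\frac{C_0+1}{a_N}=(C_0+1)\log N .
\end{equation*}
The same bound holds for $\beta_{s,N}\prod_{u=s+1}^{t-1}\beta_{u,N}$, which is the same telescoping product. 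It therefore remains to bound $\prod_{u}(1\vee L_{u,N})$ uniformly in $N$, $s$, $t$ and $T_N$.

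\emph{Step 2: choice of a small radius $r$.} By \cref{eq:subcritical-fixed-small-radius-map-derivative} with some $r_0\in(0,1)$, we have $V_{A,\varrho}'(x)\leq Cx^{-2}e^{-c/x}$ for $0<x\leq r_0$. The right-hand side tends to $0$ as $x\downarrow0$. Fix $r\in(0,r_0/2)$ with $\sup_{0<x\leq2r}Cx^{-2}e^{-c/x}\leq1$; at $x=0$ the one-sided derivative is $0$.

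If $h_{u,N}\leq r$, every $x$ in the window defining $L_{u,N}$ satisfies
\begin{equation*}
  x\leq(1+\delta)h_{u,N}+\delta a_N\leq 2r
\end{equation*}
for all $N$ large enough that $\delta a_N\leq r/2$. Hence $1\vee L_{u,N}=1$. This holds also when $h_{u,N}\ll a_N$, where the window is of size about $a_N$ and $V_{A,\varrho}'$ is super-polynomially small there.

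\emph{Step 3: the finitely many large steps.} Since $h_{u,N}\leq C_0\theta_A^u$, the set $\{u:h_{u,N}>r\}$ is an initial segment of length at most $n_0:=\lceil\log(C_0/r)/\log(1/\theta_A)\rceil+1$, independent of $N$. For such $u$ the window lies in $[(1-\delta)r,(1+\delta)C_0+\delta]\subset[r/2,2C_0+1]$. So \cref{eq:subcritical-fixed-compact-bounds} gives $L_{u,N}\leq L$ with $L=L(A,\varrho,r,C_0)$. Combining Steps 1–3,
\begin{equation*}
  \prod_{u=s}^{t-1}(1\vee\alpha_{u,N})\leq(1\vee L)^{n_0}(C_0+1)\log N .
\end{equation*}
The second term in \cref{eq:subcritical-global-amplification} obeys the same bound. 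Their sum is at most $2(1\vee L)^{n_0}(C_0+1)\log N\leq\exp\{C(\log\log N)^2\}$ for large $N$.

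The only delicate point is Step 2. One must check that the window's additive part $\delta a_N$ does not push evaluation points out of the region where $V_{A,\varrho}'\leq1$, which is why $N$ must be taken large. One must also check that the estimate is uniform in $T_N$, which holds because the number of steps above $r$ is bounded independently of everything.
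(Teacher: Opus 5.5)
Your argument is correct, and it takes a different and sharper route than the paper.

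\textbf{The paper's route.} It bounds each factor $1\vee\alpha_{u,N}$ separately over three regimes of the orbit:
\begin{itemize}
\item boundedly many steps with $h_{u,N}>r$, where $\alpha_{u,N}$ is bounded;
\item at most $C\log\log N$ steps with $a_N<h_{u,N}\le r$, where it uses only $L_{u,N}\le C$ and $\beta_{u,N}\le C\log N$;
\item the remaining steps, where $\alpha_{u,N}\le N^{-c}$.
\end{itemize}
Multiplying out $(C\log N)^{C\log\log N}$ from the middle regime is what produces $\exp\{C(\log\log N)^2\}$.

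\textbf{Your route.} You use the monotonicity of the orbit to get $\beta_{u,N}\ge1$, and then factor $1\vee\alpha_{u,N}\le(1\vee L_{u,N})\beta_{u,N}$. The $\beta$'s then telescope to $(h_{s,N}+a_N)/(h_{t,N}+a_N)\le(C_0+1)\log N$, uniformly in the number of steps. Shrinking $r$ so that $V_{A,\varrho}'\le1$ on $[0,2r]$ makes the derivative factors trivial in both of the paper's lower regimes at once. This yields an amplification constant $K_N=O(\log N)$. Fed into \cref{prop:subcritical-exact-radius-concentration}, it would relax that proposition's hypothesis to $\bar T_N(\log N)^2/N\to0$. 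The paper's cruder bound simply suffices for its application, since there $\bar T_N=O(\log\log N)$.

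\textbf{One small slip.} In Step~3, the lower end of the window for $h_{u,N}>r$ is $(1-\delta)h_{u,N}-\delta a_N\ge(1-\delta)r-\delta a_N$, not $(1-\delta)r$. With your choice $\delta a_N\le r/2$, this is only $\ge r/4$, not $\ge r/2$. Any fixed positive lower bound is enough to invoke \cref{eq:subcritical-fixed-compact-bounds}, so the conclusion is unaffected.
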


\begin{proof}
  Fix $r\in(0,1/2)$ and set
  \begin{equation*}
    \tau_r:=\inf\{t:h_{t,N}\leq r\},
    \qquad
    \tau_a:=\inf\{t:h_{t,N}\leq a_N\}.
  \end{equation*}
  Let $I_{t,N}$ denote the interval over which the supremum defining
  $L_{t,N}$ is taken.

  \emph{Step 1: The region $h_{t,N}>r$.}
  By \cref{eq:subcritical-exact-lattice-map-contraction},
  $h_{t,N}\leq\theta_A^tC_0$, and hence $\tau_r\leq C$. For $t<\tau_r$, we
  have $h_{t,N}\in[r,C_0]$. Since $a_N\leq r$ for all sufficiently large $N$
  and $\delta<1/4$, it follows that
  $I_{t,N}\subseteq[r/2,2(C_0+r)]$. The bounds in
  \cref{eq:subcritical-fixed-compact-bounds} therefore give
  \begin{equation*}
    h_{t+1,N}=V_{A,\varrho}(h_{t,N})\geq m,
    \qquad
    L_{t,N}\leq L .
  \end{equation*}
  Thus $\alpha_{t,N}$ and $\beta_{t,N}$ are bounded uniformly in $N$ for
  $t<\tau_r$.

  \emph{Step 2: The region $a_N<h_{t,N}\leq r$.}
  The contraction bound also gives
  $\tau_a\leq C\log(1/a_N)=C\log\log N$. For $\tau_r\leq t<\tau_a$ and
  $u\in I_{t,N}$,
  \begin{equation*}
    (1-2\delta)h_{t,N}
    \leq
    u
    \leq
    (1+2\delta)h_{t,N},
  \end{equation*}
  because $a_N<h_{t,N}$. In particular, $I_{t,N}\subseteq[0,2r]$. By
  \cref{eq:subcritical-fixed-small-radius-map-derivative} and the boundedness
  of $u^{-2}\exp(-c/u)$ on $(0,2r]$, we have
  $L_{t,N}\leq C$. Moreover,
  \begin{equation*}
    \beta_{t,N}
    =
    \frac{h_{t,N}+a_N}{h_{t+1,N}+a_N}
    \leq
    \frac{2r}{a_N}
    \leq
    C\log N .
  \end{equation*}
  Hence $\alpha_{t,N}\leq C\log N$ for $\tau_r\leq t<\tau_a$.

  \emph{Step 3: The region $h_{t,N}\leq a_N$.}
  For $t\geq\tau_a$, the contraction gives $h_{t,N}\leq a_N$, and
  $I_{t,N}\subseteq[0,2a_N]$. Since
  $u\mapsto u^{-2}\exp(-c/u)$ is increasing in a sufficiently small
  right-neighborhood of the origin, the derivative bound in
  \cref{eq:subcritical-fixed-small-radius-map-derivative} gives
  \begin{equation*}
    L_{t,N}
    \leq
    Ca_N^{-2}\exp\left(-\frac{c}{2a_N}\right)
    =
    C(\log N)^2N^{-c/2}
    \leq
    N^{-c/3}
  \end{equation*}
  for all sufficiently large $N$, after decreasing $c>0$ if necessary.
  Moreover, $h_{t+1,N}+a_N\geq a_N$, and therefore
  \begin{equation*}
    \beta_{t,N}
    =
    \frac{h_{t,N}+a_N}{h_{t+1,N}+a_N}
    \leq2 .
  \end{equation*}
  Consequently, after decreasing $c$ once more,
  $\alpha_{t,N}=L_{t,N}\beta_{t,N}\leq N^{-c}$.

  Thus $1\vee\alpha_{t,N}$ is uniformly bounded for at most $C$ times,
  bounded by $C\log N$ for at most $C\log\log N$ times, and equal to $1$ for
  $t\geq\tau_a$. It follows that
  \begin{equation*}
    \prod_{u=s}^{t-1}(1\vee\alpha_{u,N})
    \leq
    \exp\{C(\log\log N)^2\}.
  \end{equation*}
  The preceding three steps also give $\beta_{s,N}\leq C\log N$ for every
  $s$. This factor is absorbed into the same bound, which proves
  \cref{eq:subcritical-global-amplification}.
\end{proof}

Conditional independence and \cref{eq:subcritical-global-amplification} show
that $\mathscr H^{(N)}$ tracks $(\bar h_{t,N})_{t\geq0}$ in relative error.
The estimate below is the discrete-time counterpart of the stopped $L^2$
fluid-limit estimate of Darling and Norris \cite{darling-norris2008}. Compare
also with the bounded-increment formulation of Warnke \cite{warnke2019}.
\begin{proposition}
\label{prop:subcritical-exact-radius-concentration}
  Assume $A<A_{\mathrm{lat}}$, fix $\varrho\in(0,1)$, and let
  $x_N\in[-1,1]^N$. Let $\bar T_N\in\N_0$ be a sequence such that
  \begin{equation*}
    \frac{\bar T_N}{N}\exp\{C(\log\log N)^2\}\to0
    \quad\mbox{for every fixed }C<\infty .
  \end{equation*}
  Then
  \begin{equation}\label{eq:subcritical-exact-radius-concentration}
    \sup_{0\leq t\leq \bar T_N}
    \frac{
      \abs{\mathscr H_t^{(N)}-\bar h_{t,N}}
    }{
      \bar h_{t,N}+(\log N)^{-1}
    }
    \xrightarrow{\Pp}0 .
  \end{equation}
\end{proposition}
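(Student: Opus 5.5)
We reduce the statement to \cref{lem:common-stopped-orbit-tracking}, applied to the normalized deviation
\begin{equation*}
  \mathcal R_{t,N}
  :=
  \frac{\mathscr H_t^{(N)}-\bar h_{t,N}}{\bar h_{t,N}+a_N},
  \qquad a_N=(\log N)^{-1}.
\end{equation*}
The orbit $\bar h_{t,N}=h_{t,N}$ satisfies $h_{0,N}\leq C_0$ with $C_0:=M_\varrho$, since $\abs{Q_\varrho x_N}_\infty\leq$ the largest grid point in $[-1,1]$. Hence \cref{lem:subcritical-normalized-lattice-amplification} is available. At time zero, $\mathscr H_0^{(N)}=\bar h_{0,N}$ deterministically, so $\mathcal R_{0,N}=0$. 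Fix $\delta\in(0,1/4)$ and take the stopping time
\begin{equation*}
  \tau_N:=\inf\{t:\abs{\mathcal R_{t,N}}>\delta\}.
\end{equation*}
The conclusion \cref{eq:subcritical-exact-radius-concentration} is the statement that $\Pp(\tau_N\leq\bar T_N)\to0$ for every fixed $\delta$. This is exactly \cref{eq:common-orbit-exit-bound}, once the recursion hypotheses are verified.

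\emph{Step 1: the recursion.} On $\{t<\tau_N\}$ we have $\abs{\mathscr H_t^{(N)}-\bar h_{t,N}}\leq\delta(\bar h_{t,N}+a_N)$. Using \cref{eq:rounded-noise-decomposition} and the mean value theorem,
\begin{equation*}
  \mathscr H_{t+1}^{(N)}-\bar h_{t+1,N}
  =
  V_{A,\varrho}'(\xi_t)\bigl(\mathscr H_t^{(N)}-\bar h_{t,N}\bigr)
  +\widehat\zeta_{t+1}^{(N)},
\end{equation*}
with $\xi_t$ in the interval $I_{t,N}$ over which $L_{t,N}$ is a supremum. Dividing by $\bar h_{t+1,N}+a_N$ gives
\begin{equation*}
  \mathcal R_{t+1,N}=A_{t,N}\mathcal R_{t,N}+\eta_{t+1,N},
  \qquad
  A_{t,N}:=V_{A,\varrho}'(\xi_t)\beta_{t,N},
  \qquad
  \eta_{t+1,N}:=\frac{\widehat\zeta_{t+1}^{(N)}}{\bar h_{t+1,N}+a_N}.
\end{equation*}
Then $\abs{A_{t,N}}\leq\alpha_{t,N}$. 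The coefficient $A_{t,N}$ is $\mathcal F_t$-measurable, since $\xi_t$ can be chosen measurably, for instance as a difference quotient, which is bounded by $L_{t,N}$ as well. Off $\{t<\tau_N\}$ the recursion is not needed, because the lemma only uses the stopped process.

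\emph{Step 2: noise bounds.} By \cref{eq:rounded-noise-moments},
\begin{equation*}
  \Ee[\eta_{t+1,N}^2\mid\mathcal F_t]\leq\frac{B_\varrho^2\,\sigma_{t,N}^2}{4N},
  \qquad
  \sigma_{t,N}:=\frac{1}{\bar h_{t+1,N}+a_N}.
\end{equation*}
This is the main obstacle: $\sigma_{t,N}$ can be as large as $\log N$, so it must be absorbed into $K_N$. The definition \cref{eq:common-tracking-amplification} involves $\sigma_{s,N}\prod_{u=s+1}^{t-1}(1\vee\alpha_{u,N})$, while \cref{lem:subcritical-normalized-lattice-amplification} controls $\beta_{s,N}$ in the same position. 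We have $\sigma_{s,N}=\beta_{s,N}/(\bar h_{s,N}+a_N)$. If the denominator is unhelpful, use instead the crude bound $\sigma_{s,N}\leq a_N^{-1}=\log N$, which is $\leq\exp\{(\log\log N)^2\}$. Either way,
\begin{equation*}
  K_N\leq\exp\{C(\log\log N)^2\}
\end{equation*}
for some constant $C$ depending on $A,\varrho,\delta,C_0$. This holds uniformly in $T_N$, since the lemma is uniform in $T_N$.

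\emph{Step 3: conclusion.} Applying \cref{eq:common-orbit-exit-bound} with $T_N=\bar T_N$ and $\mathcal R_{0,N}=0$ gives
\begin{equation*}
  \Pp(\tau_N\leq\bar T_N)
  \leq
  \frac{C'\exp\{2C(\log\log N)^2\}}{\delta^2}\cdot\frac{\bar T_N}{N}
  \to0
\end{equation*}
by the hypothesis on $\bar T_N$. On $\{\tau_N>\bar T_N\}$ the supremum in \cref{eq:subcritical-exact-radius-concentration} is at most $\delta$. Since $\delta$ is arbitrary, convergence in probability follows.

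The delicate points are the measurability and localization of the mean-value coefficient, which requires $\xi_t\in I_{t,N}$ and $u\geq0$, and checking that the $\sigma$-weighted products in $K_N$ are covered by the same $(\log\log N)^2$ bound.
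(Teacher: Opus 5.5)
Your proposal is correct. Its skeleton is the paper's: normalize by $\bar h_{t,N}+a_N$, stop at $\abs{\mathcal R_{t,N}}>\delta$, bound the difference quotient by $L_{t,N}$, and feed \cref{lem:subcritical-normalized-lattice-amplification} into \cref{lem:common-stopped-orbit-tracking}.

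The one genuine difference is the noise estimate. Your $\eta_{t+1,N}=\widehat\zeta_{t+1}^{(N)}/(\bar h_{t+1,N}+a_N)$ is the same variable as the paper's $\beta_{t,N}\zeta_{t+1}$. The paper first proves the relative moment bound $\Ee U_{A,\varrho}(h,G)^2\leq Ch^2$, using $\Pp(U_{A,\varrho}(h,G)\neq0)\leq Ce^{-c/h}$ for $h\leq1$. It then uses $\mathscr H_t^{(N)}\leq C(\bar h_{t,N}+a_N)$ on $\{t<\tau_\delta\}$ to get conditional variance at most $C\beta_{t,N}^2/N$. Hence its $\sigma_{t,N}=\beta_{t,N}$, which is exactly the quantity in the second slot of \cref{eq:subcritical-global-amplification}.

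You instead use the uniform bound \cref{eq:rounded-noise-moments}, which needs no localization, and take $\sigma_{t,N}=(\bar h_{t+1,N}+a_N)^{-1}\leq\log N$. You then bound $\prod_{u=s+1}^{t-1}(1\vee\alpha_{u,N})$ by the first term of \cref{eq:subcritical-global-amplification}. The extra factor $\log N\leq\exp\{(\log\log N)^2\}$ is absorbed because the hypothesis on $\bar T_N$ holds for every fixed $C$. Your route therefore skips the paper's Steps 1--2 entirely, at the cost of a $(\log N)^2$ loss in the exit bound, which is harmless here. The paper's relative variance bound is sharper and scale-invariant and matches the design of the amplification lemma, but this proposition does not need it.

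One small slip: $Q_\varrho$ can round a coordinate up to a grid point in $(1,1+\varrho/2]$, so ``the largest grid point in $[-1,1]$'' is not the right justification. The bound $\bar h_{0,N}\leq M_\varrho$ still holds, since $\abs{x_{N,i}/\varrho}\leq\varrho^{-1}$ gives $Q_1(x_{N,i}/\varrho)^2\leq\max_{\abs u\leq\varrho^{-1}}Q_1(u)^2$.
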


\begin{proof}
  Put $a_N=(\log N)^{-1}$, and let $U_{A,\varrho}$ be the coordinate
  observable in \cref{eq:rounded-coordinate-observable}.
  If $\bar T_N=0$, then the assertion is immediate because
  $\mathscr H_0^{(N)}=\bar h_{0,N}$. We therefore assume $\bar T_N\geq1$.

  \emph{Step 1: Bounding $\bar h_{0,N}$ and the moments of
  $U_{A,\varrho}$.}
  Since $x_N\in[-1,1]^N$ and $\abs{Q_1(u)-u}\leq1/2$, we have
  $\abs{Q_\varrho(x_N)_i}\leq1+\varrho/2$ for every coordinate. Hence,
  \begin{equation}\label{eq:subcritical-initial-radius-bound}
    \bar h_{0,N}
    \leq
    C_\varrho
    :=
    \frac{(1+\varrho/2)^2}{\varrho^2}.
  \end{equation}
  Furthermore, $U_{A,\varrho}(h,G)$ is uniformly bounded in $h$, because
  \begin{equation*}
    \varrho^{-1}\tanh(\varrho A\sqrt h\,G)
    \in[-\varrho^{-1},\varrho^{-1}].
  \end{equation*}
  For $0<h\leq1$, the definition of $b_{\varrho,0}$ in
  \cref{eq:subcritical-layer-thresholds} gives
  \begin{equation*}
    \{U_{A,\varrho}(h,G)\neq0\}
    \subseteq
    \{A\sqrt h\,\abs G>b_{\varrho,0}\}.
  \end{equation*}
  The probability of this event is at most $C\exp(-c/h)$. Therefore, for
  every integer $p\geq1$,
  \begin{equation}\label{eq:subcritical-fixed-moment-bound}
    \Ee U_{A,\varrho}(h,G)^p
    \leq
    C_p h^p,
    \qquad h\geq0 .
  \end{equation}
  Indeed, for $0<h\leq1$ we use $e^{-c/h}\leq C_ph^p$, for $h=0$ the
  left-hand side vanishes, and for $h\geq1$ the bound follows from the
  uniform boundedness of $U_{A,\varrho}$ and $h^p\geq1$.

  \emph{Step 2: The conditional variance of
  $\widehat\zeta_{t+1}^{(N)}$ on $\{t<\tau_\delta\}$.}
  Fix $\delta\in(0,1/4)$ and set
  \begin{equation*}
    e_t:=\mathscr H_t^{(N)}-\bar h_{t,N},
    \qquad
    \mathcal R_{t,N}:=\frac{e_t}{\bar h_{t,N}+a_N},
    \qquad
    \tau_\delta:=\inf\{t\geq0:\abs{\mathcal R_{t,N}}>\delta\}.
  \end{equation*}
  Let $\mathcal F_t$ be the natural filtration. By
  \cref{eq:rounded-noise-decomposition,eq:rounded-noise-moments},
  $\Ee[\widehat\zeta_{t+1}^{(N)}\mid\mathcal F_t]=0$. Since
  $\mathscr H_{t+1}^{(N)}$ is an average of $N$ conditionally independent
  copies of $U_{A,\varrho}(\mathscr H_t^{(N)},G)$, the conditional variance is
  bounded by
  \begin{equation*}
    \Ee[(\widehat\zeta_{t+1}^{(N)})^2\mid\mathcal F_t]
    \leq
    \frac1N
    \Ee U_{A,\varrho}(\mathscr H_t^{(N)},G)^2 .
  \end{equation*}
  On $\{t<\tau_\delta\}$, we have
  \begin{equation*}
    \mathscr H_t^{(N)}
    \leq
    (1+\delta)\bar h_{t,N}+\delta a_N
    \leq
    C(\bar h_{t,N}+a_N).
  \end{equation*}
  Thus \cref{eq:subcritical-fixed-moment-bound} with $p=2$ gives, on this
  event,
  \begin{equation}\label{eq:subcritical-fixed-noise-variance}
    \Ee[(\widehat\zeta_{t+1}^{(N)})^2\mid\mathcal F_t]
    \leq
    \frac{C(\bar h_{t,N}+a_N)^2}{N}.
  \end{equation}

  \emph{Step 3: The error recursion.}
  Define the $\mathcal F_t$-measurable difference quotient
  \begin{equation*}
    D_{t,N}
    :=
    \begin{cases}
      \displaystyle
      \frac{
        V_{A,\varrho}(\mathscr H_t^{(N)})
        -V_{A,\varrho}(\bar h_{t,N})
      }{
        \mathscr H_t^{(N)}-\bar h_{t,N}
      },
      & \mathscr H_t^{(N)}\neq\bar h_{t,N},
      \\[1.2em]
      V_{A,\varrho}'(\bar h_{t,N}),
      & \mathscr H_t^{(N)}=\bar h_{t,N}.
    \end{cases}
  \end{equation*}
  On $\{t<\tau_\delta\}$, the interval between $\mathscr H_t^{(N)}$ and
  $\bar h_{t,N}$ is contained in the interval defining $L_{t,N}$. The
  mean-value theorem and the definition of $D_{t,N}$ therefore give
  $\abs{D_{t,N}}\leq L_{t,N}$.

  The definitions of $e_t$ and $\widehat\zeta_{t+1}^{(N)}$, together with
  $\bar h_{t+1,N}=V_{A,\varrho}(\bar h_{t,N})$, give the error recursion
  \begin{equation*}
    e_{t+1}
    =V_{A,\varrho}(\mathscr H_t^{(N)})
    -V_{A,\varrho}(\bar h_{t,N})+\widehat\zeta_{t+1}^{(N)}
    =D_{t,N}e_t+\widehat\zeta_{t+1}^{(N)}.
  \end{equation*}
  Use $\alpha_{t,N}$ and $\beta_{t,N}$ from
  \cref{lem:subcritical-normalized-lattice-amplification} with
  $h_{t,N}=\bar h_{t,N}$, and set
  \begin{equation*}
    \zeta_{t+1} :=
    \frac{\widehat\zeta_{t+1}^{(N)}}{\bar h_{t,N}+a_N},
    \qquad
    A_{t,N}:=
    \one_{\{t<\tau_\delta\}}
    \frac{D_{t,N}(\bar h_{t,N}+a_N)}
    {\bar h_{t+1,N}+a_N}.
  \end{equation*}
  Dividing the error recursion by $\bar h_{t+1,N}+a_N$ shows that, on
  $\{t<\tau_\delta\}$,
  \begin{equation}\label{eq:subcritical-normalized-error-recursion}
    \mathcal R_{t+1,N}
    =
    A_{t,N}\mathcal R_{t,N}+\beta_{t,N}\zeta_{t+1}.
  \end{equation}
  Moreover, $\abs{A_{t,N}}\leq\alpha_{t,N}$: on
  $\{t<\tau_\delta\}$ this follows from $\abs{D_{t,N}}\leq L_{t,N}$, while
  $A_{t,N}=0$ on the complementary event.

  The hypotheses of
  \cref{lem:subcritical-normalized-lattice-amplification} hold with
  $C_0=C_\varrho$, because
  \cref{eq:subcritical-initial-radius-bound} gives
  $\bar h_{0,N}\leq C_\varrho$ and
  $\bar h_{t+1,N}=V_{A,\varrho}(\bar h_{t,N})$. Let
  \begin{equation*}
    K_N
    :=
    1\vee
    \max_{0\leq s<t\leq\bar T_N}
    \left\{
      \prod_{u=s}^{t-1}(1\vee\alpha_{u,N}),\,
      \beta_{s,N}
      \prod_{u=s+1}^{t-1}(1\vee\alpha_{u,N})
    \right\}.
  \end{equation*}
  By \cref{lem:subcritical-normalized-lattice-amplification},
  $K_N\leq\exp\{C(\log\log N)^2\}$.

  \emph{Step 4: $\Pp(\tau_\delta\leq\bar T_N)\to0$.}
  Set
  \begin{equation*}
    \eta_{t+1,N}
    :=
    \one_{\{t<\tau_\delta\}}\beta_{t,N}\zeta_{t+1},
    \qquad
    \sigma_{t,N}:=\beta_{t,N}.
  \end{equation*}
  On $\{t<\tau_\delta\}$,
  \cref{eq:subcritical-normalized-error-recursion} becomes
  $\mathcal R_{t+1,N}=A_{t,N}\mathcal R_{t,N}+\eta_{t+1,N}$.
  The variables $A_{t,N}$ and $\eta_{t+1,N}$ are respectively
  $\mathcal F_t$- and $\mathcal F_{t+1}$-measurable. Since
  $\widehat\zeta_{t+1}^{(N)}$ is conditionally centered and its denominator in
  $\zeta_{t+1}$ is deterministic,
  \begin{equation*}
    \Ee[\eta_{t+1,N}\mid\mathcal F_t]=0.
  \end{equation*}
  \cref{eq:subcritical-fixed-noise-variance} gives
  \begin{equation*}
    \Ee[\eta_{t+1,N}^2\mid\mathcal F_t]
    =
    \one_{\{t<\tau_\delta\}}\beta_{t,N}^2
    \frac{
      \Ee[(\widehat\zeta_{t+1}^{(N)})^2\mid\mathcal F_t]
    }{
      (\bar h_{t,N}+a_N)^2
    }
    \leq
    \frac{C\beta_{t,N}^2}{N}.
  \end{equation*}
  Thus the hypotheses of \cref{lem:common-stopped-orbit-tracking} hold with
  $T_N=\bar T_N$, $\tau_N=\tau_\delta$, and the amplification constant $K_N$
  defined above. Since $\mathscr H_0^{(N)}=\bar h_{0,N}$, we have
  $\mathcal R_{0,N}=0$. By \cref{eq:common-orbit-exit-bound},
  \begin{align*}
    \Pp\left(
      \sup_{0\leq t\leq\bar T_N}\abs{\mathcal R_{t,N}}>\delta
    \right)
    &=
    \Pp(\tau_\delta\leq\bar T_N)
    \\
    &\leq
    \frac{CK_N^2\bar T_N}{\delta^2N}
    \leq
    \frac{C\bar T_N}{\delta^2N}
    \exp\{C'(\log\log N)^2\}
    \longrightarrow0.
  \end{align*}
  This holds for every fixed $\delta\in(0,1/4)$, and hence
  $\sup_{0\leq t\leq\bar T_N}\abs{\mathcal R_{t,N}}\to0$ in probability. By
  the definition of $\mathcal R_{t,N}$, this is
  \cref{eq:subcritical-exact-radius-concentration}.
\end{proof}

\subsection{Terminal layer and cutoff}
\label{subsec:subcritical-terminal-cutoff}

\begin{theorem}[Fixed-precision subcritical dimension cutoff]
\label{thm:subcritical-dimension-cutoff}
  Assume $A<A_{\mathrm{lat}}$, fix $\varrho\in(0,1)$, and let
  $x_N\in[-1,1]^N$ be deterministic. Suppose that $\bar t_N\to\infty$.
  Then
  \begin{equation}\label{eq:subcritical-dimension-cutoff}
    \Pp\left(
      \tau_{\varrho}^{(N)}
      >
      \bar t_N-1
    \right)
    \to1,
    \qquad
    \Pp\left(
      \tau_{\varrho}^{(N)}
      >
      \bar t_N+2
    \right)
    \to0.
  \end{equation}
  Equivalently, by the total-variation/absorption identity
  \cref{eq:tv-absorption},
  \begin{equation}\label{eq:subcritical-dimension-tv-cutoff}
    \norm{
      P_{\varrho,A,N}^{\bar t_N-1}
      (Q_\varrho x_N,\cdot)-\delta_0
    }_{\TV}
    \to1,
    \qquad
    \norm{
      P_{\varrho,A,N}^{\bar t_N+2}
      (Q_\varrho x_N,\cdot)-\delta_0
    }_{\TV}
    \to0 .
  \end{equation}
\end{theorem}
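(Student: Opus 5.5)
The plan is to apply \cref{prop:subcritical-exact-radius-concentration} with a horizon just past the deterministic entrance time, and then treat the last two steps by hand. By \cref{lem:subcritical-lattice-regimes} and \cref{eq:subcritical-initial-radius-bound}, $\bar h_{t,N}\leq C_\varrho\theta_A^t$. Hence $\bar t_N\leq C\log\log N$. We may therefore take $\bar T_N:=\bar t_N$, since then $\bar T_N\exp\{C(\log\log N)^2\}/N\to0$. Write $a_N=(\log N)^{-1}$.

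\emph{Lower bound.} Since $\bar t_N\to\infty$, for large $N$ we have $\bar t_N-1\geq0$ and $\bar h_{t,N}>a_N$ for every $t\leq\bar t_N-1$. The concentration \cref{eq:subcritical-exact-radius-concentration} gives, with probability tending to one, $\abs{\mathscr H_t^{(N)}-\bar h_{t,N}}\leq\frac12(\bar h_{t,N}+a_N)<\bar h_{t,N}$ for all $t\leq\bar t_N-1$. Thus $\mathscr H_t^{(N)}>0$ on this range, and \cref{eq:subcritical-exact-absorption} gives $\tau_\varrho^{(N)}>\bar t_N-1$ with probability tending to one.

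\emph{Upper bound.} This is the main step. At time $\bar t_N$ the concentration gives $\mathscr H_{\bar t_N}^{(N)}\leq\bar h_{\bar t_N,N}+\frac12(\bar h_{\bar t_N,N}+a_N)\leq 2a_N$ with high probability. We then use the terminal-layer structure, in two further steps.

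First step. Conditionally on $\mathscr H_t^{(N)}=h\leq 2a_N$, \cref{eq:subcritical-exact-grid-transition} and the small-radius bound in \cref{lem:subcritical-fixed-precision-map-estimates} give
\begin{equation*}
  \Ee\bigl[\mathscr H_{t+1}^{(N)}\mid\mathscr H_t^{(N)}=h\bigr]=V_{A,\varrho}(h)\leq C e^{-c/(2a_N)}=CN^{-c/2}.
\end{equation*}
Markov's inequality then gives $\mathscr H_{\bar t_N+1}^{(N)}\leq N^{-c/4}$ with high probability.

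Second step. Let $h\leq N^{-c/4}$. Since $\mathscr H^{(N)}$ takes values in $N^{-1}\Z_{\geq0}$, it suffices to show that every coordinate rounds to zero. Each coordinate of the next state is nonzero only if $A\sqrt h\,\abs{G_i}>b_{\varrho,0}$. A union bound over the $N$ coordinates gives
\begin{equation*}
  \Pp\bigl(\mathscr H_{t+1}^{(N)}\neq0\mid\mathscr H_t^{(N)}=h\bigr)\leq 2N\exp\bigl(-b_{\varrho,0}^2/(2A^2h)\bigr)\leq 2N\exp(-cN^{c/4})\to0.
\end{equation*}
Hence $\tau_\varrho^{(N)}\leq\bar t_N+2$ with probability tending to one.

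The step needing the most care is the first one, where $2a_N$ must be pushed down to a polynomially small level. Everything depends on the exponential smallness $e^{-c/h}$ of $V_{A,\varrho}$ from \cref{eq:subcritical-fixed-small-radius-map-derivative}, evaluated at $h\asymp(\log N)^{-1}$. The union bound in the second step only needs $h=o(1/\log N)$, so one could try to absorb directly from $2a_N$; this fails, because the bound there is only $N\cdot N^{-c'}$ with an unspecified $c'$. That is why the intermediate step is used, and it accounts for the window of $+2$. The equivalence with \cref{eq:subcritical-dimension-tv-cutoff} then follows from \cref{eq:tv-absorption}.
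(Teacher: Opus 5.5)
Your proof is correct. It follows the paper's skeleton: the tracking estimate of \cref{prop:subcritical-exact-radius-concentration} gives the lower bound at $\bar t_N-1$, and two terminal steps give the upper bound. The one genuine difference is how you pass from $\bar t_N$ to $\bar t_N+1$.

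The paper runs the tracking estimate one step longer, with horizon $\bar T_N=\bar t_N+1$, and combines it with $\bar h_{\bar t_N+1,N}\leq CN^{-c}$. Because the tracking error is normalized by $\bar h_{t,N}+a_N$, this only gives $\mathscr H_{\bar t_N+1}^{(N)}\log N\to0$ in probability. The final union bound therefore has to be tuned: one takes $\eta<b_\varrho^2/(2A^2)$ so that $2N^{1-b_\varrho^2/(2A^2\eta)}\to0$.

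You stop the tracking at $\bar t_N$. On $\{\mathscr H_{\bar t_N}^{(N)}\leq 2a_N\}$ you use the one-step conditional mean $V_{A,\varrho}(\mathscr H_{\bar t_N}^{(N)})\leq CN^{-c/2}$ together with Markov's inequality. This gives the much stronger polynomial bound $\mathscr H_{\bar t_N+1}^{(N)}\leq N^{-c/4}$ with high probability. The final union bound is then super-polynomially small with nothing to tune, and the last two steps come with explicit rates.

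Both routes rest on the same input, the $e^{-c/h}$ smallness of $V_{A,\varrho}$ at the scale $h\asymp(\log N)^{-1}$. The paper keeps everything inside the tracking framework, while yours is slightly more elementary and quantitatively sharper at the terminal layer.
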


Thus $Y^{(N)}$ has total-variation cutoff to $\delta_0$ at $\bar t_N$ with
a window of order one. In particular, for every fixed $\varepsilon\in(0,1)$,
\begin{equation*}
  t_{\mathrm{mix}}^{(N)}(\varepsilon)
  =
  \bar t_N+O(1).
\end{equation*}

\begin{proof}
  \emph{Step 1: Concentration up to the terminal time.}
  Put $a_N:=(\log N)^{-1}$ and $\bar T_N:=\bar t_N+1$.
  By \cref{lem:subcritical-lattice-regimes,eq:subcritical-exact-lattice-map-contraction},
  \begin{equation*}
    \bar h_{t,N}\leq C_\varrho\theta_A^t,
    \qquad t\geq0,
  \end{equation*}
  for some $\theta_A\in(0,1)$. The definition of $\bar t_N$ therefore gives
  $\bar t_N=O(\log\log N)$, and the same bound holds for $\bar T_N$. Since
  $(\log\log N)^2=o(\log N)$, there is $C_0<\infty$ such that, for every
  fixed $C<\infty$ and all sufficiently large $N$,
  \begin{equation*}
    \frac{\bar T_N}{N}\exp\{C(\log\log N)^2\}
    \leq
    \frac{C_0\log\log N}{N}\exp\{C(\log\log N)^2\}
    \longrightarrow0.
  \end{equation*}
  The assumptions
  $A<A_{\mathrm{lat}}$, fixed $\varrho\in(0,1)$, and
  $x_N\in[-1,1]^N$ are precisely the remaining hypotheses of
  \cref{prop:subcritical-exact-radius-concentration}. Hence
  \cref{eq:subcritical-exact-radius-concentration} holds on
  $[0,\bar T_N]$.

  \emph{Step 2: The lower bound.}
  Since $\bar t_N\to\infty$, we have
  $\bar t_N\geq1$ for all sufficiently large $N$. By the definition of
  $\bar t_N$,
  \begin{equation*}
    \bar h_{\bar t_N-1,N}>a_N .
  \end{equation*}
  On the event $\{\mathscr H_{\bar t_N-1}^{(N)}=0\}$, it follows that
  \begin{equation*}
    \frac{
      \abs{\mathscr H_{\bar t_N-1}^{(N)}-\bar h_{\bar t_N-1,N}}
    }{
      \bar h_{\bar t_N-1,N}+a_N
    }
    =
    \frac{\bar h_{\bar t_N-1,N}}{\bar h_{\bar t_N-1,N}+a_N}
    >
    \frac12 .
  \end{equation*}
  \cref{prop:subcritical-exact-radius-concentration} therefore gives
  $\Pp(\mathscr H_{\bar t_N-1}^{(N)}=0)\to0$. By
  \cref{eq:subcritical-exact-absorption} and the fact that $0$ is absorbing,
  this is equivalent to
  $\Pp(\tau_{\varrho}^{(N)}>\bar t_N-1)\to1$.

  \emph{Step 3: The one-step absorption criterion.}
  Define
  \begin{equation}\label{eq:subcritical-exact-round-to-zero-threshold}
    b_\varrho:=\varrho^{-1}\arctanh(\varrho/2).
  \end{equation}
  By the definition of $Q_1$, conditionally on $\mathscr H_t^{(N)}=h$,
  $Y_{t+1}^{(N)}=0$ precisely when
  \begin{equation*}
    \abs{
      \varrho^{-1}\tanh(\varrho A\sqrt h\,G_{t+1,i})
    }
    \leq\frac12,
    \qquad 1\leq i\leq N.
  \end{equation*}
  Since $\tanh$ is increasing, this event is equivalently
  \begin{equation}\label{eq:subcritical-exact-round-to-zero}
    \max_{1\leq i\leq N}\abs{A\sqrt h\,G_{t+1,i}}
    \leq b_\varrho .
  \end{equation}

  \emph{Step 4: Entrance below the terminal scale.}
  Set
  \begin{equation*}
    s_N:=\bar t_N+1=\bar T_N.
  \end{equation*}
  By the definition of $\bar t_N$, we have
  $\bar h_{\bar t_N,N}\leq a_N$. The small-radius estimate in
  \cref{lem:subcritical-fixed-precision-map-estimates} therefore yields,
  for some constants $c,C>0$ and all sufficiently large $N$,
  \begin{equation*}
    \bar h_{s_N,N}
    =
    V_{A,\varrho}(\bar h_{\bar t_N,N})
    \leq
    C\exp\left(-\frac{c}{a_N}\right)
    =
    CN^{-c} .
  \end{equation*}
  Here the inequality is immediate if $\bar h_{\bar t_N,N}=0$. Consequently,
  $\bar h_{s_N,N}\log N\to0$ and
  \begin{equation*}
    (\bar h_{s_N,N}+a_N)\log N\to1 .
  \end{equation*}
  Since $s_N=\bar T_N$, the uniform concentration estimate
  \cref{eq:subcritical-exact-radius-concentration} applies at time $s_N$.
  Writing the exact radius as its deterministic value plus the normalized
  tracking error, we obtain
  \begin{equation*}
    \begin{split}
      \mathscr H_{s_N}^{(N)}\log N
      &=
      \bar h_{s_N,N}\log N
      \\
      &\quad+
      \frac{
        \mathscr H_{s_N}^{(N)}-\bar h_{s_N,N}
      }{
        \bar h_{s_N,N}+a_N
      }
      (\bar h_{s_N,N}+a_N)\log N
      \xrightarrow{\Pp}0 .
    \end{split}
  \end{equation*}

  \emph{Step 5: Final absorption.}
  Fix
  \begin{equation*}
    0<\eta<\frac{b_\varrho^2}{2A^2}
  \end{equation*}
  and define
  \begin{equation*}
    M_{s_N+1,N}
    :=
    \max_{1\leq i\leq N}\abs{G_{s_N+1,i}},
  \end{equation*}
  where $G_{s_N+1}$ is the standard Gaussian vector used in the update from
  $s_N$ to $s_N+1$. By \cref{eq:subcritical-exact-round-to-zero}, the event
  $\{\tau_{\varrho}^{(N)}>s_N+1\}$ requires
  $A\sqrt{\mathscr H_{s_N}^{(N)}}M_{s_N+1,N}>b_\varrho$. Consequently,
  \begin{equation*}
    \{\tau_{\varrho}^{(N)}>s_N+1\}
    \cap
    \left\{\mathscr H_{s_N}^{(N)}\leq\frac{\eta}{\log N}\right\}
    \subseteq
    \left\{
      M_{s_N+1,N}>
      \frac{b_\varrho}{A}
      \sqrt{\frac{\log N}{\eta}}
    \right\}.
  \end{equation*}
  Combining the convergence
  $\mathscr H_{s_N}^{(N)}\log N\to0$ established in Step 4 with Gaussian tails
  and a union bound over the $N$ coordinates gives
  \begin{align*}
    \Pp(\tau_{\varrho}^{(N)}>\bar t_N+2)
    &\leq
    \Pp\left(
      \mathscr H_{s_N}^{(N)}\log N>\eta
    \right)
    +
    \Pp\left(
      M_{s_N+1,N}>
      \frac{b_\varrho}{A}
      \sqrt{\frac{\log N}{\eta}}
    \right)
    \\
    &\leq
    \Pp\left(
      \mathscr H_{s_N}^{(N)}\log N>\eta
    \right)
    +
    2N^{1-b_\varrho^2/(2A^2\eta)}
    \longrightarrow0.
  \end{align*}
  Hence \cref{eq:subcritical-dimension-cutoff} holds. The total variation formulation
  \cref{eq:subcritical-dimension-tv-cutoff} follows from the absorption
  identity \cref{eq:tv-absorption}. The mixing-time estimate follows from the
  monotonicity of total variation distance.
\end{proof}

\begin{remark}
\label{rem:subcritical-bounded-window-mechanism}
  Put
  \begin{equation*}
    b_\varrho:=\varrho^{-1}\arctanh(\varrho/2),
    \qquad
    \kappa_{A,\varrho}:=\frac{b_\varrho^2}{2A^2}.
  \end{equation*}
  At radius $h=(\log N)^{-1}$, the expected number of coordinates not rounded
  to zero is
  \begin{equation*}
    2N\Pp\left(G>\frac{b_\varrho\sqrt{\log N}}A\right)
    \sim
    \frac{2A}{b_\varrho\sqrt{2\pi}}
    \frac{N^{1-\kappa_{A,\varrho}}}{\sqrt{\log N}}.
  \end{equation*}
  Here $\sim$ denotes asymptotic equivalence as $N\to\infty$. The relation
  follows from the Gaussian Mills ratio.
  Thus, when $\kappa_{A,\varrho}<1$, entrance below $(\log N)^{-1}$ does not
  uniformly imply absorption in the next step. One further deterministic
  iterate is $o((\log N)^{-1})$, and the following update absorbs with
  probability tending to one. The upper time improves from $\bar t_N+2$ to
  $\bar t_N+1$ if
  \begin{equation*}
    \limsup_{N\to\infty}
    (\log N)\bar h_{\bar t_N,N}
    <
    \kappa_{A,\varrho}.
  \end{equation*}
\end{remark}

\section{Metastability at fixed precision}
\label{sec:rounded-metastability}

Fix $A>0$ and $\varrho\in(0,1)$. We identify the positive-drift components of
$V_{A,\varrho}$. When the rightmost component $(h_-,h_+)$ exists,
$\mathscr H^{(N)}$ enters a neighborhood of $h_+$ in bounded time and remains
there for $\exp(cN)$ steps. For each fixed $N$, the finite-state chain
$Y^{(N)}$ nevertheless absorbs almost surely.

\subsection{Positive-drift components}
\label{subsec:metastable-positive-rounded-wells}

Let $\mathscr H_t^{(N)}$ be the chain from
\cref{eq:subcritical-exact-grid-radius}. Its mean map is
$V_{A,\varrho}$ from \cref{eq:rounded-radius-map}. Throughout this section we
write
\begin{equation*}
  V:=V_{A,\varrho}.
\end{equation*}
Let
\begin{equation}\label{eq:metastable-radius-bound}
  M_\varrho
  :=
  \max_{\abs u\leq\varrho^{-1}} Q_1(u)^2
  <\infty .
\end{equation}
For $\alpha>0$, define
\begin{equation}\label{eq:metastable-rounded-profile}
  m_\varrho(\alpha)
  :=
  \Ee\left[
    Q_1\left(
      \varrho^{-1}\tanh(\varrho\alpha G)
    \right)^2
  \right].
\end{equation}
The rounding monotonicity in \cref{eq:subcritical-rounding-monotonicity}
implies that $V$ is nondecreasing on $[0,\infty)$. In the layer notation of
\cref{eq:subcritical-layer-formulas-exact},
\begin{equation}\label{eq:metastable-rounded-profile-layer}
  m_\varrho(\alpha)
  =
  2\sum_{k\in\mathcal K_\varrho}
  (2k+1)
  \bar\Phi\left(\frac{b_{\varrho,k}}{\alpha}\right).
\end{equation}
Recall the threshold introduced in \cref{eq:intro-metastable-threshold}:
\begin{equation}\label{eq:metastable-exact-threshold}
  A_{\mathrm{ex}}(\varrho)^2
  :=
  \inf_{\alpha>0}
  \frac{\alpha^2}{m_\varrho(\alpha)} .
\end{equation}

Define the positive-drift set
\begin{equation}\label{eq:metastable-positive-region}
  \mathcal O_{A,\varrho}
  :=
  \{h\in(0,M_\varrho):V_{A,\varrho}(h)>h\}.
\end{equation}
We call the connected components of $\mathcal O_{A,\varrho}$ positive-drift
components. The rightmost component produces metastability near its attracting
right endpoint. The corresponding vector states form a grid shell.

We first identify the fixed points that bound the positive-drift components.
\begin{proposition}
\label{prop:metastable-positive-fixed-points}
  The threshold $A_{\mathrm{ex}}(\varrho)$ belongs to $(0,\infty)$. If
  $A>A_{\mathrm{ex}}(\varrho)$, then $V_{A,\varrho}$ has at least two
  positive fixed points in
  $(0,M_\varrho)$. There exist
  $0<\alpha_-<\alpha_+$ such that
  \begin{equation}\label{eq:metastable-alpha-fixed-points}
    A^2m_\varrho(\alpha_\pm)=\alpha_\pm^2,
  \end{equation}
  and $\widehat h_\pm:=\alpha_\pm^2/A^2$ are fixed points of
  $V_{A,\varrho}$.

  Moreover, $\mathcal O_{A,\varrho}$ is nonempty and has finitely many
  connected components. If $(h_-,h_+)$ is its rightmost positive-drift
  component, then
  \begin{equation}\label{eq:metastable-component-fixed-points}
    0<h_-<h_+<M_\varrho,
    \qquad
    V(h_-)=h_-,
    \qquad
    V(h_+)=h_+,
  \end{equation}
  and $V(h)>h$ on $(h_-,h_+)$. The fixed point $h_+$ is isolated, and there
  exists $\eta_0>0$ such that
  \begin{equation}\label{eq:metastable-right-stability}
    h_+-\eta_0>h_-,
    \qquad
    h_++\eta_0<M_\varrho,
    \qquad
    V(h)<h
    \quad\mbox{for }h\in(h_+,h_++\eta_0].
  \end{equation}
\end{proposition}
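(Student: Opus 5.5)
The plan works with the profile $m_\varrho$ and the change of variables $\alpha=A\sqrt h$, under which $V(h)/h=A^2m_\varrho(\alpha)/\alpha^2$.

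\emph{Step 1: $A_{\mathrm{ex}}(\varrho)\in(0,\infty)$.} By \cref{eq:metastable-rounded-profile-layer}, $m_\varrho$ is a finite sum of smooth functions of $\alpha>0$. It is positive and bounded by $M_\varrho$. Hence $\alpha^2/m_\varrho(\alpha)\to\infty$ as $\alpha\to\infty$. As $\alpha\downarrow0$, the bound $\bar\Phi(b_{\varrho,0}/\alpha)\le e^{-b_{\varrho,0}^2/(2\alpha^2)}$ gives $m_\varrho(\alpha)/\alpha^2\to0$, so again $\alpha^2/m_\varrho(\alpha)\to\infty$. The ratio is therefore continuous, positive, and tends to infinity at both ends. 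Its infimum is a positive, attained minimum, which gives $A_{\mathrm{ex}}(\varrho)\in(0,\infty)$.

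\emph{Step 2: two fixed points.} Let $A>A_{\mathrm{ex}}(\varrho)$ and set $g(\alpha):=A^2m_\varrho(\alpha)-\alpha^2$. At a minimizer $\alpha_0$ of the ratio, $g(\alpha_0)>0$. Step 1 shows $g<0$ for all small $\alpha>0$ and for all large $\alpha$. By the intermediate value theorem, there are zeros $\alpha_-<\alpha_0<\alpha_+$. Then $\widehat h_\pm=\alpha_\pm^2/A^2$ satisfy $V(\widehat h_\pm)=\widehat h_\pm$. Since $V\le M_\varrho$ everywhere and $V(h)<M_\varrho$ for finite $h$ (the Gaussian tail leaves positive mass below the top layer), any fixed point is $<M_\varrho$. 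The set $\mathcal O_{A,\varrho}$ contains $\alpha_0^2/A^2$, so it is nonempty.

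\emph{Step 3: finitely many components.} This is the main obstacle. The function $V(h)-h=\alpha^{-2}A^{-2}\alpha^2 g(\alpha)$ has the same zeros as $g$ on $(0,\infty)$. I would show that $g$ is real-analytic on $(0,\infty)$: each $\bar\Phi(b/\alpha)$ is analytic for $\alpha>0$. Moreover $g$ is not identically zero, since $g(\alpha_0)>0$. Its zeros are therefore isolated in $(0,\infty)$. They cannot accumulate at $0$ or $\infty$, because $g<0$ near both ends by Step 1. Hence the zeros lie in a compact subinterval, so there are finitely many. It follows that $\mathcal O_{A,\varrho}$, an open set bounded by these zeros, has finitely many components.

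\emph{Step 4: the rightmost component.} Let $(h_-,h_+)$ be the rightmost component. Each endpoint is either a zero of $V(h)-h$ or one of $0$ and $M_\varrho$. It cannot be $0$, since $V(h)<h$ near $0$ by \cref{eq:subcritical-fixed-small-radius-map-derivative}. It cannot be $M_\varrho$, since $V(M_\varrho)<M_\varrho$. So $0<h_-<h_+<M_\varrho$, both endpoints are fixed points, and $V>h$ on the interior. Isolation of $h_+$ follows from Step 3. Since $(h_-,h_+)$ is rightmost, $V(h)\le h$ for $h>h_+$. By isolation of zeros, the inequality is in fact strict on $(h_+,h_++\eta_0]$ for some small $\eta_0>0$. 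Shrinking $\eta_0$ gives $h_+-\eta_0>h_-$ and $h_++\eta_0<M_\varrho$.
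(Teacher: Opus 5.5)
Your proposal is correct and follows the paper's proof essentially step for step. Both arguments send $\alpha^2/m_\varrho(\alpha)\to\infty$ at both ends, use the intermediate value theorem for $\alpha_\pm$, use real-analyticity of the layer formula to get isolated zeros, and use the sign argument immediately to the right of $h_+$. The only cosmetic difference is that you confine the zeros by using $A^2m_\varrho(\alpha)-\alpha^2<0$ for large $\alpha$ on all of $(0,\infty)$. The paper instead works inside $(0,M_\varrho)$ and uses $V(M_\varrho)<M_\varrho$ near its right end.
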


\begin{proof}
  \emph{Step 1: $A_{\mathrm{ex}}(\varrho)\in(0,\infty)$.}
  By the definitions in \cref{eq:subcritical-layer-thresholds}, the set
  $\mathcal K_\varrho$ is finite and nonempty, and each threshold
  $b_{\varrho,k}$ is positive. Hence
  \cref{eq:metastable-rounded-profile-layer} shows that $m_\varrho$ is
  continuous and positive on $(0,\infty)$.

  To control the quotient in \cref{eq:metastable-exact-threshold}, put
  $b_{\varrho,0}=\varrho^{-1}\arctanh(\varrho/2)$. The random variable in
  \cref{eq:metastable-rounded-profile} can be nonzero only when
  $\abs G>b_{\varrho,0}/\alpha$, and its square is bounded by $M_\varrho$.
  Hence
  \begin{equation*}
    m_\varrho(\alpha)
    \leq
    2M_\varrho
    \bar\Phi\left(\frac{b_{\varrho,0}}{\alpha}\right),
    \qquad
    \alpha>0,
  \end{equation*}
  and the Gaussian tail bound implies
  $m_\varrho(\alpha)/\alpha^2\to0$ as $\alpha\downarrow0$. The bound
  $m_\varrho(\alpha)\leq M_\varrho$ also gives
  $m_\varrho(\alpha)/\alpha^2\to0$ as $\alpha\to\infty$. Therefore
  \begin{equation}\label{eq:metastable-threshold-endpoints}
    \frac{\alpha^2}{m_\varrho(\alpha)}
    \longrightarrow\infty
    \quad\mbox{as }\alpha\downarrow0
    \quad\mbox{and as }\alpha\to\infty .
  \end{equation}
  Since the quotient is continuous and strictly positive on $(0,\infty)$,
  \cref{eq:metastable-threshold-endpoints} implies that the infimum in
  \cref{eq:metastable-exact-threshold} is attained on a compact subinterval
  of $(0,\infty)$ and is positive and finite. This proves
  $A_{\mathrm{ex}}(\varrho)\in(0,\infty)$.

  \emph{Step 2: $V_{A,\varrho}$ has two fixed points in
  $(0,M_\varrho)$.}
  Assume now that $A>A_{\mathrm{ex}}(\varrho)$. Then there is
  $\alpha_0>0$ such that
  $A^2m_\varrho(\alpha_0)-\alpha_0^2>0$. By
  \cref{eq:metastable-threshold-endpoints},
  $A^2m_\varrho(\alpha)-\alpha^2<0$ for all sufficiently small and all
  sufficiently large $\alpha$. The intermediate value theorem therefore gives
  two roots
  $0<\alpha_-<\alpha_+$ of
  $A^2m_\varrho(\alpha)=\alpha^2$.

  Since
  \begin{equation*}
    V_{A,\varrho}(h)=m_\varrho(A\sqrt h),
  \end{equation*}
  the points $\widehat h_\pm=\alpha_\pm^2/A^2$ satisfy
  \begin{equation*}
    V_{A,\varrho}(\widehat h_\pm)
    =
    m_\varrho(\alpha_\pm)
    =
    \frac{\alpha_\pm^2}{A^2}
    =
    \widehat h_\pm .
  \end{equation*}
  They are positive because $\alpha_\pm>0$. Moreover,
  \begin{equation*}
    \widehat h_\pm
    =
    V_{A,\varrho}(\widehat h_\pm)
    =
    \Ee\left[
      Q_1\left(
        \varrho^{-1}
        \tanh(\varrho\alpha_\pm G)
      \right)^2
    \right],
  \end{equation*}
  where the integrand is bounded by $M_\varrho$ and vanishes when
  $\abs G\leq b_{\varrho,0}/\alpha_\pm$. Thus
  $0<\widehat h_\pm<M_\varrho$.

  \emph{Step 3: $\mathcal O_{A,\varrho}$ is nonempty and has finitely many
  components.}
  Set $D(h):=V(h)-h$. The point $h_0=\alpha_0^2/A^2$ satisfies
  $D(h_0)>0$. Since $V(h_0)\leq M_\varrho$, we also have $h_0<M_\varrho$,
  and hence $\mathcal O_{A,\varrho}$ is nonempty.

  Substituting $\alpha=A\sqrt h$ into
  \cref{eq:metastable-threshold-endpoints} gives
  \begin{equation*}
    \frac{V(h)}h
    =
    A^2\frac{m_\varrho(A\sqrt h)}{(A\sqrt h)^2}
    \longrightarrow0
    \qquad\mbox{as }h\downarrow0.
  \end{equation*}
  Thus $D(h)<0$ for all sufficiently small $h>0$. Moreover,
  $V(M_\varrho)<M_\varrho$, because the integrand in its definition is bounded
  by $M_\varrho$ and vanishes with positive probability. By continuity, there
  is $\delta>0$ such that
  \begin{equation*}
    D(h)<0
    \quad\mbox{for }
    h\in(0,\delta]\cup[M_\varrho-\delta,M_\varrho].
  \end{equation*}

  By \cref{eq:metastable-rounded-profile-layer}, $V$ and hence $D$ are real
  analytic on $(0,\infty)$. The function $D$ is not identically zero because
  it is negative near the origin. Its zeros are therefore isolated, so it has
  only finitely many zeros in $[\delta,M_\varrho-\delta]$. Every component of
  $\mathcal O_{A,\varrho}$ is contained in this interval and has endpoints in
  this zero set. Consequently, $\mathcal O_{A,\varrho}$ has finitely many
  components.

  \emph{Step 4: $V(h)<h$ immediately to the right of $h_+$.}
  Let $(h_-,h_+)$ be the rightmost component. Since $D$ is continuous,
  $D(h_-)=D(h_+)=0$, while $D(h)>0$ for $h\in(h_-,h_+)$. This proves
  \cref{eq:metastable-component-fixed-points}.

  The zero $h_+$ is isolated, so $D$ has a constant nonzero sign immediately
  to its right. This sign cannot be positive, since that would produce a
  component of $\mathcal O_{A,\varrho}$ to the right of $(h_-,h_+)$. We may
  therefore choose $\eta_0>0$ such that
  $h_+-\eta_0>h_-$, $h_++\eta_0<M_\varrho$, and $V(h)<h$ for every
  $h\in(h_+,h_++\eta_0]$. This proves
  \cref{eq:metastable-right-stability}.
\end{proof}

\subsection{Entrance and exponential persistence}
\label{subsec:metastable-entrance-persistence}

Conditionally on $\mathscr H_t^{(N)}=h$, the random variables in
\cref{eq:subcritical-exact-grid-transition} are independent and take values
in $[0,M_\varrho]$. Hence Hoeffding's inequality gives, for every $u>0$,
\begin{equation}\label{eq:metastable-hoeffding}
  \Pp\left(
    \abs{\mathscr H_{t+1}^{(N)}-V(\mathscr H_t^{(N)})}>u
    \,\middle|\, \mathscr H_t^{(N)}
  \right)
  \leq
  2\exp\left(-\frac{2Nu^2}{M_\varrho^2}\right).
\end{equation}

\begin{theorem}[Metastability from the rightmost positive-drift component]
\label{thm:rounded-qualitative-metastability}
  Assume that $A>A_{\mathrm{ex}}(\varrho)$, and let
  $(h_-,h_+)$ be the rightmost positive-drift component of
  $\mathcal O_{A,\varrho}$ in
  \cref{eq:metastable-positive-region}. Let
  $\eta_0>0$ be as in \cref{eq:metastable-right-stability}, and let
  $B\subset(h_-,h_+]$ be compact. For every $0<\eta<\eta_0$,
  there are $T_\eta\in\N_0$ and constants $c_0,c_1,C\in(0,\infty)$, depending
  only on $A,\varrho,B,\eta$, with $c_1<c_0$, such that the following holds. If
  $Y_0^{(N)}$ is deterministic and
  $\mathscr H_0^{(N)}\in B$, then
  \begin{equation}\label{eq:metastable-entrance-probability}
    \Pp\left(
      \abs{\mathscr H_{T_\eta}^{(N)}-h_+}>\frac\eta2
    \right)
    \leq
    C\exp(-c_0N),
  \end{equation}
  and, for every $T\in\N_0$,
  \begin{equation}\label{eq:metastable-exit-probability}
    \Pp\left(
      \max_{0\leq s\leq T}
      \abs{\mathscr H_{T_\eta+s}^{(N)}-h_+}>\eta
    \right)
    \leq
    C(1+T)\exp(-c_0N).
  \end{equation}
  In particular,
  \begin{equation}\label{eq:metastable-exponential-survival}
    \Pp\left(
      \tau_\varrho^{(N)}
      >
      T_\eta+\lfloor\exp(c_1N)\rfloor
    \right)
    \to1 .
  \end{equation}
  For every fixed $N$, however, the chain is eventually absorbed almost surely
  from every deterministic initial state.
\end{theorem}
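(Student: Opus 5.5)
The proof follows the deterministic orbit of $V$ from $B$ into a neighborhood of $h_+$, then traps $\mathscr H^{(N)}$ there with one-step Hoeffding bounds.

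\emph{Step 1: deterministic entrance.} Since $V$ is continuous and nondecreasing, and $V(h)>h$ on $(h_-,h_+)$ with $V(h_+)=h_+$, every orbit started in $(h_-,h_+]$ increases monotonically and converges to a fixed point in $(h_-,h_+]$. That fixed point must be $h_+$. Dini's theorem, together with monotonicity of $V^t$ in the initial point and compactness of $B$ (take $b_0=\min B>h_-$), gives $T_\eta$ with $\abs{V^{T_\eta}(h)-h_+}\le\eta/8$ for all $h\in B$.

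\emph{Step 2: stochastic entrance.} Let $L$ be a Lipschitz constant for $V$ on $[0,M_\varrho]$, available from \cref{eq:subcritical-layer-formulas-exact} and \cref{lem:subcritical-fixed-precision-map-estimates}. Writing $\mathscr H_{t+1}=V(\mathscr H_t)+\widehat\zeta_{t+1}$, induction shows that if $\abs{\widehat\zeta_{s}}\le u$ for all $s\le T_\eta$, then
\begin{equation*}
  \abs{\mathscr H_{T_\eta}-V^{T_\eta}(\mathscr H_0)}\le u\sum_{j<T_\eta}(1\vee L)^j .
\end{equation*}
Choosing $u$ small and applying \cref{eq:metastable-hoeffding} with a union bound over $T_\eta$ steps gives \cref{eq:metastable-entrance-probability} with $c_0\approx 2u^2/M_\varrho^2$.

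\emph{Step 3: trapping.} This is the main step. I would show that the interval $I=[h_+-\eta,h_++\eta]$ maps strictly inside $I_{1/2}=[h_+-\eta/2,h_++\eta/2]$ up to a margin, once the requirement on $\eta$ is adjusted. On the right, $V(h)<h$ on $(h_+,h_++\eta]$ by \cref{eq:metastable-right-stability}, and $V(h)\ge V(h_+)=h_+$ by monotonicity, so $V(h)\in[h_+,h)$. On the left, $V(h)\in(h,h_+]$ for $h\in[h_+-\eta,h_+)$, since $\eta<\eta_0$ keeps $h$ inside $(h_-,h_+)$. Hence $V(I)\subset I$, but without a uniform margin near the endpoints. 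A cleaner route is to use the interval $I$ itself together with the quantity
\begin{equation*}
  \kappa:=\min\bigl\{h_++\eta-V(h_++\eta),\;V(h_+-\eta)-(h_+-\eta)\bigr\}>0 .
\end{equation*}
By monotonicity, $V(I)\subset[h_+-\eta+\kappa,\,h_++\eta-\kappa]$. Therefore, from $\mathscr H_t\in I$, the next state stays in $I$ unless $\abs{\widehat\zeta_{t+1}}>\kappa$, an event of probability at most $2e^{-2N\kappa^2/M_\varrho^2}$. Since the state at $T_\eta$ already lies in $I_{1/2}\subset I$, a union bound over $T$ steps gives \cref{eq:metastable-exit-probability}, with $c_0$ reduced to the minimum of the two exponents.

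\emph{Step 4: survival and eventual absorption.} Staying in $I$ keeps $\mathscr H>h_+-\eta>h_->0$, which excludes absorption. Taking $T=\lfloor e^{c_1N}\rfloor$ with $c_1<c_0$ makes $C(1+T)e^{-c_0N}\to0$, which gives \cref{eq:metastable-exponential-survival}.

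For fixed $N$, the state space $\{y\in\varrho\Z^N:\norm y_\infty\le 1+\varrho\}$ is finite. From any state, with probability at least some $p>0$ the Gaussian matrix has operator norm below $\varrho/(2\sqrt N\,(1+\varrho))$, and then the next state rounds to $0$. Thus there is a uniform positive one-step absorption probability $p$, so $\Pp(\tau>t)\le(1-p)^t\to0$.
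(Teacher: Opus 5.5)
Your proposal is correct and follows the paper's proof essentially step for step. The paper likewise uses monotone deterministic entrance into a neighborhood of $h_+$, Hoeffding with a union bound over the fixed horizon $T_\eta$, a uniform one-step trapping margin on $[h_+-\eta,h_++\eta]$, and a uniform positive one-step absorption probability at fixed $N$. Your differences are cosmetic: a Lipschitz constant replaces the paper's modulus of continuity, the margin $\kappa$ is computed explicitly from monotonicity rather than obtained by compactness, and fixed-$N$ absorption comes from a small-operator-norm event rather than coordinatewise independence on the finite state space.
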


\begin{proof}
  \emph{Step 1: $V^t(h)\to h_+$ uniformly for $h\in B$.}
  By \cref{eq:subcritical-rounding-monotonicity}, the map $V$ is
  nondecreasing. Since $(h_-,h_+)$ is a component of
  $\mathcal O_{A,\varrho}$,
  \begin{equation*}
    h<V(h)\leq h_+
    \quad\mbox{for }h\in(h_-,h_+),
  \end{equation*}
  and $V(h_+)=h_+$. Thus $V^t(h)$ is nondecreasing in $t$ and bounded above by
  $h_+$ for every $h\in(h_-,h_+]$. Its limit is a fixed point of $V$, and
  there is no fixed point in $(h_-,h_+)$. Hence $V^t(h)\to h_+$.

  Since $B$ is compact in $(h_-,h_+]$, choose $b>h_-$ such that
  $B\subset[b,h_+]$. On the compact interval
  $[b,h_+-\eta/4]$, when it is nonempty, the continuous function $V(h)-h$
  has a positive minimum. Thus every orbit from $B$ enters the invariant
  interval $[h_+-\eta/4,h_+]$ after a uniformly bounded number of steps.
  Hence there exists $T_\eta\in\N$ such that
  \begin{equation}\label{eq:metastable-deterministic-entrance}
    V^{T_\eta}(B)
    \subset
    [h_+-\eta/4,h_++\eta/4].
  \end{equation}

  \emph{Step 2: $\mathscr H_{T_\eta}^{(N)}$ lies within $\eta/2$ of $h_+$
  with exponentially high probability.}
  Define the modulus of continuity
  \begin{equation*}
    \omega(r)
    :=
    \sup\bigl\{\abs{V(u)-V(v)}:
    u,v\in[0,M_\varrho],\ \abs{u-v}\leq r\bigr\}.
  \end{equation*}
  Then $\omega(r)\to0$ as $r\downarrow0$. Set
  $q_s:=V^s(\mathscr H_0^{(N)})$ and
  $e_s:=\abs{\mathscr H_s^{(N)}-q_s}$. Since $e_0=0$,
  \begin{equation*}
    e_{s+1}
    \leq
    \abs{\mathscr H_{s+1}^{(N)}-V(\mathscr H_s^{(N)})}
    +\omega(e_s).
  \end{equation*}
  For $\delta>0$, define $E_0(\delta):=0$ and
  $E_{s+1}(\delta):=\delta+\omega(E_s(\delta))$. For each fixed $s$,
  $E_s(\delta)\to0$ as $\delta\downarrow0$. Since $T_\eta$ is fixed, we may
  choose $\delta_0>0$ such that
  $\max_{0\leq s\leq T_\eta}E_s(\delta_0)\leq\eta/4$.

  Suppose that
  $\mathscr H_0^{(N)}\in B$ and
  \begin{equation*}
    \abs{
      \mathscr H_{s+1}^{(N)}-V(\mathscr H_s^{(N)})
    }
    \leq\delta_0
    \qquad\mbox{for }0\leq s<T_\eta.
  \end{equation*}
  Then induction gives
  $e_s\leq E_s(\delta_0)\leq\eta/4$ for $s\leq T_\eta$. Hence
  \cref{eq:metastable-deterministic-entrance} yields
  \begin{equation*}
    \abs{\mathscr H_{T_\eta}^{(N)}-h_+}
    \leq
    e_{T_\eta}+\abs{q_{T_\eta}-h_+}
    \leq
    \frac\eta4+\frac\eta4
    =\frac\eta2.
  \end{equation*}
  A union bound and \cref{eq:metastable-hoeffding} give
  \begin{equation*}
    \Pp\left(
      \abs{\mathscr H_{T_\eta}^{(N)}-h_+}>\frac\eta2
    \right)
    \leq
    2T_\eta
      \exp\left(-\frac{2N\delta_0^2}{M_\varrho^2}\right).
  \end{equation*}
  Thus \cref{eq:metastable-entrance-probability} holds with constants
  $C_{\mathrm{ent}},c_{\mathrm{ent}}>0$ depending only on
  $A,\varrho,B,\eta$.

  \emph{Step 3: The exit bound and survival up to $\exp(c_1N)$.}
  Put
  \begin{equation*}
    J_\eta:=[h_+-\eta,h_++\eta].
  \end{equation*}
  For $h\in[h_+-\eta,h_+)$, we have $h<V(h)\leq h_+$. By
  \cref{eq:metastable-right-stability} and the monotonicity of $V$,
  \begin{equation*}
    h_+\leq V(h)<h
    \quad\mbox{for }h\in(h_+,h_++\eta].
  \end{equation*}
  Consequently, $V(J_\eta)\subset J_\eta^\circ$. Since $J_\eta$ is compact,
  there exists
  $d_\eta>0$ such that
  \begin{equation*}
    \operatorname{dist}(V(J_\eta),J_\eta^c)\geq d_\eta .
  \end{equation*}
  Therefore, on the event $\mathscr H_t^{(N)}\in J_\eta$, the additional event
  \begin{equation*}
    \abs{\mathscr H_{t+1}^{(N)}-V(\mathscr H_t^{(N)})}\leq d_\eta/2
  \end{equation*}
  implies $\mathscr H_{t+1}^{(N)}\in J_\eta$. Applying
  \cref{eq:metastable-hoeffding} with $u=d_\eta/2$, and taking a union bound
  over $T_\eta,\ldots,T_\eta+T-1$, gives
  \begin{align*}
    &\Pp\left(
      \max_{0\leq s\leq T}
      \abs{\mathscr H_{T_\eta+s}^{(N)}-h_+}>\eta
    \right)
    \\
    &\qquad\leq
    C_{\mathrm{ent}}e^{-c_{\mathrm{ent}}N}
    +2T\exp\left(-\frac{Nd_\eta^2}{2M_\varrho^2}\right)
    \leq
    C(1+T)e^{-c_0N},
  \end{align*}
  where
  $c_0:=\min\{c_{\mathrm{ent}},d_\eta^2/(2M_\varrho^2)\}>0$.
  After enlarging $C$, this proves
  \cref{eq:metastable-entrance-probability,eq:metastable-exit-probability}
  with the same constants $C,c_0$.

  Since $J_\eta\subset(0,M_\varrho)$, the complementary event in
  \cref{eq:metastable-exit-probability} prevents absorption. Choose
  $c_1\in(0,c_0)$. Taking
  $T=\lfloor\exp(c_1N)\rfloor$, the right-hand side of
  \cref{eq:metastable-exit-probability} is bounded by
  \begin{equation*}
    2C\exp\bigl(-(c_0-c_1)N\bigr)
    \longrightarrow0.
  \end{equation*}
  This is \cref{eq:metastable-exponential-survival}.

  \emph{Step 4: $\tau_\varrho^{(N)}<\infty$ almost surely for fixed $N$.}
  Let
  \begin{equation*}
    \mathcal S_N
    :=
    (\varrho\Z)^N\cap[-1-\varrho/2,1+\varrho/2]^N .
  \end{equation*}
  The chain $Y^{(N)}$ is supported on the finite set $\mathcal S_N$. If
  $y\in\mathcal S_N\setminus\{0\}$, then, conditionally on $Y_t^{(N)}=y$, the
  coordinates of $\mathsf W_{t+1,A}^{(N)}y$ are independent nondegenerate
  Gaussian variables. Thus each coordinate has positive probability of
  satisfying $Q_\varrho(\tanh((\mathsf W_{t+1,A}^{(N)}y)_i))=0$. Independence
  therefore gives
  \begin{equation*}
    \Pp\left(Y_{t+1}^{(N)}=0\,\middle|\,Y_t^{(N)}=y\right)>0 .
  \end{equation*}
  Taking the minimum over $y\in\mathcal S_N\setminus\{0\}$ gives a constant
  $p_N>0$. The Markov property now gives
  $\Pp(\tau_\varrho^{(N)}>m)\leq(1-p_N)^m\to0$, which proves the claim.
\end{proof}

\section{Supercritical cutoff as \texorpdfstring{$N\to\infty$}{N tends to infinity}}
\label{sec:supercritical-dimension}

Fix $A>1$ and let $N\to\infty$. We use $Z^{(N)}$, $K_{A,N}$, and $V_A$ from
\cref{subsec:radius-observables}. We first prove uniqueness of the nonzero
invariant laws and identify the cutoff scale from the decay of
$V_A^t(q_0)-q_\ast$. Negative moments and concentration then localize
$\nu_{A,N}$ near $q_\ast$. Before the cutoff scale, the deterministic bias
separates the law of $Z^{(N)}$ from $\nu_{A,N}$. After that scale, synchronous
coupling makes the separation from a stationary copy of order $N^{-1/2}$, and
one further transition converts this bound into convergence in total
variation. Generic constants depend only on the fixed parameters in the
surrounding statement.

\subsection{Transition density and uniqueness of invariant laws}
\label{subsec:gaussian-exact-radius-chain}

For $q>0$, set $Y_q:=\tanh^2(A\sqrt q\,G)$. For $y\in(0,1)$, the change of
variables $u=\arctanh(\sqrt y)$ gives the strictly positive density
\begin{equation}\label{eq:gaussian-one-coordinate-density}
  f_q(y)
  =
  \frac{1}{A\sqrt{2\pi q}}
  \frac{1}{\sqrt y(1-y)}
  \exp\left(
    -\frac{\arctanh(\sqrt y)^2}{2A^2q}
  \right).
\end{equation}
The $N$-fold convolution of $f_q$, followed by scaling by $N^{-1}$, gives a
strictly positive transition density for $K_{A,N}$ on $(0,1)$. This yields
uniqueness of $\nu_{A,N}$. The kernel $J_{A,N}$ then yields uniqueness of
$\pi_{A,N}$.
\begin{proposition}
\label{prop:gaussian-unique-nonzero-invariant}
  Fix $A>0$ and $N\geq1$. If $K_{A,N}$ admits a nonzero invariant probability
  $\nu_{A,N}$ on $(0,1]$, then this probability is unique and has a strictly
  positive density on $(0,1)$.
\end{proposition}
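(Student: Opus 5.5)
Our plan is to show that for every $q\in(0,1]$ the measure $K_{A,N}(q,\cdot)$ has a density $k_q$ with $k_q(y)>0$ for all $y\in(0,1)$, and then run a standard argument that an invariant law of a kernel with everywhere-positive densities is unique. For the density we use \cref{eq:gaussian-one-coordinate-density}: when $q>0$ the summands $Y_{q,i}=\tanh^2(A\sqrt q\,G_i)$ are independent, each with a density $f_q$ that is strictly positive and continuous on $(0,1)$. Their sum therefore has the convolution density $f_q^{*N}$, and $k_q(y)=Nf_q^{*N}(Ny)$. To see positivity, fix $y\in(0,1)$. Then $Ny\in(0,N)$, and the density of the sum at $Ny$ is the integral of $\prod_i f_q(y_i)$ over the simplex slice $\{y_i\in(0,1),\ \sum y_i=Ny\}$. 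This slice has nonempty relative interior, for example a neighbourhood of the point $(y,\dots,y)$, and the integrand is positive there. For $N=1$ we simply have $k_q=f_q$. We will also note that $(q,y)\mapsto k_q(y)$ is jointly measurable.

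Now let $\nu$ be a nonzero invariant law on $(0,1]$, so that $\nu(\{0\})=0$. Invariance gives $\nu(B)=\int k_q(B)\,\nu(\mathrm dq)$, and the point $q=0$ contributes nothing. Hence $\nu$ is absolutely continuous with density $p(y)=\int k_q(y)\,\nu(\mathrm dq)$, which is strictly positive on $(0,1)$ because every $k_q(y)>0$. It also follows that $\nu(\{1\})=0$, since $K_{A,N}(q,\{1\})=0$ for $q>0$. This settles the density claim.

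For uniqueness, let $\nu_1\neq\nu_2$ be two nonzero invariant laws on $(0,1]$. We first reduce to mutually singular ones. Write $\mu=\nu_1-\nu_2=\mu^+-\mu^-$ for the Jordan decomposition, where $\mu^\pm$ are nonzero with equal mass. Because $\mu K=\mu$ and $K$ is a Markov kernel, we have $\|\mu^+K\|=\|\mu^+\|$ and $\mu^+K-\mu^-K=\mu$. Then $(\mu^+K)\wedge\mu^{-}K$ satisfies
\[
\|\mu\|\le\|\mu^+K\|+\|\mu^-K\|-2\|(\mu^+K)\wedge(\mu^-K)\|,
\]
and $\|\mu\|=\|\mu^+\|+\|\mu^-\|$, so $\mu^+K\perp\mu^-K$. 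Both $\mu^\pm$ are supported in $(0,1]$, since the $\nu_i$ are. By the positivity step, $\mu^\pm K$ then have strictly positive densities on $(0,1)$, and two such measures cannot be mutually singular. This contradiction gives $\nu_1=\nu_2$.

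Alternatively, one can normalize $\mu^+$ and check that it is itself invariant: the equality $\|\mu^+K-\mu^-K\|=\|\mu^+\|+\|\mu^-\|$ forces $\mu^+K\ge\mu^+$, and equality of masses then gives $\mu^+K=\mu^+$. The conclusion is the same.

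The main obstacle is only the careful positivity of the convolution density at every point of $(0,1)$, including points close to $0$ and $1$. The slice argument above handles this uniformly, since $(y,\dots,y)$ is always an interior point. Measurability of $q\mapsto k_q(y)$ follows from the explicit formula and Tonelli.
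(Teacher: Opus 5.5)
Your proof is correct. The convolution-positivity step, and the derivation of the strictly positive density $\int k_q(y)\,\nu(\mathrm{d}q)$ from invariance, are the same as in the paper (its Step~1 and the Fubini step at the end of its Step~2).

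Where you differ is uniqueness. The paper notes that positivity makes $Z^{(N)}$ Lebesgue-irreducible on $(0,1]$ and cites Nummelin: a $\psi$-irreducible chain has at most one invariant probability. You instead argue directly. For the signed invariant measure $\mu=\nu_1-\nu_2$, you show that $\mu^+K_{A,N}$ and $\mu^-K_{A,N}$ must be mutually singular. This is impossible, because every $K_{A,N}(q,\cdot)$ with $q>0$, and hence every $\mu^\pm K_{A,N}$, has an everywhere-positive density on $(0,1)$. The support claim you need holds because $\mu^+\leq\nu_1$ and $\mu^-\leq\nu_2$.

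The citation is shorter and needs only irreducibility, not pointwise positivity of the transition density, but it imports general state-space recurrence theory. Your Jordan-decomposition argument uses only measure theory and exactly the strong property the paper already establishes: transition laws from any two positive radii are mutually absolutely continuous, so they overlap.

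Your alternative remark is also sound, and slightly stronger than you state. Since $\mu^+=(\mu K_{A,N})^+\leq\mu^+K_{A,N}$ holds without the norm identity, equality of masses gives $\mu^+K_{A,N}=\mu^+$. Thus $\mu^\pm$ would be mutually singular invariant measures, which is the same contradiction.
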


\begin{proof}
  \emph{Step 1: The transition density is strictly positive.}
  The density of $\sum_{i=1}^N Y_{i,q}$ is the convolution
  $f_q^{\ast N}$. It is strictly positive on $(0,N)$: for $s\in(0,N)$,
  choose $y_1,\ldots,y_N\in(0,1)$ with $\sum_i y_i=s$ and integrate over a
  small neighborhood of $(y_1,\ldots,y_{N-1})$ on which all coordinates,
  including the remaining coordinate, stay in $(0,1)$. After scaling by
  $N^{-1}$, the transition kernel $K_{A,N}(q,\cdot)$ therefore has a density
  $p_{A,N}(q,r)$ that is strictly positive for every $r\in(0,1)$.

  \emph{Step 2: The invariant law is unique and has a positive density.}
  The positivity of $p_{A,N}$ makes $Z^{(N)}$ on $(0,1]$
  Lebesgue-irreducible. By Nummelin~\cite{nummelin1984}, it therefore admits at
  most one invariant probability. If $\nu K_{A,N}=\nu$, then invariance and
  Fubini's theorem give
  \begin{equation*}
    \nu(\mathrm{d}r)
    =
    \left(
      \int_{(0,1]}p_{A,N}(q,r)\,\nu(\mathrm{d}q)
    \right)\mathrm{d}r.
  \end{equation*}
  The density in parentheses is strictly positive for every $r\in(0,1)$.
\end{proof}

\begin{remark}\label{rem:gaussian-full-space-invariant}
  On $[0,1]$, the point $0$ is absorbing. Hence, whenever the nonzero
  invariant law exists,
  the invariant probabilities on $[0,1]$ are exactly
  \begin{equation}\label{eq:gaussian-full-space-invariant-laws}
    \alpha\delta_0+(1-\alpha)\nu_{A,N},
    \qquad 0\leq\alpha\leq1.
  \end{equation}
  Indeed, any invariant law has a fixed mass $\alpha$ at $\{0\}$, and its
  normalized restriction to $(0,1]$, if nonzero, is invariant for the
  restricted chain.
\end{remark}

\begin{corollary}[Uniqueness of the nonzero vector invariant law]
\label{cor:gaussian-unique-vector-invariant}
  Fix $A>0$ and $N\geq1$ for which $K_{A,N}$ admits a nonzero invariant law
  $\nu_{A,N}$ on $(0,1]$. Then $P_{A,N}$ has a unique invariant probability
  $\pi_{A,N}$ on $[-1,1]^N$ with $\pi_{A,N}(\{0\})=0$, namely
  \begin{equation}\label{eq:gaussian-unique-vector-invariant}
    \pi_{A,N}=\nu_{A,N}J_{A,N}.
  \end{equation}
\end{corollary}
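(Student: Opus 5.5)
The plan is to combine \cref{prop:gaussian-tv-reduction} with \cref{prop:gaussian-unique-nonzero-invariant}. The first gives existence. Set $\pi_{A,N}:=\nu_{A,N}J_{A,N}$. By \cref{prop:gaussian-tv-reduction} this is invariant for $P_{A,N}$. It does not charge the origin: $J_{A,N}(q,\cdot)$ is the law of $\tanh(A\sqrt q\,G)$ with $G\sim\mathcal N(0,I_N)$. For $q>0$ this vector has a continuous law, so $J_{A,N}(q,\{0\})=0$. Since $\nu_{A,N}((0,1])=1$, integration gives $\pi_{A,N}(\{0\})=0$.

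For uniqueness, let $\pi$ be any invariant probability for $P_{A,N}$ with $\pi(\{0\})=0$. By the converse part of \cref{prop:gaussian-tv-reduction}, the pushforward $\nu:=(r_N)_\#\pi$ is invariant for $K_{A,N}$, and moreover $\pi=\nu J_{A,N}$. Since $r_N(x)=0$ exactly when $x=0$, we get $\nu(\{0\})=\pi(\{0\})=0$. Hence $\nu$ is an invariant probability concentrated on $(0,1]$; that is, it is a nonzero invariant law in the paper's sense.

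By \cref{prop:gaussian-unique-nonzero-invariant}, such a law is unique, so $\nu=\nu_{A,N}$. Therefore $\pi=\nu_{A,N}J_{A,N}=\pi_{A,N}$, which is \cref{eq:gaussian-unique-vector-invariant}. The same conclusion also follows from \cref{rem:gaussian-full-space-invariant}: $\nu$ must be of the form $\alpha\delta_0+(1-\alpha)\nu_{A,N}$, and $\nu(\{0\})=0$ forces $\alpha=0$.

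None of these steps should cause difficulty. The only point needing care is that the support of $\pi$ lies in $[-1,1]^N$ while $K_{A,N}$ acts on $[0,1]$; $r_N$ maps the former into the latter, so the pushforward is well defined.
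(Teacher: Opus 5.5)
Your proposal is correct and matches the paper's proof essentially step for step. Existence comes from \cref{prop:gaussian-tv-reduction} together with $J_{A,N}(q,\{0\})=0$ for $q>0$, and uniqueness comes from pushing $\pi$ forward under $r_N$, applying \cref{prop:gaussian-unique-nonzero-invariant}, and recovering $\pi=\nu_{A,N}J_{A,N}$ from the factorization; your appeal to \cref{rem:gaussian-full-space-invariant} is just a rephrasing of that same uniqueness step.
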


\begin{proof}
  \emph{Existence.}
  By \cref{prop:gaussian-tv-reduction},
  $\nu_{A,N}J_{A,N}$ is invariant for $P_{A,N}$. It has no mass at $0$ because
  $J_{A,N}(q,\cdot)$ is absolutely continuous for every $q>0$.

  \emph{Uniqueness.}
  Let $\pi$ be invariant for $P_{A,N}$ with $\pi(\{0\})=0$. By
  \cref{prop:gaussian-tv-reduction}, $\nu:=(r_N)_\#\pi$ is invariant for
  $K_{A,N}$, and since $r_N(x)=0$ if and only if $x=0$, we have
  $\nu(\{0\})=0$. By \cref{prop:gaussian-unique-nonzero-invariant},
  $\nu=\nu_{A,N}$. Invariance and
  \cref{eq:gaussian-vector-kernel-factorization} give
  \begin{equation*}
    \pi
    =
    \pi P_{A,N}
    =
    \bigl((r_N)_\#\pi\bigr)J_{A,N}
    =
    \nu_{A,N}J_{A,N}.
  \end{equation*}
\end{proof}

\subsection{Deterministic bias and cutoff scale}
\label{subsec:gaussian-mean-field-cutoff}

For $A>1$, \cref{lem:gaussian-mean-field-concavity} gives the unique nonzero fixed point
$q_\ast$ and the multiplier $\mu_A=V_A'(q_\ast)\in(0,1)$.

Let $q_0\in(0,1]$ with $q_0\neq q_\ast$, and let $C(q_0)$ be the nonzero
constant in \cref{eq:gaussian-deterministic-asymptotic}.
By \cref{eq:gaussian-deterministic-asymptotic}, the deterministic bias decays
like $C(q_0)\mu_A^t$. The cutoff scale is therefore the time at which this
bias reaches the $N^{-1/2}$ fluctuation scale. Recall from
\cref{eq:gaussian-cutoff-time,eq:gaussian-integer-cutoff-time} that
\begin{equation*}
  t_N(q_0)
  :=
  \frac{\frac12\log N+\log\abs{C(q_0)}}{\abs{\log\mu_A}},
  \qquad
  n_N(c)
  :=
  \max\{0,\lfloor t_N(q_0)+c\rfloor\} .
\end{equation*}

\subsection{Localization of \texorpdfstring{$\nu_{A,N}$}{nu(A,N)} near
\texorpdfstring{$q_\ast$}{q*}}
\label{subsec:gaussian-process-cutoff}

We prove that $\nu_{A,N}$ assigns exponentially small mass to the complement
of a compact interval around $q_\ast$ on which $V_A$ is contractive. Negative
moments control a neighborhood of zero. Away from zero, uniform deterministic
entrance and fixed-time concentration give the localization estimate. The
auxiliary estimates are proved in \cref{app:supercritical-cutoff-estimates}.

\begin{assumption}[Macroscopic initial radius]
\label{ass:gaussian-stable-selected}
  We fix $A>1$ and let $q_\ast$ and $\mu_A$ be the unique nonzero fixed
  point and multiplier from \cref{lem:gaussian-mean-field-concavity}. We also fix
  $q_0\in(0,1]$ with $q_0\neq q_\ast$. Every deterministic initial-radius
  sequence used below is assumed to converge to this $q_0$.
\end{assumption}

Throughout the remainder of this subsection,
\cref{ass:gaussian-stable-selected} is in force.

For a deterministic initial-radius sequence $q_N\to q_0$, put
\begin{equation}\label{eq:gaussian-q-deterministic-orbit}
  q_{0,N}:=q_N,
  \qquad
  q_{t+1,N}:=V_A(q_{t,N}).
\end{equation}

The deterministic orbit must first enter a region where the mean map is
contractive. We record that entrance estimate here.
\begin{lemma}
\label{lem:gaussian-deterministic-entrance}
  Let \cref{ass:gaussian-stable-selected} hold, i.e. $q_N\to q_0$. There are
  an integer $s_\ast$, a compact interval $I_\ast\Subset(0,1)$, constants
  $\delta_\ast>0$ and $\kappa<1$, and $N_0<\infty$ such that, for all
  $N\geq N_0$,
  \begin{equation}\label{eq:gaussian-deterministic-entrance}
    q_{t,N}\in I_\ast\quad\mbox{for every }t\geq s_\ast,
    \qquad
    \inf_{t\geq s_\ast}
    \operatorname{dist}(q_{t,N},\partial I_\ast)
    \geq
    \delta_\ast,
  \end{equation}
  and
  \begin{equation}\label{eq:gaussian-stable-basin-derivative}
    \sup_{q\in I_\ast}\abs{V_A'(q)}\leq\kappa.
  \end{equation}
\end{lemma}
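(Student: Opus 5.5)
The plan is to build $I_\ast$ around $q_\ast$ from the derivative condition, and then get uniform entrance of the orbits from Step~2 of \cref{lem:gaussian-mean-field-concavity} together with continuity of $V_A$ in the initial point.

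\emph{Choice of $I_\ast$.} By \cref{lem:gaussian-mean-field-concavity}, $V_A$ is $C^1$ on $(0,1]$ with $0<V_A'(q_\ast)=\mu_A<1$; smoothness follows from differentiating under the expectation away from $q=0$. Fix $\kappa\in(\mu_A,1)$ and choose $r>0$ with $[q_\ast-2r,q_\ast+2r]\subset(0,1)$ and $\sup_{\abs{q-q_\ast}\leq2r}\abs{V_A'(q)}\leq\kappa$. Set $I_\ast:=[q_\ast-2r,q_\ast+2r]$ and $\delta_\ast:=r$. Then \cref{eq:gaussian-stable-basin-derivative} holds by construction. Also, since $V_A$ is increasing with fixed point $q_\ast$, the inner interval $J:=[q_\ast-r,q_\ast+r]$ is forward invariant: $V_A(J)\subset[V_A(q_\ast-r),V_A(q_\ast+r)]$, and contraction gives $\abs{V_A(q)-q_\ast}\leq\kappa\abs{q-q_\ast}$. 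Hence any orbit point in $J$ stays in $J$ forever. Points of $J$ are at distance at least $r=\delta_\ast$ from $\partial I_\ast$, so \cref{eq:gaussian-deterministic-entrance} reduces to showing $q_{t,N}\in J$ for $t\geq s_\ast$.

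\emph{Uniform entrance.} Step~2 of \cref{lem:gaussian-mean-field-concavity} gives $V_A^t(q_0)\to q_\ast$, since $q_0\in(0,1]$. Choose $s_\ast$ with $\abs{V_A^{s_\ast}(q_0)-q_\ast}\leq r/2$. The map $q\mapsto V_A^{s_\ast}(q)$ is continuous on $(0,1]$ (a finite composition of the continuous map $V_A$), and $q_N\to q_0>0$. Hence $\abs{V_A^{s_\ast}(q_N)-V_A^{s_\ast}(q_0)}\leq r/2$ for $N\geq N_0$, and then $q_{s_\ast,N}\in J$. Forward invariance of $J$ now gives $q_{t,N}\in J$ for all $t\geq s_\ast$.

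The only mildly delicate point is continuity of $V_A$ near the initial point, which requires $q_0>0$ so that the $q_N$ stay in a compact subset of $(0,1]$. Note also that $q_N\leq1$ automatically, since $x_N\in[-1,1]^N$. Apart from this, the argument is routine.
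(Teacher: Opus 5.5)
Your proof is correct and follows the same route as the paper. Both arguments choose $I_\ast$ around $q_\ast$ from the derivative bound, then obtain uniform entrance from the convergence $V_A^t(q_0)\to q_\ast$ and the continuity of the single fixed iterate $V_A^{s_\ast}$. Your forward invariance of the inner interval $J$, via $\abs{V_A(q)-q_\ast}\leq\kappa\abs{q-q_\ast}$, makes explicit why the perturbed orbits stay inside $I_\ast$ for all later times, which the paper leaves implicit. One minor correction: $V_A$ is continuous on all of $[0,1]$, so the positivity of $q_0$ is needed to guarantee $V_A^t(q_0)\to q_\ast$, not for continuity.
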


\begin{proof}
  Since $q_\ast$ is stable, choose a compact interval $I_\ast\Subset(0,1)$
  containing $q_\ast$ in its interior such that
  $\sup_{I_\ast}\abs{V_A'}<1$. Since $q_0$ lies in the basin of $q_\ast$,
  the orbit $V_A^t(q_0)$ enters the interior of $I_\ast$ and
  remains in a smaller compact subinterval of $I_\ast$ after a finite time.
  By continuity of $V_A^t$ for each fixed $t$, the same is true uniformly
  for all $q_N$ sufficiently close to $q_0$, after increasing the entrance time
  if necessary. This proves \cref{eq:gaussian-deterministic-entrance}, while
  the derivative bound follows from the choice of $I_\ast$.
\end{proof}

The next estimates control invariant mass near zero. They use negative
moments of order $\gamma N$, first for $\ell_{A,N}$ and then uniformly for
small radii.

\begin{remark}
\label{rem:gaussian-ell-negative-moments}
  For every positive random variable $X$, Markov's inequality gives
  $\Pp(X\leq r)\leq r^p\Ee X^{-p}$, so negative moments provide small-ball
  bounds. Near $q=0$,
  \begin{equation*}
    \frac{F_{A,N}(q,G)}{q}
    \approx
    A^2\frac{\chi_N^2}{N}.
  \end{equation*}
  Here $\chi_N$ is the Euclidean norm of a standard Gaussian vector in
  $\R^N$, as in \cref{eq:gaussian-ell}.
  To obtain an exponentially small bound on $\nu_{A,N}((0,r])$, we take
  $p=\gamma N$. If $Z_N\sim\chi_N^2$ and
  $\gamma\in(0,1/2)$, direct integration gives
  \begin{equation*}
    \Ee\left[
      \left(
        A^2\frac{Z_N}{N}
      \right)^{-\gamma N}
    \right]
    =
    A^{-2\gamma N}
    \left(\frac N2\right)^{\gamma N}
    \frac{\Gamma(N/2-\gamma N)}{\Gamma(N/2)}.
  \end{equation*}
  The moment is finite precisely when $\gamma<1/2$, and Stirling's formula
  yields
  \begin{equation*}
    \frac1N
    \log
    \Ee\left[
      \left(
        A^2\frac{Z_N}{N}
      \right)^{-\gamma N}
    \right]
    \to
    \Lambda_A(\gamma)
    :=
    -2\gamma\log A
    +
    \gamma
    +
    \left(\frac12-\gamma\right)\log(1-2\gamma).
  \end{equation*}
  Since $\Lambda_A(0)=0$ and
  $\Lambda_A'(0)=-2\log A<0$ for $A>1$, this exponent is negative for all
  sufficiently small $\gamma>0$. The truncated estimate below makes this
  linearized calculation uniform over a neighborhood of the origin.
\end{remark}

\begin{lemma}
\label{lem:gaussian-truncated-cramer}
  Let $A>1$. There are constants
  $\gamma\in(0,1/2)$, $\kappa>0$, $L\geq1$, and
  $\varepsilon\in(0,1)$ such that, for all sufficiently large $N$,
  \begin{equation}\label{eq:gaussian-truncated-cramer}
    \Ee\left[
      \left(
        (1-\varepsilon)A^2
        \frac1N\sum_{i=1}^N\min(G_i^2,L^2)
      \right)^{-\gamma N}
    \right]
    \leq
    e^{-\kappa N}.
  \end{equation}
\end{lemma}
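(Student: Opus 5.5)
The expectation factorizes after raising to the power $-\gamma N$. Set $W:=\min(G_1^2,L^2)$. Then
\begin{equation*}
  \Ee\left[\left((1-\varepsilon)A^2\frac1N\sum_{i=1}^N W_i\right)^{-\gamma N}\right]
\end{equation*}
is a negative moment of an average of i.i.d.\ bounded variables, with $\Ee W\to1$ as $L\to\infty$. The plan is to bound the average from below by a geometric mean using AM--GM:
\begin{equation*}
  \frac1N\sum_i W_i\geq\prod_i W_i^{1/N}.
\end{equation*}
This gives
\begin{equation*}
  \Ee\Bigl[\bigl(\tfrac1N\textstyle\sum_iW_i\bigr)^{-\gamma N}\Bigr]
  \leq\bigl(\Ee W^{-\gamma}\bigr)^N .
\end{equation*}
The left side of \cref{eq:gaussian-truncated-cramer} is therefore at most
\begin{equation*}
  \exp\Bigl(N\bigl[-\gamma\log((1-\varepsilon)A^2)+\log\Ee W^{-\gamma}\bigr]\Bigr).
\end{equation*}
This bound holds exactly, not only asymptotically, so no Stirling step is needed. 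It remains to choose $\gamma,\varepsilon,L$ so that the bracket is negative.

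Define $\Lambda(\gamma):=\log\Ee W^{-\gamma}$, which is finite for $\gamma<1/2$. Indeed, $W^{-\gamma}\leq G^{-2\gamma}+L^{-2\gamma}$ and $\Ee\abs G^{-2\gamma}<\infty$ for $\gamma<1/2$. The function $\Lambda$ is smooth and convex on $[0,1/2)$, with $\Lambda(0)=0$ and $\Lambda'(0)=-\Ee\log W$. By monotone convergence, $\Ee\log\min(G^2,L^2)\uparrow\Ee\log G^2=-\gamma_E-\log2\approx-1.27$ as $L\to\infty$. Using AM--GM with a logarithmic comparison is lossy here, and this is the main obstacle. The geometric-mean route yields negativity exactly when
\begin{equation*}
  2\log A+\Ee\log G^2>0,
\end{equation*}
which need not hold for $A$ slightly above $1$. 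So the AM--GM shortcut only works for large $A$, and for general $A>1$ a genuine large-deviation argument for the arithmetic mean is needed.

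The fallback, which I would actually use, is an exponential-tilt (Chernoff) bound on the lower tail of the mean $S_N:=\frac1N\sum_iW_i$. Write the negative moment via the layer-cake formula:
\begin{equation*}
  \Ee S_N^{-\gamma N}=\int_0^\infty\Pp\bigl(S_N^{-\gamma N}>u\bigr)\,du .
\end{equation*}
Split into two regions. \emph{Region $S_N\geq m$}, where $m$ is slightly below $\Ee W$. Here the integrand is at most $m^{-\gamma N}$. \emph{Region $S_N<m$.} For the lower tail, use $\Pp(S_N\leq s)\leq\exp(-N\,I(s))$ with the Cramér rate
\begin{equation*}
  I(s)=\sup_{\lambda>0}\bigl(-\lambda s-\log\Ee e^{-\lambda W}\bigr),
\end{equation*}
and note that $I(s)\geq\frac12\log(1/s)-C$ as $s\downarrow0$, since $\Ee e^{-\lambda W}\approx\lambda^{-1/2}$ for large $\lambda$. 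Then $s^{-\gamma N}\Pp(S_N\in ds)$ integrates to at most $\exp\bigl(N\sup_{s\leq m}[\gamma\log(1/s)-I(s)]+o(N)\bigr)$; the supremum is finite because $\gamma<1/2$ and small because $I(m)\approx0$ only near $m$. I would simply discretize $s$ into finitely many levels plus a tail piece $s\leq s_0$, where the $\frac12\log(1/s)$ growth beats $\gamma\log(1/s)$. Altogether,
\begin{equation*}
  \Ee S_N^{-\gamma N}\leq\exp\Bigl(N\bigl[\gamma\log(1/m)+\eta(\gamma)\bigr]+o(N)\bigr),
\end{equation*}
where $\eta(\gamma)=o(\gamma)$ as $\gamma\downarrow0$ for fixed $m<\Ee W$. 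To see this, note that on $\{s<m\}$ we have $\gamma\log(m/s)-I(s)\leq\sup_x[\gamma x-\tilde I(x)]$, where $x=\log(m/s)$ and $\tilde I(x)\geq c\min(x,x^2)$, giving a bound of order $\gamma^2$.

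Finally, choose the parameters in order:
\begin{itemize}
  \item[1.] Pick $L$ large and $m<\Ee W$ close to $1$, and $\varepsilon$ small, so that $(1-\varepsilon)A^2m>A^{1}$, say; this uses $A>1$.
  \item[2.] With these fixed, the exponent is $-\gamma\log((1-\varepsilon)A^2m)+O(\gamma^2)$, which is negative for small $\gamma>0$.
  \item[3.] Absorb the $o(N)$ term by halving $\kappa$.
\end{itemize}
This proves \cref{eq:gaussian-truncated-cramer}. Keeping $\eta(\gamma)=o(\gamma)$ uniform via the quadratic lower bound on $\tilde I$ near $0$ is the delicate point. It relies on $W$ being bounded, hence $\Var W<\infty$ and a nondegenerate Cramér function at the mean, and this is exactly why the truncation at $L$ appears.
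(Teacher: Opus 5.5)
Your proposal is correct. You were right to drop the AM--GM shortcut. Bounding by $(\Ee W^{-\gamma})^N$ uses only a coordinatewise moment, and in the limit $L\to\infty$, $\varepsilon\to0$ it requires $\Ee\log(A^2G^2)>0$, i.e.\ $A>A_c(1)\approx1.89$. That is the fixed-width threshold for $N=1$, not $A>1$.

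Your fallback has the same skeleton as the paper's proof. You split at a level $m$ slightly below $\Ee W$, so the good event contributes $((1-\varepsilon)A^2m)^{-\gamma N}$ with base larger than one. The difference is in how the bad event $\{S_N<m\}$ is handled.

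The paper first proves one global bound $\Ee S_N^{-\alpha N}\le e^{C_\alpha N}$ for a fixed $\alpha\in(0,1/2)$. This comes from the one-coordinate Laplace bound $\Ee e^{-\lambda W}\le C\lambda^{-1/2}$ and the Gamma-integral representation of $x^{-p}$. It then takes $\Pp(S_N<m)\le e^{-c_0N}$ from Hoeffding and combines the two by H\"older. The bad-event contribution is therefore at most $\exp\{N(C_\alpha\gamma/\alpha-c_0(1-\gamma/\alpha))\}$, which is exponentially small for small $\gamma$.

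You instead integrate $s^{-\gamma N}$ directly against the lower tail, using the same Laplace bound in the form $\Pp(S_N\le s)\le C^Ns^{N/2}$ near zero and a Cram\'er bound in between. The paper's route is shorter, and its global moment bound is reused in \cref{lem:gaussian-outside-negative-moment}. Yours tracks the exponent more precisely, but carries more bookkeeping than it needs. Since $m<\Ee W$ is fixed, $I(m)>0$, and so
\begin{equation*}
  \Ee\bigl[S_N^{-\gamma N}\one_{\{s_0\le S_N<m\}}\bigr]\le s_0^{-\gamma N}e^{-NI(m)}
\end{equation*}
is exponentially small once $\gamma\log(1/s_0)<I(m)$. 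Neither the discretization nor the estimate $\tilde I(x)\ge c\min(x,x^2)$ is needed.

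Your closing explanation of the truncation is off. A lower-tail Chernoff bound for nonnegative $W$ needs no truncation, and $\Var(G^2)<\infty$ in any case. The cap at $L$ is forced by the application in \cref{prop:gaussian-near-zero-negative-moment}, where only $h_{R_0}(g)\ge(1-\varepsilon)A^2\min(g^2,L^2)$ is available. In the paper's proof, it is also what makes Hoeffding applicable.
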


For sufficiently small $R_0$, the ratio $F_{A,N}(q,G)/q$ dominates the
truncated quadratic average in \cref{lem:gaussian-truncated-cramer}, uniformly
for $0<q\leq R_0$. Since $x\mapsto x^{-\gamma N}$ is decreasing, the lemma
then gives the required negative-moment contraction near zero.
\begin{proposition}
\label{prop:gaussian-near-zero-negative-moment}
  Let \cref{ass:gaussian-stable-selected} hold, and let $I_\ast$ be the compact
  interval selected in \cref{lem:gaussian-deterministic-entrance}. There are
  $R_0\in(0,\inf I_\ast)$, $\gamma\in(0,1/2)$, and $\kappa>0$ such that, for
  all sufficiently large $N$,
  \begin{equation}\label{eq:gaussian-lyapunov-near-zero-input}
    \sup_{0<q\leq R_0}
    \Ee\left[
      \left(
        \frac{F_{A,N}(q,G)}{q}
      \right)^{-\gamma N}
    \right]
    \leq
    e^{-\kappa N}.
  \end{equation}
\end{proposition}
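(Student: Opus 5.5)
The plan is a pointwise comparison followed by monotonicity and \cref{lem:gaussian-truncated-cramer}. Fix the constants $\gamma,\kappa,L,\varepsilon$ from \cref{lem:gaussian-truncated-cramer}. The key elementary fact is that $\tanh^2(x)/x^2$ is even, decreasing in $\abs x$, and tends to one at zero. Hence there is $x_0>0$ with $\tanh^2(x)\geq(1-\varepsilon)x^2$ for $\abs x\leq x_0$. Choose
\begin{equation*}
  R_0:=\min\left\{\frac{x_0^2}{A^2L^2},\ \frac12\inf I_\ast\right\},
\end{equation*}
so that $R_0\in(0,\inf I_\ast)$.

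For $0<q\leq R_0$ and each coordinate, set $x_i:=A\sqrt q\,G_i$ and $y_i:=A\sqrt q\min(\abs{G_i},L)$. Since $\tanh^2$ is increasing in $\abs x$ and $\abs{y_i}\leq\abs{x_i}$,
\begin{equation*}
  \tanh^2(x_i)\geq\tanh^2(y_i)\geq(1-\varepsilon)y_i^2
  =(1-\varepsilon)A^2q\min(G_i^2,L^2).
\end{equation*}
The second inequality uses $\abs{y_i}\leq A\sqrt{R_0}\,L\leq x_0$. Averaging over $i$ and dividing by $q$ gives the pointwise bound
\begin{equation*}
  \frac{F_{A,N}(q,G)}{q}
  \geq
  (1-\varepsilon)A^2\frac1N\sum_{i=1}^N\min(G_i^2,L^2),
\end{equation*}
uniformly in $0<q\leq R_0$ and in $G$.

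Both sides are almost surely positive. Since $x\mapsto x^{-\gamma N}$ is decreasing on $(0,\infty)$, the pointwise bound reverses under this map. Taking expectations then bounds the left side of \cref{eq:gaussian-lyapunov-near-zero-input} by the quantity in \cref{eq:gaussian-truncated-cramer}, uniformly in $q\in(0,R_0]$. That quantity is at most $e^{-\kappa N}$ for all large $N$. The supremum over $q$ therefore gives the claim with the same $\gamma$ and $\kappa$.

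The substantive work, namely the uniform Cramér-type negative-moment bound for the truncated average, is entirely contained in \cref{lem:gaussian-truncated-cramer}, which we take as given. The only care needed here is that the comparison be uniform in $G$. The truncation at $L$ handles exactly this: it keeps the $\tanh$ argument inside the small-argument region for every realization, so the bound holds without any exceptional event. Choosing $R_0$ below $\inf I_\ast$ is a free additional constraint.
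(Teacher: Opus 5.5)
Your proof is correct and follows essentially the same route as the paper. Both establish the pointwise bound $F_{A,N}(q,G)/q\geq(1-\varepsilon)A^2\frac1N\sum_{i=1}^N\min(G_i^2,L^2)$ for every $G$ and every $0<q\leq R_0$, then apply monotonicity of $x\mapsto x^{-\gamma N}$ and \cref{lem:gaussian-truncated-cramer}. The only cosmetic difference is that the paper first reduces to $q=R_0$, using that $q\mapsto\tanh^2(A\sqrt q\,g)/q$ is decreasing, and then splits into the cases $\abs g\leq L$ and $\abs g>L$. You instead truncate the argument of $\tanh$ and apply the small-argument bound directly at each $q$.
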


Away from zero, there is $c_{A,R_0}>0$ such that
$\tanh^2(A\sqrt q\,g)\geq c_{A,R_0}\min(g^2,1)$ for $q\in[R_0,1]$.
After averaging over the coordinates, this reduces the estimate to a
negative-moment bound for a truncated Gaussian square average.
\begin{lemma}
\label{lem:gaussian-outside-negative-moment}
  Let $R_0\in(0,1)$ and $\gamma\in(0,1/2)$. There is $b<\infty$ such that, for
  every $N\geq1$,
  \begin{equation}\label{eq:gaussian-lyapunov-outside-input}
    \sup_{q\in[R_0,1]}
    \Ee\left[
      F_{A,N}(q,G)^{-\gamma N}
    \right]
    \leq
    e^{bN}.
  \end{equation}
\end{lemma}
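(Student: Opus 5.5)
The plan is to reduce the estimate to the negative moment of a truncated Gaussian square average, using a pointwise lower bound on the summands. First I will show there is $c=c_{A,R_0}>0$ with
\begin{equation*}
  \tanh^2(A\sqrt q\,g)\geq c\min(g^2,1),
  \qquad q\in[R_0,1],\ g\in\R .
\end{equation*}
Because $\tanh^2$ is even and increasing in $\abs u$, and $A\sqrt q\geq A\sqrt{R_0}=:a>0$, it suffices to treat $q=R_0$. For $\abs g\leq1$, the concavity of $\tanh$ on $[0,a]$ gives $\tanh(a\abs g)\geq\tanh(a)\abs g$. For $\abs g>1$, we have $\tanh(a\abs g)\geq\tanh a$. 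Hence $c=\tanh^2(a)$ works. Averaging over coordinates gives $F_{A,N}(q,G)\geq c\,S_N$ with $S_N:=N^{-1}\sum_i\min(G_i^2,1)$. Since $x\mapsto x^{-\gamma N}$ is decreasing, this yields
\begin{equation*}
  \sup_{q\in[R_0,1]}\Ee F_{A,N}(q,G)^{-\gamma N}\leq c^{-\gamma N}\,\Ee S_N^{-\gamma N}.
\end{equation*}
Note that $c^{-\gamma N}=e^{\gamma\log(1/c)N}$ is already of the required form.

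It remains to bound $\Ee S_N^{-\gamma N}\leq e^{b'N}$ for every $N\geq1$. Here I will use $\min(x,1)\geq x/(1+x)$, or more simply compare with the untruncated average. On $\{\max_i G_i^2\leq1\}$ we have $S_N=\chi_N^2/N$, but in general truncation only decreases $S_N$, so a cleaner route is needed. I will bound $S_N$ from below by a sum of independent terms and use the small-ball structure directly. Layer-cake gives
\begin{equation*}
  \Ee S_N^{-\gamma N}=\int_0^\infty \Pp(S_N^{-\gamma N}>s)\,\mathrm ds\leq 1+\int_0^1 \gamma N r^{-\gamma N-1}\Pp(S_N\leq r)\,\mathrm dr .
\end{equation*}
For the small-ball probability I will use the exponential Chebyshev bound $\Pp(S_N\leq r)\leq e^{\lambda N r}\left(\Ee e^{-\lambda\min(G^2,1)}\right)^N$ with $\lambda=1/r$. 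The key fact is
\begin{equation*}
  \Ee e^{-\lambda\min(G^2,1)}\leq \Ee e^{-\lambda G^2}+e^{-\lambda}=(1+2\lambda)^{-1/2}+e^{-\lambda}\leq C\lambda^{-1/2}.
\end{equation*}
This gives $\Pp(S_N\leq r)\leq (Ce\sqrt r)^N$. The integral is then bounded by
\begin{equation*}
  \gamma N (Ce)^N\int_0^1 r^{N/2-\gamma N-1}\,\mathrm dr=\frac{\gamma N(Ce)^N}{N(1/2-\gamma)} ,
\end{equation*}
which is finite because $\gamma<1/2$. It is at most $e^{b''N}$ uniformly in $N\geq1$, with $b''$ depending only on $\gamma$. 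Combining the two steps gives $b=\gamma\log(1/c)+b''+\log 2$.

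The main obstacle is the small-ball bound: the exponent $N/2$ from the Gaussian Laplace transform must beat $\gamma N$. This is exactly the condition $\gamma<1/2$ from \cref{rem:gaussian-ell-negative-moments}, and the truncation at $1$ costs only the harmless additive $e^{-\lambda}$ term. Everything else is a routine estimate, uniform in $q$ and $N$.
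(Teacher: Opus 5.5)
Your proposal is correct and follows the paper's proof. The same pointwise bound $\tanh^2(A\sqrt q\,g)\geq c_{A,R_0}\min(g^2,1)$ reduces everything to a negative moment of $S_N=N^{-1}\sum_i\min(G_i^2,1)$, and $c^{-\gamma N}$ is absorbed into $e^{bN}$. The paper gets $\Ee S_N^{-\gamma N}\leq e^{CN}$ by citing \cref{eq:gaussian-truncated-negative-moment}, which turns the same Laplace bound $\Ee e^{-\lambda\min(G^2,1)}\leq C\lambda^{-1/2}$ into a negative moment via the Gamma-integral representation of $x^{-p}$ and Stirling's formula; your Chernoff small-ball estimate with layer-cake does this directly and slightly more elementarily. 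Your false start with $\min(x,1)\geq x/(1+x)$ is unnecessary and can be deleted.
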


See \cref{app:supercritical-negative-moments} for the proofs of \cref{lem:gaussian-truncated-cramer,prop:gaussian-near-zero-negative-moment,lem:gaussian-outside-negative-moment}.

By \cref{prop:gaussian-nonzero-invariant-existence,prop:gaussian-unique-nonzero-invariant},
and the fact that $A_c(N)\downarrow1$, $K_{A,N}$ has a unique nonzero
invariant law for all sufficiently large $N$. We denote it by $\nu_{A,N}$.

\begin{remark}
\label{rem:gaussian-power-law-lyapunov}
  For fixed $A>1$ and all sufficiently large $N$,
  \begin{equation*}
    \Ee\left[\left(A^2\frac{\chi_N^2}{N}\right)^{-1}\right]
    =A^{-2}\frac{N}{N-2}<1,
  \end{equation*}
  so the choice $V(q)=q^{-1}$ in
  \cref{prop:gaussian-nonzero-invariant-existence} gives a Foster--Lyapunov
  estimate. Consequently, every Ces\`aro limit of the laws of $Z^{(N)}$
  considered there assigns no mass to $\{0\}$. To prove
  \cref{prop:gaussian-stable-localization}, we need
  $\nu_{A,N}((0,r])\leq Ce^{-cN}$ for some fixed $r>0$. We therefore set
  $V_N(q)=q^{-\gamma N}$. A bound of the form
  $\int V_N\,\mathrm{d}\nu_{A,N}\leq Ce^{bN}$ gives
  \begin{equation*}
    \nu_{A,N}((0,r])
    \leq
    r^{\gamma N}\int V_N\,\mathrm{d}\nu_{A,N}
    \leq
    C\exp\{N(b+\gamma\log r)\}.
  \end{equation*}
  Choosing $r<e^{-b/\gamma}$ makes the exponent negative.
\end{remark}

Combining the near-zero contraction estimate with the bound on $[R_0,1]$
yields the invariant-law estimate below.
\begin{proposition}
\label{prop:gaussian-stationary-negative-moment}
  Let \cref{ass:gaussian-stable-selected} hold. Then, for all sufficiently large
  $N$, the unique nonzero invariant law $\nu_{A,N}$ of $K_{A,N}$ satisfies
  \begin{equation}\label{eq:gaussian-stationary-negative-moment}
    \int_0^1 q^{-\gamma N}\,\nu_{A,N}(\mathrm{d}q)
    \leq
    \frac{e^{bN}}{1-e^{-\kappa N}},
  \end{equation}
  where $R_0,\gamma,\kappa$ are as in
  \cref{prop:gaussian-near-zero-negative-moment}, and $b$ is the constant in
  \cref{lem:gaussian-outside-negative-moment} for this choice of $R_0,\gamma$.
\end{proposition}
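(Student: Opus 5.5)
The plan is a Foster--Lyapunov drift inequality for $V_N(q):=q^{-\gamma N}$ under $K_{A,N}$, integrated against $\nu_{A,N}$ and rearranged. The one subtlety is that $\int V_N\,\mathrm d\nu_{A,N}$ is not known to be finite in advance, so I will work with truncations.

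\emph{Step 1: Drift inequality.} For $0<q\leq R_0$, write $F_{A,N}(q,G)^{-\gamma N}=q^{-\gamma N}\bigl(F_{A,N}(q,G)/q\bigr)^{-\gamma N}$ and apply \cref{prop:gaussian-near-zero-negative-moment}. This gives
\begin{equation*}
  K_{A,N}V_N(q)\leq e^{-\kappa N}V_N(q).
\end{equation*}
For $q\in[R_0,1]$, \cref{lem:gaussian-outside-negative-moment} gives $K_{A,N}V_N(q)\leq e^{bN}$. Combining the two cases, for all $q\in(0,1]$,
\begin{equation*}
  K_{A,N}V_N(q)\leq e^{-\kappa N}V_N(q)+e^{bN}.
\end{equation*}

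\emph{Step 2: Truncation and integration.} For $M>0$, let $V_N^M:=\min(V_N,M)$. Since $x\mapsto\min(x,M)$ is concave and nondecreasing, Jensen's inequality gives
\begin{equation*}
  K_{A,N}V_N^M\leq\min(K_{A,N}V_N,M)\leq\min\bigl(e^{-\kappa N}V_N+e^{bN},M\bigr).
\end{equation*}
Integrating against $\nu_{A,N}$ and using invariance, which applies because $\nu_{A,N}$ lives on $(0,1]$ and $V_N^M$ is bounded, gives
\begin{equation*}
  \int V_N^M\,\mathrm d\nu_{A,N}\leq\int\min\bigl(e^{-\kappa N}V_N+e^{bN},M\bigr)\,\mathrm d\nu_{A,N}.
\end{equation*}
The right side still involves the untruncated $V_N$, so this does not close directly. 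Two ways around it:
\begin{itemize}
  \item Use $\min(ax+c,M)\leq a\min(x,M/a)+c$ and iterate along an increasing sequence of truncation levels.
  \item More cleanly, avoid the issue by iterating the kernel. By the drift inequality, $K_{A,N}^tV_N(q)\leq e^{-\kappa Nt}V_N(q)+e^{bN}/(1-e^{-\kappa N})$. Hence $\int V_N^M\,\mathrm d\nu_{A,N}=\int K_{A,N}^tV_N^M\,\mathrm d\nu_{A,N}$ is at most $\int\min\bigl(e^{-\kappa Nt}V_N+e^{bN}/(1-e^{-\kappa N}),M\bigr)\,\mathrm d\nu_{A,N}$.
\end{itemize}
For the second route, let $t\to\infty$. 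The integrand converges pointwise to $\min\bigl(e^{bN}/(1-e^{-\kappa N}),M\bigr)$, since $V_N<\infty$ on $(0,1]$, and it is bounded by $M$. Dominated convergence gives the bound $e^{bN}/(1-e^{-\kappa N})$ uniformly in $M$. Monotone convergence as $M\to\infty$ then yields \cref{eq:gaussian-stationary-negative-moment}.

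\emph{Step 3: Admissibility of $N$.} Take $N$ large enough for three things: the constants in \cref{prop:gaussian-near-zero-negative-moment} are in force, $\nu_{A,N}$ exists and is unique (from $A_c(N)\downarrow1<A$), and $\nu_{A,N}(\{0\})=0$. The main obstacle is the finiteness issue handled in Step 2. The iterated-kernel version avoids any a priori integrability assumption, so I expect it to be routine once the drift inequality is in place.
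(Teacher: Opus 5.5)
Your argument is correct, but it takes a different route from the paper. The paper does not integrate the drift inequality against $\nu_{A,N}$ directly. Instead it:
\begin{enumerate}
  \item iterates the Foster inequality along the chain started at $q=1$ and bounds the Ces\`aro averages $\overline\nu_T^{\,1}$;
  \item passes to a weak limit using the continuous truncations $V_N^M$ with $V_N^M(0)=M$;
  \item deduces that the limit $\overline\nu$ has no atom at $0$ and satisfies the moment bound;
  \item invokes \cref{prop:gaussian-compactness-selection} and the uniqueness in \cref{prop:gaussian-unique-nonzero-invariant} to conclude $\overline\nu=\nu_{A,N}$.
\end{enumerate}
Your second route applies invariance of $\nu_{A,N}$ to the bounded function $\min(V_N,M)$ after $t$ steps. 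You then use $K_{A,N}^t\min(V_N,M)\leq\min(K_{A,N}^tV_N,M)$ and let $t\to\infty$ by dominated convergence. The inequality needs only positivity of $K_{A,N}^t$ and $K_{A,N}^t\mathbf 1=\mathbf 1$, so Jensen is not needed. Dominated convergence is legitimate precisely because $\nu_{A,N}$ is a probability on $(0,1]$, so $V_N<\infty$ $\nu_{A,N}$-a.e.

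Your route is shorter and works for any invariant probability on $(0,1]$, with no Krylov--Bogolyubov construction and no uniqueness input. The paper's route builds a nonzero invariant law carrying the moment bound from scratch. It therefore also gives existence for large $N$ independently of \cref{prop:gaussian-nonzero-invariant-existence}, but it needs uniqueness to transfer the bound to $\nu_{A,N}$.

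Your first bullet also closes by the same mechanism. Iterating
\[
  \int\min(V_N,M)\,\mathrm d\nu_{A,N}\leq\int\min(e^{-\kappa N}V_N,M)\,\mathrm d\nu_{A,N}+e^{bN}
\]
at the levels $Me^{\kappa Nk}$ leaves the remainder $\int\min(e^{-\kappa Nk}V_N,M)\,\mathrm d\nu_{A,N}$, which tends to $0$ as $k\to\infty$.
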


To prove the next proposition, choose $r<R_0$ so that the preceding
negative-moment estimate gives $\nu_{A,N}((0,r])\leq Ce^{-cN}$. Uniform
deterministic entrance from $[r,1]$ into $I_\ast$, together with a fixed-time
concentration estimate, then gives, for some fixed $m$,
$\sup_{q\in[r,1]}K_{A,N}^m(q,I_\ast^c)\leq Ce^{-cN}$. Invariance combines
these two bounds.
\begin{proposition}
\label{prop:gaussian-stable-localization}
  Let \cref{ass:gaussian-stable-selected} hold. Let $\nu_{A,N}$ be the unique
  nonzero invariant law of $K_{A,N}$. Then there are constants $C,c>0$ such that
  \begin{equation}\label{eq:gaussian-stationary-exponential-localization}
    \nu_{A,N}(I_\ast^c)\leq C e^{-cN}
  \end{equation}
  for all sufficiently large $N$. In particular,
  \begin{equation}\label{eq:gaussian-stationary-localization}
    \nu_{A,N}(I_\ast^c)\leq\frac{C}{N}
  \end{equation}
  after enlarging $C$.
\end{proposition}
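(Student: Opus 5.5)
We prove \cref{prop:gaussian-stable-localization} by combining the negative-moment bound near zero with a fixed-time concentration estimate away from zero, glued together by invariance. Throughout, $N$ is large enough that $\nu_{A,N}$ exists and \cref{prop:gaussian-stationary-negative-moment} applies.

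\emph{Step 1: mass near zero.} By \cref{prop:gaussian-stationary-negative-moment}, $\int q^{-\gamma N}\,\mathrm d\nu_{A,N}\leq 2e^{bN}$ for large $N$. As in \cref{rem:gaussian-power-law-lyapunov}, Markov's inequality gives
\begin{equation*}
  \nu_{A,N}((0,r])\leq 2\exp\{N(b+\gamma\log r)\}.
\end{equation*}
We fix $r\in(0,R_0)$ with $r<e^{-2b/\gamma}$, so that $\nu_{A,N}((0,r])\leq 2e^{-bN}$; we may assume $b>0$.

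\emph{Step 2: uniform deterministic entrance from $[r,1]$.} By \cref{lem:gaussian-mean-field-concavity}, every orbit of $V_A$ from $(0,1]$ converges monotonically to $q_\ast$, which lies in the interior of $I_\ast$. Since $V_A^t$ is continuous and $V_A$ is increasing, the orbits from $r$ and from $1$ bracket all orbits from $[r,1]$. Hence there is a fixed $m$ such that
\begin{equation*}
  V_A^m([r,1])\subset\{q:\operatorname{dist}(q,I_\ast^c)\geq\delta\}
\end{equation*}
for some $\delta>0$.

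\emph{Step 3: fixed-time concentration.} We use the decomposition \cref{eq:gaussian-noise-decomposition} together with the Hoeffding bound \cref{eq:gaussian-noise-hoeffding}. Let $\omega$ be the modulus of continuity of $V_A$ on $[0,1]$, which is Lipschitz, and run the recursion $E_{s+1}=\delta_0+\omega(E_s)$ from $E_0=0$, as in Step 2 of the proof of \cref{thm:rounded-qualitative-metastability}. Choosing $\delta_0$ so that $E_m\leq\delta/2$, a union bound over $m$ steps gives
\begin{equation*}
  \sup_{q\in[r,1]}K_{A,N}^m(q,I_\ast^c)\leq 2m\,e^{-2N\delta_0^2}.
\end{equation*}
The constants are uniform in $q$ because Hoeffding's inequality is uniform in the state.

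\emph{Step 4: invariance.} Using $\nu_{A,N}=\nu_{A,N}K_{A,N}^m$, split the integral over $(0,r]$ and $(r,1]$:
\begin{equation*}
  \nu_{A,N}(I_\ast^c)\leq\nu_{A,N}((0,r])+\sup_{q\in[r,1]}K_{A,N}^m(q,I_\ast^c)\leq 2e^{-bN}+2me^{-2N\delta_0^2}.
\end{equation*}
This is \cref{eq:gaussian-stationary-exponential-localization}. The bound \cref{eq:gaussian-stationary-localization} follows trivially by enlarging $C$.

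The main obstacle is Step 1, namely making the mass near zero exponentially small; this is already handled by \cref{prop:gaussian-stationary-negative-moment}. The remaining steps are routine, the only point needing care being the uniformity over $q\in[r,1]$ in Step 2, which the monotone bracketing supplies.
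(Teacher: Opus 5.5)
Your proposal is correct and follows the paper's proof essentially step for step: $\nu_{A,N}((0,r])$ is exponentially small by the negative-moment bound and Markov's inequality; $V_A^m([r,1])$ enters the interior of $I_\ast$ uniformly, and Hoeffding with a union bound over $m$ steps gives $\sup_{q\in[r,1]}K_{A,N}^m(q,I_\ast^c)\leq Ce^{-cN}$; invariance under $K_{A,N}^m$ then combines the two bounds. The only cosmetic difference is that you get uniformity on $[r,1]$ by bracketing with the orbits from $r$ and $1$, while the paper uses Dini's theorem (and an explicit Lipschitz sum instead of your modulus-of-continuity recursion); both work.
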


The proofs of
\cref{prop:gaussian-stationary-negative-moment,prop:gaussian-stable-localization}
are in \cref{app:supercritical-negative-moments,app:supercritical-localization-concentration}.

\subsection{Concentration and coupling at the cutoff scale}
\label{subsec:gaussian-dynamic-tracking-smoothing}

Once two radii lie in $I_\ast$, the following estimate converts their
separation into total variation after one step. We then combine it with
concentration and synchronous coupling.
\begin{lemma}
\label{lem:gaussian-score-smoothing}
  Let $I\Subset(0,1)$. There is a constant $C_I<\infty$ such that, for every
  $q,q'\in I$,
  \begin{equation}\label{eq:gaussian-score-tv-smoothing}
    \norm{K_{A,N}(q,\cdot)-K_{A,N}(q',\cdot)}_{\TV}
    \leq
    C_I\sqrt N\,\abs{q-q'} .
  \end{equation}
\end{lemma}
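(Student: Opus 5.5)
We bound the total variation between the two transition laws by lifting to the coordinate level. The law $K_{A,N}(q,\cdot)$ is the pushforward under $y\mapsto N^{-1}\sum_i y_i$ of the product measure $f_q^{\otimes N}(y)\,\mathrm dy$, with $f_q$ the one-coordinate density in \cref{eq:gaussian-one-coordinate-density}. Total variation contracts under measurable maps, so
\begin{equation*}
  \norm{K_{A,N}(q,\cdot)-K_{A,N}(q',\cdot)}_{\TV}
  \leq
  \norm{f_q^{\otimes N}-f_{q'}^{\otimes N}}_{\TV}.
\end{equation*}
A direct union bound would give a factor $N$. The $\sqrt N$ comes from the Hellinger distance, which tensorizes:
\begin{equation*}
  \norm{\mu^{\otimes N}-\nu^{\otimes N}}_{\TV}\leq \sqrt2\,H(\mu^{\otimes N},\nu^{\otimes N}),
  \qquad
  H^2(\mu^{\otimes N},\nu^{\otimes N})\leq N H^2(\mu,\nu),
\end{equation*}
with the normalization $H^2=1-\int\sqrt{\mu\nu}$. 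It therefore suffices to prove the one-coordinate bound $H(f_q,f_{q'})\leq C_I\abs{q-q'}$ for $q,q'\in I$.

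For this, use the Gaussian representation directly rather than the explicit density. The map $g\mapsto\tanh^2(A\sqrt q\,g)$ factors through $\abs g$, and $\abs g\mapsto\tanh^2(A\sqrt q\abs g)$ is a bijection. Hence $f_q$ is the image of the law of $\abs{\mathcal N(0,A^2q)}$ under the fixed bijection $x\mapsto\tanh^2(x)$, $x\geq0$. Hellinger distance is invariant under bijections, so
\begin{equation*}
  H(f_q,f_{q'})=H\bigl(\mathcal N(0,A^2q),\mathcal N(0,A^2q')\bigr),
\end{equation*}
where restricting to $\abs{\cdot}$ is harmless by symmetry. For centered Gaussians with variances $s,s'$ the explicit formula is
\begin{equation*}
  1-H^2=\sqrt{\frac{2\sqrt{ss'}}{s+s'}}.
\end{equation*}
A Taylor expansion of this gives $H^2\leq C(\log(s/s'))^2\leq C_I\abs{q-q'}^2$ for $q,q'\in I\Subset(0,1)$, since $q$ is bounded below on $I$. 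Equivalently, the Fisher information of the scale family is $1/(2s^2)$, which is bounded on $I$.

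Combining the three displays gives
\begin{equation*}
  \norm{K_{A,N}(q,\cdot)-K_{A,N}(q',\cdot)}_{\TV}\leq \sqrt{2N}\,C_I\abs{q-q'},
\end{equation*}
which is the claim after renaming the constant. There is no real obstacle. The only care needed is to invoke Hellinger tensorization rather than a coordinatewise TV union bound, and to note that the bound uses only $\inf I>0$; the constraint $\sup I<1$ plays no role.
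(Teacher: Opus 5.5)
Your proof is correct, and it takes a different route from the paper. The paper argues infinitesimally. It shows that $q\mapsto\mu_q^{\otimes N}$ is differentiable in total variation, with derivative $S_{N,q}\,\mu_q^{\otimes N}$, where $S_{N,q}=\sum_i(G_i^2-1)/(2q)$ is the product score. Cauchy--Schwarz then gives $\Ee\abs{S_{N,q}}\leq(\Ee S_{N,q}^2)^{1/2}=\sqrt N/(\sqrt2\,q)$. The paper integrates this bound over the segment between $q'$ and $q$ and pushes forward under the averaging map, which yields $C_I=(2\sqrt2\,q_-)^{-1}$.

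You instead work globally with Hellinger affinities. Tensorization together with Bernoulli's inequality produces the $\sqrt N$. Invariance under the bijection $x\mapsto\tanh^2 x$ on $[0,\infty)$ then reduces the one-coordinate bound to the closed-form affinity of two centered Gaussians.

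The mechanism is the same in both arguments: additivity of the Fisher information $1/(2q^2)$ over independent coordinates. Your bound even has the same leading constant for small $\abs{q-q'}$, since $H^2\sim(\log(q/q'))^2/16$ and hence $\sqrt{2N}\,H\approx\sqrt N\,\abs{q-q'}/(2\sqrt2\,q_-)$.

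Your route needs no regularity in $q$. The paper must first justify, through an integrable Gaussian envelope, that $q\mapsto f_q$ is $C^1$ into $L^1$ before it can write $\mu_q^{\otimes N}-\mu_{q'}^{\otimes N}$ as the integral of its derivative. You use only an explicit Gaussian integral and data processing. The paper's route, in exchange, applies to any smooth parametric family with a square-integrable score, without needing a closed-form affinity.

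Your per-coordinate bijection step can also be skipped. $K_{A,N}(q,\cdot)$ is already the image of $\mathcal N(0,A^2qI_N)$ under $g\mapsto N^{-1}\sum_i\tanh^2(g_i)$, so data processing reduces the claim directly to the Hellinger distance between two isotropic Gaussians on $\R^N$.
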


The stationary chain satisfies the analogous concentration estimate.
\begin{proposition}
\label{prop:gaussian-stationary-concentration}
  Let \cref{ass:gaussian-stable-selected} hold, and let $\nu_{A,N}$ be the unique
  nonzero invariant law of $K_{A,N}$. Then
  \begin{equation}\label{eq:gaussian-stationary-concentration}
    \int_0^1 (q-q_\ast)^2\,\nu_{A,N}(\mathrm{d}q)
    \leq
    \frac{C}{N}.
  \end{equation}
\end{proposition}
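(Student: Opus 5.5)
Let $Z$ be a stationary copy of the chain, $Z_0\sim\nu_{A,N}$, and write $V:=V_A$. Stationarity and \cref{eq:gaussian-noise-decomposition} give
\[
\Ee(Z_1-q_\ast)^2=\Ee\bigl(V(Z_0)-q_\ast\bigr)^2+\Ee(\zeta_1^{(N)})^2 ,
\]
since the cross term vanishes by conditional centering (\cref{eq:gaussian-noise-moments}). Hence, with $m_2:=\int(q-q_\ast)^2\,\nu_{A,N}(\mathrm{d}q)$ and $V(q_\ast)=q_\ast$,
\[
m_2\leq\int\bigl(V(q)-V(q_\ast)\bigr)^2\,\nu_{A,N}(\mathrm{d}q)+\frac1{4N}.
\]
This is a one-step contraction inequality for the second moment, and the proof reduces to bounding the first integral by $\kappa^2m_2$ plus an error of order $1/N$.

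Split the first integral over $I_\ast$ and $I_\ast^c$, where $I_\ast$ is the interval from \cref{lem:gaussian-deterministic-entrance}.
\begin{itemize}
\item On $I_\ast$ the mean value theorem applies. Since $q_\ast$ lies in the interior of the interval $I_\ast$, the segment from $q$ to $q_\ast$ stays in $I_\ast$, so \cref{eq:gaussian-stable-basin-derivative} gives $\bigl(V(q)-q_\ast\bigr)^2\leq\kappa^2(q-q_\ast)^2$. Integrating yields a contribution of at most $\kappa^2m_2$.
\item On $I_\ast^c$ both $V(q)$ and $q_\ast$ lie in $[0,1]$, so the integrand is at most $1$. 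By \cref{prop:gaussian-stable-localization} this contribution is at most $\nu_{A,N}(I_\ast^c)\leq Ce^{-cN}$, which is $\leq C/N$ after enlarging $C$.
\end{itemize}

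Combining these, $m_2\leq\kappa^2m_2+\frac{1}{4N}+Ce^{-cN}$. Because $m_2\leq1$ is finite, we can rearrange to get $m_2\leq(1-\kappa^2)^{-1}\bigl(\tfrac{1}{4N}+Ce^{-cN}\bigr)\leq C/N$ for all sufficiently large $N$.

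The main obstacle is the localization input, namely that $\nu_{A,N}$ puts only $O(1/N)$ mass outside the contractive interval. In particular this excludes mass near the repelling origin. That input is exactly \cref{prop:gaussian-stable-localization}, which may be assumed. The remaining point to check is that $\kappa<1$ in \cref{lem:gaussian-deterministic-entrance} is uniform in $N$; it is, since $I_\ast$ and $\kappa$ depend only on $A$ and $q_0$.
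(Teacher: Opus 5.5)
Your proposal is correct and follows the paper's proof essentially step for step. You use invariance together with the conditional noise moments to get the one-step identity for the second moment. You then apply the contraction bound $\kappa$ on $I_\ast$ and the trivial bound plus \cref{prop:gaussian-stable-localization} on $I_\ast^c$, which gives $m_2\leq\kappa^2 m_2+C/N$, and rearranging finishes the argument.
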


The next lemma controls $Z^{(N)}$ around $(q_{t,N})_{t\geq0}$.
\begin{lemma}
\label{lem:gaussian-orbit-concentration}
  Let \cref{ass:gaussian-stable-selected} hold. Then, for every
  $0<C_0<\infty$ there are
  constants $C,c_0>0$ such that, with
  $T_N:=\lfloor C_0\log N\rfloor$ and
  \begin{equation*}
    \tau_\ast^{(N)}
    :=
    \inf\{t\geq s_\ast:Z_t^{(N)}\notin I_\ast\},
  \end{equation*}
  one has, for all sufficiently large $N$,
  \begin{equation}\label{eq:gaussian-orbit-no-exit}
    \Pp\left(\tau_\ast^{(N)}\leq T_N\right)
    \leq
    C T_N\exp(-c_0N),
  \end{equation}
  and
  \begin{equation}\label{eq:gaussian-orbit-concentration}
    \sup_{0\leq t\leq T_N}
    \Ee\left[
      \left(Z_t^{(N)}-q_{t,N}\right)^2
    \right]
    \leq \frac{C}{N} .
  \end{equation}
\end{lemma}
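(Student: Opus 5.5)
The plan is to apply the stopped tracking estimate of \cref{lem:common-stopped-orbit-tracking} to the error $e_t:=Z_t^{(N)}-q_{t,N}$, after a short preliminary phase of fixed length $s_\ast$.

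\emph{Initial phase.} On $[0,s_\ast]$ we use \cref{eq:gaussian-noise-hoeffding} together with uniform continuity of $V_A$ on $[0,1]$, exactly as in Step~2 of the proof of \cref{thm:rounded-qualitative-metastability}. Fix $\delta_0$ so small that $s_\ast$ one-step errors of size $\delta_0$, propagated through the modulus of continuity of $V_A$, keep $e_t$ below $\delta_\ast/2$. A union bound then gives
\begin{equation*}
  \Pp\left(\max_{t\leq s_\ast}\abs{e_t}>\delta_\ast/2\right)\leq 2s_\ast e^{-cN}.
\end{equation*}
For the second-moment bound on this phase, note that $V_A$ is Lipschitz on $[0,1]$, since $V_A'(0+)=A^2$ and $V_A$ is concave. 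The error recursion therefore gives
\begin{equation*}
  \Ee e_{t+1}^2\leq \bigl(\operatorname{Lip}V_A\bigr)^2\Ee e_t^2+\frac{1}{4N},
\end{equation*}
by \cref{eq:gaussian-noise-moments}. Since $s_\ast$ is fixed, this yields $\Ee e_{t}^2\leq C/N$ for $t\leq s_\ast$. The initial error is $e_0=0$, because $q_{0,N}=q_N=Z_0^{(N)}$.

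\emph{Contractive phase.} For $t\geq s_\ast$, write $e_{t+1}=D_te_t+\zeta_{t+1}^{(N)}$, where $D_t$ is the difference quotient of $V_A$ between $Z_t^{(N)}$ and $q_{t,N}$. On $\{t<\tau_\ast^{(N)}\}$ both points lie in the interval $I_\ast$, so $\abs{D_t}\leq\kappa<1$ by \cref{eq:gaussian-stable-basin-derivative}. We apply \cref{lem:common-stopped-orbit-tracking}, started at $s_\ast$, with $\alpha_{t,N}=\kappa$, $\sigma_{t,N}=1$, and stopping time $\tau_N:=\tau_\ast^{(N)}\wedge\inf\{t:\abs{e_t}>\delta_\ast/2\}$. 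The amplification constant is then $K_N=1$. By \cref{eq:gaussian-deterministic-entrance}, any exit from $I_\ast$ forces $\abs{e_t}\geq\delta_\ast$, so $\tau_\ast^{(N)}\leq T_N$ implies that $\abs{e}$ exceeds $\delta_\ast/2$ at some time up to $T_N$. The second-moment bound \cref{eq:common-orbit-exit-bound} alone only gives the estimate $C T_N/(N\delta_\ast^2)$, which is not exponential.

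\emph{Exponential no-exit bound.} To get \cref{eq:gaussian-orbit-no-exit}, I would argue pathwise instead of in $L^2$. On the event that every noise increment satisfies $\abs{\zeta_{t+1}^{(N)}}\leq(1-\kappa)\delta_\ast/2$ for $t\leq T_N$, induction from $\abs{e_{s_\ast}}\leq\delta_\ast/2$ gives
\begin{equation*}
  \abs{e_{t+1}}\leq\kappa\abs{e_t}+(1-\kappa)\delta_\ast/2\leq\delta_\ast/2,
\end{equation*}
so the chain never leaves $I_\ast$. Hoeffding's bound \cref{eq:gaussian-noise-hoeffding} and a union bound over $t\leq T_N$ then give $CT_Ne^{-c_0N}$. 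This pathwise invariance is the key point, and it is where I expect the main care to be needed.

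\emph{Second moments up to $T_N$.} The contractive phase uses the killed bound \cref{eq:common-killed-orbit-tracking}. With $\alpha_{t,N}=\kappa$ and $\sigma_{t,N}=1$ it gives
\begin{equation*}
  \Ee\left[e_t^2\one_{\{\tau_N>t\}}\right]\leq \Ee e_{s_\ast}^2+\frac{C}{N(1-\kappa^2)}\leq\frac{C}{N},
\end{equation*}
uniformly in $t\leq T_N$. On $\{\tau_N\leq t\}$ we use the trivial bound $e_t^2\leq1$ and the exponential estimate just proved, so this part contributes at most $CT_Ne^{-c_0N}=o(1/N)$ because $T_N=O(\log N)$. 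Adding the two contributions gives \cref{eq:gaussian-orbit-concentration}.
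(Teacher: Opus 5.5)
Your proposal is correct and follows essentially the same route as the paper. Both arguments use a fixed-length initial phase controlled by the Lipschitz bound for $V_A$ and Hoeffding, and then the killed second-moment estimate with $\alpha_{t,N}=\kappa$ after time $s_\ast$. Both get the exponential no-exit bound from a pathwise small-noise induction $\abs{e_{t+1}}\leq\kappa\abs{e_t}+\abs{\zeta_{t+1}^{(N)}}$ using the margin $\delta_\ast$, and finish by splitting $\Ee e_t^2$ on the exit event. The only difference is that you add $\{\abs{e_t}>\delta_\ast/2\}$ to the stopping time, which is harmless but unnecessary.
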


See \cref{app:supercritical-localization-concentration} for the proofs.

We use this lemma with a horizon constant chosen before the cutoff offset.
Fix
\begin{equation}\label{eq:gaussian-uniform-log-horizon-constant}
  C_0>
  \frac{1}{2\abs{\log\mu_A}}.
\end{equation}
For every fixed $c\in\R$, the definitions of $t_N(q_0)$ and $n_N(c)$ give
\begin{equation*}
  \frac{n_N(c)}{\log N}
  \to
  \frac{1}{2\abs{\log\mu_A}}.
\end{equation*}
Hence, by \cref{eq:gaussian-uniform-log-horizon-constant},
\begin{equation}\label{eq:gaussian-uniform-log-horizon}
  0\leq n_N(c)\leq C_0\log N
\end{equation}
for all sufficiently large $N$. Thus the constants supplied by
\cref{lem:gaussian-orbit-concentration} for this choice of $C_0$ are
independent of $c$. Only the lower bound on $N$ may depend on the fixed value
of $c$.

The next lemma controls the synchronous coupling at time
$\pospart{n_N(c)-1}$, one step before $n_N(c)$. The expected separation of the
two radii is $O(\mu_A^cN^{-1/2})$. Applying
\cref{lem:gaussian-score-smoothing} at the next transition then gives a
total-variation error of order $\mu_A^c$.
\begin{lemma}
\label{lem:gaussian-synchronous-cutoff-scale}
  Let \cref{ass:gaussian-stable-selected} hold. Let $q_N\to q_0$ with
  $q_N>0$, let $\nu_{A,N}$ be the unique nonzero invariant law of
  $K_{A,N}$, and let
  $(Z_t^{(N)},\widetilde Z_t^{(N)})$ be the synchronous coupling driven by the
  same Gaussian vectors, with $Z_0^{(N)}=q_N$ and
  $\widetilde Z_0^{(N)}\sim\nu_{A,N}$. Then there is a constant $C<\infty$
  such that, for every $c\geq0$,
  \begin{equation}\label{eq:gaussian-synchronous-cutoff-scale}
    \limsup_{N\to\infty}
    \sqrt N\,
    \Ee\abs{
      Z_{\pospart{n_N(c)-1}}^{(N)}
      -
      \widetilde Z_{\pospart{n_N(c)-1}}^{(N)}
    }
    \leq
    C\mu_A^c .
  \end{equation}
\end{lemma}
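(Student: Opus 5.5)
We bound the coupled separation by a triangle inequality through the deterministic orbit and the fixed point. Write $m:=\pospart{n_N(c)-1}$ and use
\begin{equation*}
  \Ee\abs{Z_m^{(N)}-\widetilde Z_m^{(N)}}
  \leq
  \Ee\abs{Z_m^{(N)}-q_{m,N}}
  +\abs{q_{m,N}-q_\ast}
  +\Ee\abs{\widetilde Z_m^{(N)}-q_\ast}.
\end{equation*}
The middle term is the deterministic bias: by \cref{lem:gaussian-mean-field-concavity}, locally uniformly in the initial point, $q_{m,N}-q_\ast=C(q_N)\mu_A^m+o(\mu_A^m)$. Since $C$ is continuous at $q_0$ and $\mu_A^{m}\leq\mu_A^{t_N(q_0)+c-2}=\mu_A^{c-2}N^{-1/2}/\abs{C(q_0)}$, this term contributes at most $C'\mu_A^cN^{-1/2}$. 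The third term equals $\int\abs{q-q_\ast}\,\nu_{A,N}(\mathrm{d}q)$ by stationarity of $\widetilde Z$. By Cauchy--Schwarz and \cref{prop:gaussian-stationary-concentration} it is $O(N^{-1/2})$.

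These crude bounds do not yet carry the factor $\mu_A^c$ on the two fluctuation terms. Estimating each fluctuation separately only gives $O(N^{-1/2})$, because both $Z$ and $\widetilde Z$ have $N^{-1/2}$ fluctuations. The contraction must therefore come from the synchronous coupling itself, so I would restructure the argument as follows. For fixed $g$, the map $q\mapsto F_{A,N}(q,g)$ is monotone, so the coupled pair stays ordered step by step (with random order from the initial condition). The difference then satisfies
\begin{equation*}
  D_{t+1}:=Z_{t+1}-\widetilde Z_{t+1}=\Bigl[\frac1N\sum_i\partial_q\tanh^2(A\sqrt{\xi_t}G_{t+1,i})\Bigr]D_t
\end{equation*}
for an intermediate point $\xi_t$. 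While both radii stay in $I_\ast$, the conditional mean of this random factor is $V_A'(\xi_t)\approx\mu_A$ up to $O(\abs{\xi_t-q_\ast})$. Its fluctuation is also controlled: it is an average of $N$ bounded terms, so it deviates from its mean by $O(N^{-1/2})$.

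The plan is then as follows. Run $s_\ast+s_0$ steps so that both chains lie in a small neighbourhood of $q_\ast$, where the expected multiplier is at most $\mu_A(1+\epsilon)$; this uses \cref{lem:gaussian-orbit-concentration}, \cref{prop:gaussian-stable-localization}, and the exit bound \eqref{eq:gaussian-orbit-no-exit} applied to both chains, with the exit events costing $O(T_Ne^{-c_0N})$ since $\abs D\leq1$. At that time, $\Ee\abs{D}\leq\abs{q_{s,N}-q_\ast}+O(N^{-1/2})$. Iterating in $L^1$ through conditional expectations gives
\begin{equation*}
  \Ee\abs{D_m}\lesssim\bigl(\mu_A(1+\epsilon)\bigr)^{m-s}\bigl(\abs{q_{s,N}-q_\ast}+N^{-1/2}\bigr).
\end{equation*}
The deterministic part reproduces $C(q_0)\mu_A^m\approx\mu_A^{c}N^{-1/2}$ only if $\epsilon$ is not lost exponentially, and the $N^{-1/2}$ part only decays if $m-s$ is large. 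Both are acceptable in the limsup as $N\to\infty$: $m-s\to\infty$, so $(\mu_A(1+\epsilon))^{m-s}N^{-1/2}\to0$ relative to $N^{-1/2}$, which kills the stationary-noise term entirely.

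The main obstacle is obtaining the sharp constant $\mu_A^c$ rather than $(\mu_A(1+\epsilon))^{t_N}$, which blows up relative to $\mu_A^{t_N}$. I would handle this with a two-phase argument. First, follow $Z$ along its deterministic orbit: $Z_t-q_{t,N}=O(N^{-1/2})$ in $L^2$, and the product of $V_A'(q_{u,N})$ over the orbit gives exactly $C(q_0)\mu_A^m$ via the Koenigs product. Second, compare the random multipliers with $V_A'(q_{u,N})$. The errors are $O(\abs{q_{u,N}-q_\ast}+\abs{Z_u-q_{u,N}}+\abs{\widetilde Z_u-q_\ast})$ in mean plus centred noise of size $N^{-1/2}$. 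The deterministic part is summable. The remaining $O(N^{-1/2})$ per-step errors, accumulated over $O(\log N)$ steps and multiplied by $D$, contribute $o(N^{-1/2})$. Controlling these mixed second-order terms, by Cauchy--Schwarz with the $L^2$ bounds from \cref{lem:gaussian-orbit-concentration} and \cref{prop:gaussian-stationary-concentration}, is where most of the work lies.
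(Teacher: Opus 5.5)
Your final argument works, but it is organised differently from the paper's proof.

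\textbf{The paper's route.} The paper never follows the coupled distance through the transient. It sets $s_N=n_N(c)-1-\ell_N$ with $\ell_N=\lfloor b\log\log N\rfloor$. At time $s_N$ it applies exactly the crude three-term triangle inequality from your first paragraph, which costs $CN^{-1/2}(\mu_A^{c-\ell_N}+1)$. It then contracts only during the last $\ell_N$ steps. There both chains lie in $J_N=[q_\ast-(\log N)^{-2},q_\ast+(\log N)^{-2}]$ off an event of probability $O((\log N)^4/N)$, and the secant slope of $V_A$ on $J_N$ is at most $a_N=\mu_A+C(\log N)^{-2}$. Since $a_N^{\ell_N}=\mu_A^{\ell_N}(1+o(1))$, the bias term is turned back into $\mu_A^c$, while the fluctuation term decays like $\mu_A^{\ell_N}\to0$.

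\textbf{Your route.} You iterate over the whole horizon $m=\pospart{n_N(c)-1}$. Because the coupled pair stays ordered, $\Ee[\abs{D_{u+1}}\mid\mathcal F_u]=\abs{V_A(Z_u)-V_A(\widetilde Z_u)}$ holds exactly, so no centred noise needs controlling. Your pathwise formula with a single intermediate point $\xi_t$ is not literally right, since the intermediate points depend on $G_{t+1,i}$, but you do not need it. Write $V_A(q)=\Ee\,\phi(A^2qG^2)$ with $\phi(s)=\tanh^2\sqrt s$; the derivatives of $\phi$ are bounded on $[0,\infty)$, so $V_A''$ is bounded on $[0,1]$. Hence the secant slope is at most $\mu_A+C\abs{q_{u,N}-q_\ast}+CR_u$, where $R_u:=\abs{Z_u-q_{u,N}}+\abs{\widetilde Z_u-q_\ast}$. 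Summability of $\abs{q_{u,N}-q_\ast}$ keeps the deterministic products over $[u,m)$ below $K\mu_A^{m-u}$, uniformly in $c$ and $N$. Cauchy--Schwarz with \cref{lem:gaussian-orbit-concentration,prop:gaussian-stationary-concentration} gives $\Ee[R_u\abs{D_u}]\leq CN^{-1/2}(N^{-1/2}+\abs{q_{u,N}-q_\ast})$. The discounted corrections therefore total $O(N^{-1}+m\mu_A^mN^{-1/2})=O(N^{-1}\log N)=o(N^{-1/2})$. It is essential here that $(\Ee D_u^2)^{1/2}$ is itself $O(N^{-1/2})$ for $u$ near $m$. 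The crude bound $\abs{D_u}\leq1$ would leave an $O(N^{-1/2})$ term with no factor $\mu_A^c$.

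\textbf{One inaccuracy.} The product $\prod_{u<m}V_A'(q_{u,N})=(V_A^m)'(q_N)$ is of order $\mu_A^m$, but it is not $C(q_0)\mu_A^m$. Only the order matters here. It is cleaner to compare with the secant slopes $(q_{u+1,N}-q_\ast)/(q_{u,N}-q_\ast)$, which telescope to $\abs{q_{m,N}-q_\ast}/\abs{q_N-q_\ast}$. This even gives $\sqrt N\,\Ee\abs{D_m}\leq\sqrt N\abs{q_{m,N}-q_\ast}(1+o(1))+o(1)$, i.e.\ essentially the deterministic bias itself.

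\textbf{What each route buys.} Yours gives that sharper constant and avoids the block decomposition and good-event bookkeeping. The paper's is more economical in its inputs. It uses only the local contraction at $q_\ast$ and crude bounds elsewhere. It needs no global Lipschitz control of $V_A'$ and no mixed second-moment estimates through the transient, where $V_A'$ can exceed one.
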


See \cref{app:supercritical-cutoff-contraction} for the proof.

\subsection{Proof of the supercritical cutoff theorem}
\label{subsec:gaussian-cutoff-proof}

The lower bound separates the evolving and stationary laws before the
deterministic cutoff scale. The coupling estimate gives the upper bound after
that scale.

\begin{proof}[Proof of \cref{thm:gaussian-process-cutoff}]

  \emph{Step 1: Setup.}
  The theorem assumptions give \cref{ass:gaussian-stable-selected}. Moreover,
  $q_N>0$ for all sufficiently large $N$ because $q_N\to q_0>0$.
  Since $A>1$ and $A_c(N)\downarrow1$, the existence of a nonzero invariant
  law for all sufficiently large $N$ follows from
  \cref{prop:gaussian-nonzero-invariant-existence}. Uniqueness follows from
  \cref{prop:gaussian-unique-nonzero-invariant}. We denote this law by
  $\nu_{A,N}$.

  \emph{Step 2: The lower bound.}
  Fix $c>0$ and put $t_N^-:=n_N(-c)$, with $n_N$ defined in
  \cref{eq:gaussian-integer-cutoff-time}. By
  \cref{eq:gaussian-uniform-log-horizon}, this time is positive and bounded by
  $C_0\log N$ for all large $N$, with the same $C_0$ for every fixed $c$. Let
  $
    d_N:=|{q_{t_N^-,N}-q_\ast}|
  $.
  Since $q_0\neq q_\ast$, the orbit $V_A^t(q_0)$ never crosses $q_\ast$.
  Hence $d_N>0$ for all large $N$. We separate the dynamic law and the
  invariant law by the half-line
  \begin{equation}\label{eq:gaussian-lower-separating-set}
    H_N
    :=
    \left\{
      q\in[0,1]:
      \operatorname{sgn}(q_{t_N^-,N}-q_\ast)(q-q_\ast)
      \geq \frac12 d_N
    \right\}.
  \end{equation}
  Write
  $\varepsilon_N:=\operatorname{sgn}(q_{t_N^-,N}-q_\ast)\in\{-1,1\}$.
  Then
  $
    \varepsilon_N(q_{t_N^-,N}-q_\ast)=d_N
  $.
  If $Z_{t_N^-}^{(N)}\notin H_N$, then, by the definition of $H_N$,
  \begin{equation*}
    \varepsilon_N(Z_{t_N^-}^{(N)}-q_\ast)<\frac12d_N .
  \end{equation*}
  Subtracting this inequality from the preceding identity gives
  \begin{equation*}
    \varepsilon_N(q_{t_N^-,N}-Z_{t_N^-}^{(N)})
    =
    \varepsilon_N(q_{t_N^-,N}-q_\ast)
    -
    \varepsilon_N(Z_{t_N^-}^{(N)}-q_\ast)
    >
    \frac12d_N .
  \end{equation*}
  In particular,
  \begin{equation*}
    \{Z_{t_N^-}^{(N)}\notin H_N\}
    \subseteq
    \left\{
      \abs{Z_{t_N^-}^{(N)}-q_{t_N^-,N}}\geq \frac12 d_N
    \right\}.
  \end{equation*}
  Chebyshev's inequality and \cref{lem:gaussian-orbit-concentration} give
  \begin{align}
    K_{A,N}^{t_N^-}(q_N,H_N^c)
    &=
    \Pp_{q_N}(Z_{t_N^-}^{(N)}\notin H_N)
    \notag\\
    &\leq
    \Pp_{q_N}\left(
      \abs{Z_{t_N^-}^{(N)}-q_{t_N^-,N}}\geq \frac12 d_N
    \right)
    \notag\\
    &\leq
    \frac{4}{d_N^2}
    \Ee\left[
      \left(Z_{t_N^-}^{(N)}-q_{t_N^-,N}\right)^2
    \right]
    \leq
    \frac{C}{N d_N^2}.
    \label{eq:gaussian-lower-dynamic-error}
  \end{align}
  In the last step we used \cref{eq:gaussian-orbit-concentration}, since
  $t_N^-\leq C_0\log N$.
  Similarly, if $q\in H_N$, then $\abs{q-q_\ast}\geq d_N/2$. Chebyshev's
  inequality and \cref{prop:gaussian-stationary-concentration} therefore give
  \begin{equation}\label{eq:gaussian-lower-stationary-error}
    \nu_{A,N}(H_N)
    \leq
    \frac{4}{d_N^2}
    \int_0^1(q-q_\ast)^2\,\nu_{A,N}(\mathrm{d}q)
    \leq
    \frac{C}{N d_N^2}.
  \end{equation}

  Since total variation is the supremum over measurable sets,
  \begin{align*}
    \norm{
      K_{A,N}^{t_N^-}(q_N,\cdot)-\nu_{A,N}
    }_{\TV}
    &\geq
    K_{A,N}^{t_N^-}(q_N,H_N)-\nu_{A,N}(H_N)  \\
    &\geq
    1
    -
    K_{A,N}^{t_N^-}(q_N,H_N^c)
    -
    \nu_{A,N}(H_N).
  \end{align*}
  The locally
  uniform asymptotic \cref{eq:gaussian-deterministic-asymptotic}, applied at
  $t_N^-=n_N(-c)$, gives
  \begin{equation*}
    d_N
    =
    \abs{C(q_N)}\mu_A^{t_N^-}(1+o(1)).
  \end{equation*}
  Moreover, $C(q_N)\to C(q_0)\neq0$ and the definition of the cutoff center
  gives
  $\sqrt N\,\abs{C(q_0)}\mu_A^{t_N(q_0)}=1$. Since
  $t_N^--(t_N(q_0)-c)\in[-1,0]$, the integer-time rounding is uniformly
  bounded. Hence there is $a_0>0$, independent of $c$, such that
  \begin{equation*}
    \liminf_{N\to\infty}\sqrt N\,d_N
    \geq
    a_0\mu_A^{-c}.
  \end{equation*}
  Combining this bound with
  \cref{eq:gaussian-lower-dynamic-error,eq:gaussian-lower-stationary-error}
  yields
  \begin{equation*}
    \liminf_{N\to\infty}
    \norm{
      K_{A,N}^{t_N^-}(q_N,\cdot)-\nu_{A,N}
    }_{\TV}
    \geq
    1-C\mu_A^{2c}.
  \end{equation*}
  Here $C$ is independent of $c$.
  Sending $c\to\infty$ proves \cref{eq:gaussian-process-lower}.

  \emph{Step 3: The upper bound.}
  Fix $c\geq0$ and put
  $t_N^+:=\pospart{n_N(c)-1}$. Let
  $(Z_t^{(N)},\widetilde Z_t^{(N)})$ be the synchronous coupling from
  \cref{lem:gaussian-synchronous-cutoff-scale}, with
  $Z_0^{(N)}=q_N$ and $\widetilde Z_0^{(N)}\sim\nu_{A,N}$. By invariance,
  $\widetilde Z_{t_N^+}^{(N)}\sim\nu_{A,N}$. The Markov property and convexity
  of total variation imply the standard coupling inequality
  \begin{align}
    \norm{
      K_{A,N}^{t_N^++1}(q_N,\cdot)-\nu_{A,N}
    }_{\TV}
    &=
    \norm{
      \Ee\left[
        K_{A,N}(Z_{t_N^+}^{(N)},\cdot)
        -
        K_{A,N}(\widetilde Z_{t_N^+}^{(N)},\cdot)
      \right]
    }_{\TV}
    \notag\\
    &\leq
    \Ee\,
    \norm{
      K_{A,N}(Z_{t_N^+}^{(N)},\cdot)
      -
      K_{A,N}(\widetilde Z_{t_N^+}^{(N)},\cdot)
    }_{\TV}.
    \label{eq:gaussian-upper-smoothing-reduction}
  \end{align}
  Set
  \begin{equation*}
    E_N
    :=
    \left\{
      Z_{t_N^+}^{(N)}\in I_\ast,\,
      \widetilde Z_{t_N^+}^{(N)}\in I_\ast
    \right\}.
  \end{equation*}
  We claim that $\Pp(E_N^c)=o_N(1)$. Indeed,
  \cref{eq:gaussian-uniform-log-horizon} gives
  $t_N^+\leq C_0\log N$ for all large $N$, with $C_0$ independent of $c$, and
  $t_N^+\geq s_\ast$. Thus
  \cref{lem:gaussian-orbit-concentration} gives
  \begin{equation*}
    \Pp(Z_{t_N^+}^{(N)}\notin I_\ast)
    \leq
    C(\log N)e^{-c_0N}
    =
    o_N(1),
  \end{equation*}
  while stationarity of $\widetilde Z^{(N)}$ and
  \cref{prop:gaussian-stable-localization} give
  \begin{equation*}
    \Pp(\widetilde Z_{t_N^+}^{(N)}\notin I_\ast)
    =
    \nu_{A,N}(I_\ast^c)
    \leq
    \frac{C}{N}.
  \end{equation*}
  These two bounds give $\Pp(E_N^c)=o_N(1)$.

  On $E_N$, the one-step total variation estimate
  \cref{lem:gaussian-score-smoothing}, applied with $I=I_\ast$, gives
  \begin{equation*}
    \norm{
      K_{A,N}(Z_{t_N^+}^{(N)},\cdot)
      -
      K_{A,N}(\widetilde Z_{t_N^+}^{(N)},\cdot)
    }_{\TV}
    \leq
    C\sqrt N\,
    \abs{Z_{t_N^+}^{(N)}-\widetilde Z_{t_N^+}^{(N)}}.
  \end{equation*}
  On $E_N^c$ we only use the trivial bound by one. Hence
  \cref{eq:gaussian-upper-smoothing-reduction} yields
  \begin{equation*}
    \norm{
      K_{A,N}^{t_N^++1}(q_N,\cdot)-\nu_{A,N}
    }_{\TV}
    \leq
    C\sqrt N\,
    \Ee\abs{
      Z_{t_N^+}^{(N)}
      -
      \widetilde Z_{t_N^+}^{(N)}
    }
    +
    \Pp(E_N^c)
  \end{equation*}
  For all sufficiently large $N$, one has $n_N(c)\geq1$ and hence
  $t_N^++1=n_N(c)$. Since $t_N^+=\pospart{n_N(c)-1}$,
  \cref{lem:gaussian-synchronous-cutoff-scale} gives
  \begin{equation*}
    \limsup_{N\to\infty}
    \norm{
      K_{A,N}^{n_N(c)}(q_N,\cdot)-\nu_{A,N}
    }_{\TV}
    \leq
    C\mu_A^c.
  \end{equation*}
  Here $C$ is independent of $c$.
  Sending $c\to\infty$ proves
  \cref{eq:gaussian-process-upper}.
\end{proof}

\begin{proof}[Proof of \cref{cor:gaussian-vector-cutoff}]
  The lower bound follows from \cref{prop:gaussian-tv-reduction}, the left inequality in
  \cref{eq:gaussian-tv-bracket}, and \cref{thm:gaussian-process-cutoff}. The
  right inequality in \cref{eq:gaussian-tv-bracket} gives
  \begin{equation*}
    \norm{P_{A,N}^{n_N(c)}(x_N,\cdot)-\pi_{A,N}}_{\TV}
    \leq
    \norm{K_{A,N}^{\pospart{n_N(c)-1}}(q_N,\cdot)-\nu_{A,N}}_{\TV}.
  \end{equation*}
  For all sufficiently large $N$, one has
  $\pospart{n_N(c)-1}=n_N(c-1)$.
  Thus \cref{eq:gaussian-process-upper} proves the upper bound after sending
  $c\to\infty$. By \cref{cor:gaussian-unique-vector-invariant},
  $\pi_{A,N}=\nu_{A,N}J_{A,N}$ is the unique $P_{A,N}$-invariant probability
  that assigns no mass to $\{0\}$.
\end{proof}

\section{The supercritical stationary law at fixed \texorpdfstring{$N$}{N}}
\label{sec:nd-supercritical-stationary}

Fix $N$ and assume $A>A_c(N)$. We study the invariant law $\pi_{A,N}$ with
$\pi_{A,N}(\{0\})=0$. We first derive its stationary
density equation and rewrite it in log-polar coordinates. Near the origin, the
nonlinear update is a quadratic perturbation of its linearization. Gaussian
isotropy refreshes the angular variable in one step, so exponential tilting at
$\beta_{A,N}$ reduces the linearized equation to a scalar renewal equation. We
then show that the nonlinear correction gives a directly Riemann integrable
forcing and apply the renewal theorem to prove the directional and radial
power laws in \cref{thm:nd-power-singularity:intro}. This is the
implicit-renewal method for stationary random recursions \cite{kesten1973},
\cite{goldie1991}, and \cite{buraczewski-damek-mikosch2016}.

We use the Gaussian matrix $\mathsf W_A^{(N)}$ from
\cref{eq:gaussian-weights}, without a time index. The chain $X^{(N)}$ and its
transition kernel $P_{A,N}$ are given in
\cref{eq:unrounded-chain,prop:gaussian-tv-reduction}. Below, $W$ denotes an
independent copy of $\mathsf W_A^{(N)}$. We use $\ell_{A,N}$ from
\cref{eq:gaussian-ell}. Set
\begin{equation*}
  \Kcal_N:=[-1,1]^N .
\end{equation*}
The origin is absorbing, so $\delta_0$ is always invariant. We write
$\pi_{A,N}$ for the nonzero invariant law, which exists under the present
assumption by \cref{prop:gaussian-nonzero-invariant-existence}. Its uniqueness
follows from \cref{cor:gaussian-unique-vector-invariant}.

\subsection{The stationary density equation}
\label{subsec:nd-stationary-density}

For $x\neq0$, the coordinatewise change of variables $z_i=\arctanh(y_i)$
shows that one Gaussian step has a product density. We record the resulting
stationary equation.
\begin{proposition}
\label{prop:nd-stationary-equation}
  Let $\pi_{A,N}$ be an invariant probability for $P_{A,N}$ on $\Kcal_N$ with
  $\pi_{A,N}(\{0\})=0$. Then, for every bounded measurable
  $\varphi:\Kcal_N\to\R$,
  \begin{equation}\label{eq:nd-weak-stationary-equation}
    \int_{\Kcal_N}\varphi(y)\,\pi_{A,N}(\mathrm{d}y)
    =
    \int_{\Kcal_N}
    \int_{\R^N}
    \varphi\bigl(\tanh(A\norm{x}_2 g/\sqrt N)\bigr)\,
    \mathcal G_N(\mathrm{d}g)\,\pi_{A,N}(\mathrm{d}x).
  \end{equation}
  Moreover, $\pi_{A,N}(\partial[-1,1]^N)=0$ and $\pi_{A,N}$ is absolutely
  continuous on $(-1,1)^N$. If $f_{A,N}$ denotes its density, then, for
  $x\neq0$ and $y\in(-1,1)^N$,
  \begin{equation}\label{eq:nd-density-kernel}
    \kappa_{A,N}(x,y)
    =
    \prod_{i=1}^N
    \frac{\sqrt N}{A\norm{x}_2\sqrt{2\pi}}
    \frac{1}{1-y_i^2}
    \exp\left(
      -\frac{N\,\arctanh(y_i)^2}
      {2A^2\norm{x}_2^2}
    \right)
  \end{equation}
  is the transition density from $x$ to $y$, and
  \begin{equation}\label{eq:nd-density-equation}
    f_{A,N}(y)
    =
    \int_{\Kcal_N\setminus\{0\}}
    \kappa_{A,N}(x,y)\,\pi_{A,N}(\mathrm{d}x).
  \end{equation}
  If $\pi_{A,N}(\mathrm{d}x)=f_{A,N}(x)\,\mathrm{d}x$, this becomes
  \begin{equation}\label{eq:nd-density-fixed-point}
    f_{A,N}(y)
    =
    \int_{\Kcal_N\setminus\{0\}}
    \kappa_{A,N}(x,y)f_{A,N}(x)\,\mathrm{d}x .
  \end{equation}
\end{proposition}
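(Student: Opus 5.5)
The plan is to read off everything from the factorization in \cref{prop:gaussian-tv-reduction}. First, the weak stationary equation \cref{eq:nd-weak-stationary-equation} is just $\pi_{A,N}=\pi_{A,N}P_{A,N}$ written against a test function $\varphi$. By \cref{eq:gaussian-vector-kernel-factorization}, $P_{A,N}\varphi(x)$ is the integral of $\varphi$ against $J_{A,N}(r_N(x),\cdot)$. Since $A\sqrt{r_N(x)}=A\norm{x}_2/\sqrt N$, this is exactly the inner Gaussian integral, and integrating against $\pi_{A,N}$ gives the identity.

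Next comes the boundary and absolute continuity. Take $\varphi=\one_B$ with $B$ a Lebesgue-null Borel subset of $[-1,1]^N$. For $x\neq0$, the vector $\tanh(A\norm{x}_2g/\sqrt N)$ with $g$ standard Gaussian lies in $(-1,1)^N$ almost surely. It is the image of a nondegenerate Gaussian under the coordinatewise diffeomorphism $\tanh:\R^N\to(-1,1)^N$. Its law therefore has a density, obtained from the change of variables $z_i=\arctanh(y_i)$ with Jacobian $\prod_i(1-y_i^2)^{-1}$ and the Gaussian density of variance $A^2\norm{x}_2^2/N$ in each coordinate. This density is precisely $\kappa_{A,N}(x,y)$ in \cref{eq:nd-density-kernel}. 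Consequently $P_{A,N}(x,B)=0$ for every $x\neq0$. Since $\pi_{A,N}(\{0\})=0$, the weak equation gives $\pi_{A,N}(B)=0$. Applied to $B=\partial[-1,1]^N$, and more generally to any null set, this yields $\pi_{A,N}(\partial[-1,1]^N)=0$ and $\pi_{A,N}\ll\mathrm{d}y$ on $(-1,1)^N$.

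For the density identity, take $\varphi=\one_B$ with $B\subseteq(-1,1)^N$ Borel. Write $P_{A,N}(x,B)=\int_B\kappa_{A,N}(x,y)\,\mathrm{d}y$ for $x\neq0$, and use Tonelli, which is valid since $\kappa_{A,N}\geq0$ is jointly measurable on $(\Kcal_N\setminus\{0\})\times(-1,1)^N$. This gives
\[
\pi_{A,N}(B)=\int_B\int_{\Kcal_N\setminus\{0\}}\kappa_{A,N}(x,y)\,\pi_{A,N}(\mathrm{d}x)\,\mathrm{d}y .
\]
Hence the inner integral is a version of the Radon--Nikodym density $f_{A,N}$, which is \cref{eq:nd-density-equation}. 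We may take this version as the definition of $f_{A,N}$ everywhere on $(-1,1)^N$, so that the identity holds pointwise and not merely almost everywhere. Finally, substituting $\pi_{A,N}(\mathrm{d}x)=f_{A,N}(x)\,\mathrm{d}x$ gives \cref{eq:nd-density-fixed-point}; the point $x=0$ is Lebesgue-null, so excluding it changes nothing.

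There is no real obstacle here. The only point requiring care is stating in which sense \cref{eq:nd-density-equation} holds: pointwise for the chosen version, or almost everywhere for an arbitrary density. The measurability needed for Tonelli is immediate from the explicit formula for $\kappa_{A,N}$.
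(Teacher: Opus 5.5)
Your proposal is correct and follows essentially the same route as the paper: the weak equation is the invariance relation $\pi_{A,N}=\pi_{A,N}P_{A,N}$, the product density $\kappa_{A,N}$ comes from the coordinatewise change of variables $z_i=\arctanh(y_i)$, and integrating this density against $\pi_{A,N}$, using $\pi_{A,N}(\{0\})=0$, gives the boundary statement, absolute continuity, and the density identity. Your extra care about choosing a version of $f_{A,N}$ is fine, with one small caveat. At $y=0$ the kernel $\kappa_{A,N}(x,0)$ is a constant multiple of $\norm{x}_2^{-N}$ with no Gaussian damping, so the pointwise version may equal $+\infty$ there, and the identity then holds pointwise only in the extended-real sense.
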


\begin{proof}
  \emph{Step 1: The transition density from $x\neq0$.}
  Fix $x\neq0$. By \cref{eq:gaussian-weights}, the rows of
  $\mathsf W_A^{(N)}$ are independent, and the coordinates of
  $\mathsf W_A^{(N)}x$ are independent centered Gaussians with variance
  $A^2\norm{x}_2^2/N$. The change of variables
  $y_i=\tanh z_i$, or $z_i=\arctanh(y_i)$, gives the product density
  \cref{eq:nd-density-kernel}. Moreover, $\tanh z\in(-1,1)$ for every finite
  $z$, so this transition law gives no mass to $\partial[-1,1]^N$.

  \emph{Step 2: The density of the invariant law.}
  The weak equation \cref{eq:nd-weak-stationary-equation} is the invariance
  relation $\pi_{A,N}=\pi_{A,N}P_{A,N}$. Since $\pi_{A,N}(\{0\})=0$, the
  boundary property from Step~1 and invariance give
  $\pi_{A,N}(\partial[-1,1]^N)=0$. Integrating the transition density
  \cref{eq:nd-density-kernel} against $\pi_{A,N}$ gives
  \cref{eq:nd-density-equation} and proves absolute continuity on
  $(-1,1)^N$. Substituting
  $\pi_{A,N}(\mathrm{d}x)=f_{A,N}(x)\,\mathrm{d}x$ gives
  \cref{eq:nd-density-fixed-point}.
\end{proof}

\subsection{Log-polar coordinates}
\label{subsec:nd-log-polar-coordinates}

To analyze the invariant law near $0$, we use log-polar coordinates. Let
$\mathbb S^{N-1}$ be the Euclidean unit sphere and write
$\hat z=z/\norm{z}_2$ for $z\neq0$. For $x\neq0$, set
\begin{equation*}
  r=\norm{x}_2,\qquad
  \theta=\frac{x}{\norm{x}_2},\qquad
  y=-\log r .
\end{equation*}
Since $r=e^{-y}$, the limit $r\downarrow0$ corresponds to $y\to\infty$, and
the factor $r^\beta$ becomes $e^{-\beta y}$.

Let $\widetilde{\pi}_{A,N}$ be the image of $\pi_{A,N}$ under
$x\mapsto(Y,\Theta)=(-\log\norm{x}_2,x/\norm{x}_2)$. Since
$0<\norm{x}_2\leq\sqrt N$ on $\Kcal_N\setminus\{0\}$, the variable $Y$ takes
values in $[-\frac12\log N,\infty)$. For $r>0$, $\theta\in\mathbb S^{N-1}$,
and $W\theta\neq0$, define
\begin{align}
  \eta(r,\theta,W)
  &:=
  \log
  \frac{r\norm{W\theta}_2}{\norm{\tanh(rW\theta)}_2},
  \label{eq:nd-eta-definition}
  \\
  \Theta_+(r,\theta,W)
  &:=
  \frac{\tanh(rW\theta)}
  {\norm{\tanh(rW\theta)}_2},
  \qquad
  \Theta_0(\theta,W):=\widehat{W\theta}.
  \label{eq:nd-angular-definitions}
\end{align}
Here $\eta$ is the correction to the logarithmic radius caused by the
nonlinearity, $\Theta_+$ is the direction after the nonlinear update, and
$\Theta_0$ is the direction after the corresponding linear update.
Since $W\theta\neq0$ almost surely, the stationary equation \cref{eq:nd-weak-stationary-equation} becomes in these coordinates
\begin{equation}\label{eq:nd-log-polar-stationary}
  \begin{split}
  \int \varphi(y,\theta)\,\widetilde{\pi}_{A,N}(\mathrm{d}y,\mathrm{d}\theta)
  =
  \int
  \Ee\Bigl[
  \varphi\Bigl(
    y-\log\norm{W\theta}_2+\eta(e^{-y},\theta,W),\,
    \Theta_+(e^{-y},\theta,W)
  \Bigr)
  \Bigr]\,
  \widetilde{\pi}_{A,N}(\mathrm{d}y,\mathrm{d}\theta)
  \end{split}
\end{equation}
for every bounded measurable $\varphi$.

\subsection{The linear renewal problem}
\label{subsec:nd-linear-renewal}

In \cref{eq:nd-log-polar-stationary}, set $\eta=0$ and replace $\Theta_+$ by
$\Theta_0$. This gives the linearized chain
\begin{equation}\label{eq:nd-linear-markov-additive}
  (Y,\Theta)
  \mapsto
  \left(
    Y+\Delta,\Theta_0
  \right),
  \qquad
  \Delta=-\log\norm{\mathsf W_A^{(N)}\Theta}_2 .
\end{equation}
For a general matrix ensemble, $\Theta$ is a Markov chain and $\Delta$ is an
additive functional of it \cite{bougerol-lacroix1985}. Renewal theory for
this chain is developed in \cite{kesten1973} and further discussed in
\cite{buraczewski-damek-mikosch2016}. For Gaussian $W$, the law of $W\Theta$
does not depend on $\Theta$. The angle is therefore refreshed at each step.

Let $\bar\sigma_N$ denote normalized surface measure on $\mathbb S^{N-1}$. We
equip $C(\mathbb S^{N-1})$ with the supremum norm. Define the linear log-polar
transition kernel by
\begin{equation*}
  P_{A,N}^{\mathrm{lin}}(\theta,\cdot,\cdot)
  :=
  \operatorname{Law}\left(
    -\log\norm{\mathsf W_A^{(N)}\theta}_2,\,
    \widehat{\mathsf W_A^{(N)}\theta}
  \right).
\end{equation*}
For $0\leq\beta<N$, define the transfer operator on
$C(\mathbb S^{N-1})$ by
\begin{equation}\label{eq:nd-transfer-operator}
  \Lcal_{A,N,\beta}\varphi(\theta)
  :=
  \int e^{\beta z}\varphi(\theta')\,
  P_{A,N}^{\mathrm{lin}}(\theta,\mathrm{d}z,\mathrm{d}\theta')
  =
  \Ee\left[
    \norm{\mathsf W_A^{(N)}\theta}_2^{-\beta}
    \varphi\bigl(\widehat{\mathsf W_A^{(N)}\theta}\bigr)
  \right].
\end{equation}
The Gaussian transfer operator can be computed explicitly.
\begin{lemma}\label{lem:nd-gaussian-transfer}
  For every $0\leq\beta<N$, the operator $\Lcal_{A,N,\beta}$ is bounded on
  $C(\mathbb S^{N-1})$ and satisfies
  \begin{equation}\label{eq:nd-transfer-rank-one}
    \Lcal_{A,N,\beta}\varphi(\theta)
    =
    \mathcal M_{A,N}(\beta)
    \int_{\mathbb S^{N-1}}\varphi\,\mathrm{d}\bar\sigma_N,
    \qquad
    \theta\in\mathbb S^{N-1},
  \end{equation}
  where
  \begin{equation}\label{eq:nd-gaussian-negative-moment}
    \mathcal M_{A,N}(\beta)
    :=
    \Ee\norm{\mathsf W_A^{(N)}\theta}_2^{-\beta}
    =
    \left(\frac{\sqrt N}{A}\right)^\beta
    2^{-\beta/2}
    \frac{\Gamma((N-\beta)/2)}{\Gamma(N/2)}.
  \end{equation}
  Consequently, its spectral radius $r_{A,N}(\beta)$ is given by
  \begin{equation}\label{eq:nd-pressure-scaling}
    r_{A,N}(\beta)
    =
    \mathcal M_{A,N}(\beta)
    =
    A^{-\beta}\mathcal M_{1,N}(\beta).
  \end{equation}
\end{lemma}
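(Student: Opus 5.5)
The key input is Gaussian rotational invariance. For each fixed unit vector $\theta$, the vector $\mathsf W_A^{(N)}\theta$ has law $\mathcal N(0,(A^2/N)I_N)$, which does not depend on $\theta$. We can therefore write $\mathsf W_A^{(N)}\theta\stackrel{d}{=}(A/\sqrt N)G$ with $G\sim\mathcal G_N$. For a standard Gaussian vector, the norm $\norm{G}_2=\chi_N$ and the direction $\widehat G$ are independent, and $\widehat G$ is distributed according to $\bar\sigma_N$. This is the polar decomposition of $\mathcal G_N$: the density is radial, so in polar coordinates it factors into a radial part times surface measure.

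Given these facts, the expectation in \cref{eq:nd-transfer-operator} factors as
\begin{equation*}
  \Lcal_{A,N,\beta}\varphi(\theta)
  =
  \left(\frac{\sqrt N}{A}\right)^{\beta}
  \Ee\chi_N^{-\beta}\,
  \Ee\varphi(\widehat G)
  =
  \mathcal M_{A,N}(\beta)
  \int_{\mathbb S^{N-1}}\varphi\,\mathrm d\bar\sigma_N .
\end{equation*}
This is \cref{eq:nd-transfer-rank-one}, and it holds for every bounded measurable $\varphi$, in particular for every $\varphi\in C(\mathbb S^{N-1})$.

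The moment formula \cref{eq:nd-gaussian-negative-moment} follows from the Gamma integral. Since $\chi_N^2\sim\mathrm{Gamma}(N/2,2)$,
\begin{equation*}
  \Ee\chi_N^{-\beta}
  =
  \Ee(\chi_N^2)^{-\beta/2}
  =
  2^{-\beta/2}\frac{\Gamma((N-\beta)/2)}{\Gamma(N/2)} .
\end{equation*}
This is finite exactly when $\beta<N$, which is consistent with \cref{eq:rounded-gaussian-chi-log-moment} evaluated at $\alpha=-\beta$.

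Boundedness of the operator is then immediate: $\norm{\Lcal_{A,N,\beta}\varphi}_\infty\le\mathcal M_{A,N}(\beta)\norm{\varphi}_\infty$. The right-hand side of \cref{eq:nd-transfer-rank-one} is a constant function, so the operator maps into $C(\mathbb S^{N-1})$.

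For the spectral radius, note that $\Lcal_{A,N,\beta}$ is rank one, equal to $\mathcal M_{A,N}(\beta)$ times the projection $\varphi\mapsto\bigl(\int\varphi\,\mathrm d\bar\sigma_N\bigr)\mathbf 1$. This projection is idempotent, so $\Lcal^n=\mathcal M^n\times$ projection, and the projection has operator norm one in the supremum norm. Gelfand's formula then gives $r_{A,N}(\beta)=\mathcal M_{A,N}(\beta)$. Alternatively, the spectrum of the operator is $\{0,\mathcal M_{A,N}(\beta)\}$, with $\mathbf 1$ as an eigenfunction for the eigenvalue $\mathcal M_{A,N}(\beta)$.

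The scaling identity $\mathcal M_{A,N}(\beta)=A^{-\beta}\mathcal M_{1,N}(\beta)$ is read off directly from the explicit formula.

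No step is a real obstacle. The only point that needs a careful justification is the independence of $\chi_N$ and $\widehat G$ together with the uniformity of $\widehat G$. Both follow from rotation invariance of $\mathcal G_N$ via the polar-coordinates factorization of its density.
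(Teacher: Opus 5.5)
Your proposal is correct and follows the paper's proof essentially step by step. Isotropy reduces $\mathsf W_A^{(N)}\theta$ to $(A/\sqrt N)G$; the independence of $\chi_N$ and $\widehat G$, with $\widehat G\sim\bar\sigma_N$, gives the rank-one factorization; the Gamma moment formula gives $\mathcal M_{A,N}(\beta)$, finite exactly for $\beta<N$; and the rank-one structure identifies the spectral radius. Your Gelfand-formula justification of that last step, using idempotence and unit sup-norm of the projection, simply spells out what the paper asserts directly.
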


\begin{proof}
  Fix $\theta\in\mathbb S^{N-1}$. By
  \cref{eq:gaussian-weights}, the vector
  $\mathsf W_A^{(N)}\theta$ has law $(A/\sqrt N)G$, where
  $G\sim\mathcal N(0,I_N)$. Hence
  \begin{equation*}
    \Ee\norm{\mathsf W_A^{(N)}\theta}_2^{-\beta}
    =
    \left(\frac{\sqrt N}{A}\right)^\beta
    \Ee\norm{G}_2^{-\beta}.
  \end{equation*}
  The last expectation is finite exactly for $\beta<N$, since the standard
  Gaussian density is bounded and the singularity of $\norm{z}_2^{-\beta}$ at
  the origin is integrable in dimension $N$ precisely in this range. The
  displayed Gamma-function formula is the standard moment formula for
  $\chi_N=\norm{G}_2$.

  The same isotropy also shows that
  $\widehat{\mathsf W_A^{(N)}\theta}$ has law $\bar\sigma_N$ and is independent
  of $\norm{\mathsf W_A^{(N)}\theta}_2$. Therefore
  \cref{eq:nd-transfer-rank-one} holds. A rank-one positive operator of the
  form $\mathcal M_{A,N}(\beta)\bar\sigma_N(\varphi)\mathbf 1$ has spectral
  radius $\mathcal M_{A,N}(\beta)$.
\end{proof}

The Cramér exponent is the unique positive solution of the criticality equation
for this operator.
\begin{lemma}\label{lem:nd-gaussian-cramer-exponent}
  Assume $A>A_c(N)$, with $A_c(N)$ defined in
  \cref{eq:fixed-width-critical}. Then there is a unique
  $\beta_{A,N}\in(0,N)$ satisfying
  \begin{equation}\label{eq:nd-beta-equation}
    r_{A,N}(\beta_{A,N})=1.
  \end{equation}
  Equivalently, $\beta_{A,N}$ is the unique positive solution of
  \begin{equation}\label{eq:nd-beta-equation-explicit}
    A^{-\beta}
    N^{\beta/2}
    2^{-\beta/2}
    \frac{\Gamma((N-\beta)/2)}{\Gamma(N/2)}
    =
    1,
    \qquad 0<\beta<N.
  \end{equation}
\end{lemma}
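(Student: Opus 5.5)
We study $\Lambda(\beta):=\log r_{A,N}(\beta)=\log\mathcal M_{A,N}(\beta)$ on $[0,N)$, where $\mathcal M_{A,N}$ has the closed form given in \cref{lem:nd-gaussian-transfer}. The equivalence of \cref{eq:nd-beta-equation} with \cref{eq:nd-beta-equation-explicit} is then just this formula rewritten, since $(\sqrt N/A)^\beta 2^{-\beta/2}=A^{-\beta}N^{\beta/2}2^{-\beta/2}$. It therefore remains to show that $\Lambda$ has exactly one zero in $(0,N)$.

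\emph{Step 1: Convexity and behaviour at $0$.} From the explicit formula,
\begin{equation*}
  \Lambda(\beta)=\beta\log\frac{\sqrt N}{A}-\frac\beta2\log2+\log\Gamma\Bigl(\frac{N-\beta}2\Bigr)-\log\Gamma\Bigl(\frac N2\Bigr),
\end{equation*}
so $\Lambda(0)=0$. Differentiating twice gives $\Lambda''(\beta)=\tfrac14\psi_1((N-\beta)/2)>0$, so $\Lambda$ is strictly convex on $[0,N)$. Alternatively, strict convexity follows from Hölder's inequality applied to $\Ee\norm{W\theta}_2^{-\beta}$, because $\norm{W\theta}_2$ is nondegenerate.

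The first derivative at the origin is
\begin{equation*}
  \Lambda'(0)=\log\frac{\sqrt N}{A}-\tfrac12\log2-\tfrac12\psi(N/2)=\log\frac{A_c(N)}{A},
\end{equation*}
using \cref{eq:fixed-width-critical}. Equivalently, $\Lambda'(0)=-\Ee\log\ell_{A,N}=\gamma_{A,N}$. Since $A>A_c(N)$, we get $\Lambda'(0)<0$, so $\Lambda<0$ on some interval $(0,\varepsilon)$.

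\emph{Step 2: Blow-up at $N$.} As $\beta\uparrow N$, the argument $(N-\beta)/2\downarrow0$, so $\Gamma((N-\beta)/2)\to\infty$. The remaining terms of $\Lambda$ stay bounded, hence $\Lambda(\beta)\to+\infty$. By continuity, $\Lambda$ has at least one zero in $(\varepsilon,N)$.

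\emph{Step 3: Uniqueness.} Suppose $\Lambda$ vanished at two points $0<\beta_1<\beta_2<N$. Together with $\Lambda(0)=0$, this gives three collinear points on the graph of a strictly convex function, which is impossible. Hence the positive zero $\beta_{A,N}$ is unique.

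No step is a real obstacle. The only point needing care is identifying $\Lambda'(0)$ with $\log(A_c(N)/A)$, which requires the digamma derivative of $\log\Gamma$ to match the normalization in \cref{eq:fixed-width-critical}. This is exactly the identity $\gamma_{A,N}=\log(A_c(N)/A)$ already recorded in \cref{eq:gaussian-log-multiplier-moments}.
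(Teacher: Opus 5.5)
Your proposal is correct and follows the paper's argument. In both, $\log\mathcal M_{A,N}$ vanishes at $0$, has negative slope there because $A>A_c(N)$, is strictly convex, and blows up as $\beta\uparrow N$, so it has exactly one positive zero. The only cosmetic difference is that you check convexity and the slope at $0$ with the explicit trigamma and digamma formulas. The paper instead invokes H\"older's inequality and the characterization of $A_c(N)$ through $\Ee\log\ell_{A,N}$.
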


\begin{proof}
  Recall $\ell_{A,N}$ from \cref{eq:gaussian-ell}, and set
  \begin{equation*}
    F(\beta):=\log\Ee \ell_{A,N}^{-\beta}.
  \end{equation*}
  By \cref{eq:fixed-width-critical}, the condition $A>A_c(N)$ is equivalent to
  $\Ee\log \ell_{A,N}>0$. Thus $F(0)=0$ and
  $F'(0)=-\Ee\log \ell_{A,N}<0$.
  Moreover $F$ is strictly convex by H\"older's inequality and the
  nondegeneracy of $\ell_{A,N}$, and
  $F(\beta)\to\infty$ as $\beta\uparrow N$, because
  $\Ee\chi_N^{-\beta}\uparrow\infty$. Therefore $F(\beta)=0$ has exactly one
  solution in $(0,N)$. The explicit equation is just
  \cref{eq:nd-gaussian-negative-moment}.
\end{proof}

By \cref{lem:nd-gaussian-transfer}, tilting at $\beta_{A,N}$ changes only the
law of the increment. The updated direction remains independent of the
increment and has law $\bar\sigma_N$. The renewal equation therefore reduces
to a scalar convolution in $y$. The following lemma applies Feller's whole-line
renewal theorem
\cite{feller1971vol2} to this convolution and recovers the angular limit as a
multiple of $\bar\sigma_N$. A related formulation is given in
\cite{gut2009stopped}.
\begin{lemma}\label{lem:nd-gaussian-renewal}
  Assume $A>A_c(N)$ and let $\beta_{A,N}$ be the exponent from
  \cref{lem:nd-gaussian-cramer-exponent}. Using $\ell_{A,N}$ from
  \cref{eq:gaussian-ell}, define
  \begin{equation}\label{eq:nd-tilted-increment-law}
    \int g(z)\,\widehat\mu_{A,N}(\mathrm{d}z)
    :=
    \Ee\left[
      \ell_{A,N}^{-\beta_{A,N}}
      g(-\log \ell_{A,N})
    \right].
  \end{equation}
  Then $\widehat\mu_{A,N}$ is a nonlattice probability measure. Its drift
  satisfies
  \begin{equation*}
    \widehat m_{A,N}
    :=
    \int_\R z\,\widehat\mu_{A,N}(\mathrm{d}z)
    \in(0,\infty).
  \end{equation*}
  Set
  $\widehat P_{A,N}(\theta,\mathrm{d}z,\mathrm{d}\theta')
  :=\bar\sigma_N(\mathrm{d}\theta')\,\widehat\mu_{A,N}(\mathrm{d}z)$.

  Let $(\Hcal_y)_{y\in\R}$ be a locally bounded family of finite signed
  measures on $\mathbb S^{N-1}$ satisfying the boundary conditions
  \begin{equation}\label{eq:nd-renewal-boundary-conditions}
      \lim_{y\to-\infty}\norm{\Hcal_y}_{\mathrm{TV}}=0, \qquad
      \lim_{y\to\infty}
      e^{-\beta_{A,N}y}\norm{\Hcal_y}_{\mathrm{TV}}=0.
  \end{equation}
  Assume also that the forcing $(\Psi_y)_{y\in\R}$ satisfies
  \begin{equation}\label{eq:nd-dri-definition}
    \sum_{k\in\Z}
    \sup_{y\in[k,k+1]}\norm{\Psi_y}_{\mathrm{TV}}
    <\infty .
  \end{equation}
  For every continuous $\varphi$, assume that
  $y\mapsto\Psi_y(\varphi)$ is continuous and that
  \begin{equation}\label{eq:nd-abstract-renewal-equation}
    \Hcal_y(\varphi)
    =
    \int_{\mathbb S^{N-1}}
    \int
    \varphi(\theta')\,
    \widehat P_{A,N}(\theta,\mathrm{d}z,\mathrm{d}\theta')\,
    \Hcal_{y-z}(\mathrm{d}\theta)
    +
    \Psi_y(\varphi).
  \end{equation}
  Then
  \begin{equation}\label{eq:nd-abstract-renewal-limit}
    \Hcal_y(\varphi)
    \to
    \frac{1}{\widehat m_{A,N}}
    \left(\int_\R \Psi_t(\mathbf 1)\,\mathrm{d}t\right)
    \int_{\mathbb S^{N-1}}\varphi\,\mathrm{d}\bar\sigma_N,
    \qquad
    y\to\infty,
  \end{equation}
  where $\mathbf 1$ denotes the constant function on $\mathbb S^{N-1}$.
\end{lemma}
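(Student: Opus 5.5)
The plan is to exploit the product structure of $\widehat P_{A,N}$ and reduce everything to a scalar renewal equation on $\R$. We then identify its solution with the renewal-measure convolution, apply Feller's key renewal theorem \cite{feller1971vol2}, and lift back to the sphere.

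\emph{Step 1: Properties of $\widehat\mu_{A,N}$.}
Write $\beta=\beta_{A,N}$ and $F(\beta)=\log\Ee\ell_{A,N}^{-\beta}$ as in the proof of \cref{lem:nd-gaussian-cramer-exponent}.
\begin{itemize}
\item $\widehat\mu_{A,N}$ has total mass $\Ee\ell_{A,N}^{-\beta}=e^{F(\beta)}=1$, so it is a probability measure.
\item It is absolutely continuous, because $\log\chi_N$ has a density. Hence it is nonlattice.
\item $F$ is finite and smooth on $[0,N)$. Since $\beta<N$, some slightly larger negative moment is finite, so $\widehat m_{A,N}=\Ee[\ell^{-\beta}(-\log\ell)]=F'(\beta)$ is finite.
\item $F$ is strictly convex with $F(0)=F(\beta)=0$ and $F'(0)<0$. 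Therefore $F'(\beta)>0$.
\end{itemize}

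\emph{Step 2: Scalar reduction.}
Since $\widehat P_{A,N}(\theta,\cdot,\cdot)=\widehat\mu_{A,N}\otimes\bar\sigma_N$ does not depend on $\theta$, \cref{eq:nd-abstract-renewal-equation} becomes
\begin{equation*}
  \Hcal_y(\varphi)=\bar\sigma_N(\varphi)\,(h*\widehat\mu_{A,N})(y)+\Psi_y(\varphi),
  \qquad h(y):=\Hcal_y(\mathbf 1).
\end{equation*}
Taking $\varphi=\mathbf 1$ gives the scalar equation $h=h*\widehat\mu_{A,N}+\psi$, where $\psi(y):=\Psi_y(\mathbf 1)$. The function $\psi$ is continuous, and by \cref{eq:nd-dri-definition} it is directly Riemann integrable. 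Iterating $n$ times gives
\begin{equation*}
  h=\sum_{k<n}\psi*\widehat\mu^{*k}+h*\widehat\mu^{*n}.
\end{equation*}

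\emph{Step 3: The remainder vanishes (main obstacle).}
The main task is to show $(h*\widehat\mu^{*n})(y)=\widehat\Ee[h(y-S_n)]\to0$ for each fixed $y$. Here $S_n$ is the $\widehat\mu$-random walk. The difficulty is that $S_n$ can be very negative, and $h$ may grow like $e^{\beta u}$ as $u\to+\infty$.

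Fix $\varepsilon>0$. By \cref{eq:nd-renewal-boundary-conditions} and local boundedness, choose $M$ such that
\begin{itemize}
\item $|h(u)|\le\varepsilon$ for $u<-M$,
\item $|h(u)|\le\varepsilon e^{\beta u}$ for $u>M$,
\item $|h|\le C_M$ on $[-M,M]$.
\end{itemize}
The three regions are then handled separately.
\begin{itemize}
\item The region $u<-M$ contributes at most $\varepsilon$.
\item The region $[-M,M]$ contributes at most $C_M\widehat\Pp(S_n\in[y-M,y+M])$. This tends to $0$ because the walk has positive drift $\widehat m_{A,N}>0$ and is therefore transient.
\item For the region $u>M$ we use the change-of-measure identity $\widehat\Ee[e^{-\beta S_n}g(S_n)]=\Ee[g(S_n^{\mathrm{orig}})]$, where $S_n^{\mathrm{orig}}$ is the untilted walk with increments $-\log\ell_{A,N}$. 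This yields $\widehat\Ee[e^{\beta(y-S_n)}]=e^{\beta y}$. So this region contributes at most $\varepsilon e^{\beta y}$.
\end{itemize}
Since $y$ is fixed and $\varepsilon$ is arbitrary, the remainder vanishes. Hence $h=U*\psi$, where $U=\sum_k\widehat\mu^{*k}$ is the renewal measure; the series converges because $\psi$ is directly Riemann integrable.

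\emph{Step 4: Conclusion.}
By Feller's key renewal theorem for a nonlattice law with positive finite mean,
\begin{equation*}
  h(y)\to\frac1{\widehat m_{A,N}}\int_\R\psi(t)\,\mathrm{d}t,
  \qquad y\to\infty.
\end{equation*}
Direct Riemann integrability also gives $\sup_\varphi|\Psi_y(\varphi)|/\norm\varphi_\infty\le\norm{\Psi_y}_{\mathrm{TV}}\to0$. In particular $\psi(y)\to0$, so $(h*\widehat\mu)(y)=h(y)-\psi(y)$ has the same limit as $h$. Substituting into the formula from Step 2 gives $\Hcal_y(\varphi)\to\bar\sigma_N(\varphi)\,\widehat m_{A,N}^{-1}\int_\R\Psi_t(\mathbf 1)\,\mathrm{d}t$, which is \cref{eq:nd-abstract-renewal-limit}.
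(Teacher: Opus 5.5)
Your proposal is correct and follows essentially the same route as the paper. Both arguments establish the tilt properties through strict convexity of $\beta\mapsto\log\Ee\ell_{A,N}^{-\beta}$, reduce to the scalar equation for $h_y=\Hcal_y(\mathbf 1)$, kill the remainder $\Ee h_{y-S_n}$ using the boundary conditions together with the identity $\Ee e^{-\beta_{A,N}S_n}=1$ for the tilted walk, and then apply Feller's whole-line key renewal theorem followed by angular reconstruction. The one cosmetic difference is that you split the remainder into three regions and use transience on the compact middle one, whereas the paper splits into two and applies dominated convergence on $(-\infty,L]$.
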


\begin{proof}
  \emph{Step 1: Tilted law.}
  The identity $r_{A,N}(\beta_{A,N})=1$ says precisely that
  $\widehat\mu_{A,N}$ has total mass one. Its law has a density, since
  $\chi_N$ has a density on $(0,\infty)$, and hence it is nonlattice. The
  drift is finite because $\Ee\chi_N^{-\beta}\abs{\log\chi_N}<\infty$ for
  $\beta<N$. It is positive since
  \begin{equation*}
    \widehat m_{A,N}
    =
    \frac{\mathrm{d}}{\mathrm{d}\beta}
    \log\mathcal M_{A,N}(\beta)\bigg|_{\beta=\beta_{A,N}},
  \end{equation*}
  and this derivative is positive at the unique positive zero
  $\beta_{A,N}$ of the strictly convex function
  $\log\mathcal M_{A,N}(\beta)$.

  \emph{Step 2: Scalar renewal representation.}
  Put $h_y:=\Hcal_y(\mathbf 1)$. Taking $\varphi=\mathbf 1$ in
  \cref{eq:nd-abstract-renewal-equation} gives the scalar renewal equation
  \begin{equation*}
    h_y
    =
    \int_\R h_{y-z}\,\widehat\mu_{A,N}(\mathrm{d}z)
    +
    \Psi_y(\mathbf 1).
  \end{equation*}
  Let $S_n=Z_1+\cdots+Z_n$, where the variables $Z_i$ are independent with law
  $\widehat\mu_{A,N}$. Iterating the last display gives
  \begin{equation*}
    h_y
    =
    \Ee h_{y-S_n}
    +
    \sum_{k=0}^{n-1}
    \Ee\Psi_{y-S_k}(\mathbf 1).
  \end{equation*}
  We claim that, for this fixed $y$,
  $\Ee h_{y-S_n}\to0$ as $n\to\infty$. Since
  $\widehat m_{A,N}>0$, we have $S_n\to\infty$ almost surely. Fix $L<\infty$.
  On the half-line $(-\infty,L]$, local boundedness together with
  the first condition in \cref{eq:nd-renewal-boundary-conditions} gives
  $\sup_{u\leq L}\norm{\Hcal_u}_{\mathrm{TV}}<\infty$. Since
  $y-S_n\to-\infty$ almost surely, dominated convergence gives
  \begin{equation*}
    \Ee\left[
      \norm{\Hcal_{y-S_n}}_{\mathrm{TV}}
      \one_{\{y-S_n\leq L\}}
    \right]
    \to0 .
  \end{equation*}
  On the complementary event,
  the second condition in \cref{eq:nd-renewal-boundary-conditions} gives, for
  $L$ sufficiently large,
  \begin{equation*}
    \Ee\left[
      \norm{\Hcal_{y-S_n}}_{\mathrm{TV}}
      \one_{\{y-S_n>L\}}
    \right]
    \leq
    \varepsilon e^{\beta_{A,N}y}
    \Ee e^{-\beta_{A,N}S_n}
    =
    \varepsilon e^{\beta_{A,N}y}.
  \end{equation*}
  The equality uses the defining tilt \cref{eq:nd-tilted-increment-law}.
  Adding the two contributions and then letting $\varepsilon\downarrow0$ gives
  \begin{equation*}
    \Ee\norm{\Hcal_{y-S_n}}_{\mathrm{TV}}\to0 .
  \end{equation*}
  Since $\norm{\mathbf 1}_\infty=1$, this total variation bound also controls
  $\Ee h_{y-S_n}$:
  \begin{equation*}
    \abs{\Ee h_{y-S_n}}
    =
    \abs{\Ee\,\Hcal_{y-S_n}(\mathbf 1)}
    \leq
    \Ee\norm{\Hcal_{y-S_n}}_{\mathrm{TV}}
    \to0 .
  \end{equation*}
  Therefore, after first fixing $y$ and then letting $n\to\infty$,
  \begin{equation*}
    h_y
    =
    \sum_{k=0}^{\infty}
    \Ee\Psi_{y-S_k}(\mathbf 1).
  \end{equation*}
  \emph{Step 3: Scalar renewal limit.}
  Let $y\to\infty$ and decompose the signed forcing as
  $\Psi_y(\mathbf 1)=\Psi_y(\mathbf 1)^+-\Psi_y(\mathbf 1)^-$. Both parts are
  nonnegative and directly Riemann integrable by
  \cref{eq:nd-dri-definition}. The increment law $\widehat\mu_{A,N}$ is
  nonarithmetic with mean
  $\widehat m_{A,N}\in(0,\infty)$ and charges both half-lines, so the renewal
  measure $U:=\sum_{k\geq0}\mathcal L(S_k)$ obeys the general renewal theorem on
  the whole line: $U\{I+y\}\to\abs I/\widehat m_{A,N}$ as $y\to+\infty$ and
  $U\{I+y\}\to0$ as $y\to-\infty$ for every finite interval $I$
  \cite[Sec.~XI.9, (9.2)--(9.3)]{feller1971vol2}. Applying the
  upper- and lower-step-function argument of
  \cite[Sec.~XI.1, (1.10)--(1.17)]{feller1971vol2} to these
  whole-line interval limits extends the conclusion to any directly Riemann
  integrable integrand in the sense of \cref{eq:nd-dri-definition}. Apply it
  separately to
  $\Psi_\cdot(\mathbf 1)^+$ and $\Psi_\cdot(\mathbf 1)^-$. The identity
  $\int_\R z(y-s)\,U(\mathrm{d}s)=\sum_{k\geq0}\Ee\,z(y-S_k)$ then gives
  \begin{equation*}
    \sum_{k=0}^\infty\Ee\,\Psi_{y-S_k}(\mathbf 1)^\pm
    \to
    \frac{1}{\widehat m_{A,N}}\int_\R \Psi_t(\mathbf 1)^\pm\,\mathrm{d}t,
    \qquad y\to\infty,
  \end{equation*}
  and subtracting the two limits yields
  \begin{equation*}
    h_y
    \to
    \frac{1}{\widehat m_{A,N}}\int_\R \Psi_t(\mathbf 1)\,\mathrm{d}t .
  \end{equation*}
  \emph{Step 4: Angular reconstruction.}
  For a general continuous $\varphi$, the product form of
  $\widehat P_{A,N}$ gives
  \begin{equation*}
    \Hcal_y(\varphi)
    =
    \left(\int_{\mathbb S^{N-1}}\varphi\,\mathrm{d}\bar\sigma_N\right)
    \int_\R h_{y-z}\,\widehat\mu_{A,N}(\mathrm{d}z)
    +
    \Psi_y(\varphi).
  \end{equation*}
  Using the scalar renewal equation to replace the convolution by
  $h_y-\Psi_y(\mathbf 1)$, and using direct Riemann integrability to get
  $\Psi_y(\varphi)\to0$ and $\Psi_y(\mathbf 1)\to0$, yields
  \cref{eq:nd-abstract-renewal-limit}.
\end{proof}

\subsection{The nonlinear renewal equation}
\label{subsec:nd-nonlinear-renewal}

We compare \cref{eq:nd-log-polar-stationary} with its linearization
\cref{eq:nd-linear-markov-additive}. After exponential tilting, their
difference is the forcing $\Psi^\pi$ defined below. Its integrability uses
exponential moments of order less than $\beta_{A,N}$.
\begin{lemma}
\label{lem:nd-subcritical-exponential-moments}
  Assume $A>A_c(N)$ and let $\pi_{A,N}$ be a nonzero invariant law as in
  \cref{prop:nd-stationary-equation}. Let $(Y,\Theta)$ have law
  $\widetilde{\pi}_{A,N}$. Then, for every $0\leq\alpha<\beta_{A,N}$,
  \begin{equation}\label{eq:nd-subcritical-exponential-moment}
    \Ee e^{\alpha Y}<\infty .
  \end{equation}
\end{lemma}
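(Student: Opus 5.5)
The plan is a Foster--Lyapunov argument for $V_\alpha(x):=\norm{x}_2^{-\alpha}$, so that $\Ee e^{\alpha Y}=\int V_\alpha\,\mathrm d\pi_{A,N}$. The case $\alpha=0$ is trivial, so fix $0<\alpha<\beta_{A,N}$.

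\emph{Step 1: the linear moment is contractive.} In the proof of \cref{lem:nd-gaussian-cramer-exponent}, $F(\beta)=\log\mathcal M_{A,N}(\beta)$ is strictly convex with $F(0)=F(\beta_{A,N})=0$. Hence $\mathcal M_{A,N}(\alpha)<1$. Fix $\rho$ with $\mathcal M_{A,N}(\alpha)<\rho<1$.

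\emph{Step 2: drift near the origin.} For $x=r\theta$ with $r$ small, the proof of \cref{lem:rounded-gaussian-log-reduction} gives $\log(\norm u_2/\norm{\tanh u}_2)\le C\norm u_2^2$. With $u=rW\theta$ this yields
\begin{equation*}
  P_{A,N}V_\alpha(x)\le r^{-\alpha}\,\Ee\Bigl[\norm{W\theta}_2^{-\alpha}e^{\alpha Cr^2\norm{W\theta}_2^2}\Bigr].
\end{equation*}
By isotropy, $\norm{W\theta}_2\stackrel d=\ell_{A,N}$. For $r\le1$ the integrand is dominated by $\ell_{A,N}^{-\alpha}e^{\alpha C\ell_{A,N}^2}$. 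This is integrable when $\alpha C$ is small compared with $N/(2A^2)$; if it is not, first restrict to $r\le r_1$, so the exponent becomes $\alpha Cr_1^2\ell^2$. Integrability near zero uses $\alpha<N$. Dominated convergence then shows the expectation tends to $\mathcal M_{A,N}(\alpha)$ as $r\downarrow0$. Hence there is $r_0>0$ with $P_{A,N}V_\alpha\le\rho V_\alpha$ on $\{0<\norm x_2\le r_0\}$.

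\emph{Step 3: boundedness away from the origin.} Use $\abs{\tanh u_i}\ge\tanh(1)\min(\abs{u_i},1)$. This gives $\norm{\tanh u}_2\ge c_N\min(\norm u_2,1)$. For $r_0\le\norm x_2\le\sqrt N$ we get
\begin{equation*}
  P_{A,N}V_\alpha(x)\le c_N^{-\alpha}\,\Ee\bigl[\min(r_0\ell_{A,N},1)^{-\alpha}\bigr]=:b<\infty,
\end{equation*}
again because $\alpha<N$. Altogether $P_{A,N}V_\alpha\le\rho V_\alpha+b$ on $\Kcal_N\setminus\{0\}$. Iterating gives $P_{A,N}^nV_\alpha\le\rho^nV_\alpha+b/(1-\rho)$.

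\emph{Step 4: transfer to $\pi_{A,N}$.} This is the main subtlety, since we do not know a priori that $\int V_\alpha\,\mathrm d\pi_{A,N}<\infty$, so the drift inequality cannot simply be integrated. Truncate instead. For $K<\infty$, invariance together with $\min(\cdot,K)$ being concave and nondecreasing (Jensen) gives
\begin{equation*}
  \int\min(V_\alpha,K)\,\mathrm d\pi_{A,N}
  =\int P_{A,N}^n\min(V_\alpha,K)\,\mathrm d\pi_{A,N}
  \le\int\min\bigl(\rho^nV_\alpha+b/(1-\rho),K\bigr)\,\mathrm d\pi_{A,N}.
\end{equation*}
Since $\pi_{A,N}(\{0\})=0$, $V_\alpha<\infty$ holds $\pi_{A,N}$-almost surely. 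Bounded convergence as $n\to\infty$ therefore bounds the left side by $b/(1-\rho)$. Letting $K\to\infty$, monotone convergence gives $\Ee e^{\alpha Y}\le b/(1-\rho)<\infty$.

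The step needing most care is Step 2, namely uniform control of the nonlinear correction factor $e^{\alpha Cr^2\norm{W\theta}_2^2}$ against Gaussian tails. Shrinking $r_0$ handles it.
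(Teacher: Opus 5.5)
Your proposal is correct, and it takes a genuinely different route from the paper.

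The paper pushes $\pi_{A,N}$ forward to the squared-radius chain $K_{A,N}$ and works with $q^{-\alpha/2}$. Near the origin it bounds $F_{A,N}(q,G)/q$ from \emph{below*}: first by its value at a fixed small radius, using monotonicity of $\tanh x/x$, and then by a truncated Gaussian average with finite negative moments. You instead bound the nonlinear correction from \emph{above} by $e^{\alpha Cr^2\norm{W\theta}_2^2}$ and use Gaussian exponential moments after shrinking $r$. Both give the same near-zero contraction.

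The more substantial difference is the transfer to the invariant law. The paper never integrates the drift inequality against the given $\nu$. It reruns the Ces\`aro construction of \cref{prop:gaussian-nonzero-invariant-existence} to produce some nonzero invariant law with a finite negative moment. It then identifies that law with $\nu$ via the uniqueness in \cref{prop:gaussian-unique-nonzero-invariant}, which rests on strict positivity of the transition density and Nummelin's irreducibility theory.

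Your Step~4 applies invariance $\pi_{A,N}=\pi_{A,N}P_{A,N}^n$ to the bounded function $\min(V_\alpha,K)$. You then use bounded convergence in $n$, which is legitimate because $\pi_{A,N}(\{0\})=0$ and the chain started off the origin never reaches it, followed by monotone convergence in $K$. This works for every invariant law with no atom at zero. It therefore needs no uniqueness input and also gives the explicit bound $\Ee e^{\alpha Y}\le b/(1-\rho)$.

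What the paper's route buys is reuse of machinery already set up for the large-$N$ analysis. Yours is more self-contained and slightly more general.

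A minor point: your constant $c_N$ in Step~3 can be taken to be $\tanh 1$, independent of $N$.
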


\begin{proof}
  The case $\alpha=0$ is trivial, so assume $0<\alpha<\beta_{A,N}$. Let
  $\nu$ be the image of $\pi_{A,N}$ under the squared-radius map
  $r_N(x)=N^{-1}\norm{x}_2^2$, defined before
  \cref{prop:gaussian-tv-reduction}. By
  \cref{prop:gaussian-tv-reduction}, $\nu$ is a nonzero invariant law for
  $K_{A,N}$.

  \emph{Step 1: Contraction of $q^{-\alpha/2}$ near zero.}
  Since $\alpha<\beta_{A,N}$ and $\beta_{A,N}$ is the first positive zero of
  $\log\mathcal M_{A,N}$, we have
  \begin{equation*}
    \Ee\left[
      \left(\frac A{\sqrt N}\chi_N\right)^{-\alpha}
    \right]
    <1 .
  \end{equation*}
  Put $p=\alpha/2$ and $V(q)=q^{-p}$. Since
  \begin{equation*}
    \frac{F_{A,N}(q,G)}q
    =
    \frac1N\sum_{i=1}^N
    A^2G_i^2
    \left(
      \frac{\tanh(A\sqrt q\,G_i)}{A\sqrt q\,G_i}
    \right)^2
    \xrightarrow[q\downarrow0]{}
    \frac{A^2}{N}\chi_N^2
    \qquad\mbox{a.s.},
  \end{equation*}
  the fixed-$N$ version of the truncation argument in
  \cref{prop:gaussian-near-zero-negative-moment} gives
  \begin{equation*}
    \Ee\left[
      \left(\frac{F_{A,N}(q,G)}q\right)^{-p}
    \right]
    \longrightarrow
    \Ee\left[
      \left(\frac A{\sqrt N}\chi_N\right)^{-\alpha}
    \right]
    <1 .
  \end{equation*}
  Hence we may choose $R_0>0$ and $a<1$ such that, for $0<q\leq R_0$,
  \begin{equation*}
    K_{A,N}V(q)
    =
    q^{-p}
    \Ee\left[
      \left(\frac{F_{A,N}(q,G)}q\right)^{-p}
      \right]
      \leq aV(q).
  \end{equation*}

  \emph{Step 2: A finite negative moment of $\nu$.}
  Since $\alpha<\beta_{A,N}<N$, the fixed-$N$ Gaussian small-ball estimate used
  above gives
  \begin{equation*}
    B:=\sup_{R_0\leq q\leq1} K_{A,N}V(q)<\infty .
  \end{equation*}
  Therefore
  \begin{equation*}
    K_{A,N}V(q)\leq aV(q)+B,\qquad q\in(0,1].
  \end{equation*}
  The Cesàro construction in the proof of
  \cref{prop:gaussian-nonzero-invariant-existence} therefore produces a nonzero
  invariant law $\overline\nu$ with $\int q^{-p}\,\overline\nu(\mathrm{d}q)<\infty$.
  By the uniqueness of the nonzero invariant law in
  \cref{prop:gaussian-unique-nonzero-invariant}, $\overline\nu=\nu$.

  \emph{Step 3: The exponential moment of $Y$.}
  If $X\sim\pi_{A,N}$, then $r_N(X)\sim\nu$ and
  \begin{equation*}
    \Ee e^{\alpha Y}
    =
    N^{-\alpha/2}
    \int_{(0,1]}q^{-\alpha/2}\,\nu(\mathrm{d}q)
    <\infty .
  \end{equation*}
  This proves \cref{eq:nd-subcritical-exponential-moment}.
\end{proof}

The next lemma bounds the difference between the nonlinear and linearized
updates.
\begin{lemma}
\label{lem:nd-gaussian-polar-perturbation}
  Assume $A>A_c(N)$ and let $\beta_{A,N}\in(0,N)$ be the exponent from
  \cref{lem:nd-gaussian-cramer-exponent}. Fix $\alpha\in(0,2)$ and set
  $h(t):=\min\{1,e^{-Nt}\}$. Then there exists $C<\infty$ such that
  \begin{equation}\label{eq:nd-polar-envelope-dri}
    \sum_{k\in\Z}
    \sup_{t\in[k,k+1]} e^{\beta_{A,N}t}h(t)<\infty
  \end{equation}
  and, for every $0<r\leq\sqrt N$ and every $t\in\R$,
  \begin{equation}\label{eq:nd-polar-kernel-error}
    \left\|
      \Ee\left[
        \one_{\{\norm{\tanh(rU)}_2<re^{-t}\}}
        \delta_{\widehat{\tanh(rU)}}
      \right]
      -
      \Ee\left[
        \one_{\{\norm U_2<e^{-t}\}}\delta_{\hat U}
      \right]
    \right\|_{\mathrm{TV}}
    \leq
    C r^\alpha h(t),
  \end{equation}
  where $\delta_z$ denotes the Dirac mass at $z$, $U=(A/\sqrt N)G$, and
  $G\sim\mathcal N(0,I_N)$.
\end{lemma}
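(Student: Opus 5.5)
The plan is to prove \cref{eq:nd-polar-envelope-dri} directly and to prove \cref{eq:nd-polar-kernel-error} by a change of variables, not by a pathwise coupling. A pathwise coupling cannot work: the directions $\widehat{\tanh(rU)}$ and $\hat U$ differ almost surely, so Dirac masses at the two directions are always at total variation distance one.

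\emph{The envelope bound.} For $k\geq0$, the supremum over $[k,k+1]$ is at most $e^{\beta_{A,N}(k+1)}e^{-Nk}$. This is summable because $\beta_{A,N}<N$ by \cref{lem:nd-gaussian-cramer-exponent}. For $k<0$, we have $h\equiv1$, and the supremum is at most $e^{\beta_{A,N}(k+1)}$, which is summable over $k<0$ because $\beta_{A,N}>0$.

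\emph{Reduction to densities.} Put $V:=r^{-1}\tanh(rU)$. The map $u\mapsto r^{-1}\tanh(ru)$ is a coordinatewise diffeomorphism of $\R^N$ onto $(-1/r,1/r)^N$, and $\hat V=\widehat{\tanh(rU)}$. Hence the two measures in \cref{eq:nd-polar-kernel-error} are the images under $v\mapsto\hat v$ of the laws of $V$ and of $U$, each restricted to the ball $B_t:=\{\norm v_2<e^{-t}\}$. Pushforward does not increase total variation, so the left side is at most $\int_{B_t}\abs{p_V-p_U}\,\mathrm dv$. Write $p_U(v)=c_{A,N}\exp(-N\norm v_2^2/(2A^2))$. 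Then
\begin{equation*}
  p_V(v)
  =
  c_{A,N}
  \exp\Bigl(-\tfrac{N}{2A^2}\textstyle\sum_i r^{-2}\arctanh(rv_i)^2\Bigr)
  \prod_i(1-r^2v_i^2)^{-1}
\end{equation*}
on $(-1/r,1/r)^N$.

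\emph{Key estimate and case split.} The main step is the local density comparison. On the region $\{\max_i\abs{rv_i}\leq1/2\}$, Taylor expansion gives $0\leq r^{-2}\arctanh(rv_i)^2-v_i^2\leq Cr^2v_i^4$ and $\bigl|\prod_i(1-r^2v_i^2)^{-1}-1\bigr|\leq Cr^2\norm v_2^2$. Since $\abs{e^{-a}-1}\leq a$ for $a\geq0$, this yields
\begin{equation*}
  \abs{p_V-p_U}(v)\leq Cr^2(1+\norm v_2^4)\bigl(p_U(v)+p_V(v)\bigr).
\end{equation*}
I then split into three cases.

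In the first case, $t\geq0$ and $re^{-t}\leq1/2$. Then $B_t$ lies inside the good region, and integrating over $B_t$ gives at most $Cr^2\Pp(U\in B_t)\leq Cr^2e^{-Nt}$. This is at most $Cr^\alpha h(t)$ because $r\leq\sqrt N$.

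In the second case, $t\geq0$ and $re^{-t}>1/2$. I bound the left side crudely by $\Pp(\norm U_2<e^{-t})+\Pp(\norm V_2<e^{-t})$. The first term is at most $Ce^{-Nt}$ by the small-ball bound. For the second, the event forces $\abs{rv_i}\leq re^{-t}$, which keeps the Jacobian controlled up to a constant depending on $N$, so it is also at most $Ce^{-Nt}$. Here $r>1/2$, so $r^\alpha$ is bounded below and the bound $Cr^\alpha e^{-Nt}$ follows.

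In the third case, $t<0$, so $h=1$. If $r\geq r_0$ for a fixed $r_0>0$, the trivial bound $2\leq Cr^\alpha$ suffices. If $r<r_0$, I integrate the density comparison over the good region, which gives $O(r^2)$ using polynomial Gaussian moments. The complement of the good region is contained in $\{\norm U_2\gtrsim1/r\}\cup\{\norm V_2\gtrsim1/r\}$. The event $\{\norm V_2\gtrsim 1/r\}$ is contained in $\{\norm U_2\gtrsim 1/r\}$ because $\abs{\tanh z}\leq\abs z$, so both have probability $O(e^{-c/r^2})\leq Cr^\alpha$.

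\emph{Expected obstacle.} The main difficulty is handling the Jacobian factor $\prod_i(1-r^2v_i^2)^{-1}$ in the regimes where $rv_i$ is not small, that is, the second and third cases. There I avoid any density comparison and use only mass bounds. The ingredients for those mass bounds are $\norm{\tanh(ru)}_2\leq r\norm u_2$ and Gaussian tails. The restriction $\alpha<2$ only leaves room in the constants when $r$ ranges over $(0,\sqrt N]$.
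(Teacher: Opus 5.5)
Your argument is correct in substance and uses the same three ingredients as the paper's proof: the pushforward reduction to $\int_{\{\norm{v}_2<e^{-t}\}}\abs{p_V-p_U}\,\mathrm{d}v$, a Taylor comparison of the two densities where $\abs{rv_i}\leq 1/2$, and Gaussian tails for $\norm{v}_2\gtrsim 1/r$.

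\textbf{A false justification in your second case.} You say that $\abs{rv_i}\leq re^{-t}$ keeps the Jacobian $\prod_i(1-r^2v_i^2)^{-1}$ controlled. But $re^{-t}$ can be close to or larger than $1$ there; at $t=0$ it equals $r$, which ranges up to $\sqrt N$. In that regime the constraint adds nothing to $\abs{rV_i}<1$, and the Jacobian is unbounded on the event.

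The conclusion still holds, for a trivial reason. Since $t\geq0$, $re^{-t}>1/2$ and $r\leq\sqrt N$, you have $r>1/2$ and $e^{-Nt}>(2\sqrt N)^{-N}$. So the crude bound $2$ is already at most $Cr^\alpha h(t)$ with $C=C(N,\alpha)$, and no density estimate is needed in this case.

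If you do want $\Pp(\norm{V}_2<e^{-t})\leq Ce^{-Nt}$, the correct reason is the one the paper uses. The function $s\mapsto\exp\{-c\arctanh(s)^2\}(1-s^2)^{-1}$ is bounded on $(-1,1)$ for each $c>0$. So the Gaussian factor beats the Jacobian, and $p_V$ is bounded uniformly for $r$ in a compact subset of $(0,\infty)$.

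\textbf{A smaller slip in your first case.} You pass from a bound involving $p_U+p_V$ to $\Pp(U\in B_t)$ alone. This is fine, because on the good region $r^{-1}\abs{\arctanh(rv_i)}\geq\abs{v_i}$ gives $p_V\leq(4/3)^Np_U$. In fact, writing $p_V=p_Ue^{-a}J$ with $a\geq0$ and $J$ the Jacobian, your Taylor bounds give $\abs{p_V-p_U}\leq p_U(\abs{J-1}+a)\leq Cr^2(1+\norm{v}_2^4)p_U$ directly.

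\textbf{Comparison with the paper.} The paper splits on $r$. For $r\in[1/4,\sqrt N]$ it uses the uniform bound on $p_r$ to get $C\rho^N$ on every ball, together with $r^\alpha\geq4^{-\alpha}$. It uses the Taylor comparison only for $r\leq1/4$, first on $\norm{v}_2\leq r^{-1/2}$ and then with a crude bound on the annulus up to $(2r)^{-1}$. You instead split on $t$ and $re^{-t}$, and apply the Taylor comparison for every $r$ once the ball lies in the good region.

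Your good region $\{\max_i\abs{rv_i}\leq1/2\}$ has two advantages. The one-sided inequality $\abs{e^{-a}-1}\leq a$ and the boundedness of $\log J$ there remove the need for the paper's separate annulus step. With the repair above, your route also avoids the uniform density bound for large $r$, at the cost of one extra case.
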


\begin{proof}
  \emph{Step 1: Reduction to the densities of $U$ and $T_rU$.}
  Put
  \begin{equation*}
    T_r u:=r^{-1}\tanh(ru),
    \qquad u\in\R^N .
  \end{equation*}
  Then
  $\widehat{\tanh(rU)}=\widehat{T_rU}$ and
  $\{\norm{\tanh(rU)}_2<re^{-t}\}=\{\norm{T_rU}_2<e^{-t}\}$. Let
  $p_0$ be the density of $U$. The map $T_r$ is a diffeomorphism from
  $\R^N$ onto $(-r^{-1},r^{-1})^N$, and the density of $T_rU$ is
  \begin{equation}\label{eq:nd-nonlinear-linearized-density}
    p_r(v)
    =
    p_0\left(r^{-1}\arctanh(rv)\right)
    \prod_{i=1}^N(1-r^2v_i^2)^{-1}
    \one_{\{\abs{rv_i}<1,\ 1\leq i\leq N\}} .
  \end{equation}
  Since angular projection and restriction to the ball
  $\{\norm v_2<e^{-t}\}$ cannot increase total variation, the left side of
  \cref{eq:nd-polar-kernel-error} is at most
  \begin{equation}\label{eq:nd-density-error-reduction}
    \int_{\{\norm v_2<e^{-t}\}}\abs{p_r(v)-p_0(v)}\,\mathrm{d}v .
  \end{equation}

  It remains to prove that, for every $\rho>0$ and $0<r\leq\sqrt N$,
  \begin{equation}\label{eq:nd-density-error-ball}
    \int_{\{\norm v_2<\rho\}}\abs{p_r(v)-p_0(v)}\,\mathrm{d}v
    \leq
    C r^\alpha \min\{1,\rho^N\}.
  \end{equation}

  \emph{Step 2: The density estimate for $1/4\leq r\leq\sqrt N$.}
  The densities $p_r$ are uniformly bounded for
  $1/4\leq r\leq\sqrt N$. Indeed, by
  \cref{eq:nd-nonlinear-linearized-density}, it is enough to note that the
  one-dimensional function
  \begin{equation*}
    s\mapsto
    \exp\{-c\,\arctanh(s)^2\}(1-s^2)^{-1},
    \qquad \abs{s}<1,
  \end{equation*}
  is bounded for every $c>0$. In the density $p_r$, the corresponding value is
  $c=N/(2A^2r^2)$, which ranges over a compact subset of $(0,\infty)$. Hence
  \begin{equation*}
    \int_{\{\norm v_2<\rho\}}\abs{p_r(v)-p_0(v)}\,\mathrm{d}v
    \leq C\rho^N .
  \end{equation*}
  The same integral is at most $2$ because $p_r$ and $p_0$ are probability
  densities. Since $r^\alpha\geq4^{-\alpha}$ on this range, these two bounds
  give \cref{eq:nd-density-error-ball}.

  \emph{Step 3: The density estimate for $0<r\leq1/4$.}
  We first prove the pointwise bound
  \begin{equation*}
    \abs{p_r(v)-p_0(v)}
    \leq
    C r^2(1+\norm v_2^4)e^{-c\norm v_2^2},
    \qquad \norm v_2\leq(2r)^{-1}.
  \end{equation*}
  First suppose
  $\norm v_2\leq r^{-1/2}$. Then
  $\abs{rv_i}\leq\sqrt r\leq1/2$, and hence
  \begin{equation*}
    r^{-1}\arctanh(rv_i)
    =
    v_i+O(r^2\abs{v_i}^3),
    \qquad
    -\log(1-r^2v_i^2)=O(r^2v_i^2),
  \end{equation*}
  uniformly in $i$. Expanding the logarithm of the density ratio
  $p_r(v)/p_0(v)$ gives
  \begin{equation*}
    \abs{\log(p_r(v)/p_0(v))}
    \leq
    C r^2(1+\norm v_2^4),
  \end{equation*}
  whose right side is bounded on this region. Since
  $p_r=p_0\exp\{\log(p_r/p_0)\}$ and $p_0(v)\leq Ce^{-c\norm v_2^2}$, the
  inequality $\abs{e^x-1}\leq C\abs x$ for bounded $x$ gives the pointwise
  bound above, after decreasing $c$. On the remaining
  annulus $r^{-1/2}<\norm v_2\leq(2r)^{-1}$, both $p_0(v)$ and $p_r(v)$ are
  bounded by $Ce^{-c\norm v_2^2}$, while
  $r^2(1+\norm v_2^4)\geq1$. Thus the pointwise bound also holds on this
  annulus, after decreasing $c$.
  Integrating over
  $\{\norm v_2<\rho,\ \norm v_2\leq(2r)^{-1}\}$, and using that
  $\int_{\{\norm v_2<\rho\}}(1+\norm v_2^4)e^{-c\norm v_2^2}\,\mathrm{d}v
  \leq C\min\{1,\rho^N\}$, since the integrand is bounded near the origin and
  integrable over all of $\R^N$, this contributes at most
  $Cr^2\min\{1,\rho^{N}\}\leq Cr^\alpha\min\{1,\rho^N\}$.
  The part of $\{\norm v_2<\rho\}$ where
  $\norm v_2>(2r)^{-1}$ is empty when $\rho<1$. When $\rho\geq1$, its
  contribution is at most
  \begin{align*}
    &\int_{\{\norm v_2>(2r)^{-1}\}}
      \bigl(p_0(v)+p_r(v)\bigr)\,\mathrm{d}v
    \\
    &\qquad\leq
    \Pp\bigl(\norm U_2>(2r)^{-1}\bigr)
    +\Pp\bigl(\norm{T_rU}_2>(2r)^{-1}\bigr)
    \leq Ce^{-c/r^2},
  \end{align*}
  where the last inequality uses $\norm{T_rU}_2\leq\norm U_2$ and the
  Gaussian tail bound. Since $e^{-c/r^2}\leq Cr^\alpha$, this proves
  \cref{eq:nd-density-error-ball}.

  \emph{Step 4: The kernel bound and summability.}
  Taking $\rho=e^{-t}$ in \cref{eq:nd-density-error-ball}, and using
  $\min\{1,\rho^N\}=h(t)$, proves
  \cref{eq:nd-polar-kernel-error}. For $t<0$,
  $e^{\beta_{A,N}t}h(t)=e^{\beta_{A,N}t}$, while for $t\geq0$ it equals
  $e^{-(N-\beta_{A,N})t}$. Since $0<\beta_{A,N}<N$, both tails are summable,
  which proves \cref{eq:nd-polar-envelope-dri}.
\end{proof}

\begin{definition}[The forcing $\Psi^\pi$]
\label{def:nd-nonlinear-forcing}
  Fix an invariant law $\pi_{A,N}$ as in
  \cref{prop:nd-stationary-equation}, let
  $(Y,\Theta)\sim\widetilde{\pi}_{A,N}$, and let $W$ be an independent copy of
  $\mathsf W_A^{(N)}$. Put
  \begin{equation*}
    \Delta=-\log\norm{W\Theta}_2,\qquad
    \Theta_0=\widehat{W\Theta},\qquad
    \Theta_+=\Theta_+(e^{-Y},\Theta,W),\qquad
    \eta=\eta(e^{-Y},\Theta,W).
  \end{equation*}
  For each continuous test function $\varphi$ on $\mathbb S^{N-1}$, define
  \begin{equation}\label{eq:nd-forcing-definition}
    \Psi_y^\pi(\varphi)
    :=
    e^{\beta_{A,N}y}
    \Ee\Bigl[
      \varphi(\Theta_+)
      \one_{\{Y+\Delta+\eta>y\}}
      -
      \varphi(\Theta_0)
      \one_{\{Y+\Delta>y\}}
    \Bigr].
  \end{equation}
  This bounded linear functional on $C(\mathbb S^{N-1})$ defines a finite
  signed measure, also denoted by $\Psi_y^\pi$.
  Set
  \begin{equation}\label{eq:nd-forcing-constant}
    c_{\Psi^\pi}
    :=
    \frac{1}{\widehat m_{A,N}}
    \int_\R \Psi_t^\pi(\mathbf 1)\,\mathrm{d}t ,
  \end{equation}
  where $\widehat m_{A,N}:=\int_\R z\,\widehat\mu_{A,N}(\mathrm{d}z)$ is the
  tilted drift.
  We say that the nonlinear renewal forcing is admissible if
  $(\Psi_y^\pi)_{y\in\R}$ satisfies
  \cref{eq:nd-dri-definition} and $c_{\Psi^\pi}>0$.
\end{definition}

We next verify that this forcing is admissible.
\begin{proposition}
\label{prop:nd-forcing-admissibility}
  Assume $A>A_c(N)$ and let $\pi_{A,N}$ be a nonzero invariant law as in
  \cref{prop:nd-stationary-equation}. Then the forcing defined in
  \cref{def:nd-nonlinear-forcing} is admissible.
\end{proposition}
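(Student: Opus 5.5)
The plan is to prove the two parts of admissibility separately: the direct Riemann integrability bound \cref{eq:nd-dri-definition} for $(\Psi_y^\pi)$, together with continuity of $y\mapsto\Psi_y^\pi(\varphi)$, and the strict positivity $c_{\Psi^\pi}>0$. Both start by conditioning on $(Y,\Theta)=(s,\theta)$ in \cref{eq:nd-forcing-definition}. Put $r=e^{-s}\in(0,\sqrt N]$, which is allowed because $Y\geq-\frac12\log N$, and put $U:=W\theta$. By isotropy, $U$ has law $(A/\sqrt N)G$ for every fixed $\theta$.

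In these variables $Y+\Delta+\eta=-\log\norm{\tanh(rU)}_2$ and $Y+\Delta=s-\log\norm U_2$. Hence, with $t:=y-s$, the events become
\begin{equation*}
  \{Y+\Delta+\eta>y\}=\{\norm{\tanh(rU)}_2<re^{-t}\},
  \qquad
  \{Y+\Delta>y\}=\{\norm U_2<e^{-t}\},
\end{equation*}
and the directions are $\Theta_+=\widehat{\tanh(rU)}$ and $\Theta_0=\hat U$. So the conditional version of the bracket in \cref{eq:nd-forcing-definition} is exactly the signed measure estimated in \cref{lem:nd-gaussian-polar-perturbation}. For a fixed $\alpha\in(0,\min\{2,\beta_{A,N}\})$, that lemma gives
\begin{equation*}
  \norm{\Psi_y^\pi}_{\mathrm{TV}}
  \leq
  C\,\Ee\bigl[e^{\beta_{A,N}y}e^{-\alpha Y}h(y-Y)\bigr]
  =
  C\,\Ee\bigl[e^{(\beta_{A,N}-\alpha)Y}\,g(y-Y)\bigr],
  \qquad
  g(t):=e^{\beta_{A,N}t}h(t).
\end{equation*}

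Next I sum over $k\in\Z$. The supremum over $y\in[k,k+1]$ moves inside the expectation. For a fixed value of $Y$, the intervals $[k-Y,k+1-Y]$ each meet at most two consecutive unit grid intervals. Therefore $\sum_k\sup_{[k-Y,k+1-Y]}g\leq2\sum_{j\in\Z}\sup_{[j,j+1]}g$, which is finite by \cref{eq:nd-polar-envelope-dri} and is uniform in $Y$. Tonelli then bounds the left side of \cref{eq:nd-dri-definition} by a constant times $\Ee e^{(\beta_{A,N}-\alpha)Y}$. This expectation is finite by \cref{lem:nd-subcritical-exponential-moments}, since $0\leq\beta_{A,N}-\alpha<\beta_{A,N}$.

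For continuity in $y$: the variables $-\log\norm{\tanh(rU)}_2$ and $-\log\norm U_2$ have continuous laws conditionally on $(Y,\Theta)$, because $U$ has a density and $\tanh$ is a diffeomorphism. So the indicators are a.s. continuous in $y$, and dominated convergence applies with the bound $\norm{\varphi}_\infty$, locally uniformly in $y$ thanks to the factor $e^{\beta_{A,N}y}$.

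For positivity, take $\varphi=\mathbf 1$. Coordinatewise $\abs{\tanh u}\leq\abs u$ gives $\eta\geq0$, so $\{Y+\Delta>y\}\subseteq\{Y+\Delta+\eta>y\}$ pathwise. Hence
\begin{equation*}
  \Psi_y^\pi(\mathbf 1)=e^{\beta_{A,N}y}\,\Pp\bigl(Y+\Delta\leq y<Y+\Delta+\eta\bigr)\geq0
  \quad\mbox{for every }y.
\end{equation*}
Moreover, $\eta>0$ almost surely, since $rU$ has a.s. nonzero coordinates and $\abs{\tanh u}<\abs u$ for $u\neq0$. The random interval $[Y+\Delta,Y+\Delta+\eta)$ therefore has positive length a.s., so $\int_\R\Pp(Y+\Delta\leq t<Y+\Delta+\eta)\,\mathrm{d}t=\Ee\eta>0$, allowing the value $+\infty$. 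Consequently $\Psi_t^\pi(\mathbf 1)$ is positive on a set of positive Lebesgue measure. Combined with nonnegativity, the integrability already proved, and $\widehat m_{A,N}\in(0,\infty)$ from \cref{lem:nd-gaussian-renewal}, this gives $c_{\Psi^\pi}>0$.

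This route deliberately avoids Fubini on the quantity $e^{\beta_{A,N}(Y+\Delta+\eta)}$, which need not be integrable. The main obstacle I expect is the domination step: justifying the regular conditional probabilities and the exchange of $\sup_y$ with $\Ee$. Both are handled by the pointwise-in-$(s,\theta)$ bound from \cref{lem:nd-gaussian-polar-perturbation}, which is uniform over $r\leq\sqrt N$, followed by Tonelli.
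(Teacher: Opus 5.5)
Your proof is correct and follows the paper's argument essentially step for step: you condition on $(Y,\Theta)$ to invoke \cref{lem:nd-gaussian-polar-perturbation}, sum the envelope $e^{\beta_{A,N}t}h(t)$ uniformly in the shift and close with \cref{lem:nd-subcritical-exponential-moments}, get continuity from the absence of atoms, and get positivity from $\eta>0$ almost surely. The only difference is cosmetic. For positivity the paper applies Tonelli with weight $e^{\beta_{A,N}y}$ to obtain $\int\Psi^\pi_y(\mathbf 1)\,\mathrm{d}y=\beta_{A,N}^{-1}\Ee[e^{\beta_{A,N}(Y+\Delta)}(e^{\beta_{A,N}\eta}-1)]$, which is legitimate for a nonnegative integrand, so your integrability worry there is unnecessary; you apply it with weight one and get $\Ee\eta>0$ instead.
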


\begin{proof}
  \emph{Step 1: A summable bound for $\norm{\Psi_y^\pi}_{\mathrm{TV}}$.}
  Choose $\alpha\in(0,\min\{2,\beta_{A,N}\})$. Conditional on
  $(Y,\Theta)$, Gaussian isotropy shows that $W\Theta$ has the same law as
  $U:=(A/\sqrt N)G$. This law does not depend on $(Y,\Theta)$. Set
  $r=e^{-Y}$ and $t=y-Y$. Since $Y\geq-\frac12\log N$, we have
  $0<r\leq\sqrt N$.
  Conditional on $(Y,\Theta)$, the signed measure in
  \cref{eq:nd-forcing-definition} is therefore the difference estimated in
  \cref{lem:nd-gaussian-polar-perturbation}. It follows that
  \begin{equation}\label{eq:nd-forcing-envelope-bound}
    \norm{\Psi_y^\pi}_{\mathrm{TV}}
    \leq
    C e^{\beta_{A,N}y}
    \Ee\left[
      e^{-\alpha Y}h(y-Y)
    \right]
    =
    C\Ee\left[
      e^{(\beta_{A,N}-\alpha)Y}
      H(y-Y)
    \right],
  \end{equation}
  where $h(t)=\min\{1,e^{-Nt}\}$. Put
  $H(t):=e^{\beta_{A,N}t}h(t)$, so that
  \begin{equation*}
    H(t)
    =
    \begin{cases}
      e^{\beta_{A,N}t}, & t<0,\\
      e^{-(N-\beta_{A,N})t}, & t\geq0.
    \end{cases}
  \end{equation*}
  Since $0<\beta_{A,N}<N$, both tails decay exponentially and
  \begin{equation*}
    \sup_{a\in\R}
    \sum_{k\in\Z}\sup_{y\in[k,k+1]}H(y-a)
    \leq C .
  \end{equation*}
  Using this bound in \cref{eq:nd-forcing-envelope-bound} gives
  \begin{equation*}
    \begin{split}
    \sum_{k\in\Z}
    \sup_{y\in[k,k+1]}\norm{\Psi_y^\pi}_{\mathrm{TV}}
    &\leq
    C\Ee\left[
      e^{(\beta_{A,N}-\alpha)Y}
      \sum_{k\in\Z}
      \sup_{y\in[k,k+1]}H(y-Y)
    \right]
    \\
    &\leq
    C\Ee e^{(\beta_{A,N}-\alpha)Y}.
    \end{split}
  \end{equation*}
  The last expectation is finite by
  \cref{lem:nd-subcritical-exponential-moments}, applied with exponent
  $\beta_{A,N}-\alpha$. This proves \cref{eq:nd-dri-definition}.

  \emph{Step 2: Continuity of $y\mapsto\Psi_y^\pi(\varphi)$.}
  Fix a continuous $\varphi$. Conditional on $(Y,\Theta)$, the vector
  $W\Theta$ has the nondegenerate Gaussian law
  $\mathcal N(0,(A^2/N)I_N)$. The map
  \begin{equation*}
    v\longmapsto\tanh(e^{-Y}v)
  \end{equation*}
  is a diffeomorphism from $\R^N$ onto $(-1,1)^N$. Thus both $W\Theta$ and
  $\tanh(e^{-Y}W\Theta)$ have conditional densities. Since Euclidean spheres
  have Lebesgue measure zero, the two variables
  \begin{equation*}
    Y+\Delta=Y-\log\norm{W\Theta}_2,
    \qquad
    Y+\Delta+\eta
    =-\log\norm{\tanh(e^{-Y}W\Theta)}_2
  \end{equation*}
  have no atoms, conditionally and hence unconditionally. If $y_n\to y$, the
  two indicators in \cref{eq:nd-forcing-definition} therefore converge almost
  surely. The boundedness of $\varphi$ and dominated convergence show that
  $\Psi_{y_n}^\pi(\varphi)\to\Psi_y^\pi(\varphi)$.

  \emph{Step 3: Positivity of $c_{\Psi^\pi}$.}
  Taking $\varphi=\mathbf 1$ and using $\eta\geq0$, we obtain
  \begin{equation*}
    \Psi_y^\pi(\mathbf 1)
    =
    e^{\beta_{A,N}y}
    \Pp\left(Y+\Delta\leq y<Y+\Delta+\eta\right)
    \geq0 .
  \end{equation*}
  The bound in Step~1 implies that this function is integrable. Tonelli's
  theorem gives
  \begin{equation*}
    \int_\R\Psi_y^\pi(\mathbf 1)\,\mathrm{d}y
    =
    \frac1{\beta_{A,N}}
    \Ee\left[
      e^{\beta_{A,N}(Y+\Delta)}
      \left(e^{\beta_{A,N}\eta}-1\right)
    \right].
  \end{equation*}
  Since $Y<\infty$ and $W\Theta\neq0$ almost surely, the strict inequality
  \begin{equation*}
    \norm{\tanh(e^{-Y}W\Theta)}_2
    <
    e^{-Y}\norm{W\Theta}_2
    \qquad\mbox{almost surely}
  \end{equation*}
  implies $\eta>0$ almost surely. Hence the last expectation is strictly
  positive. Since $\widehat m_{A,N}>0$ by
  \cref{lem:nd-gaussian-renewal}, \cref{eq:nd-forcing-constant} gives
  $c_{\Psi^\pi}>0$, which proves the proposition.
\end{proof}

\begin{proof}[Proof of \cref{thm:nd-power-singularity:intro}]
  \emph{Step 1: Boundary conditions for $\Hcal_y$.}
  Let
  $(Y,\Theta)$ have law $\widetilde{\pi}_{A,N}$ and define, for $y\in\R$,
  \begin{equation}\label{eq:nd-weighted-tail-measure}
    \Hcal_y(B)
    :=
    e^{\beta_{A,N}y}
    \Ee\left[
      \one_{\{Y>y\}}\one_{\{\Theta\in B\}}
    \right].
  \end{equation}
  Each $\Hcal_y$ is a nonnegative finite measure on $\mathbb S^{N-1}$. Since
  $\widetilde{\pi}_{A,N}$ is the image of $\pi_{A,N}$ under
  $x\mapsto(-\log\norm{x}_2,\hat x)$, we have
  \begin{equation}\label{eq:nd-tail-measure-identity}
    \Hcal_y(B)
    =
    e^{\beta_{A,N}y}\,
    \pi_{A,N}\!\left(0<\norm{x}_2<e^{-y},\ \hat x\in B\right).
  \end{equation}
  By \cref{prop:nd-stationary-equation}, $\pi_{A,N}$ is absolutely continuous on
  $(-1,1)^N$ and therefore charges no sphere $\{\norm{x}_2=s\}$. Thus the strict
  radial inequality in \cref{eq:nd-tail-measure-identity} may be replaced by
  $\norm{x}_2\leq e^{-y}$.

  Since $Y\geq-\frac12\log N$,
  $\norm{\Hcal_y}_{\mathrm{TV}}=e^{\beta_{A,N}y}$ for
  $y<-\frac12\log N$, and hence
  $\norm{\Hcal_y}_{\mathrm{TV}}\to0$ as $y\to-\infty$.
  Moreover,
  \begin{equation*}
    e^{-\beta_{A,N}y}\norm{\Hcal_y}_{\mathrm{TV}}
    =
    \Pp(Y>y)
    \to0,
    \qquad y\to\infty.
  \end{equation*}
  The bound $\norm{\Hcal_y}_{\mathrm{TV}}\leq e^{\beta_{A,N}y}$ also gives local
  boundedness. Hence $\Hcal_y$ satisfies
  \cref{eq:nd-renewal-boundary-conditions}.

  \emph{Step 2: The renewal equation for $\Hcal_y$.}
  Let $W$ be an independent copy of
  $\mathsf W_A^{(N)}$, independent of $(Y,\Theta)$, and use the notation of
  \cref{def:nd-nonlinear-forcing}.
  For a continuous test function $\varphi$ on $\mathbb S^{N-1}$,
  stationarity and \cref{eq:nd-log-polar-stationary} give
  \begin{equation}\label{eq:nd-tail-stationarity}
    \Hcal_y(\varphi)
    =
    e^{\beta_{A,N}y}
    \Ee\left[
      \varphi(\Theta_+)
      \one_{\{Y+\Delta+\eta>y\}}
    \right].
  \end{equation}
  Add and subtract the linearized term with $(\Theta_0,Y+\Delta)$. The difference
  between the two terms is precisely
  $\Psi_y^\pi(\varphi)$ from \cref{eq:nd-forcing-definition}.
  Since $\widehat P_{A,N}$ is the $e^{\beta_{A,N}z}$-tilt of the linear
  log-polar transition kernel $P_{A,N}^{\mathrm{lin}}$ from
  \cref{eq:nd-transfer-operator},
  \begin{equation*}
    \begin{split}
    &\int_{\mathbb S^{N-1}}
    \int
    \varphi(\theta')\,
    \widehat P_{A,N}(\theta,\mathrm{d}z,\mathrm{d}\theta')\,
    \Hcal_{y-z}(\mathrm{d}\theta)
    \\
    &\qquad =
    e^{\beta_{A,N}y}
    \Ee\left[
      \varphi(\Theta_0)
      \one_{\{Y+\Delta>y\}}
    \right].
    \end{split}
  \end{equation*}
  Therefore
  \begin{equation}\label{eq:nd-renewal-equation}
    \Hcal_y(\varphi)
    =
    \int_{\mathbb S^{N-1}}
    \int
    \varphi(\theta')\,
    \widehat P_{A,N}(\theta,\mathrm{d}z,\mathrm{d}\theta')\,
    \Hcal_{y-z}(\mathrm{d}\theta)
    +
    \Psi_y^\pi(\varphi).
  \end{equation}
  This is \cref{eq:nd-abstract-renewal-equation}.

  \emph{Step 3: The limit from the renewal lemma.}
  Step~1 verifies the boundary conditions and local boundedness of $\Hcal_y$,
  and Step~2 gives the renewal equation. By
  \cref{prop:nd-forcing-admissibility}, $\Psi^\pi$ satisfies
  \cref{eq:nd-dri-definition}. Continuity of
  $y\mapsto\Psi_y^\pi(\varphi)$ was also established in its proof. Thus the
  assumptions of \cref{lem:nd-gaussian-renewal} hold. The constant
  $c_{\Psi^\pi}$ in that lemma is positive by
  \cref{prop:nd-forcing-admissibility}. By the uniqueness in
  \cref{cor:gaussian-unique-vector-invariant}, it depends only on $(A,N)$.
  Set $c_{A,N}:=c_{\Psi^\pi}$. The lemma gives
  \begin{equation}\label{eq:nd-weighted-renewal-limit}
    \Hcal_y(\varphi)
    \to
    c_{A,N}
    \int_{\mathbb S^{N-1}}\varphi(\theta)\,
    \bar\sigma_N(\mathrm{d}\theta)
    \qquad \mbox{as } y\to\infty .
  \end{equation}

  \emph{Step 4: The directional and radial asymptotics.}
  The convergence in \cref{eq:nd-weighted-renewal-limit} means that
  $\Hcal_y$ converges weakly to $c_{A,N}\bar\sigma_N$. By the Portmanteau
  theorem,
  $\Hcal_y(B)\to c_{A,N}\bar\sigma_N(B)$ for every Borel set $B$ with
  $\bar\sigma_N(\partial B)=0$. Taking $y=-\log s$ in
  \cref{eq:nd-tail-measure-identity} and using that $\pi_{A,N}$ charges no
  sphere gives
  \begin{equation*}
    s^{-\beta_{A,N}}
    \pi_{A,N}\!\left(0<\norm{x}_2\leq s,\ \hat x\in B\right)
    =\Hcal_{-\log s}(B)
    \longrightarrow c_{A,N}\bar\sigma_N(B),
    \qquad s\downarrow0.
  \end{equation*}
  This is the directional asymptotic. Taking $B=\mathbb S^{N-1}$, which has
  empty boundary, gives the radial asymptotic.
\end{proof}

\section{Numerical illustrations}
\label{sec:numerics}

The figures compare the theoretical predictions with numerical calculations.
We evaluate $V_A$ and $V_{A,\varrho}$ by Gauss--Hermite
quadrature or Poisson-summation formulas. These curves contain no sampling
error. The chains $R^{(N)}$, $Z^{(N)}$, and $\mathscr H^{(N)}$ are simulated
with fixed random seeds. The dashed curves
in the supercritical cutoff plot are heuristic finite-$N$ Gaussian
approximations included for comparison. The scripts are available from the
author upon request.

\emph{The map $V_A$.}\par\noindent
As $N\to\infty$, the iterates of $V_A(q)=\Ee\tanh^2(A\sqrt q\,G)$ describe the
deterministic limit of $Z^{(N)}$. \Cref{fig:meanfield} shows the transition at
$A=1$ and, in the supercritical regime, convergence to the unique positive
fixed point $q_\ast$. Its multiplier $\mu_A=V_A'(q_\ast)\in(0,1)$ determines
the cutoff scale in dimension.

\begin{figure}[H]
  \centering
  \includegraphics[width=\textwidth]{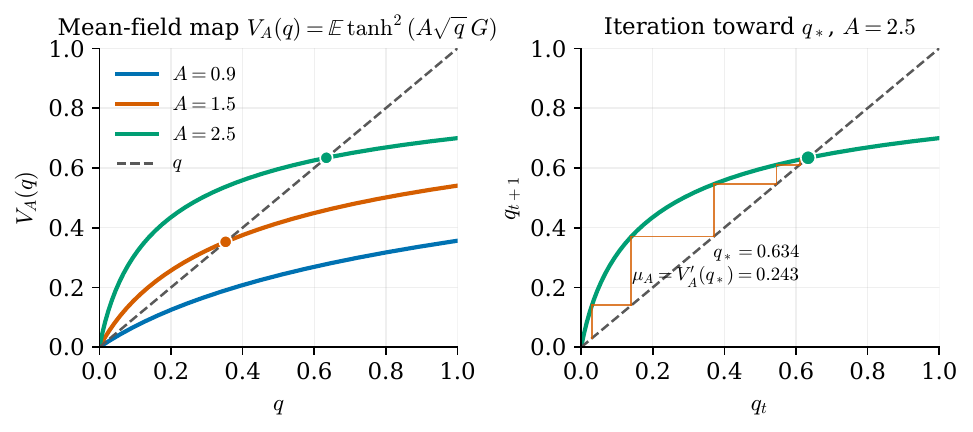}
  \caption{The map $V_A$. Left: $V_A$ and the diagonal for three
  gains, showing the positive fixed point for $A>1$. Right: iteration of
  $V_A$ toward $q_\ast$ for $A=2.5$.}
  \label{fig:meanfield}
\end{figure}

\emph{Supercritical dimension cutoff.}
\Cref{fig:cutoff} shows the empirical
total variation distance $\norm{K_{A,N}^t(q_0,\cdot)-\nu_{A,N}}_{\TV}$ of
$Z^{(N)}$ to its nonzero invariant law, plotted against the centered time
$t-t_N(q_0)$ for a range of widths $N$. The curves stay near one before the
cutoff time and drop toward zero afterwards, consistently with
\cref{thm:gaussian-process-cutoff}.

\begin{figure}[H]
  \centering
  \includegraphics[width=0.66\textwidth]{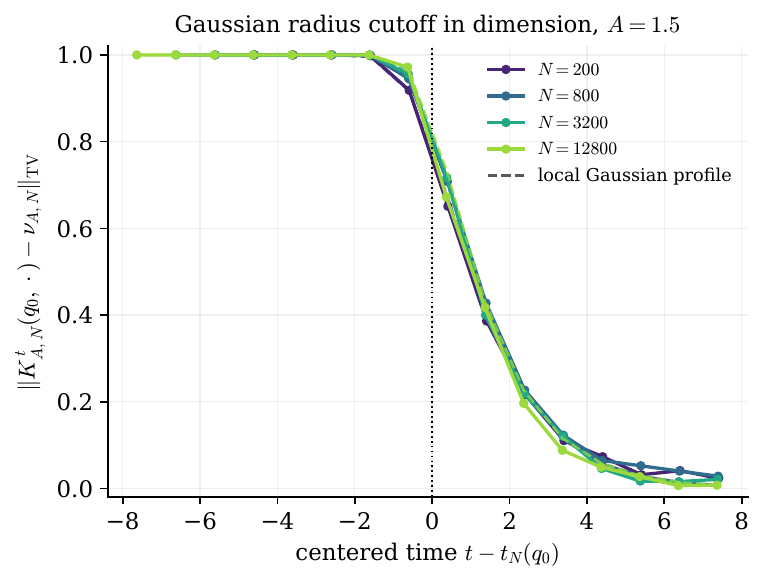}
  \caption{Numerical supercritical dimension cutoff for $Z^{(N)}$, with
  $A=1.5$ and $q_0=0.05$. The solid curves are histogram estimates
  of the total variation distance to $\nu_{A,N}$ from $4{,}000$ simulated
  paths. The dashed curves are heuristic local Gaussian approximations.}
  \label{fig:cutoff}
\end{figure}

\emph{Invariant law and power-law singularity.}
In the supercritical regime, $X^{(N)}$ has a nonzero invariant law
characterized by \cref{prop:nd-stationary-equation}. \Cref{fig:radiuslaw}
compares a long run of $R^{(N)}$ with a numerical solution of a
discretized invariant equation. For $N=1$, the transition probabilities are
computed from the exact conditional distribution on a mesh refined near zero.
The logarithmic lower-tail plot also shows a
reference line with the Cram\'er slope $\beta_{A,N}$ from
\cref{thm:nd-power-singularity:intro}.

\begin{figure}[H]
  \centering
  \includegraphics[width=\textwidth]{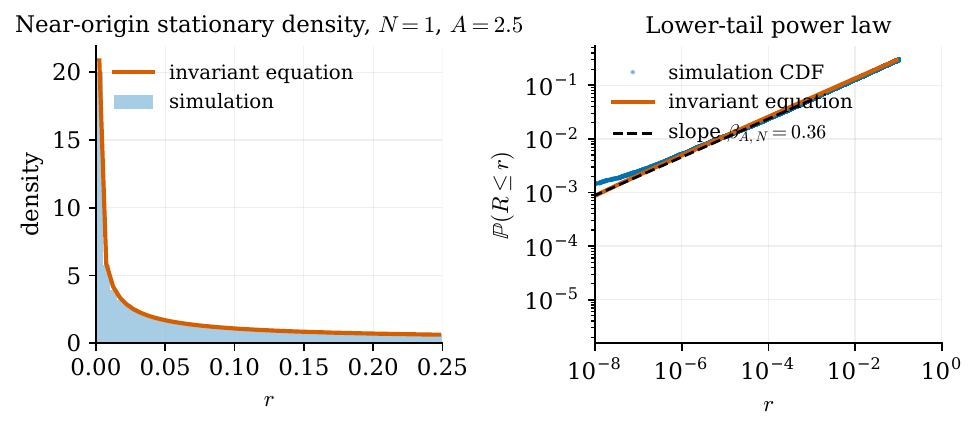}
  \caption{Supercritical stationary Euclidean radius for $N=1$ and $A=2.5$.
  Left: the near-origin density from simulation and a numerical solution of
  the invariant equation. The density increases toward zero because
  $\beta_{A,1}<1$. Right: the empirical and numerical lower tails and a reference line of
  slope $\beta_{A,1}$. The observed slope is consistent with
  \cref{thm:nd-power-singularity:intro}.}
  \label{fig:radiuslaw}
\end{figure}

\emph{Vanishing-mesh cutoff at fixed width.}
At fixed width and subcritical gain, \cref{eq:tv-absorption} identifies the
survival probability with the total variation distance to $\delta_0$.
\Cref{fig:precision} plots this probability on the theorem scale, with center
$L_\varrho/\gamma_{A,N}$ and window
$\sigma_N\gamma_{A,N}^{-3/2}\sqrt{L_\varrho}$. As $\varrho$ decreases, the
curves approach the Gaussian first-passage profile
$\Phi_{\mathrm G}(-a)$ of
\cref{thm:rounded-gaussian-nearest-cutoff,prop:rounded-gaussian-threshold-cutoff}.
For the smallest displayed $\varrho$, the figure also includes the random-walk
approximation, which shows the lattice correction.

\begin{figure}[H]
  \centering
  \includegraphics[width=0.66\textwidth]{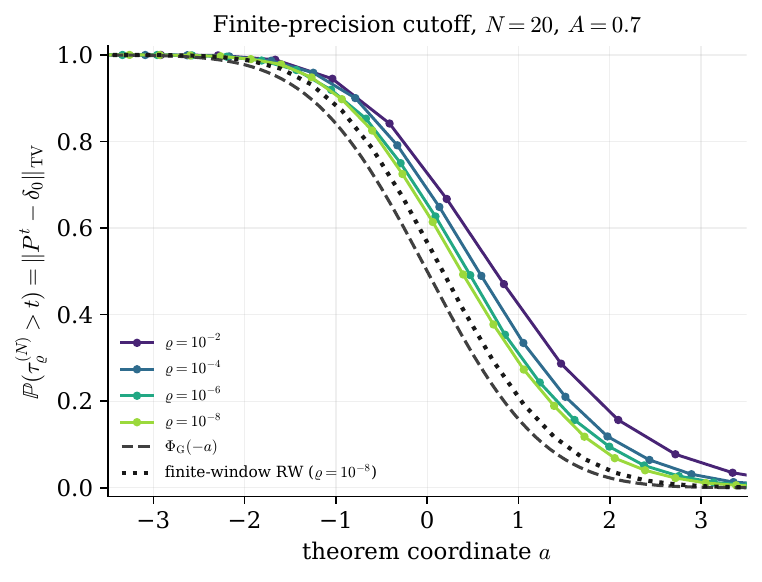}
  \caption{Vanishing-mesh cutoff at fixed width, with $N=20$ and $A=0.7$.
  The survival probability, equal to the total variation distance to
  $\delta_0$, is plotted using the center and window in
  \cref{thm:rounded-gaussian-nearest-cutoff}. The dashed curve is
  $\Phi_{\mathrm G}(-a)$. The dotted curve is the random-walk approximation at
  $\varrho=10^{-8}$.}
  \label{fig:precision}
\end{figure}

\emph{Subcritical cutoff as $N\to\infty$.}
Fix the mesh size and let the dimension grow with $A<A_{\mathrm{lat}}$. For
the displayed parameters,
$A=0.7<A_{\mathrm{lat}}\approx0.8663$ and $\varrho=0.5$. By
\cref{thm:subcritical-dimension-cutoff}, the survival probability remains near
one at time $\bar t_N-1$ and is near zero by time $\bar t_N+2$, where
$\bar t_N$ is the first time at which $(\bar h_{t,N})_{t\geq0}$ enters
the scale $(\log N)^{-1}$. \Cref{fig:subcritical-dimension-cutoff} plots the
empirical survival probability against the centered integer time
$t-\bar t_N$. For this figure, we sample each step of $\mathscr H^{(N)}$
exactly.

\begin{figure}[H]
  \centering
  \includegraphics[width=0.66\textwidth]{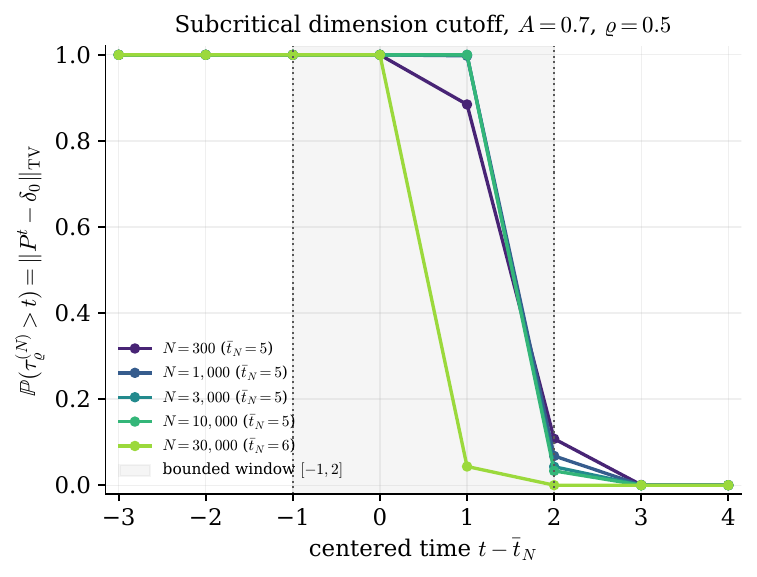}
  \caption{Fixed-precision subcritical dimension cutoff ($A=0.7$,
  $\varrho=0.5$, and $x_N=(1,\ldots,1)$, with $40{,}000$ paths per
  dimension). The empirical survival probability
  $\Pp(\tau_\varrho^{(N)}>t)=
  \norm{P_{\varrho,A,N}^t(Q_\varrho x_N,\cdot)-\delta_0}_{\TV}$ is plotted
  against $t-\protect\bar t_N$. The dotted lines mark the lower and upper time
  bounds in \cref{thm:subcritical-dimension-cutoff}, namely
  $\protect\bar t_N-1$ and $\protect\bar t_N+2$.}
  \label{fig:subcritical-dimension-cutoff}
\end{figure}

\emph{Metastability at fixed precision.}
When the rightmost positive-drift component $(h_-,h_+)$ exists,
$\mathscr H^{(N)}$ approaches $h_+$ and remains nearby until a rare fluctuation
triggers absorption. See \cref{prop:metastable-positive-fixed-points}.
\Cref{fig:metastability} shows $V_{A,\varrho}$, sample paths exhibiting the two
time scales, and the growth of the absorption time with width.

\begin{figure}[H]
  \centering
  \includegraphics[width=\textwidth]{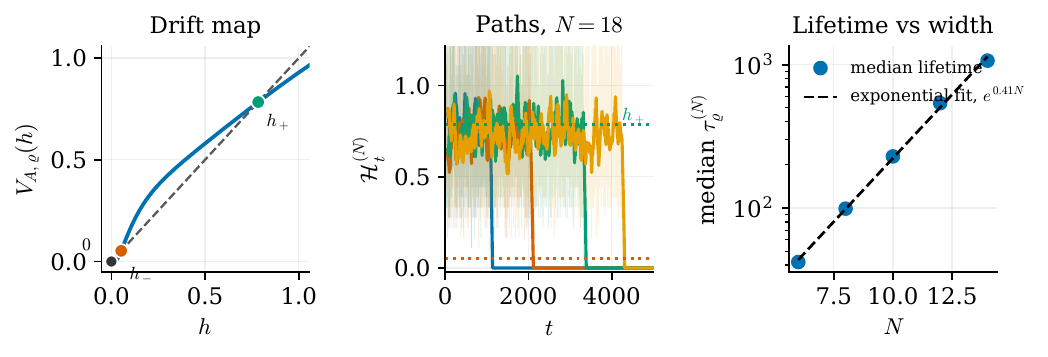}
  \caption{Metastability for $A=1.15$ and $\varrho=0.5$. Left: the
  map $V_{A,\varrho}$ with unstable endpoint $h_-$ and stable endpoint $h_+$.
  Middle: representative paths and moving averages with window $60$.
  Right: median absorption times and an exponential fit. The observed growth
  is consistent with \cref{thm:rounded-qualitative-metastability}.}
  \label{fig:metastability}
\end{figure}

\emph{Critical curves.}
\Cref{fig:thresholds} compares $A_c(N)$ from
\cref{eq:fixed-width-critical}, which decreases to $1$, with
$A_{\mathrm{lat}}$ and $A_{\mathrm{ex}}(\varrho)$.

\begin{figure}[H]
  \centering
  \includegraphics[width=\textwidth]{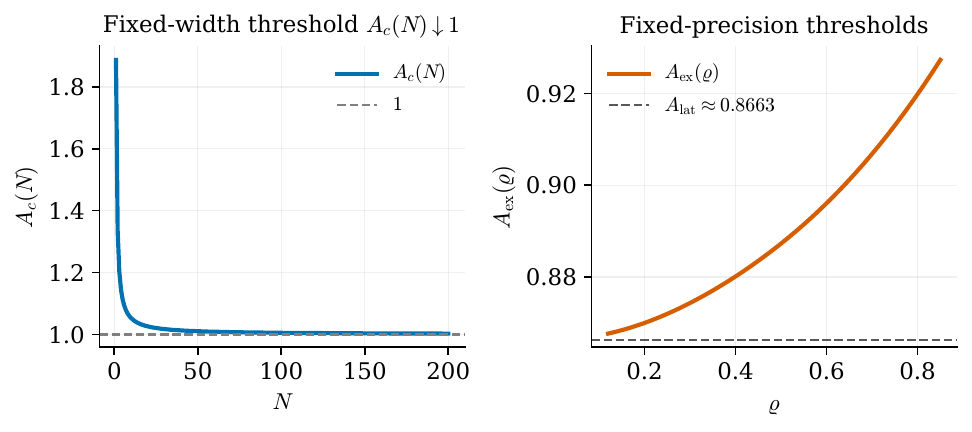}
  \caption{Thresholds. Left: $A_c(N)$ decreases to $1$. Right:
  $A_{\mathrm{ex}}(\varrho)$, evaluated numerically from its variational
  definition, and $A_{\mathrm{lat}}$.}
  \label{fig:thresholds}
\end{figure}

\clearpage
\appendix

\section{Proofs of auxiliary probabilistic results}
\label{app:common-probabilistic-estimates}

This appendix contains proofs of
\cref{lem:common-stopped-orbit-tracking,lem:negative-drift-affine-entrance}
and
\cref{prop:gaussian-compactness-selection,prop:gaussian-nonzero-invariant-existence}.

\subsection{Proof of
\texorpdfstring{\Cref{lem:common-stopped-orbit-tracking}}
{Lemma~\ref{lem:common-stopped-orbit-tracking}}}
\label{app:common-stopped-orbit-tracking}

\begin{proof}
  Put $T:=T_N$ and suppress the index $N$ on $\mathcal R_t$, $A_t$,
  $\eta_t$, $\alpha_t$, and $\sigma_t$ throughout the proof.

  \emph{Step 1: The moment before $\tau_N$.}
  Fix $0\leq t<T$. Since $\tau_N$ is a stopping time,
  $\{\tau_N>t\}\in\mathcal F_t$. Moreover,
  $\{\tau_N>t+1\}\subseteq\{\tau_N>t\}$, and on the latter event the
  recursion \cref{eq:common-tracking-recursion} applies. Consequently,
  \begin{equation*}
    \mathcal R_{t+1}^2\one_{\{\tau_N>t+1\}}
    \leq
    \one_{\{\tau_N>t\}}
    (A_t\mathcal R_t+\eta_{t+1})^2.
  \end{equation*}
  The variables $\one_{\{\tau_N>t\}}$, $A_t$, and $\mathcal R_t$ are
  $\mathcal F_t$-measurable. Thus the conditional cross term vanishes by
  \cref{eq:common-tracking-noise}, and
  \begin{equation*}
    \Ee\left[
      \mathcal R_{t+1}^2\one_{\{\tau_N>t+1\}}
      \,\middle|\,\mathcal F_t
    \right]
    \leq
    \one_{\{\tau_N>t\}}
    \left(
      A_t^2\mathcal R_t^2
      +\Ee[\eta_{t+1}^2\mid\mathcal F_t]
    \right)
    \leq
    \one_{\{\tau_N>t\}}
    \left(
      \alpha_t^2\mathcal R_t^2
      +\frac{C\sigma_t^2}{N}
    \right).
  \end{equation*}
  Set
  $m_t:=\Ee[\mathcal R_t^2\one_{\{\tau_N>t\}}]$. Taking expectations in
  the preceding display and using $\Pp(\tau_N>t)\leq1$ gives
  \begin{equation*}
    m_{t+1}
    \leq
    \alpha_t^2m_t+\frac{C\sigma_t^2}{N}.
  \end{equation*}
  Iterating this deterministic recursion, and using
  $m_0\leq\Ee\mathcal R_0^2$, yields
  \begin{equation*}
    m_t
    \leq
    \left(\prod_{u=0}^{t-1}\alpha_u^2\right)
    \Ee\mathcal R_0^2
    +\frac{C}{N}\sum_{s=0}^{t-1}
    \sigma_s^2\prod_{u=s+1}^{t-1}\alpha_u^2.
  \end{equation*}
  This is \cref{eq:common-killed-orbit-tracking}.

  \emph{Step 2: The moment of the stopped process.}
  Define the deterministic backward weights
  \begin{equation*}
    W_t:=\prod_{u=t}^{T-1}(1\vee\alpha_u^2),
    \qquad 0\leq t\leq T.
  \end{equation*}
  These weights satisfy
  \begin{equation*}
    W_T=1,
    \qquad
    W_{t+1}\leq W_t,
    \qquad
    \alpha_t^2W_{t+1}\leq W_t.
  \end{equation*}
  In addition, the definition of $K_N$ gives
  \begin{equation*}
    W_0^{1/2}
    =\prod_{u=0}^{T-1}(1\vee\alpha_u)
    \leq K_N,
    \qquad
    \sigma_tW_{t+1}^{1/2}
    =\sigma_t\prod_{u=t+1}^{T-1}(1\vee\alpha_u)
    \leq K_N.
  \end{equation*}

  For the one-step weighted estimate, note that on
  $\{\tau_N\leq t\}$ the stopped process is constant, whereas on
  $\{\tau_N>t\}$ the recursion applies. Hence
  \begin{equation*}
    \overline{\mathcal R}_{t+1,N}
    =
    \one_{\{\tau_N\leq t\}}\overline{\mathcal R}_{t,N}
    +
    \one_{\{\tau_N>t\}}(A_t\mathcal R_t+\eta_{t+1}).
  \end{equation*}
  The two events in this decomposition are disjoint. Taking conditional
  expectations, using \cref{eq:common-tracking-noise}, and applying the
  preceding bounds therefore gives
  \begin{align*}
    &\Ee\left[
      W_{t+1}\overline{\mathcal R}_{t+1,N}^2
      \,\middle|\,\mathcal F_t
    \right]
    \\
    &\quad=
    \one_{\{\tau_N\leq t\}}W_{t+1}
      \overline{\mathcal R}_{t,N}^2
    +
    \one_{\{\tau_N>t\}}W_{t+1}
      \left(A_t^2\mathcal R_t^2
      +\Ee[\eta_{t+1}^2\mid\mathcal F_t]\right)
    \\
    &\quad\leq
    \one_{\{\tau_N\leq t\}}W_t
      \overline{\mathcal R}_{t,N}^2
    +
    \one_{\{\tau_N>t\}}W_t\mathcal R_t^2
    +\frac{CK_N^2}{N}
    \\
    &\quad=
    W_t\overline{\mathcal R}_{t,N}^2
    +\frac{CK_N^2}{N}.
  \end{align*}
  Taking expectations and using the tower property gives, for every
  $0\leq t<T$,
  \begin{equation*}
    \Ee\left[W_{t+1}\overline{\mathcal R}_{t+1,N}^2\right]
    -\Ee\left[W_t\overline{\mathcal R}_{t,N}^2\right]
    \leq
    \frac{CK_N^2}{N}.
  \end{equation*}
  Summing from $t=0$ to $T-1$, the left-hand side telescopes, and we obtain
  \begin{equation*}
    \Ee\left[W_T\overline{\mathcal R}_{T,N}^2\right]
    -\Ee\left[W_0\overline{\mathcal R}_{0,N}^2\right]
    \leq
    \frac{CK_N^2T}{N}.
  \end{equation*}
  Since $W_T=1$, $W_0$ is deterministic, and
  $\overline{\mathcal R}_{0,N}=\mathcal R_{0,N}$, this is equivalent to
  \begin{equation*}
    \Ee\overline{\mathcal R}_{T,N}^2
    \leq
    W_0\Ee\mathcal R_{0,N}^2+\frac{CK_N^2T}{N}.
  \end{equation*}
  Since $W_0\leq K_N^2$, this proves
  \cref{eq:common-stopped-orbit-tracking}.

  \emph{Step 3: The probability of exit.}
  Suppose that $\tau_N$ is the first time at which
  $\abs{\mathcal R_{t,N}}>\delta$. On $\{\tau_N\leq T_N\}$ we have
  $T_N\wedge\tau_N=\tau_N$. Therefore, by the definition of
  the stopped process and the strict inequality in the definition of
  $\tau_N$,
  \begin{equation*}
    \overline{\mathcal R}_{T_N,N}
    =\mathcal R_{\tau_N,N},
    \qquad
    \abs{\mathcal R_{\tau_N,N}}>\delta.
  \end{equation*}
  Hence
  \begin{equation*}
    \delta^2\one_{\{\tau_N\leq T_N\}}
    \leq
    \overline{\mathcal R}_{T_N,N}^2.
  \end{equation*}
  Taking expectations and applying
  \cref{eq:common-stopped-orbit-tracking}, we obtain
  \begin{align*}
    \delta^2\Pp(\tau_N\leq T_N)
    &=
    \delta^2\Ee\one_{\{\tau_N\leq T_N\}}
    \\
    &\leq
    \Ee\overline{\mathcal R}_{T_N,N}^2
    \\
    &\leq
    CK_N^2\left(
      \Ee\mathcal R_{0,N}^2+\frac{T_N}{N}
    \right).
  \end{align*}
  Dividing by $\delta^2$ proves \cref{eq:common-orbit-exit-bound} and
  completes the proof.
\end{proof}

\subsection{Proof of
\texorpdfstring{\Cref{lem:negative-drift-affine-entrance}}
{Lemma~\ref{lem:negative-drift-affine-entrance}}}
\label{app:common-affine-entrance}

\begin{proof}
  \emph{Step 1: The recursion before $T$.}
  By our assumptions we can choose $\eps>0$ so small that
  $\mu_\eps:=\Ee\log(M_1+\eps)<0$. Take $K_\ast\geq b/\eps$. Whenever
  $U_t\geq K_\ast$,
  \begin{equation*}
    U_{t+1}
    =
    M_{t+1}U_t+b
    \leq
    M_{t+1}U_t+\eps K_\ast
    \leq
    (M_{t+1}+\eps)U_t .
  \end{equation*}
  Let $T:=\inf\{t:U_t\leq K_\ast\}$. On the event $\{T>n\}$ we have
  $U_t>K_\ast$ for every $t\leq n$, so the previous bound applies at each step
  and
  \begin{equation*}
    \log U_n\leq\log K+\widetilde S_n,
    \qquad
    \widetilde S_n:=\sum_{j=1}^n\log(M_j+\eps),
  \end{equation*}
  a random walk with increment mean $\mu_\eps<0$. Since $U_n>K_\ast$ on
  $\{T>n\}$, this forces $\widetilde S_n>-\log(K/K_\ast)$.

  \emph{Step 2: The tail of $T$.}
  By hypothesis $\log(M_1+\eps)$ has exponential moments near the origin, so
  $\Lambda(s):=\log\Ee(M_1+\eps)^s$ is finite for $s$ in a neighborhood of $0$,
  with $\Lambda(0)=0$ and $\Lambda'(0)=\mu_\eps<0$. Fix $s>0$ small enough that
  $\Lambda(s)<0$ and put $c_3:=-\Lambda(s)>0$. Chernoff's inequality gives, for
  every integer $n\geq1$,
  \begin{equation*}
    \Pp(T>n)
    \leq
    \Pp\!\left(\widetilde S_n>-\log(K/K_\ast)\right)
    \leq
    \left(\frac K{K_\ast}\right)^{s}\left(\Ee(M_1+\eps)^s\right)^{n}
    =
    \left(\frac K{K_\ast}\right)^{s}e^{-c_3n}.
  \end{equation*}
  Since $T$ is integer valued,
  $\Pp(T>c_1\log K+r)=\Pp(T>\lfloor c_1\log K+r\rfloor)$, and using
  $\lfloor c_1\log K+r\rfloor>c_1\log K+r-1$ with the choice $c_1:=s/c_3$,
  \begin{equation*}
    \Pp(T>c_1\log K+r)
    \leq
    \left(\frac K{K_\ast}\right)^{s}e^{c_3}e^{-c_3(c_1\log K+r)}
    =
    K_\ast^{-s}e^{c_3}\,K^{s-c_3c_1}e^{-c_3r}
    =
    K_\ast^{-s}e^{c_3}e^{-c_3r},
  \end{equation*}
  because $s-c_3c_1=0$. This is \cref{eq:negative-drift-affine-entrance} with
  $c_2:=\max\{K_\ast^{-s}e^{c_3},1\}$ and completes the proof.
\end{proof}

\subsection{Proof of
\texorpdfstring{\Cref{prop:gaussian-compactness-selection}}
{Proposition~\ref{prop:gaussian-compactness-selection}}}
\label{app:common-invariant-law}

\begin{proof}
  We use the Krylov--Bogolyubov averaging argument. Compare with
  \cite{kryloff-bogoliouboff1937}.

  \emph{Step 1: $K_{A,N}$ is Feller.}
  The space $[0,1]$ is compact, and hence the set of probability measures on
  $[0,1]$ is weakly compact. The transition kernel is
  Feller: if $\phi$ is continuous and bounded on $[0,1]$, then
  \begin{equation*}
    K_{A,N}\phi(q)
    =
    \int_{\R^N}
    \phi\left(F_{A,N}(q,g)\right)\,\mathcal G_N(\mathrm{d}g)
  \end{equation*}
  is continuous in $q$, by the continuity of $F_{A,N}$ and dominated
  convergence.

  \emph{Step 2: A Ces\`aro limit is invariant.}
  Let $\overline\nu_{T_j}^{\,q}\Rightarrow\overline\nu$. For every continuous
  $\phi$,
  \begin{align*}
    \overline\nu_{T_j}^{\,q}K_{A,N}\phi
    -
    \overline\nu_{T_j}^{\,q}\phi
    &=
    \frac1{T_j}
    \left(
      K_{A,N}^{T_j}\phi(q)-\phi(q)
    \right).
  \end{align*}
  The right-hand side tends to zero. Passing to the limit, using the Feller
  property for $K_{A,N}\phi$, gives
  $\overline\nu K_{A,N}\phi=\overline\nu\phi$ for every continuous $\phi$.
  Since probability measures on $[0,1]$ are determined by their integrals
  against continuous functions, $\overline\nu$ is invariant and satisfies
  \cref{eq:gaussian-q-stationary-equation}.

  \emph{Step 3: $\nu^+$ is invariant on $(0,1]$.}
  Since $K_{A,N}(0,\cdot)=\delta_0$ and
  $K_{A,N}(s,\{0\})=0$ for $s>0$ (all Gaussian coordinates would have to
  vanish), an invariant law
  $\overline\nu\neq\delta_0$ decomposes as
  \begin{equation*}
    \overline\nu
    =
    \overline\nu(\{0\})\delta_0
    +
    \left(1-\overline\nu(\{0\})\right)\nu^+,
  \end{equation*}
  with $\nu^+$ invariant on $(0,1]$. The assertion for the vector law is then
  the invariant-law conclusion of \cref{prop:gaussian-tv-reduction}.
  The stationary equation for $X^{(N)}$ is
  \cref{eq:gaussian-vector-weak-stationary-equation}.
\end{proof}

\subsection{Proof of
\texorpdfstring{\Cref{prop:gaussian-nonzero-invariant-existence}}
{Proposition~\ref{prop:gaussian-nonzero-invariant-existence}}}
\label{app:common-nonzero-invariant-existence}

\begin{proof}
  \emph{Step 1: Negative moments of $S_N$.}
  Let $G_1,\ldots,G_N$ be independent standard Gaussians and put
  \begin{equation*}
    S_N:=\frac1N\sum_{i=1}^N\min(G_i^2,1).
  \end{equation*}
  If $0<u<1/N$, then $S_N\leq u$ implies
  $\abs{G_i}\leq\sqrt{Nu}$ for every $i$. Hence
  \begin{equation*}
    \Pp(S_N\leq u)
    \leq
    \prod_{i=1}^N\Pp(\abs{G_i}\leq\sqrt{Nu})
    \leq
    C_N u^{N/2}.
  \end{equation*}
  Consequently, $\Ee S_N^{-p}<\infty$ for every $p<N/2$.

  \emph{Step 2: Contraction of $r^{-p}$ near zero.}
  Since $A>A_c(N)$,
  \begin{equation*}
    \frac{d}{dp} \Ee\left[
      \left(A^2\frac{\chi_N^2}{N}\right)^{-p}
    \right]_{p=0}
    =
    -\Ee\log\left(A^2\frac{\chi_N^2}{N}\right)
    =2\gamma_{A,N}<0,
  \end{equation*}
  where $\ell_{A,N}$ is as in \cref{eq:gaussian-ell}, and
  $\gamma_{A,N}=-\Ee\log\ell_{A,N}<0$ in the present supercritical regime.
  Therefore there exists $p\in(0,N/2)$ such that
  \begin{equation}\label{eq:gaussian-simple-linear-negative-moment}
    \Ee\left[
      \left(A^2\frac{\chi_N^2}{N}\right)^{-p}
    \right]
    <1 .
  \end{equation}
  We claim that, for some $R_0\in(0,1)$ and $a<1$,
  \begin{equation}\label{eq:gaussian-simple-near-zero-drift}
    \sup_{0<r\leq R_0}
    \Ee\left[
      \left(\frac{F_{A,N}(r,G)}{r}\right)^{-p}
    \right]
    \leq a .
  \end{equation}
  Indeed, for fixed $g$ the quotient
  $r^{-1}\tanh^2(A\sqrt r\,g)$ decreases in $r$ and converges to
  $A^2g^2$ as $r\downarrow0$. Therefore
  $F_{A,N}(r,G)/r$ converges almost surely to
  $A^2\chi_N^2/N$. Moreover, for each fixed $R>0$ and $0<q\leq R$, the same
  monotonicity gives
  \begin{equation*}
    \frac{F_{A,N}(q,G)}q
    \geq
    \frac{F_{A,N}(R,G)}R .
  \end{equation*}
  The random variable on the right is bounded below by $c_{A,R,N}S_N$.
  The estimate from Step 1 gives
  $\Ee S_N^{-p}<\infty$. Dominated convergence therefore gives the convergence
  of the expectations, and \cref{eq:gaussian-simple-near-zero-drift} follows from
  \cref{eq:gaussian-simple-linear-negative-moment} by taking $R_0$ small.

  \emph{Step 3: A bound away from zero.}
  There is $B<\infty$ such that
  \begin{equation}\label{eq:gaussian-simple-outside-bound}
    \sup_{r\in[R_0,1]}\Ee F_{A,N}(r,G)^{-p}\leq B .
  \end{equation}
  This follows from the pointwise lower bound
  \begin{equation*}
    \tanh^2(A\sqrt r\,g)\geq c_{A,R_0}\min(g^2,1),
    \qquad r\in[R_0,1],
  \end{equation*}
  together with the estimate for $S_N$ from Step 1.

  Let $V(r):=r^{-p}$ for $r>0$. Combining the two estimates gives
  \begin{equation}\label{eq:gaussian-simple-foster}
    K_{A,N}V(r)\leq aV(r)+B,
    \qquad r>0.
  \end{equation}

  \emph{Step 4: The Ces\`aro limit does not charge zero.}
  Let $Z^{(N)}$ start from $q\in(0,1]$. Iterating
  \cref{eq:gaussian-simple-foster} gives
  \begin{equation*}
    \frac1T\sum_{t=0}^{T-1}\Ee V(Z_t^{(N)})
    \leq
    \frac{V(q)}{T(1-a)}+\frac{B}{1-a}.
  \end{equation*}
  Choose a weakly convergent Cesàro subsequence
  $\overline\nu_{T_j}^{\,q}\Rightarrow\overline\nu$. For
  $M<\infty$, the function $V^M(r):=\min\{r^{-p},M\}$ for $r>0$ and
  $V^M(0):=M$ is continuous on $[0,1]$. Passing to the limit gives a bound
  independent of $M$. Since
  $M\overline\nu(\{0\})\leq\int V^M\,\mathrm{d}\overline\nu$, letting
  $M\to\infty$ first gives $\overline\nu(\{0\})=0$. Monotone convergence on
  $(0,1]$ then gives
  \begin{equation*}
    \int_{(0,1]}r^{-p}\,\overline\nu(\mathrm{d}r)<\infty,
    \qquad
    \overline\nu(\{0\})=0.
  \end{equation*}
  Thus $\overline\nu\neq\delta_0$, and
  \cref{prop:gaussian-compactness-selection} shows that
  $\overline\nu\in\mathcal I_{A,N}^+(q)$. The vector invariant law follows
  from \cref{eq:gaussian-compactness-vector-law}.
\end{proof}

\section{Proofs of estimates for supercritical cutoff as
\texorpdfstring{$N\to\infty$}{N tends to infinity}}
\label{app:supercritical-cutoff-estimates}

This appendix contains the proofs of the estimates used in
\cref{subsec:gaussian-process-cutoff}, with the notation introduced there for
$Z^{(N)}$ and $(q_{t,N})_{t\geq0}$.

\subsection{Proofs of
\texorpdfstring{\Cref{lem:gaussian-truncated-cramer,prop:gaussian-near-zero-negative-moment,lem:gaussian-outside-negative-moment,prop:gaussian-stationary-negative-moment}}
{the results on negative moments}}
\label{app:supercritical-negative-moments}

\begin{proof}[Proof of \cref{lem:gaussian-truncated-cramer}]
  \emph{Step 1: Negative moments of the truncated average.}
  Let
  \begin{equation*}
    U_i:=\min(G_i^2,1),
    \qquad
    S_N:=\frac1N\sum_{i=1}^N U_i.
  \end{equation*}
  For $\lambda\geq1$,
  \begin{equation*}
    \Ee e^{-\lambda U_1}
    \leq
    \int_{-1}^1 e^{-\lambda x^2}\frac{e^{-x^2/2}}{\sqrt{2\pi}}\,\mathrm{d}x
    +
    e^{-\lambda}
    \leq
    C\lambda^{-1/2}.
  \end{equation*}
  Chernoff's bound with $\lambda=u^{-1}$ gives
  \begin{equation*}
    \Pp(S_N\leq u)
    \leq
    e^N
    \left(\Ee e^{-U_1/u}\right)^N
    \leq
    C^N u^{N/2},
  \end{equation*}
  for $0<u\leq1$.

  For $x,p>0$, the Gamma integral and the change of variables $v=ux$ give
  \begin{equation*}
    x^{-p}
    =
    \frac1{\Gamma(p)}
    \int_0^\infty u^{p-1}e^{-ux}\,\mathrm{d}u.
  \end{equation*}
  Let $p=\alpha N$ with $\alpha\in(0,1/2)$. By Tonelli's theorem and
  independence,
  \begin{align*}
    \Ee S_N^{-p}
    &=
    \frac1{\Gamma(p)}
    \int_0^\infty
    u^{p-1}
    \left(\Ee e^{-(u/N)U_1}\right)^N
    \,\mathrm{d}u .
  \end{align*}
  We split the integral at $u=N$. On $[0,N]$ we use the trivial bound
  $\Ee e^{-(u/N)U_1}\leq1$. On $[N,\infty)$ we use the preceding
  one-coordinate Laplace bound with $\lambda=u/N\geq1$. Since
  $p=\alpha N<N/2$, this gives
  \begin{align*}
    \Ee S_N^{-p}
    &\leq
    \frac{N^p}{\Gamma(p+1)}
    +
    \frac{C^N N^{N/2}}{\Gamma(p)}
    \int_N^\infty u^{p-1-N/2}\,\mathrm{d}u
    \\
    &\leq
    C_\alpha^N
    \frac{N^p}{\Gamma(p+1)}.
  \end{align*}
  Stirling's formula, after increasing the constant to handle bounded $N$,
  shows that the last expression is bounded by $e^{C_\alpha N}$.
  If $W_{L,i}:=\min(G_i^2,L^2)$ and $L\geq1$, then $W_{L,i}\geq U_i$.
  Consequently,
  \begin{equation}\label{eq:gaussian-truncated-negative-moment}
    \sup_{L\geq1}
    \Ee\left[
      \left(
        \frac1N\sum_{i=1}^N W_{L,i}
      \right)^{-\alpha N}
    \right]
    \leq
    e^{C_\alpha N}.
  \end{equation}

  \emph{Step 2: Choice of $L$, $\varepsilon$, and $\delta$.}
  Put $\mu_L:=\Ee W_{L,1}$. Choose first $L$ large, then
  $\varepsilon>0$ small enough that $(1-\varepsilon)A^2\mu_L>1$, and finally
  $\delta>0$ such that
  \begin{equation*}
    a:=(1-\varepsilon)A^2(\mu_L-\delta)>1.
  \end{equation*}
  If $\overline W_{L,N}:=N^{-1}\sum_i W_{L,i}$, Hoeffding's inequality gives
  \begin{equation*}
    \Pp(\overline W_{L,N}<\mu_L-\delta)\leq e^{-c_0N}
  \end{equation*}
  with $c_0>0$ depending on $L$ and $\delta$.

  \emph{Step 3: Splitting at
  $\overline W_{L,N}=\mu_L-\delta$.}
  Fix $\alpha\in(0,1/2)$. By
  \cref{eq:gaussian-truncated-negative-moment}, after increasing
  $C_\alpha$ to absorb the fixed factor
  $((1-\varepsilon)A^2)^{-\alpha N}$,
  \begin{equation*}
    \Ee\left[
      \left((1-\varepsilon)A^2\overline W_{L,N}\right)^{-\alpha N}
    \right]
    \leq
    e^{C_\alpha N}.
  \end{equation*}
  On the event $\{\overline W_{L,N}\geq\mu_L-\delta\}$, the integrand in
  \cref{eq:gaussian-truncated-cramer} is at most $a^{-\gamma N}$. On the
  complementary event, H\"older's inequality gives, for $0<\gamma<\alpha$,
  \begin{align*}
    \Ee\left[
      \left((1-\varepsilon)A^2\overline W_{L,N}\right)^{-\gamma N}
      \one_{\{\overline W_{L,N}<\mu_L-\delta\}}
    \right]
    &\leq
    e^{-c_0N(1-\gamma/\alpha)}
    e^{C_\alpha N\gamma/\alpha}.
  \end{align*}
  Choose $\gamma>0$ so small that
  $\gamma < \alpha c_0/(c_0+C_\alpha)$. Then choose $\kappa>0$ such that
  \begin{equation*}
    \kappa < \min\left\{
      \gamma\log a,
      c_0(1-\gamma/\alpha)-C_\alpha\gamma/\alpha
    \right\}.
  \end{equation*}
  The good- and bad-event estimates then give
  \cref{eq:gaussian-truncated-cramer} and complete the proof.
\end{proof}

\begin{proof}[Proof of \cref{prop:gaussian-near-zero-negative-moment}]
  Let $\gamma,\kappa,L,\varepsilon$ be as in
  \cref{lem:gaussian-truncated-cramer}, decreasing $\kappa$ if necessary.
  For $q>0$ define
  \begin{equation*}
    h_q(g)
    :=
    \frac{\tanh^2(A\sqrt q\,g)}{q}
    =
    A^2g^2
    \left(
      \frac{\tanh(A\sqrt q\,g)}{A\sqrt q\,g}
    \right)^2,
  \end{equation*}
  with the continuous interpretation at $g=0$. Since
  $x\mapsto \tanh x/x$ is decreasing on $(0,\infty)$, the map
  $q\mapsto h_q(g)$ is decreasing for each fixed $g$.

  Choose $R_0\in(0,\inf I_\ast)$ so small that
  \begin{equation}\label{eq:gaussian-ratio-lower-bound}
    \left(
      \frac{\tanh(A\sqrt{R_0}u)}{A\sqrt{R_0}u}
    \right)^2
    \geq
    1-\varepsilon
    \qquad \mbox{for every }\abs u\leq L.
  \end{equation}
  For $\abs g\leq L$, \cref{eq:gaussian-ratio-lower-bound} gives
  \begin{equation*}
    h_{R_0}(g)\geq(1-\varepsilon)A^2g^2.
  \end{equation*}
  For $\abs g>L$, the map
  $\abs g\mapsto h_{R_0}(g)=R_0^{-1}\tanh^2(A\sqrt{R_0}g)$ is increasing, and
  hence
  \begin{equation*}
    h_{R_0}(g)
    \geq
    h_{R_0}(L)
    \geq
    (1-\varepsilon)A^2L^2.
  \end{equation*}
  Combining the two cases gives
  \begin{equation}\label{eq:gaussian-h-lower-truncated}
    h_{R_0}(g)
    \geq
    (1-\varepsilon)A^2\min(g^2,L^2)
    \qquad \mbox{for every }g\in\R.
  \end{equation}

  Hence, for $0<q\leq R_0$,
  \begin{equation*}
    \frac{F_{A,N}(q,G)}{q}
    =
    \frac1N\sum_{i=1}^N h_q(G_i)
    \geq
    \frac1N\sum_{i=1}^N h_{R_0}(G_i)
    \geq
    (1-\varepsilon)A^2
    \frac1N\sum_{i=1}^N\min(G_i^2,L^2).
  \end{equation*}
  Since $x\mapsto x^{-\gamma N}$ is decreasing on $(0,\infty)$, applying
  \cref{lem:gaussian-truncated-cramer} gives
  \begin{equation*}
    \sup_{0<q\leq R_0}
    \Ee\left[
      \left(
        \frac{F_{A,N}(q,G)}{q}
      \right)^{-\gamma N}
    \right]
    \leq
    e^{-\kappa N}
  \end{equation*}
  for all sufficiently large $N$.
\end{proof}

\begin{proof}[Proof of \cref{lem:gaussian-outside-negative-moment}]
  There is $c_{A,R_0}>0$ such that, for every $q\in[R_0,1]$ and every
  $g\in\R$,
  \begin{equation}\label{eq:gaussian-outside-pointwise-lower}
    \tanh^2(A\sqrt q\,g)
    \geq
    c_{A,R_0}\min(g^2,1).
  \end{equation}
  Indeed, for $\abs g\leq1$ this follows from the monotonicity of
  $\tanh x/x$ on $[0,A]$, while for $\abs g\geq1$ it follows from
  $\tanh^2(A\sqrt q\,\abs g)\geq\tanh^2(A\sqrt{R_0})$. Thus
  \begin{equation*}
    F_{A,N}(q,G)
    \geq
    c_{A,R_0}
    \frac1N\sum_{i=1}^N\min(G_i^2,1).
  \end{equation*}
  The bound follows from
  \cref{eq:gaussian-truncated-negative-moment}, after absorbing
  $c_{A,R_0}^{-\gamma N}$ into $e^{bN}$.
\end{proof}

\begin{proof}[Proof of \cref{prop:gaussian-stationary-negative-moment}]
  \emph{Step 1: The bound for $K_{A,N}V_N$.}
  Let $R_0,\gamma,\kappa$ be as in
  \cref{prop:gaussian-near-zero-negative-moment}, and let $b$ be the constant
  in \cref{lem:gaussian-outside-negative-moment} for this choice of
  $R_0,\gamma$. Define
  \begin{equation*}
    V_N(q):=q^{-\gamma N},
    \qquad q>0.
  \end{equation*}
  For $q>0$, the two negative-moment estimates give the Foster inequality
  \begin{equation}\label{eq:gaussian-stationary-negative-moment-drift}
    K_{A,N}V_N(q)
    \leq
    e^{-\kappa N}V_N(q)+e^{bN}.
  \end{equation}
  Indeed, if $q\leq R_0$ then
  \begin{equation*}
    K_{A,N}V_N(q)
    =
    q^{-\gamma N}
    \Ee\left[\left(F_{A,N}(q,G)/q\right)^{-\gamma N}\right]
    \leq
    e^{-\kappa N}V_N(q),
  \end{equation*}
  while for $q\in[R_0,1]$ the outside estimate gives
  $K_{A,N}V_N(q)\leq e^{bN}$.

  \emph{Step 2: The bound under $\nu_{A,N}$.}
  Let $(Z_t^{(N)})_{t\geq0}$ start from $1$. Iterating
  \cref{eq:gaussian-stationary-negative-moment-drift} yields
  \begin{equation}\label{eq:gaussian-stationary-negative-moment-iteration}
    \Ee V_N(Z_t^{(N)})
    \leq
    e^{-\kappa Nt}
    +
    \frac{e^{bN}}{1-e^{-\kappa N}}.
  \end{equation}
  Hence
  \begin{equation}\label{eq:gaussian-stationary-negative-moment-cesaro-bound}
    \int V_N(q)\,\overline\nu_T^{\,1}(\mathrm{d}q)
    \leq
    \frac{1}{T(1-e^{-\kappa N})}
    +
    \frac{e^{bN}}{1-e^{-\kappa N}}.
  \end{equation}
  Here $\overline\nu_T^{\,1}$ is the Cesàro average started from $q=1$ in
  \cref{eq:gaussian-cesaro-selection}.

  Take a weakly convergent subsequence
  $\overline\nu_{T_j}^{\,1}\Rightarrow\overline\nu$ with $T_j\to\infty$.
  For $M<\infty$ put $V_N^M(q):=\min\{V_N(q),M\}$ for $q>0$ and
  $V_N^M(0):=M$. Then $V_N^M$ is continuous on $[0,1]$. Applying
  \cref{eq:gaussian-stationary-negative-moment-cesaro-bound} with $T=T_j$ and
  then letting $j\to\infty$, the term containing $T_j^{-1}$ vanishes and
  \begin{equation*}
    \int V_N^M(q)\,\overline\nu(\mathrm{d}q)
    \leq
    \frac{e^{bN}}{1-e^{-\kappa N}}.
  \end{equation*}
  Letting $M\to\infty$ and using monotone convergence, we obtain
  $\int_{(0,1]}q^{-\gamma N}\,\overline\nu(\mathrm{d}q)<\infty$ and
  $\overline\nu(\{0\})=0$. By \cref{prop:gaussian-compactness-selection},
  $\overline\nu$ is a nonzero invariant law. By
  \cref{prop:gaussian-unique-nonzero-invariant},
  $\overline\nu=\nu_{A,N}$. The same monotone-convergence passage gives
  \cref{eq:gaussian-stationary-negative-moment}.
\end{proof}

\subsection{Proofs of
\texorpdfstring{\Cref{prop:gaussian-stable-localization,lem:gaussian-score-smoothing,prop:gaussian-stationary-concentration,lem:gaussian-orbit-concentration}}
{the localization and concentration results}}
\label{app:supercritical-localization-concentration}

\begin{proof}[Proof of \cref{prop:gaussian-stable-localization}]
  \emph{Step 1: $\nu_{A,N}((0,r])$ is exponentially small.}
  Let $R_0,\gamma$ be as in
  \cref{prop:gaussian-near-zero-negative-moment}, and let $b$ be the constant
  from \cref{lem:gaussian-outside-negative-moment} for this choice of
  $R_0,\gamma$. Set
  \begin{equation*}
    V_N(q):=q^{-\gamma N},
    \qquad q>0.
  \end{equation*}
  By \cref{prop:gaussian-stationary-negative-moment}, after increasing $C$ if
  necessary,
  \begin{equation}\label{eq:gaussian-lyapunov-moment-bound}
    \int_0^1 V_N(q)\,\nu_{A,N}(\mathrm{d}q)
    \leq
    C e^{bN}.
  \end{equation}

  Now choose a second threshold $r\in(0,R_0)$ so small that
  \begin{equation}\label{eq:gaussian-final-threshold-choice}
    b+\gamma\log r<0.
  \end{equation}
  Since $V_N(q)\geq r^{-\gamma N}$ on $(0,r]$,
  \begin{equation}\label{eq:gaussian-small-mass-bound}
    \nu_{A,N}((0,r])
    \leq
    r^{\gamma N}
    \int_0^1 V_N(q)\,\nu_{A,N}(\mathrm{d}q)
    \leq
    C e^{-cN}.
  \end{equation}

  \emph{Step 2: Entrance into $I_\ast$ after $m$ steps.}
  By
  \cref{lem:gaussian-mean-field-concavity}, every orbit of $V_A$ started from
  $[r,1]$ converges monotonically to $q_\ast$ without crossing it.
  Hence
  \begin{equation*}
    \abs{V_A^{t+1}(q)-q_\ast}
    \leq
    \abs{V_A^t(q)-q_\ast},
    \qquad q\in[r,1].
  \end{equation*}
  Dini's theorem shows that this convergence is uniform on $[r,1]$.
  Therefore there are $m\in\N$ and $\eta>0$ such that
  \begin{equation}\label{eq:gaussian-deterministic-fixed-entrance}
    V_A^m([r,1])
    \subset
    I_{\ast,-\eta}
    :=
    \left\{
      q\in I_\ast:\operatorname{dist}(q,I_\ast^c)\geq\eta
    \right\}.
  \end{equation}
  Let $L<\infty$ be a Lipschitz constant for $V_A$ on $[0,1]$ and set
  \begin{equation*}
    \eta_m:=\frac{\eta}{2(1+L+\cdots+L^{m-1})}.
  \end{equation*}
  If the noise variables in \cref{eq:gaussian-noise-decomposition} have
  absolute value at most $\eta_m$ during the first $m$ steps, then the
  stochastic radius at time $m$ is within $\eta/2$ of its deterministic
  counterpart and hence belongs to $I_\ast$. Hoeffding's inequality
  \cref{eq:gaussian-noise-hoeffding} and a union bound over these fixed
  $m$ steps give
  \begin{equation}\label{eq:gaussian-fixed-time-entrance}
    \sup_{q\in[r,1]}
    K_{A,N}^m(q,I_\ast^c)
    \leq
    C e^{-cN}.
  \end{equation}

  \emph{Step 3: Invariance gives the bound on $I_\ast^c$.}
  Since $\nu_{A,N}$ is invariant,
  \begin{equation*}
    \nu_{A,N}(I_\ast^c)
    =
    \int_0^1 K_{A,N}^m(q,I_\ast^c)\,\nu_{A,N}(\mathrm{d}q)
    \leq
    \nu_{A,N}((0,r])
    +
    \sup_{q\in[r,1]}K_{A,N}^m(q,I_\ast^c).
  \end{equation*}
  Combining \cref{eq:gaussian-small-mass-bound,eq:gaussian-fixed-time-entrance}
  proves \cref{eq:gaussian-stationary-exponential-localization}, and the
  weaker estimate \cref{eq:gaussian-stationary-localization} follows
  immediately.
\end{proof}

\begin{proof}[Proof of \cref{lem:gaussian-score-smoothing}]
  \emph{Step 1: The score of one coordinate.}
  Let $\mu_q$ be the law of $Y_q:=\tanh^2(A\sqrt q\,G)$ on $(0,1)$, and let
  $f_q$ be the density in \cref{eq:gaussian-one-coordinate-density}.
  Its $q$-score is
  \begin{equation}\label{eq:gaussian-one-coordinate-score}
    \ell_q(y)
    :=
    \partial_q\log f_q(y)
    =
    -\frac1{2q}
    +
    \frac{\arctanh(\sqrt y)^2}{2A^2q^2}.
  \end{equation}
  Since
  $\arctanh(\sqrt{Y_q})=\abs{A\sqrt q\,G}$, we have
  \begin{equation}\label{eq:gaussian-score-moments}
    \ell_q(Y_q)=\frac{G^2-1}{2q},
    \qquad
    \Ee \ell_q(Y_q)=0,
    \qquad
    \Ee \ell_q(Y_q)^2=\frac1{2q^2}.
  \end{equation}
  Fix a compact subinterval $I=[q_-,q_+]\Subset(0,1)$. On this interval the
  displayed density is a $C^1$ map into $L^1((0,1))$: after writing
  $u=\arctanh(\sqrt y)$, the substitution $y=\tanh^2u$ gives a
  uniform integrable Gaussian envelope for $\partial_qf_q=\ell_qf_q$. Thus
  $q\mapsto\mu_q$ is differentiable in total variation on $I$, with signed
  derivative measure $\partial_q\mu_q=\ell_q\mu_q$. Here and below
  $\abs\lambda$ denotes the total variation measure of a signed measure
  $\lambda$. With our convention for total variation distance between
  probability measures, one divides the signed variation norm by two.

  \emph{Step 2: The score of $N$ coordinates.}
  Let $Y_{1,q},\ldots,Y_{N,q}$ be independent copies of $Y_q$ and set
  \begin{equation*}
    M_{N,q}:=\frac1N\sum_{i=1}^N Y_{i,q},
    \qquad
    S_{N,q}:=\sum_{i=1}^N\ell_q(Y_{i,q}).
  \end{equation*}
  Differentiating the product measure gives
  \begin{equation}\label{eq:gaussian-product-score-derivative}
    \partial_q\mu_q^{\otimes N}
    =
    S_{N,q}\,\mu_q^{\otimes N}.
  \end{equation}
  Equivalently, for every bounded measurable $\varphi:(0,1)^N\to\R$,
  \begin{equation}\label{eq:gaussian-score-test-function}
    \partial_q\Ee\varphi(Y_{1,q},\ldots,Y_{N,q})
    =
    \Ee\left[
      \varphi(Y_{1,q},\ldots,Y_{N,q})S_{N,q}
    \right].
  \end{equation}
  Since $S_{N,q}$ is the density of $\partial_q\mu_q^{\otimes N}$ against
  $\mu_q^{\otimes N}$, its total variation norm is
  \begin{equation}\label{eq:gaussian-score-product-bound}
    \abs{\partial_q\mu_q^{\otimes N}}((0,1)^N)
    =
    \Ee\abs{S_{N,q}}
    \leq
    \left(\Ee S_{N,q}^2\right)^{1/2}
    =
    \frac{\sqrt N}{\sqrt2\,q},
  \end{equation}
  where the last identity uses \cref{eq:gaussian-score-moments} and the
  independence of the $Y_{i,q}$.

  \emph{Step 3: Passing to $K_{A,N}$.}
  The kernel $K_{A,N}(q,\cdot)$ is the law of $M_{N,q}$ and hence the
  pushforward of $\mu_q^{\otimes N}$ under the map
  $(y_1,\ldots,y_N)\mapsto N^{-1}\sum_i y_i$. Pushforward does not increase the
  total variation distance. Since $q\mapsto\mu_q^{\otimes N}$ is continuously
  differentiable in total variation,
  \begin{equation*}
    \mu_q^{\otimes N}-\mu_{q'}^{\otimes N}
    =
    \int_{q'}^q\partial_s\mu_s^{\otimes N}\,\mathrm{d}s
  \end{equation*}
  as signed measures. Integrating \cref{eq:gaussian-score-product-bound} and
  dividing the signed variation by two gives
  \begin{equation*}
    \norm{K_{A,N}(q,\cdot)-K_{A,N}(q',\cdot)}_{\TV}
    \leq
    \tfrac12\abs{\mu_q^{\otimes N}-\mu_{q'}^{\otimes N}}((0,1)^N)
    \leq
    \frac12\int_{q\wedge q'}^{q\vee q'}\frac{\sqrt N}{\sqrt2\,s}\,\mathrm{d}s
    \leq
    \frac{\sqrt N}{2\sqrt2\,q_-}\abs{q-q'} .
  \end{equation*}
  Since $q,q'\in I\Subset(0,1)$, this proves
  \cref{eq:gaussian-score-tv-smoothing} with $C_I=(2\sqrt2\,q_-)^{-1}$.
\end{proof}

\begin{proof}[Proof of \cref{prop:gaussian-stationary-concentration}]
  Put
  \begin{equation*}
    V_N:=
    \int_0^1 (q-q_\ast)^2\,\nu_{A,N}(\mathrm{d}q).
  \end{equation*}
  Invariance of $\nu_{A,N}$ and \cref{eq:gaussian-noise-moments} give
  \begin{align}
    V_N
    &=
    \int_0^1
    \Ee\left[
      \left(F_{A,N}(q,G)-q_\ast\right)^2
    \right]
    \nu_{A,N}(\mathrm{d}q)
    \notag\\
    &=
    \int_0^1
    \left(V_A(q)-q_\ast\right)^2
    \nu_{A,N}(\mathrm{d}q)
    +
    O(N^{-1}).
    \label{eq:gaussian-stationary-concentration-step}
  \end{align}
  On $I_\ast$, \cref{eq:gaussian-stable-basin-derivative} gives
  $
    \abs{V_A(q)-q_\ast}
    \leq
    \kappa\abs{q-q_\ast}
  $.
  On $I_\ast^c$, we use the trivial bound
  $
    \abs{V_A(q)-q_\ast}\leq1
  $
  and \cref{eq:gaussian-stationary-localization}. Hence
  $
    V_N
    \leq
    \kappa^2 V_N+\frac{C}{N}
  $.
  Since $\kappa<1$, this gives
  $
    V_N\leq \frac{C}{(1-\kappa^2)N}
  $.
  The claim follows after enlarging $C$.
\end{proof}

\begin{proof}[Proof of \cref{lem:gaussian-orbit-concentration}]
  \emph{Step 1: The error at time $s_\ast$.}
  Put
  $
    e_t^{(N)}:=Z_t^{(N)}-q_{t,N}
  $.
  Since the comparison orbit starts from the same finite-$N$ radius
  $q_{0,N}$, one has $e_0^{(N)}=0$. The map $V_A$ is Lipschitz on
  $[0,1]$. Since $s_\ast$ is fixed, finitely many iterations of
  \cref{eq:gaussian-noise-moments,eq:gaussian-noise-hoeffding} give
  \begin{equation}\label{eq:gaussian-entrance-error}
    \Ee[(e_{s_\ast}^{(N)})^2]\leq \frac{C}{N},
    \qquad
    \Pp\left(\abs{e_{s_\ast}^{(N)}}>\eta\right)
    \leq
    C\exp(-c_\eta N)
  \end{equation}
  for every fixed $\eta>0$.

  \emph{Step 2: The second moment before exit.}
  Use the one-step noise $\zeta_{t+1}^{(N)}$ and natural filtration
  $\mathcal F_t$ from
  \cref{eq:gaussian-noise-decomposition,eq:gaussian-noise-moments}. Then
  \begin{equation*}
    e_{t+1}^{(N)}
    =
    Z_{t+1}^{(N)}-q_{t+1,N}
    =
    V_A(Z_t^{(N)})-V_A(q_{t,N})
    +
    \zeta_{t+1}^{(N)}.
  \end{equation*}
  Moreover,
  \begin{equation*}
    \Ee[\zeta_{t+1}^{(N)}\mid \mathcal F_t]=0,
    \qquad
    \Ee[(\zeta_{t+1}^{(N)})^2\mid \mathcal F_t]\leq \frac{C}{N}.
  \end{equation*}

  On $\{t<\tau_\ast^{(N)}\}$, both $Z_t^{(N)}$ and $q_{t,N}$ lie in the
  stable interval $I_\ast$. Hence the mean value theorem and
  \cref{eq:gaussian-stable-basin-derivative} give
  \begin{equation*}
    \abs{V_A(Z_t^{(N)})-V_A(q_{t,N})}
    \leq
    \kappa\abs{e_t^{(N)}}.
  \end{equation*}
  Therefore, using the preceding decomposition and the martingale property of
  $\zeta_{t+1}^{(N)}$,
  \begin{equation*}
    \Ee\left[
      (e_{t+1}^{(N)})^2
      \,\middle|\,\mathcal F_t
    \right]
    =
    \left(
      V_A(Z_t^{(N)})-V_A(q_{t,N})
    \right)^2
    +
    \Ee\left[
      (\zeta_{t+1}^{(N)})^2
      \,\middle|\,\mathcal F_t
    \right]
    \leq
    \kappa^2(e_t^{(N)})^2+\frac{C}{N}
  \end{equation*}
  on $\{t<\tau_\ast^{(N)}\}$. Apply
  \cref{eq:common-killed-orbit-tracking} to the time-shifted process
  $(e_{s_\ast+j}^{(N)})_{j\geq0}$, with
  $\alpha_{j,N}=\kappa$ and $\sigma_{j,N}=C$. The geometric sum in that
  estimate is bounded by $(1-\kappa^2)^{-1}$, while
  \cref{eq:gaussian-entrance-error} supplies the $O(N^{-1})$ initial term.
  Hence
  \begin{equation}\label{eq:gaussian-stopped-orbit-concentration}
    \Ee\left[
      (e_t^{(N)})^2
      \one_{\{\tau_\ast^{(N)}>t\}}
    \right]
    \leq
    \frac{C}{N}.
  \end{equation}

  \emph{Step 3: Removing the stopping time.}
  Choose $\eta>0$ so small that
  \begin{equation*}
    \kappa\delta_\ast+\eta<\delta_\ast.
  \end{equation*}
  If
  \begin{equation*}
    \abs{e_{s_\ast}^{(N)}}\leq\eta
    \qquad\mbox{and}\qquad
    \abs{\zeta_j^{(N)}}\leq\eta
    \quad\mbox{for every }s_\ast<j\leq T_N,
  \end{equation*}
  then the recursion
  \begin{equation*}
    \abs{e_{t+1}^{(N)}}
    \leq
    \kappa\abs{e_t^{(N)}}+\abs{\zeta_{t+1}^{(N)}}
  \end{equation*}
  and the margin in \cref{eq:gaussian-deterministic-entrance} imply
  $Z_t^{(N)}\in I_\ast$ for every $s_\ast\leq t\leq T_N$. Hence
  \cref{eq:gaussian-noise-hoeffding,eq:gaussian-entrance-error} and a union
  bound give
  \begin{equation*}
    \Pp(\tau_\ast^{(N)}\leq T_N)
    \leq
    C T_N e^{-c_0N}.
  \end{equation*}

  Since $\abs{e_t^{(N)}}\leq1$, splitting according to whether $Z^{(N)}$ has
  exited before time $t$ gives
  \begin{equation*}
    \Ee[(e_t^{(N)})^2]
    =
    \Ee\left[
      (e_t^{(N)})^2\one_{\{\tau_\ast^{(N)}>t\}}
    \right]
    +
    \Ee\left[
      (e_t^{(N)})^2\one_{\{\tau_\ast^{(N)}\leq t\}}
    \right]
    \leq
    \Ee\left[
      (e_t^{(N)})^2\one_{\{\tau_\ast^{(N)}>t\}}
    \right]
    +
    \Pp(\tau_\ast^{(N)}\leq t).
  \end{equation*}
  Combining this with
  \cref{eq:gaussian-stopped-orbit-concentration,eq:gaussian-orbit-no-exit} and
  $T_N=O(\log N)$ gives, uniformly for $t\leq T_N$,
  \begin{equation*}
    \Ee[(e_t^{(N)})^2]
    \leq
    \frac{C}{N}
    +
    C(\log N)e^{-c_0N}.
  \end{equation*}
  Multiplying by $N$ and increasing $C$ for finitely many small values of $N$
  proves \cref{eq:gaussian-orbit-concentration}.
\end{proof}

\subsection{Proof of
\texorpdfstring{\Cref{lem:gaussian-synchronous-cutoff-scale}}
{Lemma~\ref{lem:gaussian-synchronous-cutoff-scale}}}
\label{app:supercritical-cutoff-contraction}

\begin{proof}
  \emph{Step 1: The last $\ell_N$ steps.}
  Take $C_0$ as in \cref{eq:gaussian-uniform-log-horizon-constant}. Fix
  $b>0$ and $c\geq0$. All constants below are independent of $c$. The
  lower bound on $N$ and the $o(1)$ terms may depend on $c$, since the limit in
  $N$ is taken first. By
  \cref{eq:gaussian-uniform-log-horizon},
  \begin{equation*}
    n_N(c)\leq C_0\log N
  \end{equation*}
  for all large $N$. The estimates from
  \cref{lem:gaussian-orbit-concentration} will be used only up to this time.
  For all sufficiently large $N$, put
  \begin{equation*}
    \ell_N:=\max\{0,\lfloor b\log\log N\rfloor\},
    \qquad
    s_N:=\max\{0,n_N(c)-1-\ell_N\}.
  \end{equation*}
  Since $n_N(c)-1-\ell_N>0$ eventually,
  $s_N+\ell_N=n_N(c)-1$ for all large $N$, and
  $\ell_N=o(\log N)$.
  Because $n_N(c)$ is of order $\log N$ while
  $\ell_N=o(\log N)$, we have $s_N\to\infty$. Hence, after increasing $N$ if
  necessary,
  \begin{equation*}
    s_N\geq s_\ast,
    \qquad
    0\leq s_N\leq s_N+\ell_N=n_N(c)-1\leq C_0\log N .
  \end{equation*}
  The orbit estimates from \cref{lem:gaussian-orbit-concentration} therefore
  apply at every time used in the last block.

  \emph{Step 2: The distance at time $s_N$.}
  By the deterministic asymptotic in
  \cref{eq:gaussian-deterministic-asymptotic}, together with the definition of
  $t_N(q_0)$ and the bounded integer-time rounding in $n_N(c)$,
  \begin{equation}\label{eq:gaussian-block-deterministic-start}
    \abs{q_{s_N,N}-q_\ast}
    \leq
    C N^{-1/2}\mu_A^{c-\ell_N}
  \end{equation}
  for all large $N$, with $C$ independent of $c$. Indeed,
  \begin{equation*}
    s_N-\bigl(t_N(q_0)+c-\ell_N\bigr)\in[-2,-1],
  \end{equation*}
  so the rounding contributes at most the fixed factor $\mu_A^{-2}$. Only the
  lower bound on $N$ in the locally uniform deterministic asymptotic may depend
  on $c$. Since $\widetilde Z_0^{(N)}\sim\nu_{A,N}$ and the law
  $\nu_{A,N}$ is invariant, also
  $\widetilde Z_{s_N}^{(N)}\sim\nu_{A,N}$. Therefore, by the triangle
  inequality, Cauchy's inequality,
  \cref{eq:gaussian-orbit-concentration}, and
  \cref{eq:gaussian-stationary-concentration},
  \begin{align*}
    \Ee\abs{Z_{s_N}^{(N)}-\widetilde Z_{s_N}^{(N)}}
    &\leq
    \Ee\abs{Z_{s_N}^{(N)}-q_{s_N,N}}
    +
    \abs{q_{s_N,N}-q_\ast}
    +
    \Ee\abs{\widetilde Z_{s_N}^{(N)}-q_\ast}
    \\
    &\leq
    C N^{-1/2}
    +
    C N^{-1/2}\mu_A^{c-\ell_N}
    +
    C N^{-1/2}.
  \end{align*}
  Hence
  \begin{equation}\label{eq:gaussian-block-start-distance}
    \Ee\abs{Z_{s_N}^{(N)}-\widetilde Z_{s_N}^{(N)}}
    \leq
    C N^{-1/2}\left(\mu_A^{c-\ell_N}+1\right).
  \end{equation}

  \emph{Step 3: Both chains stay near $q_\ast$.}
  Let
  \begin{equation*}
    \rho_N:=(\log N)^{-2},
    \qquad
    J_N:=[q_\ast-\rho_N,q_\ast+\rho_N]\cap[0,1].
  \end{equation*}
  For every $0\leq j\leq\ell_N$, the orbit $(q_{t,N})_{t\geq0}$ remains
  closer to $q_\ast$ as time increases. Hence \cref{eq:gaussian-block-deterministic-start}
  implies
  \begin{equation}\label{eq:gaussian-block-deterministic-local}
    q_{s_N+j,N}
    \in
    [q_\ast-\rho_N/2,q_\ast+\rho_N/2]
  \end{equation}
  for all large $N$, because
  $N^{-1/2}\mu_A^{c-\ell_N}=o(\rho_N)$. Chebyshev's inequality and
  \cref{eq:gaussian-orbit-concentration} therefore give
  \begin{equation*}
    \Pp\left(Z_{s_N+j}^{(N)}\notin J_N\right)
    \leq
    \Pp\left(
      \abs{Z_{s_N+j}^{(N)}-q_{s_N+j,N}}>\rho_N/2
    \right)
    \leq
    \frac{C}{N \rho_N^2}.
  \end{equation*}
  Similarly, stationarity of $\widetilde Z^{(N)}$ and
  \cref{eq:gaussian-stationary-concentration} imply
  \begin{equation*}
    \Pp\left(\widetilde Z_{s_N+j}^{(N)}\notin J_N\right)
    \leq
    \frac{C}{N \rho_N^2}.
  \end{equation*}
  Thus, with
  \begin{equation*}
    E_j
    :=
    \left\{
      Z_{s_N+j}^{(N)}\in J_N,\,
      \widetilde Z_{s_N+j}^{(N)}\in J_N
    \right\},
  \end{equation*}
  we have
  \begin{equation}\label{eq:gaussian-block-bad-probability}
    \Pp(E_j^c)
    \leq
    \frac{C}{N \rho_N^2}.
  \end{equation}

  \emph{Step 4: The distance contracts during the last block.}
  On $J_N$ the deterministic map has derivative close to its derivative at
  $q_\ast$. Since $V_A'(q_\ast)=\mu_A$ and $V_A''$ is bounded in a
  neighborhood of $q_\ast$, the mean value theorem gives, for all large $N$
  and all $q,q'\in J_N$,
  \begin{equation}\label{eq:gaussian-local-taylor-contraction}
    \abs{V_A(q)-V_A(q')}
    \leq
    \left(\mu_A+C \rho_N\right)\abs{q-q'}.
  \end{equation}
  We choose $N$ large enough that
  $
    a_N:=\mu_A+C \rho_N<1
  $.

  Let
  $
    D_j:=
    \Ee\abs{
      Z_{s_N+j}^{(N)}
      -
      \widetilde Z_{s_N+j}^{(N)}
    }
  $.
  Conditional on the two radii at time $s_N+j$, the next step uses the same
  Gaussian vector in both chains. For fixed $g\in\R^N$, the map
  $q\mapsto F_{A,N}(q,g)$ is nondecreasing. Indeed, for $q>0$,
  \begin{equation*}
    \partial_q\tanh^2(A\sqrt q\,g_i)
    =
    \frac{Ag_i}{\sqrt q}
    \tanh(A\sqrt q\,g_i)
    \operatorname{sech}^2(A\sqrt q\,g_i)
    \geq0,
  \end{equation*}
  and the endpoint $q=0$ follows by continuity. Monotonicity therefore
  removes the absolute value inside the expectation, and for every
  $q,q'\in[0,1]$,
  \begin{equation*}
    \Ee\abs{F_{A,N}(q,G)-F_{A,N}(q',G)}
    =
    \abs{V_A(q)-V_A(q')}.
  \end{equation*}
  Applying this identity conditionally gives
  \begin{align*}
    \Ee\left[
      \abs{
        Z_{s_N+j+1}^{(N)}
        -
        \widetilde Z_{s_N+j+1}^{(N)}
      }
      \,\middle|\, Z_{s_N+j}^{(N)},\widetilde Z_{s_N+j}^{(N)}
    \right]
    =
    \abs{
      V_A(Z_{s_N+j}^{(N)})
      -
      V_A(\widetilde Z_{s_N+j}^{(N)})
    }.
  \end{align*}
  On $E_j$, \cref{eq:gaussian-local-taylor-contraction} bounds this
  by
  $a_N\abs{Z_{s_N+j}^{(N)}-\widetilde Z_{s_N+j}^{(N)}}$. On
  $E_j^c$ we use only the trivial bound by one. Taking expectations
  and using \cref{eq:gaussian-block-bad-probability} gives
  \begin{equation}\label{eq:gaussian-block-recursion}
    D_{j+1}
    \leq
    a_ND_j
    +
    \frac{C}{N \rho_N^2}
    \qquad
    0\leq j<\ell_N .
  \end{equation}

  Iterating the recursion and using $a_N<1$ gives
  \begin{equation*}
    D_{\ell_N}
    \leq
    a_N^{\ell_N}D_0
    +
    \frac{C}{N \rho_N^2}
    \sum_{k=0}^{\ell_N-1}a_N^k
    \leq
    a_N^{\ell_N}D_0
    +
    \frac{C}{N \rho_N^2}.
  \end{equation*}
  Since $\rho_N=(\log N)^{-2}$,
  \begin{equation*}
    \frac{1}{N \rho_N^2}
    =
    \frac{(\log N)^4}{N}
    =
    o(N^{-1/2}).
  \end{equation*}
  Hence
  \begin{equation}\label{eq:gaussian-block-iterated}
    D_{\ell_N}
    \leq
    a_N^{\ell_N}D_0
    +
    o(N^{-1/2}).
  \end{equation}

  \emph{Step 5: The distance at time $n_N(c)-1$.}
  The block length recovers the factor $\mu_A^{\ell_N}$, while the
  perturbation $C\rho_N$ has no asymptotic effect. Indeed,
  \begin{equation*}
    a_N^{\ell_N}
    =
    \mu_A^{\ell_N}
    \left(1+\frac{C \rho_N}{\mu_A}\right)^{\ell_N},
  \end{equation*}
  and $\ell_N \rho_N\to0$. Therefore
  \begin{equation}\label{eq:gaussian-block-factor}
    a_N^{\ell_N}
    =
    \mu_A^{\ell_N}(1+o(1)).
  \end{equation}
  Combining
  \cref{eq:gaussian-block-start-distance,eq:gaussian-block-iterated,eq:gaussian-block-factor}
  gives
  \begin{equation*}
    \sqrt N\,D_{\ell_N}
    \leq
    C\mu_A^{\ell_N}
    \left(\mu_A^{c-\ell_N}+1\right)
    +
    o(1)
    \leq
    C\mu_A^c+C\mu_A^{\ell_N}+o(1)
    =
    C\mu_A^c+o(1).
  \end{equation*}
  Here $\mu_A^{\ell_N}\to0$ because $\mu_A\in(0,1)$ and
  $\ell_N\to\infty$. The constants in the preceding estimate are independent
  of $c$, while the $o(1)$ term may depend on its fixed value.
  Since $D_{\ell_N}$ is the expectation in
  \cref{eq:gaussian-synchronous-cutoff-scale}, the claimed bound follows.
\end{proof}

\section*{Acknowledgements}
The author used OpenAI's ChatGPT and Anthropic's Claude in preparing this manuscript. The author takes full responsibility for its contents.

\bibliographystyle{alpha}
\bibliography{refs}

\end{document}